\newcommand{\Ueberschrift}{Galois sections and $p$-adic period mappings}
\newcommand{\Kurztitel}{Galois sections and $p$-adic period mappings}
\pgfplotsset{compat=1.12}
\DeclareRobustCommand{\gobblefive}[5]{}
\DeclareMathOperator{\rB}{B}
\DeclareMathOperator{\rF}{F}
\DeclareMathOperator{\rH}{H}
\DeclareMathOperator{\rN}{N}
\DeclareMathOperator{\rP}{P}
\DeclareMathOperator{\rR}{R}
\DeclareMathOperator{\rd}{d}
\newcommand{\bA}{{\mathbb A}}
\newcommand{\bB}{{\mathbb B}}
\newcommand{\bC}{{\mathbb C}}
\newcommand{\bE}{{\mathbb E}}
\newcommand{\bF}{{\mathbb F}}
\newcommand{\bG}{{\mathbb G}}
\newcommand{\bM}{{\mathbb M}}
\newcommand{\bP}{{\mathbb P}}
\newcommand{\bQ}{{\mathbb Q}}
\newcommand{\bS}{{\mathbb S}}
\newcommand{\bT}{{\mathbb T}}
\newcommand{\bZ}{{\mathbb Z}}
\newcommand{\ubC}{{\underline{\bC}}}
\newcommand{\cH}{{\mathscr H}}
\newcommand{\cO}{{\mathscr O}}
\newcommand{\dA}{{\mathcal A}}
\newcommand{\dE}{{\mathcal E}}
\newcommand{\dF}{{\mathcal F}}
\newcommand{\dG}{{\mathcal G}}
\newcommand{\dH}{{\mathcal H}}
\newcommand{\dI}{{\mathcal I}}
\newcommand{\dL}{{\mathcal L}}
\newcommand{\dM}{{\mathcal M}}
\newcommand{\dP}{{\mathcal P}}
\newcommand{\fX}{{\mathfrak X}}
\newcommand{\sB}{{\mathsf B}}
\newcommand{\sD}{{\mathsf D}}
\DeclareSymbolFont{cyrletters}{OT2}{wncyr}{m}{n}
\DeclareMathSymbol{\Sha}{\mathalpha}{cyrletters}{"58}
\DeclareMathOperator{\Aut}{Aut}
\DeclareMathOperator{\Cov}{Cov}
\DeclareMathOperator{\End}{End}
\DeclareMathOperator{\Hom}{Hom}
\DeclareMathOperator{\Map}{Map}
\DeclareMathOperator{\pr}{pr}
\newcommand{\Rep}{{\sf Rep}}
\newcommand{\MF}{{\sf MF}}
\newcommand{\surj}{\twoheadrightarrow} 
\DeclareMathOperator{\uAut}{\underline{Aut}}
\newcommand{\xyinj}{\ar@{^(->}}
\DeclareMathOperator{\GL}{GL}
\DeclareMathOperator{\gr}{gr}
\DeclareMathOperator{\Sp}{Sp}
\newcommand{\tr}{{\rm tr}} 
\DeclareMathOperator{\rank}{rk}
\DeclareMathOperator*{\bigwedgesquare}{{\bigwedge\nolimits^{\!\!2}}}
\DeclareMathOperator*{\bigwedgek}{{\mathit{\bigwedge\nolimits^{\!\!k}}}}
\DeclareMathOperator*{\bigwedgebullet}{{\bigwedge\nolimits^{\!\!\bullet}}}
\DeclareMathOperator{\Aff}{Aff}
\DeclareMathOperator{\Frac}{Frac}
\DeclareMathOperator{\Pic}{Pic}
\DeclareMathOperator{\Spec}{Spec}
\DeclareMathOperator{\bc}{{\rm bc}}
\DeclareMathOperator{\cHom}{\cH\!\mathit{om}}
\DeclareMathOperator{\Gal}{Gal}
\DeclareMathOperator{\Ind}{Ind}
\def\10{{\overrightarrow{10}}}
\def\01{{\overrightarrow{01}}}
\newcommand{\ab}{{\rm ab}} 
\newcommand{\alg}{{\rm alg}}
\newcommand{\an}{{\rm an}}
\DeclareMathOperator{\cl}{cl}
\newcommand{\cris}{{\rm cris}}
\newcommand{\pst}{{\rm pst}}
\newcommand{\deR}{{\rm dR}}
\newcommand{\cyc}{{\rm cyc}}
\newcommand{\et}{\mathrm{\acute{e}t}}
\newcommand{\proet}{\mathrm{pro\acute{e}t}}
\newcommand{\loc}{{\rm loc}}
\newcommand{\nr}{{\rm nr}}
\newcommand{\ph}{\varphi}
\DeclareMathOperator{\Res}{{\rm Res}}
\newcommand{\Sel}{{\rm Sel}}
\newcommand{\ov}[1]{\mbox{${\overline{#1}}$}} 
\DeclarePairedDelimiter\abs{\lvert}{\rvert}
\newtheorem{thm}{Theorem}[section]
\newtheorem{prop}[thm]{Proposition}
\newtheorem{lem}[thm]{Lemma}
\newtheorem{sublem}[thm]{Sublemma}
\newtheorem{cor}[thm]{Corollary}
\newtheorem{thmABC}{Theorem}
\theoremstyle{definition}
\newtheorem{defi}[thm]{Definition}
\theoremstyle{remark}
\newtheorem{rmk}[thm]{Remark}
\newtheorem{ex}[thm]{Example}
\newenvironment{pro*}[1][\proofname]{{\it{#1:}} }{}
\newenvironment{pro**}[1][]{{\it{#1}} }{\hfill $\square$}
\numberwithin{equation}{section}
\newlist{enumer}{enumerate}{2}
\setlist[enumer]{label=(\roman*),align=left,labelindent=0pt,leftmargin=*,widest = (iii)}
\newlist{enumerar}{enumerate}{1}
\setlist[enumerar]{label=\arabic*.,align=left,labelindent=0pt,leftmargin=*,widest = 8.}
\newlist{enumera}{enumerate}{2}
\setlist[enumera]{label=(\arabic*),align=left,labelindent=0pt,leftmargin=*,widest = (8)}
\newlist{enumeral}{enumerate}{2}
\setlist[enumeral]{label=(\alph*),align=left,labelindent=0pt,leftmargin=*,widest = (m)}
\definecolor{shadecolor}{RGB}{186,238,186}
\definecolor{softlimegreen}{RGB}{186,238,186}
\definecolor{limegreen}{RGB}{208,243,208}
\definecolor{questioncolor}{RGB}{135, 173, 241} %{100, 149, 237}
\definecolor{warningcolor}{RGB}{240,120,134}
\definecolor{verypaleyellow}{RGB}{255,255,194}
\newcommand\Sec{\mathrm{Sec}}
\newcommand\fcov{\mathrm{f{-}cov}}
\renewcommand\st{\mathrm{st}}
\DeclareMathOperator{\Surj}{Surj}
\newcommand{\out}{\mathrm{out}} %%outer surjections
\DeclareMathOperator{\LGrass}{LGrass} %%Lagrangian Grassmannian
\DeclareMathOperator{\IGrass}{IGrass} %%Isotropic Grassmannian
\newcommand{\acase}{\ensuremath{\mathrm{(a)}}\xspace}
\newcommand{\bcase}{\ensuremath{\mathrm{(b)}}\xspace}
\newcommand{\ccase}{\ensuremath{\mathrm{(c)}}\xspace}
\newcommand{\Betti}{{\rB}}
\newcommand{\Kbar}{\overline K}
\newcommand{\Lbar}{\overline L}
\newcommand{\Rbar}{\overline R}
\newcommand{\Ubar}{\overline U}
\newcommand{\DeR}{\mathrm{DR}}
\newcommand{\llbrack}{\lbrack\!\lbrack}
\newcommand{\rrbrack}{\rbrack\!\rbrack}
\newcommand{\ssimp}{\mathrm{ss}}
\newcommand{\MIC}{\mathbf{MIC}}
\newcommand{\FMIC}{\mathbf{FMIC}}
\newcommand{\Loc}{\mathbf{Loc}}
\newcommand{\Kom}{\mathbf{Kom}}
\newcommand{\SKom}{\mathbf{SKom}}
\newcommand{\FKom}{\mathbf{FKom}}
\newcommand{\Derv}{\mathbf{D}}
\newcommand{\FDerv}{\mathbf{FD}}
\newcommand{\Seq}{\mathbf{Seq}}
\newcommand{\SPair}{\mathsf{SP}}
\DeclareMathOperator{\SMod}{\mathbf{SMod}}
\DeclareMathOperator{\FMod}{\mathbf{FMod}}
\DeclareMathOperator{\Alg}{\mathbf{Alg}}
\newcommand\BO{\mathrm{BO}}
\newcommand\size{\mathrm{size}}
\newcommand\DpH{\sD_\pH}
\newcommand\GSp{\mathrm{GSp}}
\newcommand{\Nbd}{U}
\newcommand{\pH}{{p\!\rH}}
\newcommand{\LV}{\mathrm{LV}} %%the Lawrence--Venkatesh locus
\newcommand{\relH}{\underline{\rH}} %%relative cohomology
\newcommand{\bt}{\mathbf{t}}
\newcommand{\dt}{\rd\!t}
\newcommand{\pia}{\pi^a}
\newcommand{\pif}{\pi^f}
\newcommand{\SpAut}{\uAut_{\GSp}}
\newcommand{\MM}{M}
\newcommand{\unit}{\mathbf{1}}
\begin{document}

\hrule width\hsize

\vspace{0.5cm}

\title[\Kurztitel]{\Large \Ueberschrift}
\author{L.\ Alexander Betts}
\address{L.\ Alexander Betts, Department of Mathematics, Harvard University, 1 Oxford Street, Cambridge MA 02138, USA}
\email{abetts@math.harvard.edu}
%\urladdr{https://lalexanderbetts.net}

\author{Jakob Stix}
\address{Jakob Stix, Institut f\"ur Mathematik, Goethe--Universit\"at Frankfurt, Ro\-bert-Mayer-Stra{\ss}e~{6--8},
60325 Frankfurt am Main, Germany}
\email{stix@math.uni-frankfurt.de}
%\urladdr{http://www.math.uni-frankfurt.de/~stix/}

\thanks{
The first author (LAB) was supported by the Simons Collaboration on Arithmetic Geometry, Number Theory, and Computation under grant number 550031. The second author (JS) was supported in part by the LOEWE research unit USAG and acknowledges funding by the Deutsche Forschungsgemeinschaft  (DFG, German Research Foundation) through the Collaborative Research Centre TRR 326 GAUS, project number 444845124.
}
%\subjclass[2010]{}
%\keywords{}
\date{\today}

\maketitle

\vspace{-0.4cm}

\begin{quotation} 
\noindent \small {\bf Abstract} Let $K$ be a number field not containing a CM subfield. For any smooth projective curve $Y/K$ of genus~$\geq2$, we prove that the image of the ``Selmer'' part of Grothendieck's section set inside the $K_v$-rational points~$Y(K_v)$ is finite for every finite place $v$. This gives an unconditional verification of a prediction of Grothendieck's section conjecture. In the process of proving our main result, we also refine and extend the method of Lawrence and Venkatesh, with potential consequences for explicit computations.
\end{quotation}

%%%%%%%%%%%%%%%%%%%%%%%%%%%%%%%%%%%%%%%%%%%
%%%%%% Table of contents %%%%%%%%%%%%%%%%%%
%%%%%%%%%%%%%%%%%%%%%%%%%%%%%%%%%%%%%%%%%%%
\setcounter{tocdepth}{1} {\scriptsize \tableofcontents}
%%%%%%%%%%%%%%%%%%%%%%%%%%%%%%%%%%%%%%%%%%%
%%%%%% Main body %%%%%%%%%%%%%%%%%%%%%%%%%%
%%%%%%%%%%%%%%%%%%%%%%%%%%%%%%%%%%%%%%%%%%%
\vspace{-1.2cm}
\section{Introduction}

Let~$K$ be a number field and~$Y/K$ a smooth projective (geometrically connected) curve of genus~$\geq2$. Associated to~$Y$ one has the \emph{fundamental exact sequence}
\vspace{-0.1cm}
\begin{equation}\label{eq:fes}
1 \to \pi_1^\et(Y_{\Kbar}) \to \pi_1^\et(Y) \to G_K \to 1
\end{equation}
on \'etale fundamental groups (at appropriate basepoints), where~$G_K$ is the absolute Galois group of~$K$. Every $K$-rational point~$y\in Y(K)$ gives rise to a $\pi_1^\et(Y_{\Kbar})$-conjugacy class of splittings of~\eqref{eq:fes}, and Grothendieck's \emph{Section Conjecture} predicts that every splitting of~\eqref{eq:fes} arises in this way for a unique~$y$.

In conjunction with the Mordell Conjecture, the Section Conjecture predicts that the set~$\Sec(Y/K)$ of 
conjugacy classes of
splittings of~\eqref{eq:fes} should be finite, but this remains unknown outside a handful of particular examples where it can be shown that~\eqref{eq:fes} has no splittings at all~\cite{harari-szamuely:no_abelian_sections,li-litt-salter-srinivasan:no_sections,stix:leereSC,stix:brauer-manin_for_sections}. Our aim in this paper is to prove a partial finiteness result for splittings of~\eqref{eq:fes}, unconditionally and for an arbitrary curve~$Y$, provided that $K$ contains no CM subfield. In order to state our main result, we introduce a subset of~$\Sec(Y/K)$ of local-to-global nature.

\begin{defi}
	A section~$s$ of~\eqref{eq:fes}, when restricted to a decomposition group~$G_u$ at a place~$u$ of~$K$, yields a splitting of the local fundamental exact sequence
	\[
	1 \to \pi_1^\et(Y_{\Kbar_u}) \to \pi_1^\et(Y_{K_u}) \to G_u \to 1 \,.
	\]
	We say that~$s$ is \emph{Selmer} just when $s|_{G_u}$ is the section arising from a $K_u$-rational point~$y_u\in Y(K_u)$ for each place~$u$. We write~$\Sec^\Sel(Y/K)$ for the set of Selmer sections, and for a finite place~$u$ we write
	\[
	\loc_u\colon\Sec^\Sel(Y/K) \to Y(K_u)
	\]
	for the map taking a Selmer section~$s$ to the unique $K_u$-point $y_u$ giving rise to the restricted section~$s|_{G_u}$.	
\end{defi}

Our main theorem is as follows.

\begin{thmABC}\label{thm:finite_selmer_image}
	Let~$K$ be a number field containing no CM subfield, and let~$Y/K$ be a smooth projective curve of genus~$\geq2$. Then for every finite place~$v$ of~$K$, the image of the localisation map 
	\[
	\loc_v\colon\Sec^\Sel(Y/K) \to Y(K_v)
	\]
	is finite.
\end{thmABC}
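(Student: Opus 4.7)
The plan is to extend the Lawrence--Venkatesh $p$-adic period method from rational points to Selmer sections. The crucial observation is that a section $s \in \Sec^\Sel(Y/K)$ packages together not only the local point $\loc_v(s)$ but also a genuine global $p$-adic Galois representation of $G_K$, obtained by pushing forward a geometric $\pi_1^\et(Y)$-representation along $s$; this substitutes for the Galois representation on fibre cohomology attached to a $K$-rational point in Lawrence--Venkatesh.

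Concretely, one first chooses an auxiliary smooth proper morphism $f \colon X \to Y$ such that $\bV \colonequals \R^i f_\ast \bQ_p$ carries a $\pi_1^\et(Y)$-representation with sufficiently large geometric monodromy, for some $i$; Kodaira--Parshin-type covers or judiciously twisted families of abelian varieties, as used in Lawrence--Venkatesh, provide such $X$. For a Selmer section $s$, composition yields
\[
\rho_s \colon G_K \xrightarrow{s} \pi_1^\et(Y) \to \GL(\bV).
\]
Because $s|_{G_u}$ is conjugate to the section attached to $y_u \colonequals \loc_u(s)$, the restriction $\rho_s|_{G_u}$ coincides with the Galois representation on $\rH^i_\et(X_{y_u, \Kbar_u}, \bQ_p)$. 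Consequently, $\rho_s$ is unramified outside a fixed finite set $S$ of places and de Rham with uniformly bounded Hodge--Tate weights at places above $p$.

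By Faltings' finiteness theorem for semisimple $p$-adic Galois representations of bounded dimension, ramification type, and Hodge--Tate weights, the semisimplifications $\rho_s^\ssimp$ fall into finitely many isomorphism classes. Fix one such class. Via Fontaine's $\rD_\cris$ functor, the filtered $\phi$-module attached to $\rH^i_\et(X_{y_v, \Kbar_v}, \bQ_p)$ has its underlying $\phi$-module determined up to finite ambiguity, while the Hodge filtration on $\rH^i_\dR(X_{y_v}/K_v)$ varies $y_v$-analytically. This defines, on each residue disc $\Sigma \subset Y(K_v)$, a rigid-analytic $p$-adic period map $\Phi \colon \Sigma \to \LGrass$, and the image of $\loc_v$ is contained in the preimage under $\Phi$ of a finite union of orbits of the centraliser of crystalline Frobenius. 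Since $\Phi$ is given by convergent power series, these preimages are finite on each $\Sigma$ as soon as $\Phi$ is non-constant modulo the centraliser action; finite reduction covers $Y(K_v)$ by finitely many residue discs, so this yields the desired global finiteness.

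The main obstacle, inherited from Lawrence--Venkatesh, is establishing this non-constancy uniformly on every residue disc for a single well-chosen auxiliary family. Constancy of $\Phi$ modulo the Frobenius centraliser would force the $p$-adic monodromy of $\bV$ along $\Sigma$ to lie inside a specific algebraic subgroup commuting with crystalline Frobenius at $v$, which via Hodge symmetry and an analysis of the Mumford--Tate group of $\bV$ imposes an algebraic constraint on the Hodge cocharacter implying that $K$ contains a CM subfield. The no-CM hypothesis precisely excludes this degeneration. I therefore expect the bulk of the technical work to lie in (i)~constructing auxiliary families with sufficient geometric monodromy over an arbitrary non-CM number field, and (ii)~making the link between constancy of the $p$-adic period map and CM subfields of $K$ sharp enough to yield the unconditional finiteness of $\loc_v\bigl(\Sec^\Sel(Y/K)\bigr)$.
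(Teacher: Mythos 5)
Your first step is exactly the paper's easy half: a Selmer section $s$ gives a global representation by restricting the local system $\rR^i\pi_{\et*}\bQ_p$ along $s$, and $s|_{G_u}$ being pointed forces $\rho_s|_{G_u}\cong\rH^i_\et(X_{y_u,\Kbar_u},\bQ_p)$, hence $\rho_s$ is unramified, pure and integral outside $S$ and de Rham at $p$. But the finiteness half has a genuine gap. Passing to the semisimplification $\rho_s^\ssimp$ and invoking Faltings does not give you what you claim next: the filtered $(\varphi,N,G_v)$-module of the actual local representation $\rH^i_\et(X_{y_v,\Kbar_v},\bQ_p)$ is \emph{not} determined ``up to finite ambiguity'' by the isomorphism class of $\rho_s^\ssimp$. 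The point $y_v$ only knows $\rho_s|_{G_v}$, which may be a non-split extension, and the set of filtrations $\Phi$ in the period domain compatible with a representation having a prescribed semisimplification is an orbit of a parabolic-type subgroup which can be Zariski-dense; so ``preimage of finitely many Frobenius-centraliser orbits'' does not follow. Handling exactly this failure of semisimplicity is the heart of Lawrence--Venkatesh and of the paper: one needs the numerics of self-conjugate (friendly) places, via the weight formula for Hodge--Tate weights of purity-constrained characters, to bound the average Hodge--Tate weight of any isotropic subrepresentation, leading to the principal trichotomy (GSp-irreducible case, large-weight isotropic subrepresentation case, many small Galois orbits case), the last of which is killed by choosing the Kodaira--Parshin parameter $q$ appropriately. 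Without some substitute for this, your argument only constrains sections whose $\rho_s$ happens to be semisimple.

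Two further points. First, you misplace the role of the no-CM hypothesis: in the paper it enters only to guarantee that \emph{every} finite place is self-conjugate, so that the Hodge--Tate weight constraint above applies; it has nothing to do with excluding constancy of the period map via Mumford--Tate groups. Zariski-density of the period image is a separate, purely topological input (full monodromy of the Kodaira--Parshin family, compared to the complex period map). Second, since the theorem is claimed for \emph{every} finite place $v$, including places of bad reduction for $Y$ and for the auxiliary family, your use of $\rD_\cris$ and crystalline Frobenius is insufficient: one must work with $(\varphi,N,G_v)$-modules and prove that the $v$-adic period map controls $\sD_\pst$ of the fibres without any good-reduction hypothesis, which in the paper requires relative $p$-adic Hodge theory (Scholze, Shimizu) rather than crystalline cohomology. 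These are not cosmetic issues; they are where most of the work lies.
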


Theorem~\ref{thm:finite_selmer_image} can be rephrased in terms of the finite descent obstruction. Recall that the set of modified adelic points $Y(\bA_K)_\bullet$ of $Y$ is the product of the finite adelic points $\prod_{v \nmid \infty} Y(K_v)$ with the connected components of the infinite adelic points $\prod_{v \mid \infty} \pi_0(Y(K_v))$, see \cite[\S2]{stoll:finite_descent}. 
The finite descent locus  
\[
Y(\bA_K)_\bullet^\fcov \subseteq Y(\bA_K)_\bullet
\]
is a subset of the modified adelic points, which contains the $K$-rational points~$Y(K)$ and is conjecturally equal to them \cite[Conjecture~9.1]{stoll:finite_descent}. The relationship between the finite descent locus and the Selmer section set is that~$Y(\bA_K)_\bullet^\fcov$ is exactly the image of~$\Sec^\Sel(Y/K)$ under the total localisation map $\loc\colon\Sec^\Sel(Y/K)\to Y(\bA_K)_\bullet$ (whose~$v$th component is the map~$\loc_v$ above) \cite[Theorem~11]{harari-stix:descent_and_fundamental_sequence}. Hence Theorem~\ref{thm:finite_selmer_image} implies the following shadow of the expected finiteness of~$Y(\bA_K)_\bullet^\fcov$.

\begin{thmABC}\label{thm:finite_finite_descent}
	Let~$K$ be a number field containing no CM subfield, and let~$Y/K$ be a smooth projective curve of genus~$\geq2$. Then for every finite place~$v$ of~$K$, the projection of the finite descent locus~$Y(\bA_K)_\bullet^\fcov$ on~$Y(K_v)$ is finite.
\end{thmABC}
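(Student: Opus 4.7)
The plan is to deduce Theorem B directly from Theorem A using the identification of the finite descent locus in terms of Selmer sections that is recalled in the paragraph just before the statement. The argument is essentially formal: the hard work is all absorbed into Theorem A.

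First I would invoke the theorem of Harari--Stix cited in the excerpt, which asserts that the finite descent locus $Y(\bA_K)_\bullet^\fcov$ coincides with the image of the total localisation map
\[
\loc\colon\Sec^\Sel(Y/K) \longrightarrow Y(\bA_K)_\bullet \,.
\]
Since this identification is by definition compatible with the factorisation of $\loc$ into its components, and since the $v$-th component of $\loc$ is precisely the map $\loc_v\colon\Sec^\Sel(Y/K)\to Y(K_v)$ of Theorem A, the image of $Y(\bA_K)_\bullet^\fcov$ under the projection $Y(\bA_K)_\bullet\to Y(K_v)$ is equal to $\loc_v(\Sec^\Sel(Y/K))$.

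Applying Theorem A to the number field $K$, the curve $Y$ and the finite place $v$, this image is finite, which is the conclusion of Theorem B. The entire content of the argument is thus packaged into Theorem A together with the Harari--Stix comparison; the only thing to verify here is that the projection-of-image equals image-of-projection, which is a tautology. Consequently, the real obstacle lies entirely in Theorem A, and Theorem B requires no additional input.
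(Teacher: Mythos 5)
Your proposal is correct and is precisely the paper's own argument: the Harari--Stix identification of $Y(\bA_K)_\bullet^\fcov$ with the image of the total localisation map $\loc$, followed by the tautological compatibility of projection with the component $\loc_v$, reduces Theorem~B formally to Theorem~A. Nothing further is needed.
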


The history of Grothendieck's Section Conjecture is inextricably bound up with that of the Mordell Conjecture, as it was Grothendieck's original hope that a proof of the Section Conjecture would lead to a new arithmetic-homotopical proof of the Mordell Conjecture. This paper rather reverses the relationship between these two conjectures, in that our concern is to adapt techniques used to prove the Mordell Conjecture to study the otherwise mysterious section set. In this way, Theorem~\ref{thm:finite_selmer_image} demonstrates that the theory of the \'etale fundamental group is strong enough to support Mordell-like finiteness theorems, 
giving a non-trivial finiteness constraint on the section set that applies to every curve~$Y$, at least over base fields containing no CM subfield.

\subsection{Method of proof}

The method we use in the proof of Theorem~\ref{thm:finite_selmer_image} is an adaptation of the method developed by Brian Lawrence and Akshay Venkatesh in their recent new proof of the Mordell Conjecture \cite{LVinventiones}, albeit with some significant modifications necessary for our application. Let us describe our argument in broad strokes, remarking on the differences from \cite{LVinventiones} as they arise.
\smallskip

We will take an explicitly obstruction-theoretic perspective on the strategy of~\cite{LVinventiones}. Consider an \emph{abelian-by-finite family} over~$Y$, meaning a sequence
\[
X \to Y' \to Y
\]
where~$Y'\to Y$ is a finite \'etale covering and~$X\to Y'$ is a polarised abelian scheme. If~$y\in Y(K)$ is a $K$-rational point, then the \'etale cohomology groups~$\rH^i_\et(X_{y,\Kbar},\bQ_p)$ of the geometric fibre $X_{y,\Kbar}$ are Galois representations carrying extra structures: $\rH^0_\et(X_{y,\Kbar},\bQ_p)$ is an algebra and $\rH^1_\et(X_{y,\Kbar},\bQ_p)$ is a symplectic $\rH^0_\et(X_{y,\Kbar},\bQ_p)$-module in the category of $G_K$-representations. This allows us to cut out an obstruction locus as follows.

\begin{defi}[recalled as Definition~\ref{def:s-good}]\label{def:intro_locus}
	Let~$S$ be a finite set of places of~$K$, and~$p$ a prime number. A pair~$(A,V)$ of a (commutative) algebra~$A$ and a symplectic $A$-module~$V$ in the category of $\bQ_p$-linear $G_K$-representations is called \emph{$S$-good} just when:
	\begin{itemize}
		\item $A$ is unramified outside~$S$;
		\item $V$ is unramified, pure and integral of weight~$1$ outside~$S$ (see Definition~\ref{def:purity}); and
		\item $V$ is de Rham at all places over~$p$, with Hodge--Tate weights in~$\{0,1\}$.
	\end{itemize}
	
	Let $X\to Y'\to Y$ be an abelian-by-finite family, and let~$S$ be a finite set of places of~$K$ containing all places dividing~$p\infty$ and all places of bad reduction for~$X\to Y$. Let~$v$ be a finite place of~$K$, lying over the rational prime~$p$. We define the \emph{Lawrence--Venkatesh locus}
	\[
	Y(K_v)_{X,S}^\LV \subseteq Y(K_v)
	\]
	to be the set of points~$y_v\in Y(K_v)$ for which there exists an $S$-good pair~$(A,V)$ together with $G_v$-equivariant isomorphisms
	\[
	\rH^0_\et(X_{y_v,\Kbar_v},\bQ_p) \cong A|_{G_v} \hspace{0.4cm}\text{and}\hspace{0.4cm} \rH^1_\et(X_{y_v,\Kbar_v},\bQ_p) \cong V|_{G_v}
	\]
	compatible with algebra and symplectic module structures.
\end{defi}

In~\cite[Definition~7.3]{LVinventiones}, Lawrence and Venkatesh construct an abelian-by-finite family
\[
X_q \to Y'_q \to Y
\]
called the \emph{Kodaira--Parshin family}, where the parameter~$q$ is an odd prime number. Our proof of Theorem~\ref{thm:finite_selmer_image} consists of proving two statements.
\begin{enumerate}[label = (\arabic*), ref = (\arabic*)]
	\item\label{mainpart:containment} For any abelian-by-finite family~$X\to Y'\to Y$, any suitable set~$S$ and any finite place~$v\mid p$ of~$K$, the image of the localisation map $\loc_v\colon \Sec(Y/K)^\Sel\to Y(K_v)$ is contained in~$Y(K_v)_{X,S}^\LV$.
	\item\label{mainpart:finiteness} For every self-conjugate finite place~$v$ of~$K$, there exists an odd prime~$q$ such that~$Y(K_v)_{X_q,S}^\LV$ is finite for all suitable~$S$.
\end{enumerate}
In the second point, \emph{self-conjugacy} is a technical condition on the place~$v$, slightly weaker than the condition of \emph{friendliness} in \cite[Definition~2.7]{LVinventiones}; see Definition~\ref{def:friendly}. If~$K$ has no CM subfield then every finite place is self-conjugate, so points~\ref{mainpart:containment} and~\ref{mainpart:finiteness} together imply Theorem~\ref{thm:finite_selmer_image}.

\begin{rmk}
	Although not explicitly couched in obstruction-theoretic language, \cite{LVinventiones} essentially\footnote{Strictly speaking, the argument in \cite{LVinventiones} is not quite sufficient to prove~\ref{oldmainpart:finiteness}, since the analysis in \cite[\S3.4]{LVinventiones} is only valid for residue discs centred on $K$-rational points~$y_0\in Y(K)$. However, it is easy to adapt this part of the argument to cover the case when~$y_0\in Y(K_v)$ is merely $K_v$-rational, as we shall do in \S\ref{s:big_monodromy}.} proves the following two statements.
	\begin{enumerate}[label = (\arabic*${}^\circ$), ref = (\arabic*${}^\circ$)]
		\item\label{oldmainpart:containment} For any abelian-by-finite family~$X\to Y'\to Y$, any suitable set~$S$ and any finite place~$v\mid p$ of~$K$, we have $Y(K)\subseteq Y(K_v)_{X,S}^\LV$.
		\item\label{oldmainpart:finiteness} There exists a finite place~$v\mid p$ of~$K$ and an odd prime~$q$ such that $Y(K_v)_{X_q,S}^\LV$ is finite for all suitable~$S$.
	\end{enumerate}
	In particular,~\ref{mainpart:containment} (whose proof is easy) and~\ref{oldmainpart:finiteness} together are enough to prove Theorem~\ref{thm:finite_selmer_image} for \emph{one} place~$v$. In fact, with some careful bookkeeping, one can upgrade this to a proof of Theorem~\ref{thm:finite_selmer_image} for 100\% of places~$v$ when~$K$ has no CM subfield, see Remark~\ref{rmk:compare_v,q}\ref{rmkpart:100_percent}. On the other hand, there are always some places~$v$ which the original methods of~\cite{LVinventiones} do not cover, e.g.\ any~$v$ which is ramified over~$\bQ$, which divides~$2$, or which is of bad reduction for~$Y$. It is extending the methods of~\cite{LVinventiones} to cover also these places which takes most of the work in this paper.
\end{rmk}

\begin{rmk}
	We also remark that the argument in~\cite{LVinventiones} actually constrains a slightly larger obstruction locus to the one described in Definition~\ref{def:intro_locus}, namely the locus $Y(K_v)_{X,S}^{\LV^\circ}\subseteq Y(K_v)$ defined by omitting the word ``symplectic'' throughout Definition~\ref{def:intro_locus}. We always have the containment $Y(K_v)_{X,S}^\LV\subseteq Y(K_v)_{X,S}^{\LV^\circ}$, so the Lawrence--Venkatesh method as we formulate it here imposes stronger conditions on $K$-rational points (and Selmer sections).
	
	This extra efficiency is relevant if one wants to use the Lawrence--Venkatesh method to compute $Y(K)$ in practice. It seems likely that the main contributor to the running time of any implementation of the method is going to be the relative dimension of $X\to Y'$, and our formulation here typically requires abelian-by-finite families of smaller relative dimension than in~\cite{LVinventiones}. For example, if~$Y$ is a curve of genus~$3$ over~$\bQ$, then the original argument of~\cite{LVinventiones} can only prove finiteness of~$Y(\bQ)$ using the Kodaira--Parshin family $X_q\to Y$ for~$q\geq23$ (relative dimension~$\geq55$), whereas our formulation proves finiteness already when~$q=11$ (relative dimension~$25$). This is surely still out of the range of practical computation, but indicates that systematically keeping track of symplectic pairings is likely to reduce the complexity of any explicit Lawrence--Venkatesh computations.
	
	For a detailed comparison of which pairs~$(v,q)$ we prove finiteness of~$Y(K_v)_{X_q,S}^\LV$ for, compared with the original arguments of~\cite{LVinventiones}, see Remark~\ref{rmk:compare_v,q}.
\end{rmk}

\subsubsection{Assigning representations to sections}\label{ss:intro_containment}

Of the two halves of our proof of Theorem~\ref{thm:finite_selmer_image}, the proof of \ref{mainpart:containment} is relatively easy. The relative \'etale cohomology $\rR^i\!\pi_{\et*}\bQ_p$ of the abelian-by-finite family $\pi\colon X\to Y$ is a $\bQ_p$-local system on~$Y$ for the \'etale topology, so corresponds to a continuous representation~$V^i$ of~$\pi_1^\et(Y)$. Given a section~$s\colon G_K\to\pi_1^\et(Y)$, one can restrict the representations~$V^i$ along~$s$ to obtain $G_K$-representations~$V^i_s$, where~$A_s\colonequals V^0_s$ is an algebra and~$V_s\colonequals V^1_s$ is a symplectic $A_s$-module. When the section~$s$ is Selmer, one checks that the pair~$(A_s,V_s)$ is~$S$-good and witnesses that $\loc_v(s)\in Y(K_v)_{X,S}^\LV$. This argument will be spelled out carefully in \S\ref{s:proof}.

\begin{rmk}\label{rmk:other_possibilities}
	This construction is really quite general, and shows that any $v$-adic obstruction to $K$-rational points coming from a $\bQ_p$-local system on~$Y$ in fact constrains the image of the localisation map~$\loc_v\colon \Sec^\Sel(Y/K)\to Y(K_v)$. The consequences of this observation in the case of the Chabauty--Kim method are being worked out in work of the first author, Theresa Kumpitsch and Martin L\"udtke.
\end{rmk}

\subsubsection{Period maps}

It is the proof of~\ref{mainpart:finiteness} which is much more difficult, and takes up the majority of this paper. The principal extra difficulty compared with \cite{LVinventiones} is that we need to prove a finiteness result for \emph{all} self-conjugate~$v$, rather than just for \emph{some}~$v$. This means that we are not free to assume, for example, that~$v$ is a place of good reduction for $X_q\to Y'_q\to Y$.

As in \cite{LVinventiones}, the strategy relies on the machinery of \emph{period maps}. If~$X\to Y'\to Y$ is an abelian-by-finite family and~$v$ is a finite place of~$K$, then one has a period map
\[
\Phi_{y_0}\colon U_{y_0} \to \dH_{y_0}^\an
\]
where~$U_{y_0}\subseteq Y_{K_v}^\an$ is a small admissible neighbourhood of~$y_0$ in the rigid analytification of~$Y_{K_v}$ and~$\dH_{y_0}$ is the Lagrangian Grassmannian parametrising Lagrangian $\rH^0_\deR(X_{y_0}/K_v)$-submodules of~$\rH^1_\deR(X_{y_0}/K_v)$. The map $\Phi_{y_0}$ is constructed by parallel transporting the Hodge filtrations on the de Rham cohomology groups $\rH^1_\deR(X_y/K_v)$ along the Gau\ss--Manin connection (see Definition~\ref{def:period_maps} and \S\ref{ss:abf_period_map}).

The point of these period maps is that they control the variation of the local Galois representation associated to the point~$y\in U_{y_0}(K_v)$, namely
\[
\rH^1_\et(X_{y,\Kbar_v},\bQ_p),
\]
where~$p$ is the rational prime below~$v$. 
More precisely, they control the associated filtered discrete $(\varphi,N,G_v)$-module\footnote{This is also denoted~$\sD_\pst$ in some sources. In this paper, we will reserve~$\sD_\pst(V)$ to denote the (non-filtered) discrete $(\varphi,N,G_v)$-module attached to a de Rham representation~$V$, and write~$\sD_\pH(V)$ when we want to regard this as a filtered object via comparison with~$\sD_\deR(V)$. The subscript~$\pH$ stands for ``$p$-adic Hodge''. See \S\ref{sss:DpH} for precise definitions.} 
\[
\sD_\pH\big(\rH^1_\et(X_{y,\Kbar_v},\bQ_p)\big).
\]
This goes as follows. To a Lagrangian submodule~$\Phi\in\dH_{y_0}(K_v)$, one can associate a filtered discrete $(\varphi,N,G_v)$-module $\MM^1(\Phi)$, whose underlying $(\varphi,N,G_v)$-module is $\sD_\pst(\rH^1_\et(X_{y_0,\Kbar_v},\bQ_p))$, but whose filtration is the filtration on $\sD_\deR(\rH^1_\et(X_{y_0,\Kbar_v}/K_v))\cong\rH^1_\deR(X_{y_0}/K_v)$ given by~$\rF^1=\Phi$ rather than the usual Hodge filtration. This makes~$\MM^1(\Phi)$ into a symplectic module over~$\MM^0\colonequals\sD_\pH(\rH^0_\et(X_{y_0,\Kbar_v},\bQ_p))$ in the category of filtered~$(\varphi,N,G_v)$-modules. The main technical result we need is that there are isomorphisms
\[
\sD_\pH\big(\rH^0_\et(X_{y,\Kbar_v},\bQ_p)\big) \cong \MM^0 \hspace{0.4cm}\text{and}\hspace{0.4cm} \sD_\pH\big(\rH^1_\et(X_{y,\Kbar_v},\bQ_p)\big) \cong \MM^1\big(\Phi_{y_0}(y)\big)
\]
of filtered $(\varphi,N,G_v)$-modules for every~$y\in U_{y_0}(K_v)$, compatible with algebra and symplectic module structures. We find it elucidates matters to represent this diagrammatically, as the commutativity of the diagram
\begin{equation}\label{diag:intro_period_square}
\begin{tikzcd}
	Y(K_v) \arrow[r,phantom,"\supset"]\arrow[d] &[-8ex] \Nbd_{y_0}(K_v) \arrow[r,"\Phi_{y_0}"] & 
	\dH_{y_0}(K_v) \arrow[d,"{\big(\MM^0,\MM^1(-)\big)}"] \\
	\pi_0\SPair\big(\Rep_{\bQ_p}^\deR(G_v)\big) \arrow[rr,"\DpH",hook]  & & \pi_0\SPair\big(\MF(\varphi,N,G_v)\big) \,.
\end{tikzcd}
\end{equation}
where~$\pi_0\SPair$ denotes the set of isomorphism classes of pairs of an algebra~$A$ and symplectic $A$-module~$V$ in a suitable category.

\begin{rmk}
	In~\cite{LVinventiones}, it was sufficient to consider only the case that the abelian-by-finite family $X\to Y'\to Y$ has good reduction (in the strong sense of \cite[Definition~5.1]{LVinventiones}), in which case the above setup simplifies significantly. One may take~$U_{y_0}$ to be the residue disc of~$y_0$, and may work with filtered $\varphi$-modules instead of filtered discrete $(\varphi,N,G_v)$-modules. The commutativity of~\eqref{diag:intro_period_square} is then a consequence of standard facts about crystalline cohomology in families. However, outside the good reduction case, it is significantly more complicated to show commutativity of~\eqref{diag:intro_period_square}; the approach we explain in \S\ref{s:reps_in_families} uses relative $p$-adic Hodge theory as developed by Scholze \cite{scholze:relative_p-adic_hodge_theory}, combined with the potential horizontal semistability theorem of Shimizu \cite{shimizu:p-adic_monodromy}. It is possible that one could also prove commutativity of~\eqref{diag:intro_period_square} using Hyodo--Kato cohomology and alterations, but we do not know how to do this.
\end{rmk}

We can use the diagram~\eqref{diag:intro_period_square} to isolate a subset of~$\dH_{y_0}(K_v)$ corresponding to the Lawrence--Venkatesh locus as follows.

\begin{defi}\label{def:intro_period_locus}
	We define~$\dH_{y_0}(K_v)_{X,S}^\LV\subseteq\dH_{y_0}(K_v)$ to be the set of all Lagrangian $\rH^0_\deR(X_{y_0}/K_v)$-submodules $\Phi\leq\rH^1_\deR(X_{y_0}/K_v)$ for which there exists an $S$-good pair~$(A,V)$ together with isomorphisms
	\[
	M^0 \cong \sD_\pH(A|_{G_v}) \hspace{0.4cm}\text{and}\hspace{0.4cm} M^1(\Phi) \cong \sD_\pH(V|_{G_v})
	\]
	of filtered discrete $(\varphi,N,G_v)$-modules compatible with algebra and symplectic module structures.
\end{defi}

It follows from commutativity of~\eqref{diag:intro_period_square} that the image of~$U_{y_0}(K_v)\cap Y(K_v)_{X,S}^\LV$ under the period map~$\Phi_{y_0}$ is contained in~$\dH_{y_0}(K_v)_{X,S}^\LV$. So the Lawrence--Venkatesh locus $Y(K_v)_{X,S}^\LV$ will be finite as soon as the following conditions hold for all~$y_0\in Y(K_v)$:
\begin{enumerate}[label = \roman*), ref = (\roman*)]
	\item\label{condn:intro_big_monodromy} the period map $\Phi_{y_0}\colon U_{y_0}\to\dH_{y_0}^\an$ has Zariski-dense image; and
	\item\label{condn:intro_non-density} $\dH_{y_0}(K_v)_{X,S}^\LV$ is not Zariski-dense in~$\dH_{y_0}$.
\end{enumerate}
Indeed, the conjunction of these two conditions implies that~$U_{y_0}(K_v)\cap Y(K_v)_{X,S}^\LV$ is contained in the vanishing locus of a non-zero coherent sheaf of ideals on~$U_{y_0}$, so is finite for all~$y_0$. Since~$Y(K_v)$ can be covered by finitely many~$U_{y_0}(K_v)$ by compactness, we are done.
\smallskip

We now discuss the proofs of~\ref{condn:intro_big_monodromy} and~\ref{condn:intro_non-density}.

\subsubsection{Full monodromy}

Proving~\ref{condn:intro_big_monodromy} for an abelian-by-finite family~$X\to Y'\to Y$ reduces to a similar property for complex-analytic period maps, and eventually follows from a purely topological property of~$X\to Y'\to Y$ known as \emph{full monodromy}. In any case, full monodromy for Kodaira--Parshin families was already verified in~\cite{LVinventiones}, so we need say no more on this point here.

\subsubsection{The principal trichotomy}

It is~\ref{condn:intro_non-density} which is the delicate condition. As an illustration of the approach, let us consider the set $Y(K_v)_{X,S,\ssimp}^\LV$ defined as in Definition~\ref{def:intro_locus}, but with the additional requirement that~$V$ is semisimple as a representation of~$G_K$. We also write~$\dH_{y_0}(K_v)_{X,S,\ssimp}^\LV$ for the corresponding subset of~$\dH_{y_0}(K_v)$ as in Definition~\ref{def:intro_period_locus}. The key theoretical input is a lemma of Faltings, which implies that there are only finitely many $S$-good pairs~$(A,V)$ of prescribed dimensions, up to isomorphism. If we write~$H/\bQ_p$ for the $\bQ_p$-algebraic group of (non-filtered) symplectic $(\varphi,N,G_v)$-module automorphisms of~$\sD_\pst\big(\rH^1_\et(X_{y_0,\Kbar_v},\bQ_p)\big)$, see Definition~\ref{def:phi_automorphisms}, then there is a natural action of~$H$ on the Weil restriction $\Res^{K_v}_{\bQ_p}\dH_{y_0}$, whose orbits are exactly the fibres of the right-hand vertical map in~\eqref{diag:intro_period_square}. So Faltings' Lemma implies that~$\dH_{y_0}(K_v)_{X,S,\ssimp}^\LV$ is contained in a finite number of~$H(\bQ_p)$-orbits, and hence is not Zariski-dense in~$\dH_{y_0}$ as soon as the inequality
\[
\dim_{\bQ_p}H < \dim_{K_v}\dH_{y_0}
\]
holds. This condition is relatively easy to arrange in practice, since both dimensions can be controlled rather precisely. So if one could arrange that $X\to Y'\to Y$ had full monodromy and the above inequality held for every~$y_0\in Y(K_v)$, then one would have proved finiteness of $Y(K_v)_{X,S,\ssimp}^\LV$.
\smallskip

What is less clear \emph{a priori} is how to adapt this approach to also cover non-semisimple $S$-good pairs, and overcoming this is the key insight of \cite{LVinventiones}. Lawrence and Venkatesh proved that when the place~$v$ is chosen appropriately (``friendly'' in their terminology), then being $S$-good imposes strong restrictions on the isomorphism class of a pair~$(A,V)$: weaker than being semisimple, but still strong enough to make some version of the above argument work. The property of friendly places~$v$ which enables this is that the average Hodge--Tate weight at~$v$ of a $G_K$-representation~$V$ is determined by its weight as a global Galois representation. So if~$(A,V)$ is an $S$-good pair and $v$ is friendly, then one obtains numerical constraints on the Hodge--Tate weights of any subrepresentation of~$V$, which ultimately restricts the ways in which~$V$ can fail to be semisimple.

We will formalise this argument in this paper as what we call the \emph{principal trichotomy} for $S$-good pairs, which says that when~$v$ is self-conjugate, any~$S$-good pair must satisfy one of three rather technical properties, the first of which is a weak simplicity condition and the other two of which govern the possible failures of simplicity. The precise statement is as follows.

\begin{prop}[The Principal Trichotomy (=Proposition~\ref{prop:principal_trichotomy})]\label{prop:intro_principal_trichotomy}
	Let~$(A,V)$ be a pair of an algebra~$A$ and symplectic $A$-module~$V$ in the category of $G_K$-representations which is $S$-good for some~$S$, and where~$V$ has constant rank~$2d>0$ as an $A$-module. Let~$\Sigma = \Hom_{\Alg(\bQ_p)}(A,\bQ_p)$ be the finite $G_K$-set of $\bQ_p$-algebra homomorphisms $A\to\bQ_p$. For~$\psi\in\Sigma$, we write~$G_\psi\leq G_K$ for its stabiliser in~$G_K$ and~$G_{w_\psi}=G_\psi\cap G_v$ for its stabiliser in~$G_v$.
	
	Suppose that~$v$ is self-conjugate. Then at least one of the following occurs:
	\begin{enumerate}[label=\alph*),ref=(\alph*)]
		\item there is a $\psi \in \Sigma$ such that $[G_v:G_{w_\psi}]\geq4$ and $\bQ_p\otimes_{A,\psi}V$ has no non-zero isotropic $G_\psi$-subrepresentation; or
		\item there is a $\psi \in \Sigma$ such that $[G_v:G_{w_\psi}]\geq4$ and~$\bQ_p\otimes_{A,\psi}V|_{G_v}$ has a non-zero isotropic $G_{w_\psi}$-subrepresentation~$W$ whose average Hodge--Tate weight is $\geq1/2$; or
		\item the number of $\psi \in \Sigma$ satisfying $[G_v:G_{w_\psi}]<4$ is $\geq\frac1{d+1}\dim_{\bQ_p}(A)$.
	\end{enumerate}
\end{prop}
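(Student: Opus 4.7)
I will prove the contrapositive: assume cases~\acase\ and~\bcase\ both fail, and deduce~\ccase. The strategy is to choose a non-zero isotropic $G_\psi$-subrepresentation of each $V_\psi$ for $\psi$ with big $G_v$-orbit, propagate these by Galois to obtain a $G_K$-subrepresentation of~$V$, and extract a numerical estimate by combining purity, self-conjugacy of~$v$, and the invariance of average Hodge--Tate weight under induction.

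Partition $\Sigma=\Sigma^{\mathrm{big}}\sqcup\Sigma^{\mathrm{small}}$ by whether $[G_v:G_{w_\psi}]\geq4$ or $<4$. For each $\psi\in\Sigma^{\mathrm{big}}$, failure of~\acase\ supplies a non-zero isotropic $G_\psi$-subrepresentation $W_\psi\subseteq V_\psi$; restricting to $G_{w_\psi}$, failure of~\bcase\ then forces the average Hodge--Tate weight of $W_\psi$ at $w_\psi$ to satisfy $\text{avg HT wt}(W_\psi)<\tfrac12$. Since $n_\psi\colonequals\dim W_\psi\leq d$ (being isotropic in a symplectic space of dimension $2d$) and HT weights lie in $\{0,1\}$, one in fact has the quantitative bound $\text{avg HT wt}(W_\psi)\leq\tfrac12-\tfrac1{2d}$. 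For each $G_K$-orbit $O\subseteq\Sigma$ meeting $\Sigma^{\mathrm{big}}$, fix $\psi_0\in O\cap\Sigma^{\mathrm{big}}$ and Galois-propagate by setting $W_{g\psi_0}\colonequals g(W_{\psi_0})$; the sum $W_O\colonequals\bigoplus_{\psi\in O}W_\psi\subseteq V_O\colonequals\bigoplus_{\psi\in O}V_\psi$ is then an isotropic $G_K$-subrepresentation of~$V$, pure of weight~$1$ by $S$-goodness.

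The key input from self-conjugacy of~$v$ is that every pure weight-$1$ $G_K$-subrepresentation of~$V$ has average HT weight equal to~$\tfrac12$ at~$v$. Granting this for $W_O$, and using the elementary fact that induction from $G_{w_\psi}$ to $G_v$ preserves average HT weight, the decomposition of $W_O|_{G_v}$ by $G_v$-orbits $\Omega\subseteq O$ yields
\[
\sum_{\Omega\in G_v\backslash O}|\Omega|\bigl(\tfrac12-\text{avg HT wt}(W_{\psi_\Omega})\bigr)=0,
\]
where $\psi_\Omega\in\Omega$ is any representative (the common value $n_{\psi_0}$ cancels). Big $\Omega$'s contribute at least $|\Omega|/(2d)$ by the previous paragraph, and small $\Omega$'s at least $-|\Omega|/2$ trivially, so $|O\cap\Sigma^{\mathrm{big}}|/(2d)\leq|O\cap\Sigma^{\mathrm{small}}|/2$, whence $|O|\leq(d+1)|O\cap\Sigma^{\mathrm{small}}|$ (trivial on orbits disjoint from $\Sigma^{\mathrm{big}}$). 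Summing over all $G_K$-orbits $O\subseteq\Sigma$ and using $|\Sigma|=\dim_{\bQ_p}(A)$ (which holds in the envisaged applications, where $A$ is split \'etale over $\bQ_p$) gives $|\Sigma^{\mathrm{small}}|\geq\dim_{\bQ_p}(A)/(d+1)$, which is~\ccase.

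The main obstacle I anticipate is the precise form of self-conjugacy invoked in the balance step: that $G_K$-subrepresentations of the pure symplectic~$V$ must have average HT weight $\tfrac12$ at~$v$. This is what rules out $G_K$-orbits lying entirely inside $\Sigma^{\mathrm{big}}$, and is the only place in the proof where the self-conjugacy hypothesis is used; at non-self-conjugate places the analogous identity fails due to CM-type phenomena, so the above dichotomy between~\acase/\bcase\ and~\ccase\ genuinely collapses. The remaining ingredients---symplectic duality bounding $\dim W_\psi$, the invariance of average HT weight under induction, and the final counting---are routine.
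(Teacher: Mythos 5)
Your proof is correct and follows essentially the same route as the paper's: failure of \acase supplies a non-zero isotropic $G_{\psi_0}$-stable subspace, failure of \bcase together with integrality of $\dim\rF^1$ gives the quantitative bound $\leq\tfrac12-\tfrac1{2\dim W}$ on the average Hodge--Tate weight at the places lying in $G_v$-orbits of size $\geq4$, the self-conjugacy input (Corollary~\ref{cor:friendly_representations}) pins the global average at exactly $\tfrac12$, and the orbit count yields \ccase. The only real difference is packaging: the paper applies Corollary~\ref{cor:friendly_representations} directly to $W_0$ regarded as a representation of $G_{L_{\psi_0}}$, so the weighted sum over the places $w\mid v$ of $L_{\psi_0}$ comes out of the corollary itself, whereas you induce $W_{\psi_0}$ up to $G_K$, apply the $L=K$ case, and recover the same weighted sum via the Mackey decomposition of $W_O|_{G_v}$ and the (correct) observation that induction preserves average Hodge--Tate weight; your inequality $|O|\leq(d+1)\,|O\cap\Sigma^{\mathrm{small}}|$ is exactly the paper's $(\dim_{\bQ_p}(W_0)+1)\cdot\#\Sigma_{<4}\geq\#\Sigma$. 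One caveat: your final step uses $\#\Sigma=\dim_{\bQ_p}(A)$, i.e.\ that $A$ is split \'etale over $\bQ_p$, which the statement does not assume; the paper instead opens with a reduction to the split quotients $A_i=\prod_{\psi\in\Sigma_i}\bQ_p$ over the $G_K$-orbits and deduces \ccase for $A$ from \ccase for the $A_i$ (a step which in fact only produces the bound with $\#\Sigma$ in place of $\dim_{\bQ_p}(A)$ when $\#\Sigma<\dim_{\bQ_p}(A)$), so your explicit restriction matches what is genuinely used --- and in every application $A\cong\bQ_p^n$ --- but you should either add such a reduction or state the split hypothesis in the proposition.
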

The principal trichotomy gives a decomposition (not necessarily disjoint)
\[
\dH_{y_0}(K_v)_{X,S}^\LV = \dH_{y_0}(K_v)_{X,S,\acase}^\LV\cup\dH_{y_0}(K_v)_{X,\bcase}^\LV\cup\dH_{y_0}(K_v)_{X,\ccase}^\LV
\]
according to which of the conditions~\acase, \bcase or \ccase is satisfied for the pair~$(A,V)$, and the proof of~\ref{condn:intro_non-density} then amounts to showing that, for suitably chosen~$X$, each of these three sets is not Zariski-dense in~$\dH_{y_0}(K_v)_{X,S}^\LV$. 

Using a similar argument to that sketched for~$Y(K_v)_{X,S,\ssimp}^\LV$ above, we show that $\dH_{y_0}(K_v)_{X,S,\acase}^\LV$ is \emph{never} Zariski-dense in~$\dH_{y_0}$ for any~$X$, and a more explicit version of the same argument shows the same for~$\dH_{y_0}(K_v)_{X,\bcase}^\LV$. For~$\dH_{y_0}(K_v)_{X,\ccase}^\LV$ the strategy is different: one shows that for suitably chosen~$q$, the set $\dH_{y_0}(K_v)_{X_q,\ccase}^\LV$ is actually empty. Put all together, this shows that when~$v$ is self-conjugate and~$q$ is chosen suitably, then 
\[
\dH_{y_0}(K_v)_{X_q,S}^\LV=\dH_{y_0}(K_v)_{X_q,S,\acase}^\LV\cup\dH_{y_0}(K_v)_{X_q,\bcase}^\LV
\]
is not Zariski-dense in~$\dH_{y_0}$, completing the proof of~\ref{condn:intro_non-density} for the Kodaira--Parshin family with these parameters~$q$. Since the Kodaira--Parshin family always has full monodromy, this finishes the proof of Theorem~\ref{thm:finite_selmer_image}.

\begin{rmk}
	The principal trichotomy is not stated explicitly in \cite{LVinventiones}, but a trichotomy of sorts appears implicitly in the structure of their proof. Roughly speaking, pairs of type~\acase correspond to those points dealt with in \cite[Lemma~6.2]{LVinventiones}, pairs of type~\bcase correspond to those dealt with in \cite[Lemma~6.1]{LVinventiones}, and pairs of type~\ccase correspond to those dealt with in the proof of \cite[Theorem~5.4]{LVinventiones}. This correspondence is not quite exact: we have adjusted the division slightly from \cite{LVinventiones} to make full use of the symplectic structure.
\end{rmk}

\subsection{Overview of sections}

The structure of the paper is as follows. We begin in Section \ref{s:reps} by recalling some basics on Galois representations and $p$-adic Hodge theory \`a la Fontaine. The key points in this section are the dimension bounds on automorphism groups of $(\varphi,N,G_v)$-modules (Proposition~\ref{prop:dim_of_aut}), as well as the definition of self-conjugate places (Definition~\ref{def:friendly}) and the consequences for numerics of Hodge--Tate weights (Corollary~\ref{cor:friendly_representations}). In Section \ref{s:reps_in_families} we recall the construction of $v$-adic period maps for smooth proper families $X\to Y$, and prove that these $v$-adic period maps control the variation of local Galois representations. This section presents the most significant departure from \cite{LVinventiones} since we need this machinery for a general smooth proper family, not just one with good reduction. Thus, rather than using crystalline cohomology as a bridge between \'etale and de Rham cohomology, we are forced instead to use analytic techniques and relative $p$-adic Hodge theory.

Section~\ref{s:abelian-by-finite} recalls the definition of abelian-by-finite families, and describes the symplectic module structure on the cohomology of their fibres. It also introduces the Lagrangian Grassmannian~$\dH_{y_0}$ above, and uses the results of~Section \ref{s:reps_in_families} to justify commutativity of the diagram~\eqref{diag:intro_period_square}. Section~\ref{s:big_monodromy} then gives the proof of~\ref{condn:intro_big_monodromy}, closely paralleling the discussion in \cite[\S3.4]{LVinventiones} except that the centre~$y_0\in Y(K_v)$ of the disc~$U_{y_0}$ need not be $K$-rational.

The main part of the argument comes in Section \ref{s:locus}, which proves the principal trichotomy (Proposition~\ref{prop:principal_trichotomy}), and then uses this to address~\ref{condn:intro_non-density} along the lines sketched above. The proof of~\ref{mainpart:finiteness} is assembled at the end of this section.

Section~\ref{s:proof} then tackles part~\ref{mainpart:containment}, following the construction outlined in \S\ref{ss:intro_containment} above. This section is largely independent of the rest of the paper, so may be read first if the reader wishes.

\subsection*{Acknowledgements}

We are grateful to Owen Gwilliam, Kiran Kedlaya, Minhyong Kim, Mark Kisin, Dmitri Pavlov, Koji Shimizu, Xavier Xarles, and Aled Walker for helpful discussions about various parts of this paper. We particularly thank Peter Scholze for taking the time to explain to us many technical aspects of the relative $p$-adic Hodge theory developed in \cite{scholze:relative_p-adic_hodge_theory}.

\section{Preliminaries: Galois representations and \texorpdfstring{$(\varphi,N,G_v)$-modules}{p-adic Hodge theory}}\label{s:reps}

We collect in this section several results on local and global $\bQ_p$-linear Galois representations which will be used in the sequel. This is essentially a rephrasing of the material in \cite[\S2]{LVinventiones}, except that we treat a few subjects, namely $(\varphi,N,G_v)$-modules and self-conjugate places, in greater generality. This is so that we can avoid good reduction hypotheses when we set up the machinery of Lawrence--Venkatesh, which will be important in our proof of the main theorem. The reader may wish to skip this section on a first reading, referring back to it as needed.

Throughout this section, and indeed the whole paper, we fix a number field~$K$ and an algebraic closure~$\Kbar$ of~$K$; we write~$G_K=\Gal(\Kbar|K)$ for the absolute Galois group of~$K$. For each place~$u$ of~$K$, we also fix an algebraic closure~$\Kbar_u$ of the completion~$K_u$ of~$K$ at~$u$ and a $K$-embedding~$\Kbar\hookrightarrow\Kbar_u$, allowing us to view the absolute Galois group~$G_u=\Gal(\Kbar_u|K_u)$ as a decomposition group~$G_u\subset G_K$. We write~$\cO_u$ for the ring of integers of~$K_u$, write $k_u$ for the residue field, and set $q_u\colonequals\# k_u$.

We also fix a prime number~$p$---all representations are to be~$\bQ_p$-linear---and reserve the letter~$v$ for a $p$-adic place of~$K$. We sometimes permit ourselves to write~$K_v$ for a finite extension of~$\bQ_p$, not necessarily arising as the completion of a specific number field~$K$. The absolute Galois group of a finite extension~$L_w/K_v$ contained in~$\Kbar_v$ is denoted~$G_w\leq G_v$.

\subsection{Global Galois representations, purity, and the symplectic Faltings' Lemma}

\begin{defi}[Purity]\label{def:purity}
Recall that a \textit{$q$-Weil number of weight $n$} is an algebraic number $\alpha$ such that for all complex embeddings $\iota$ the absolute value of~$\alpha$ is $\abs{\iota(\alpha)} = q^{n/2}$.

Let~$u$ be a place of the number field~$K$ not dividing~$p\infty$. We say that an unramified representation~$V$ of~$G_u$ is \emph{pure of weight~$n\in\bZ$} just when the eigenvalues of the geometric Frobenius are $q_u$-Weil numbers of weight $n$, where $q_u$ is the size of the residue field of $K_u$. We say moreover that~$V$ is \emph{integral} just when the monic characteristic polynomial of the geometric Frobenius has coefficients in~$\bZ$.

We say that a representation~$V$ of~$G_K$ is \emph{unramified and pure} (resp.\ \emph{unramified, pure and integral}) at~$u$ just when its restriction to the decomposition group~$G_u$ is.
\end{defi}

\begin{ex}\label{ex:cohomology_pure}
Let~$X/K$ be a smooth proper variety over the number field~$K$. Then there is a finite set~$S$ of places of~$K$ outside which~$X$ has good reduction, i.e.\ $X$ is the generic fibre of a smooth proper~$\cO_{K,S}$-scheme~$\fX$. Enlarging~$S$ if necessary, we may assume that it contains all places dividing~$p\infty$.

Then for all~$n\geq0$, the \'etale cohomology $\rH^n_\et(X_{\Kbar},\bQ_p)$ is unramified, pure and integral of weight~$n$ at all places outside~$S$. Unramifiedness follows from smooth proper base change for \'etale cohomology, and purity and integrality from Deligne's proof of the Weil Conjectures \cite[Th\'eor\`eme~1.6]{deligne:weil_i}.
\end{ex}

It turns out that purity imposes strong restrictions on a Galois representation~$V$. For example, it was proved by Faltings \cite[Satz~5]{faltings:endlichkeitssaetze}\cite[Lemma~2.3]{LVinventiones} that for a given dimension~$d$, weight~$n\geq0$ and finite set~$S$ of places of~$K$, there are only finitely many $d$-dimensional semisimple representations~$V$ of~$G_K$ which are unramified, pure and integral of weight~$n$ outside~$S$. The tricky word here is ``semisimple'': it is in general quite easy to produce Galois representations which are unramified, pure and integral outside a finite set of primes (see Example~\ref{ex:cohomology_pure}), but it is very hard to show that these representations are semisimple. Even in the case of an abelian variety~$A/K$, semisimplicity of~$\rH^n_\et(A_{\Kbar},\bQ_p)$ is part of the Tate Conjecture, and is only known thanks to work of Faltings~\cite[Resultat~(a)]{faltings:endlichkeitssaetze}. 

Instead of using Faltings' Lemma~\cite[Lemma~2.3]{LVinventiones}, we will use a variant thereof for symplectic representations. By a \emph{symplectic Galois representation} we mean a triple $(V,L,\omega)$ consisting of two Galois representations~$V$ and~$L$, the latter being of dimension~$1$, and a Galois-equivariant perfect alternating pairing $\omega\colon\bigwedge^2V\to L$. We usually have~$L=\bQ_p(-1)$, and we often abbreviate~$(V,L,\omega)$ to~$(V,\omega)$ or just~$V$ for short. If we pick bases of~$V$ and~$L$ for which~$\omega$ is the standard symplectic form on~$V=\bQ_p^{\oplus2d}$, then the Galois action on~$V$ is given by a continuous group homomorphism $\rho\colon G_K\to\GSp_{2d}(\bQ_p)$, where~$\GSp_{2d}$ is the subgroup of~$\GL_{2d}$ preserving the standard symplectic form up to scalar factors of similitude.

\begin{defi}[$\GSp$-irreducibility, {\cite[p905]{LVinventiones}}]\label{def:sp-irred}
A non-zero symplectic representation~$(V,L,\omega)$ of~$G_K$ is called \emph{$\GSp$-irreducible} just when~$V$ has no non-zero $G_K$-stable isotropic subspace (subspace on which~$\omega$ vanishes). This amounts to saying that the representation does not factor over a nontrivial parabolic subgroup of the general symplectic group~$\GSp$.
\end{defi}

\begin{lem}[Symplectic Faltings' Lemma {\cite[Lemma~2.6]{LVinventiones}}]\label{lem:faltings_symplectic}
Let~$S$ be a finite set of places of~$K$ containing all places dividing~$p\infty$, and let $n\geq0$ and $d>0$ be integers. Then there are, up to isomorphism, only finitely many $\GSp$-irreducible symplectic representations $(V,L,\omega)$ of~$G_K$ where~$V$ has rank~$2d$ and is unramified, pure and integral of weight~$n$ outside~$S$.
\end{lem}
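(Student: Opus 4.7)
The plan is to reduce to the non-symplectic Faltings' Lemma quoted above. The crucial observation is that $\GSp$-irreducibility forces the underlying representation $V$ to be semisimple as a $G_K$-representation. Indeed, for any $G_K$-subrepresentation $W \leq V$, the orthogonal complement $W^\perp$ with respect to $\omega$ is also $G_K$-stable (since $\omega$ is $G_K$-equivariant), and $W \cap W^\perp$ is an isotropic $G_K$-subrepresentation, hence zero by $\GSp$-irreducibility; so $V = W \oplus W^\perp$. Iterating on each summand yields full semisimplicity of $V$. Since $V$ has rank $2d$ and is unramified, pure, and integral of weight $n$ outside $S$, the non-symplectic Faltings' Lemma then produces only finitely many possibilities for $V$ up to isomorphism.

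Next, I would show finiteness of the compatible symplectic structures $(L, \omega)$ for each fixed $V$. The perfect pairing $\omega$ exhibits $L$ as a direct summand of the semisimple representation $\bigwedge^2 V$, which has finite length; hence only finitely many $L$ arise. Fixing $L$ as well, the perfect pairings $\omega\colon \bigwedge^2 V \to L$ form a Zariski-open subset of the finite-dimensional $\bQ_p$-vector space $\Hom_{G_K}(\bigwedge^2 V, L)$, and two of them give isomorphic triples precisely when they lie in the same orbit under the natural action of $\Aut_{G_K}(V) \times \Aut_{G_K}(L)$. Decomposing $V$ into isotypic components reduces the orbit count, block by block, to either counting invertible matrices over a $\bQ_p$-division algebra up to two-sided multiplication (a single orbit), or classifying non-degenerate alternating/hermitian forms over a matrix algebra over a $\bQ_p$-division algebra (finitely many orbits by the classical theory, or equivalently by finiteness of Galois cohomology of reductive groups over $\bQ_p$).

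The main obstacle is making this last step precise. Although conceptually clear, the classification of symplectic structures on a fixed semisimple pair $(V, L)$ requires a careful case analysis based on the Artin--Wedderburn decomposition of $\End_{G_K}(V)$, distinguishing the simple summands $V_i$ of $V$ according to whether $V_i \cong V_i^\vee \otimes L$ and, if so, whether the induced self-duality is orthogonal or symplectic. Absent a slicker argument, this is where the bulk of the technical work will lie.
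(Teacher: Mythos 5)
The paper itself contains no proof of this lemma: it is quoted verbatim from \cite[Lemma~2.6]{LVinventiones}, so there is no in-text argument to compare yours against. Your proposal is a correct reconstruction of the expected proof, and its first step is exactly the decisive point: for any $G_K$-subrepresentation $W\leq V$, the intersection $W\cap W^\perp$ is an isotropic subrepresentation, hence zero by $\GSp$-irreducibility, and perfectness of $\omega$ forces $V=W\oplus W^\perp$, so $V$ is semisimple and Faltings' Lemma leaves finitely many possibilities for it; finiteness of the possible $L$, as a rank-one quotient of the semisimple representation $\bigwedge^2 V$, is also fine. For the last step you do not actually need the Artin--Wedderburn case analysis that you flag as the technical bulk: the perfect $G_K$-equivariant pairings $\bigwedge^2 V\to L$ are the $\bQ_p$-points of an open subscheme of the affine space underlying $\Hom_{G_K}(\bigwedge^2 V,L)$, acted on algebraically by $H=\uAut_{G_K}(V)\times\Gm$ (the unit group of the semisimple algebra $\End_{G_K}(V)$, together with scalars on $L$). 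Over $\overline{\bQ}_p$ there are finitely many orbits (by the isotypic decomposition: non-self-dual blocks pair off in a single orbit, while each self-dual block carries an essentially unique symmetric or alternating form of the given rank over an algebraically closed field), and each geometric orbit containing a rational point meets only finitely many $H(\bQ_p)$-orbits, since these are controlled by $\rH^1(\bQ_p,\mathrm{Stab})$, which is finite by Borel--Serre finiteness of Galois cohomology over $p$-adic fields. So whichever route you take --- hermitian forms over division algebras with involution, or this orbit-counting argument --- the remaining finiteness is standard, and the gap you acknowledge is not a genuine obstruction.
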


\begin{rmk}
Our using Lemma~\ref{lem:faltings_symplectic} differs slightly from the argument in~\cite{LVinventiones}, which instead uses Faltings' Lemma~\cite[Lemma~2.3]{LVinventiones} in the original form. 
\end{rmk}

\subsection{Local Galois representations and $(\varphi,N,G_v)$-modules}

While the theory of global Galois representations is very complicated, the theory of local Galois representations is much better understood, thanks to the work of Fontaine as we now recall.

\subsubsection{De Rham representations}

Fontaine's theory identifies a certain class of $G_v$-representations, called the \emph{de Rham representations}, which turns out to contain all representations coming from geometry. Fontaine defines a certain $\Kbar_v$-algebra~$\sB_\deR=\sB_\deR(\Kbar_v)$, called the \emph{ring of de Rham periods}, which is endowed with an action of the Galois group~$G_v$ restricting to the tautological action on~$\Kbar_v$. For a representation~$V$ of~$G_v$, one has the $K_v$-vector space 
\[
\sD_\deR(V)\colonequals (\sB_\deR\otimes_{\bQ_p}V)^{G_v},
\]
whose $K_v$-dimension is at most $\dim_{\bQ_p}(V)$. One says that~$V$ is \emph{de Rham} just when equality holds: 
\[
\dim_{K_v}(\sD_\deR(V))=\dim_{\bQ_p}(V).
\]
The de Rham representations form a full $\otimes$-subcategory\footnote{In this paper, a  $\otimes$-category/$\otimes$-functor/$\otimes$-natural transformation means a symmetric monoidal category/functor/nat\-ural transformation.}
\[
\Rep_{\bQ_p}^\deR(G_v)\subseteq\Rep_{\bQ_p}(G_v)
\]
closed under subobjects \cite[Th\'eor\`eme~3.8(ii)]{fontaine:representations_semi-stables}, and the assignment $V\mapsto\sD_\deR(V)$ is $\otimes$-functorial in de Rham representations~$V$.

Moreover, the period ring $\sB_\deR$ is a complete discretely valued field and as such carries an exhaustive, separated $G_v$-stable filtration by fractional ideals of its valuation ring. This induces a $K_v$-linear filtration on $\sD_\deR(V)$ for every representation~$V$. This is called the \emph{Hodge filtration} and denoted~$\rF^\bullet = \rF^\bullet\!\sD_\deR(V)$. The Hodge filtration is $\otimes$-functorial in~$V$ for de Rham representations, see 
\cite[\S3.8]{fontaine:representations_semi-stables}.

\subsubsection{$(\varphi,N,G_v)$-modules} 
\label{sss:DpH}

One of the most fundamental results in $p$-adic representation theory is that the category of de Rham representations can be described  in terms of explicit semilinear-algebraic objects known as \emph{filtered discrete $(\varphi,N,G_v)$-modules}. We recall the definition from \cite{fontaine:representations_semi-stables}.

\begin{defi}[{\cite[\S4.2.1]{fontaine:representations_semi-stables}}]
\label{def:discretephiNGvmodule}
Let~$\bQ_p^\nr$ denote the maximal unramified extension of~$\bQ_p$ contained in~$\Kbar_v$. A \emph{$(\varphi,N,G_v)$-module} is a $\bQ_p^\nr$-vector space $D$ endowed with:
\begin{enumerate}[label=(\roman*)]
	\item a bijective semilinear \emph{crystalline Frobenius} $\varphi$ (acting as absolute Frobenius on scalars);
	\item a $\bQ_p^\nr$-linear \emph{monodromy operator} $N$; and
	\item a semilinear action of the Galois group $G_v$ (with respect to the natural action of $G_v$ on $\bQ_p^\nr \subset \Kbar_v$).
\end{enumerate}
These are required to satisfy:
\begin{enumerate}[label=(\roman*),resume]
	\item $N\circ\varphi=p\cdot\varphi\circ N$; and
	\item both $N$ and $\varphi$ commute with the action of $G_v$.
\end{enumerate}
The \emph{dimension} of $D$ is its dimension as $\bQ_p^\nr$-vector space, and $D$ is said to be \emph{discrete} just when the point-stabilisers of the $G_v$-action on~$D$ are open in~$G_v$.
\end{defi}

\begin{defi}[{\cite[\S4.3.2]{fontaine:representations_semi-stables}}]
\label{def:filtereddiscretephiNGvmodule}
A \emph{filtered discrete $(\varphi,N,G_v)$-module} is a tuple~$D=(D_\pst,D_\deR,c_\BO)$ consisting of 
\begin{enumerate}[label=(\roman*)]
	\item a $(\varphi,N,G_v)$-module~$D_\pst$, 
	\item a $K_v$-vector space~$D_\deR$ endowed with an exhaustive, separated, decreasing $K_v$-linear filtration~$\rF^\bullet$, and 
	\item a $\Kbar_v$-linear $G_v$-equivariant \emph{comparison isomorphism}
\[
c_\BO\colon \Kbar_v\otimes_{\bQ_p^\nr}D_\pst \xrightarrow\sim \Kbar_v\otimes_{K_v}D_\deR \,.
\]
\end{enumerate}
The comparison isomorphism~$c_\BO$ ensures that~$D_\pst$ is indeed discrete. The filtration~$\rF^\bullet$ is referred to as the \emph{Hodge filtration} on~$D$.
The collection of all filtered discrete~$(\varphi,N,G_v)$-modules forms a $\otimes$-category 
\[
\MF(\varphi,N,G_v)
\]
with respect to the obvious morphisms and tensor products.
\end{defi}

\begin{ex}
	The archetypal example of a filtered discrete $(\varphi,N,G_v)$-module comes from crystalline cohomology. Suppose that $\fX$ is a proper smooth $\cO_v$-scheme, with special fibre~$\fX_0$ and generic fibre~$X$. Write~$K_{v,0}$ for the maximal unramified subfield of~$K_v$, which is the same as the fraction field of the ring $W(k_v)$ of Witt vectors of the residue field~$k_v$. One then has the crystalline cohomology 
	\[
	\rH^i_\cris(\fX_0/K_{v,0})\colonequals K_{v,0}\otimes_{W(k_v)}\rH^i_\cris(\fX_0/W(k_v))
	\]
of the special fibre, which comes with a semilinear crystalline Frobenius~$\varphi$, and the de Rham cohomology $\rH^i_\deR(X/K_v)$ of the generic fibre, which comes with a Hodge filtration~$\rF^\bullet$. The two are related by the Berthelot--Ogus isomorphism \cite[Theorem~2.4]{berthelot-ogus:crystalline-de_rham}
	\[
	K_v\otimes_{K_{v,0}}\rH^i_\cris(\fX_0/K_{v,0}) \xrightarrow\sim \rH^i_\deR(X/K_v) \,.
	\]
	One thus obtains a filtered discrete $(\varphi,N,G_v)$-module by taking $D_\pst=\bQ_p^\nr\otimes_{K_{v,0}}\rH^i_\cris(\fX_0/K_{v,0})$ with induced $\varphi$- and $G_v$-action (and $N=0$), taking $D_\deR=\rH^i_\deR(X/K_v)$, and taking $c_\BO$ to be the isomorphism obtained from the Berthelot--Ogus isomorphism above by base change to $\Kbar_v$. More refined versions of this construction yield examples of filtered discrete $(\varphi,N,G_v)$-modules where $N\neq0$ (arising from~$X$ with bad semistable reduction), or where the action of $G_v$ is not just the natural action on $\bQ_p^\nr\otimes_{K_{v,0}}V$ for some $K_{v,0}$-vector space~$V$ (arising from~$X$ with unstable reduction).
\end{ex}

If~$V$ is a representation of~$G_v$, Fontaine defines
\[
\sD_\pst(V)\colonequals \varinjlim_w\left(\sB_\st\otimes_{\bQ_p}V\right)^{G_w} \,,
\]
where~$\sB_\st$ is Fontaine's ring of semistable periods \cite[\S3.1]{fontaine:corps_des_periodes} and the colimit is taken over open subgroups $G_w\leq G_v$ \cite[\S5.6.4]{fontaine:representations_semi-stables}. This is a $\bQ_p^\nr$-vector space of dimension at most $\dim_{\bQ_p}(V)$ \cite[Th\'eor\`eme~5.6.7(i)]{fontaine:representations_semi-stables}, and carries a natural action of~$G_v$ with open point-stabilisers, namely the restriction of the action on $\sB_\st\otimes_{\bQ_p}V$. Moreover, the natural Frobenius and monodromy operator on~$\sB_\st$ \cite[\S3.2.1--\S3.2.2]{fontaine:corps_des_periodes} induce on~$\sD_\pst(V)$ the structure of a discrete~$(\varphi,N,G_v)$-module, functorially in~$V$.

When~$V$ is de Rham, a theorem of Berger implies that $\dim_{\bQ_p^\nr}\sD_\pst(V) = \dim_{\bQ_p}(V)$ \cite[Th\'eor\`eme~0.7]{berger:representations_p-adiques}, and so the natural $G_v$-equivariant $\Kbar_v$-linear map
\[
c_\BO\colon\Kbar_v\otimes_{\bQ_p^\nr}\sD_\pst(V)\to\Kbar_v\otimes_{K_v}\sD_\deR(V) 
\]
induced from the inclusion\footnote{Usually, one adopts the point of view that the embedding $\sB_\st\hookrightarrow\sB_\deR$ is non-canonical, depending on a choice of $p$-adic logarithm. However, we shall adopt the point of view of \cite{shimizu:p-adic_monodromy}, identifying $\sB_\st$ with its image under this embedding, which is independent of the choice of $p$-adic logarithm.}~$\sB_\st\subset\sB_\deR$ is an isomorphism \cite[Th\'eor\`eme~5.6.7(ii)]{fontaine:representations_semi-stables}. In this way, the tuple
\[
\DpH(V) \colonequals  (\sD_\pst(V),\sD_\deR(V),c_\BO)
\]
is a filtered discrete~$(\varphi,N,G_v)$-module whenever~$V$ is de Rham.

One surprising aspect of Fontaine's theory is that this construction suffices to capture all the intricacies of the category of de Rham representations.

\begin{thm}[{\cite[Th\'eor\`eme~5.6.7(v)]{fontaine:representations_semi-stables}}, {\cite[Th\'eor\`eme~0.7]{berger:representations_p-adiques}}]
The assignment $V\mapsto\DpH(V)$ gives a fully faithful $\otimes$-functor
\[
\DpH\colon \Rep_{\bQ_p}^\deR(G_v)\hookrightarrow\MF(\varphi,N,G_v) \,.
\]
\end{thm}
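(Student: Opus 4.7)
The plan is to establish three things in succession: well-definedness, $\otimes$-functoriality, and full faithfulness, with the last achieved by constructing an explicit left inverse.

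For well-definedness, I would check that $\DpH(V)$ is indeed an object of $\MF(\varphi,N,G_v)$ when $V$ is de Rham. Discreteness of the $G_v$-action on $\sD_\pst(V)$ is automatic from the colimit over open subgroups in the definition. The fact that $c_\BO$ is an isomorphism amounts to the dimension equality $\dim_{\bQ_p^\nr}\sD_\pst(V) = \dim_{\bQ_p}(V)$, which is precisely Berger's theorem that de Rham representations are potentially semistable. For the $\otimes$-structure, the multiplication maps $\sB_\st \otimes \sB_\st \to \sB_\st$ and $\sB_\deR \otimes \sB_\deR \to \sB_\deR$ are compatible with $\varphi$, $N$, the $G_v$-action and the filtration, and they induce natural maps $\sD_\pst(V)\otimes\sD_\pst(V')\to\sD_\pst(V\otimes V')$ and analogously for $\sD_\deR$; by a dimension count these are isomorphisms on de Rham representations, yielding the required $\otimes$-natural isomorphisms for $\DpH$.

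Full faithfulness is the heart of the matter. Following Fontaine, I would construct a left inverse
\[
V_\pst\colon \MF(\varphi,N,G_v)\to \Rep_{\bQ_p}(G_v), \qquad V_\pst(D) = \rF^0\bigl(\sB_\deR \otimes_{\bQ_p^\nr} D_\pst\bigr)^{\varphi=1,\,N=0,\,G_v},
\]
where the filtration on the tensor product is transported from the Hodge filtration on $D_\deR$ via $c_\BO$. It then suffices to exhibit a natural isomorphism $V_\pst \circ \DpH \cong \id$ on $\Rep_{\bQ_p}^\deR(G_v)$, i.e.\ to show that for every de Rham $V$,
\[
V = \rF^0\bigl(\sB_\deR \otimes_{\bQ_p} V\bigr)^{\varphi=1,\,N=0,\,G_v}.
\]
For de Rham $V, V'$, one then applies this identity to the de Rham representation $\uHom(V,V')$, simultaneously identifying $\Hom_{G_v}(V,V')$ and $\Hom_{\MF(\varphi,N,G_v)}(\DpH(V),\DpH(V'))$ with the same space of $G_v$-invariants, yielding full faithfulness.

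The main technical input, and the hardest step, is Fontaine's \emph{fundamental exact sequence}
\[
0 \to \bQ_p \to \sB_\st^{\varphi=1,\,N=0} \to \sB_\deR/\rF^0\sB_\deR \to 0
\]
(whose left term is just $\sB_\cris^{\varphi=1}$), together with the fact that it remains exact after tensoring with any de Rham representation $V$ and taking $G_v$-invariants. This is established in the crystalline case by direct computation with $\sB_\cris$, extended to the semistable case by tracking $N$, and bootstrapped to the potentially semistable case via Galois descent over the finite extension where $V$ becomes semistable. Combined with Berger's theorem, this yields the identity above in full generality. The main obstacle throughout is controlling $G_v$-invariants on Fontaine's large period rings, which relies on the analytic structure of $\sB_\deR$ and the fundamental $\rH^1$-vanishing results for graded subquotients of its filtration.
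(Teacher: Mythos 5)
The paper offers no proof of this theorem: it is quoted directly from Fontaine (Th\'eor\`eme~5.6.7(v)) and Berger (Th\'eor\`eme~0.7), and your outline is, in structure, exactly the argument of those sources --- Berger's potential semistability theorem gives that $c_\BO$ is an isomorphism (well-definedness), the $\otimes$-structure comes from the multiplicative structure of the period rings plus a dimension count, and full faithfulness is Fontaine's, resting on the fundamental exact sequence $0\to\bQ_p\to\sB_\cris^{\varphi=1}\to\sB_\deR/\rF^0\sB_\deR\to0$. So the route is the right one.

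However, the pivotal identity is false as you have written it, in two ways, and since your full-faithfulness step rests on it, it must be repaired. First, $\varphi$ and $N$ do not act on $\sB_\deR\otimes_{\bQ_p}V$; the formula has to be an intersection inside $\sB_\deR\otimes_{\bQ_p}V$ of $\rF^0(\sB_\deR\otimes_{\bQ_p}V)$ with $(\sB_\st\otimes_{\bQ_p}V)^{\varphi=1,\,N=0}=(\sB_\cris\otimes_{\bQ_p}V)^{\varphi=1}$. Second, the superscript $G_v$ must be dropped if the left-hand side is to be $V$: with $G_v$-invariants the right-hand side lies in $(\sB_\deR\otimes_{\bQ_p}V)^{G_v}=\sD_\deR(V)$ and computes $V^{G_v}$, not $V$ (for $V=\bQ_p(1)$ it is $0$), and the functor $V_\pst$ as you define it takes values in spaces with trivial $G_v$-action, so $V_\pst\circ\DpH\cong\id$ cannot hold as stated. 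The two correct variants, either of which completes your argument, are: (i) without invariants, $V=\rF^0(\sB_\deR\otimes_{\bQ_p}V)\cap(\sB_\st\otimes_{\bQ_p}V)^{\varphi=1,\,N=0}$, which combined with the admissibility isomorphism $\sB_\st\otimes_{\bQ_p^\nr}\sD_\pst(V)\cong\sB_\st\otimes_{\bQ_p}V$ (valid because de Rham implies potentially semistable) shows that the invariant-free $V_\pst$ really is a quasi-inverse on the essential image; or (ii) with invariants but $V^{G_v}$ on the left, $V^{G_v}=\rF^0\!\sD_\deR(V)\cap\sD_\pst(V)^{\varphi=1,\,N=0,\,G_v}$, which is the version your final step actually uses: applied to $W=\uHom(V,V')$, together with $\DpH(\uHom(V,V'))\cong\uHom(\DpH(V),\DpH(V'))$ (this is where de Rham-ness of $V,V'$ enters), it identifies $\Hom_{G_v}(V,V')=W^{G_v}$ with $\Hom_{\MF(\varphi,N,G_v)}(\DpH(V),\DpH(V'))$. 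With that correction your sketch is the standard Fontaine--Berger proof that the paper cites.
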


The following examples of filtered discrete~$(\varphi,N,G_v)$-modules will appear throughout this paper.

\begin{ex}\label{ex:p-adic_hodge_cohomology}
	If~~$X/K_v$ is a variety, we adopt the shorthand
	\[
	\rH^\bullet_\pst(X/\bQ_p^\nr) \colonequals \sD_\pst(\rH^\bullet_\et(X_{\Kbar_v},\bQ_p)) \,,
	\]
	which is a discrete $(\varphi,N,G_v)$-module. The \'etale--de Rham comparison isomorphism gives an isomorphism
	\[
	c_\deR\colon\sD_\deR(\rH^\bullet_\et(X_{\Kbar_v},\bQ_p)) \xrightarrow\sim \rH^\bullet_\deR(X/K_v) \,,
	\]
	and we define the \emph{$p$-adic Hodge cohomology} of~$X$ to be the filtered discrete $(\varphi,N,G_v)$-module
	\[
	\rH^\bullet_\pH(X/K_v) \colonequals (\rH^\bullet_\pst(X/\bQ_p^\nr),\rH^\bullet_\deR(X/K_v),c_\deR\circ c_\BO) \,.
	\]
	This is, of course, isomorphic to
	\[
	\sD_\pH(\rH^\bullet_\et(X_{\Kbar_v},\bQ_p)) = (\sD_\pst(\rH^\bullet_\et(X_{\Kbar_v},\bQ_p)),\sD_\deR(\rH^\bullet_\et(X_{\Kbar_v},\bQ_p)),c_\BO) \,.
	\]
	The assignment~$X\mapsto\rH^\bullet_\pH(X/K_v)$ is contravariant functorial in~$X$ by Proposition~\ref{prop:compatibility_i}\eqref{proppart:compatibility_functoriality} later.
\end{ex}

\begin{rmk}
	We will later have to be careful about exactly which comparison isomorphism~$c_\deR$ we use in the definition of the $p$-adic Hodge cohomology; see Remark~\ref{rmk:which_comparison_iso}. This is why we have to refer to Proposition~\ref{prop:compatibility_i}\eqref{proppart:compatibility_functoriality} for functoriality of~$\rH^\bullet_\pH$, rather than the more usual reference~\cite[Theorem~(A1.1)]{tsuji:c_st_survey}.
\end{rmk}

\subsubsection{Change of base field}

If~$L_w/K_v$ is a finite extension of~$K_v$ contained in~$\Kbar_v$ with absolute Galois group~$G_w$, then one also has the category~$\MF(\varphi,N,G_w)$ of filtered discrete $(\varphi,N,G_w)$-modules, using~$L_w$ as the base field in place of~$K_v$. One can pass between $(\varphi,N,G_v)$- and $(\varphi,N,G_w)$-modules as follows.

If~$D$ is a $(\varphi,N,G_v)$-module then we obtain by restriction a $(\varphi,N,G_w)$-module $D|_{G_w}$. And if~$D=(D_\pst,D_\deR,c_\BO)$ is a filtered discrete $(\varphi,N,G_v)$-module,
 then we define a filtered discrete $(\varphi,N,G_w)$-module~$D|_{G_w}$ by
\[
D|_{G_w} \colonequals (D_\pst|_{G_w},L_w\otimes_{K_v}D_\deR,c_\BO)
\]
where~$c_\BO$ also denotes the composite isomorphism
\[
\Kbar_v\otimes_{\bQ_p^\nr}D_\pst|_{G_w} \xrightarrow[\sim]{c_\BO} \Kbar_v\otimes_{K_v}D_\deR = \Kbar_v\otimes_{L_w}(L_w\otimes_{K_v}D_\deR) \,.
\]
The assignment $D\mapsto D|_{G_w}$ is a $\otimes$-functor in a natural way.

In the other direction, if~$D$ is a $(\varphi,N,G_w)$-module, we make the induction\footnote{We follow the convention that the induction $\Ind_{G_w}^{G_v}\!W$ is the set~$\Map_{G_w}(G_v,W)$ of $G_w$-equivariant maps $\psi\colon G_v\to W$, with $G_v$-action given by $(g\cdot\psi)(h)=\psi(hg)$.} $\Ind_{G_w}^{G_v}\!D$ into a $(\varphi,N,G_v)$-module by giving it the induced Frobenius~$\varphi$ and monodromy~$N$, and making it into a $\bQ_p^\nr$-vector space with the twisted action
\[
(\lambda\cdot\psi)(g) \colonequals g(\lambda)\cdot\psi(g)
\]
for~$\lambda\in\bQ_p^\nr$, $\psi\in\Ind_{G_w}^{G_v}\!D$ and~$g\in G_v$. And if~$D=(D_\pst,D_\deR,c_\BO)$ is a filtered discrete $(\varphi,N,G_w)$-module, then we define
\[
\Ind_{G_w}^{G_v}\!D \colonequals (\Ind_{G_w}^{G_v}\!D_\pst,D_\deR,c_\BO')
\]
where we consider $D_\deR$ as a $K_v$-vector space, and where~$c_\BO'$ is the right-to-left composite in
\[
\Kbar_v\otimes_{\bQ_p^\nr}\Ind_{G_w}^{G_v}\!D_\pst \cong \Ind_{G_w}^{G_v}\!(\Kbar_v\otimes_{\bQ_p^\nr}D_\pst) \xrightarrow[\sim]{\Ind(c_\BO)} \Ind_{G_w}^{G_v}\!(\Kbar_v\otimes_{L_w}D_\deR) \cong \Kbar_v\otimes_{K_v}D_\deR \,.
\]
Explicitly, $c_\BO'$ sends $1\otimes\psi$ to
\[
\sum_i(g_i\otimes1)\left(e\cdot c_\BO(\psi(g_i^{-1}))\right) \in \Kbar_v\otimes_{K_v}D_\deR \,,
\]
where the sum is over left coset representatives~$(g_i)$ for~$G_w\leq G_v$, and $e\in L_w\otimes_{K_v}L_w$ is the idempotent given by $\sum_jx_j\otimes y_j$ where~$(x_j)$ and~$(y_j)$ are dual bases of~$L_w$ with respect to the trace form. The assignment $D\mapsto\Ind_{G_w}^{G_v}\!D$ is a lax $\otimes$-functor in a natural way.

We remark that the induction functor $\Ind_{G_w}^{G_v}\!(-)$ is right adjoint to the restriction functor~$(-)|_{G_w}$ in a natural way, and that the lax $\otimes$-structure on $\Ind_{G_w}^{G_v}\!(-)$ is the one induced from the $\otimes$-structure on~$(-)|_{G_w}$.

\begin{ex}\label{ex:restriction_and_coinduction_of_p-adic_hodge_coh}
	If~$X$ is a variety over~$K_v$, then there is a canonical isomorphism
	\begin{equation}\label{eq:p-adic_hodge_coh_of_base-change}
	\rH^\bullet_\pH(X_{L_w}/L_w) \cong \rH^\bullet_\pH(X/K_v)|_{G_w} \,,
	\end{equation}
	of filtered discrete~$(\varphi,N,G_w)$-modules induced by the isomorphisms
	\[
	\rH^\bullet_\et((X_{L_w})_{\Lbar_w},\bQ_p) = \rH^\bullet_\et(X_{\Kbar_v},\bQ_p) \hspace{0.4cm}\text{and}\hspace{0.4cm} \rH^\bullet_\deR(X_{L_w}/L_w) \cong L_w\otimes_{K_v}\rH^\bullet_\deR(X/K_v) \,.
	\]
	(Here we write~$(-)_{\Lbar_w}=\Spec(\Kbar_v)\times_{\Spec(L_w)}(-)$ to emphasise that the base-change is over~$L_w$.) Similarly, if~$X$ is a variety over~$L_w$, viewed also as a variety over~$K_v$ in the natural way, then there is a canonical isomorphism
	\begin{equation}\label{eq:p-adic_hodge_coh_of_restriction}
	\rH^\bullet_\pH(X/K_v) \cong \Ind_{G_w}^{G_v}\rH^\bullet_\pH(X/L_w) \,,
	\end{equation}
	of filtered discrete~$(\varphi,N,G_v)$-modules induced by the isomorphisms
	\[
	\rH^\bullet_\et(X_{\Kbar_v},\bQ_p) \cong \Ind_{G_w}^{G_v}\rH^\bullet_\et(X_{\Lbar_w},\bQ_p) \hspace{0.4cm}\text{and}\hspace{0.4cm} \rH^\bullet_\deR(X/K_v) = \rH^\bullet_\deR(X/L_w) \,.
	\]
	
	We remark that implicit in the assertion that~\eqref{eq:p-adic_hodge_coh_of_base-change} is an isomorphism of filtered discrete~$(\varphi,N,G_v)$-modules is the fact that the comparison isomorphism~$c_\deR$ is compatible with base-change, see Proposition~\ref{prop:compatibility_i}\eqref{proppart:compatibility_base_change} later. The corresponding compatibility in~\eqref{eq:p-adic_hodge_coh_of_restriction} follows formally from the same property and the restriction--induction adjunction.
\end{ex}

\subsubsection{Automorphisms of $(\varphi,N,G_v)$-modules}\label{sss:phi-N_automorphisms}

In our generalisation of the method of Lawrence--Venkatesh, it will be important at one point to put upper bounds on the dimensions of automorphism groups of $(\varphi,N,G_v)$-modules. More precisely, we will want to bound the dimension of the automorphism group of a \emph{symplectic $(\varphi,N,G_v)$-module}, by which we mean a triple $(D,L,\omega)$ consisting of two $(\varphi,N,G_v)$-modules~$D$ and~$L$, the latter having $\bQ_p^\nr$-dimension~$1$, and a $\bQ_p^\nr$-linear perfect pairing
\[
\omega\colon\bigwedge\nolimits_{\bQ_p^\nr}^{\!2}D \to L
\]
which is equivariant for the $\varphi$-, $N$- and $G_v$-actions.

\begin{defi}\label{def:phi_automorphisms}
\leavevmode
\begin{enumerate}
	\item Let $D$ be a $(\varphi,N,G_v)$-module. An \emph{automorphism} of $D$ is a $\bQ_p^\nr$-linear automorphism of $D$ which commutes with the actions of $\varphi$, $N$ and $G_v$. More generally, if $R$ is a $\bQ_p$-algebra, an \emph{$R$-linear automorphism} of $D$ is a $R\otimes_{\bQ_p}\bQ_p^\nr$-linear automorphism of $R\otimes_{\bQ_p}D$ which commutes with the actions of $1\otimes\varphi$, $1\otimes N$ and the action of~$G_v$ on~$D$. We will see shortly that the functor
\[
\uAut(D)\colon\{\text{$\bQ_p$-algebras}\}\rightarrow \{\text{groups}\}
\]
sending a $\bQ_p$-algebra $R$ to the group of $R$-linear automorphisms of $D$ is representable by an affine algebraic group over $\bQ_p$, which we also call $\uAut(D)$.
	\item If $D=(D,L,\omega)$ is a symplectic $(\varphi,N,G_v)$-module, then we define 
	$\SpAut(D)$ to be the closed algebraic subgroup of $\uAut(D)$ consisting of those automorphisms which preserve the pairing $\omega$ up to a scalar factor of similitude\footnote{Technically speaking, one should regard this factor of similitude as part of the data of a point of $\uAut_\GSp(D)$. This only makes a difference in the degenerate case~$D=0$, which we will never see.}.
\end{enumerate}
By the \emph{scalars} in $\uAut(D)$ (resp.\ $\uAut_\GSp(D)$), we mean those automorphisms which act on~$R\otimes_{\bQ_p}D$ by multiplication by some~$\lambda\in R^\times$. The inclusion of the scalars thus defines a central cocharacter $\bG_{m}\rightarrow\uAut(D)$ (resp.\ $\bG_{m}\rightarrow\uAut_\GSp(D)$) defined over $\bQ_p$.
\end{defi}

\begin{rmk}\label{rmk:induced_autos}
	If~$D=(D_\pst,D_\deR,c_\BO)$ is a filtered discrete $(\varphi,N,G_v)$-module, then the automorphism group $\uAut(D_\pst)$ acts on~$D_\deR$. Given some $\psi\in\uAut(D_\pst)(R)$, the induced $R\otimes_{\bQ_p}\Kbar_v$-linear automorphism of $R\otimes_{\bQ_p}\Kbar_v\otimes_{\bQ_p^\nr}D_\pst\cong R\otimes_{\bQ_p}\Kbar_v\otimes_{K_v}D_\deR$ is $G_v$-equivariant, so induces on taking $G_v$-invariants an $R\otimes_{\bQ_p}K_v$-linear automorphism of $R\otimes_{\bQ_p}D_\deR$, not necessarily preserving the filtration. This construction yields an action
	\[
	\uAut(D_\pst) \to \Res^{K_v}_{\bQ_p}\GL(D_\deR) \,,
	\]
	which one can even show to be a closed embedding (though we don't use this).
\end{rmk}

We will later use the following construction. Fix $D=(D_\pst,D_\deR,c_\BO)$ a filtered discrete $(\varphi,N,G_v)$-module and let~$\dG$ denote the flag variety parametrising $K_v$-linear filtrations on~$D_\deR$ with the same dimension data as the given filtration~$\rF^\bullet$. So there is an action of $\uAut(D_\pst)$ on~$\Res^{K_v}_{\bQ_p}\dG$ induced from the action described in Remark~\ref{rmk:induced_autos}. If~$\Phi\in\dG(K_v)$ is such a filtration on~$D_\deR$, we define a filtered discrete~$(\varphi,N,G_v)$-module $\MM(\Phi)$ by
\[
\MM(\Phi) \colonequals (D_\pst,D_\deR,c_\BO) \,,
\]
where the $(\varphi,N,G_v)$-module structure on~$D_\pst$ is the given one, but the filtration on~$D_\deR$ is given by~$\Phi$ instead of its original filtration.

\begin{lem}
\label{lem:describe orbits}
	In the above setup, let~$\Phi_1,\Phi_2\in\dG(K_v)$ be two filtrations on~$D_\deR$. Then~$\MM(\Phi_1)$ and~$\MM(\Phi_2)$ are isomorphic as filtered discrete $(\varphi,N,G_v)$-modules if and only if~$\Phi_1$ and~$\Phi_2$ lie in the same orbit under the action of~$\uAut(D_\pst)(\bQ_p)$.
	
	In particular, the set of all~$\Phi$ such that~$\MM(\Phi)$ lies in a fixed isomorphism class lies in a $\bQ_p$-subvariety of~$\Res^{K_v}_{\bQ_p}\dG$ of dimension at most~$\dim_{\bQ_p}\uAut(D_\pst)$.
	\begin{proof}
		An isomorphism $\MM(\Phi_1)\xrightarrow\sim\MM(\Phi_2)$ is determined by its $\pst$ component, which must be a $(\varphi,N,G_v)$-module automorphism~$\psi$ of~$D_\pst$. The condition that~$\psi$ is a filtered isomorphism is exactly that~$\psi$ takes~$\Phi_1$ to~$\Phi_2$ under the action described in Remark~\ref{rmk:induced_autos}, so we are done.
	\end{proof}
\end{lem}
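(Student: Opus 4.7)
The plan is to reduce an isomorphism $\MM(\Phi_1) \xrightarrow\sim \MM(\Phi_2)$ in $\MF(\varphi,N,G_v)$ to the data of its $\pst$-component, and then, for the dimension bound, to invoke the standard fact that the scheme-theoretic image of a morphism of finite type schemes has dimension at most that of the source. The key input throughout is Remark~\ref{rmk:induced_autos}, which produces from an element $\psi \in \uAut(D_\pst)(R)$ a compatible automorphism of $R\otimes_{\bQ_p}D_\deR$, and hence defines the action of $\uAut(D_\pst)$ on $\Res^{K_v}_{\bQ_p}\dG$ referenced in the statement.

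First I would unpack Definition~\ref{def:filtereddiscretephiNGvmodule}: an isomorphism $\MM(\Phi_1) \xrightarrow\sim \MM(\Phi_2)$ consists of a $(\varphi,N,G_v)$-module isomorphism $\psi_\pst$ of the common underlying module $D_\pst$ together with a filtered $K_v$-linear isomorphism
\[
\psi_\deR\colon(D_\deR,\Phi_1)\xrightarrow\sim(D_\deR,\Phi_2),
\]
subject to compatibility with the (shared) comparison isomorphism $c_\BO$. The compatibility condition forces $\psi_\deR$ to be the $K_v$-linear automorphism of $D_\deR$ induced by $\psi_\pst$ via Remark~\ref{rmk:induced_autos}: namely, tensor $\psi_\pst$ with $\id_{\Kbar_v}$, transport across $c_\BO$, and take $G_v$-invariants. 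Therefore the datum of such an isomorphism is equivalent to the datum of an element $\psi \in \uAut(D_\pst)(\bQ_p)$ whose induced action on $D_\deR$ sends $\Phi_1$ to $\Phi_2$, which is precisely the assertion that $\Phi_1$ and $\Phi_2$ lie in the same $\uAut(D_\pst)(\bQ_p)$-orbit for the action recalled just before the lemma. This yields the first part, and no serious obstacle arises here: the argument is a careful unpacking of the definition of morphisms in $\MF(\varphi,N,G_v)$.

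For the ``in particular'' claim, I would consider the orbit morphism
\[
\mu_{\Phi_1}\colon \uAut(D_\pst) \longrightarrow \Res^{K_v}_{\bQ_p}\dG, \qquad \psi\longmapsto \psi\cdot\Phi_1,
\]
of $\bQ_p$-schemes. Since $\uAut(D_\pst)$ is an affine algebraic group of finite type over $\bQ_p$, the scheme-theoretic image of $\mu_{\Phi_1}$ is a closed $\bQ_p$-subvariety of $\Res^{K_v}_{\bQ_p}\dG$ of dimension at most $\dim_{\bQ_p}\uAut(D_\pst)$. By the first part, every $\Phi\in\dG(K_v)$ with $\MM(\Phi) \cong \MM(\Phi_1)$ lies in the $\uAut(D_\pst)(\bQ_p)$-orbit of $\Phi_1$, hence arises as a $\bQ_p$-point of this image, which gives the desired dimension bound.
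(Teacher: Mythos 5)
Your proof is correct and follows essentially the same route as the paper: the first part is the same unpacking of morphisms in $\MF(\varphi,N,G_v)$, showing the de Rham component is forced by the $\pst$ component via $c_\BO$ and Remark~\ref{rmk:induced_autos}. Your justification of the ``in particular'' clause via the orbit morphism and the dimension bound on its scheme-theoretic image is the standard argument the paper leaves implicit, and it is valid since the action of $\uAut(D_\pst)$ on $\Res^{K_v}_{\bQ_p}\dG$ is algebraic.
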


Now let us give the promised proof of representability of $\uAut(D)$ and $\uAut_\GSp(D)$, and bound their dimensions.

\begin{prop}\label{prop:dim_of_aut}
\leavevmode
\begin{enumerate}
	\item\label{proppart:dim_of_aut_basic} Let $D$ be a $(\varphi,N,G_v)$-module of $\bQ_p^\nr$-dimension $d$. Then $\uAut(D)$ is a $\bQ_p$-algebraic group of $\bQ_p$-dimension $\leq d^2$.
	\item\label{proppart:dim_of_aut_symplectic} Let $D=(D,L,\omega)$ be a symplectic $(\varphi,N,G_v)$-module of $\bQ_p^\nr$-rank $2d$. Then $\SpAut(D)$ is a $\bQ_p$-algebraic group of $\bQ_p$-dimension $\leq d(2d+1)+1$.
\end{enumerate}
\begin{proof}
	\eqref{proppart:dim_of_aut_basic}. Let~$\End(D)$ denote the (possibly) non-commutative $\bQ_p$-algebra of endomorphisms of~$D$ as a $(\varphi,N,G_v)$-module, so that $\uAut(D)$ is isomorphic to the functor
	\[
	R\mapsto (R\otimes_{\bQ_p}\End(D))^\times \,.
	\]
	To show that $\uAut(D)$ is a $\bQ_p$-algebraic group, it suffices to show that $\dim_{\bQ_p}\End(D)<\infty$, for then $\uAut(D)$ is its group-scheme of units (which is a closed algebraic subvariety of the affine space corresponding to $\End(D) \times \End(D)$). For this, we claim that the map
	\begin{equation}\label{eq:endomorphism_embedding}\tag{$\ast$}
		\bQ_p^\nr\otimes_{\bQ_p}\End(D) \to \End_{\bQ_p^\nr}(D)
	\end{equation}
	is injective, where the right-hand side denotes the $\bQ_p^\nr$-linear endomorphisms of~$D$ (requiring no compatibility with the $(\varphi,N,G_v)$-action). So suppose that $\psi_1,\dots,\psi_k$ are $\bQ_p$-linearly independent elements of~$\End(D)$ and that $\lambda_1,\dots,\lambda_k\in\bQ_p^\nr$ are such that~$\sum_i\lambda_i\otimes\psi_i$ lies in the kernel of~\eqref{eq:endomorphism_embedding}. This says that $\sum_i\lambda_i\psi_i=0$ as a $\bQ_p^\nr$-linear endomorphism of~$D$. Since each~$\psi_i$ commutes with the action of~$G_v$ on~$D$, we thus have
	\[
	\sum_i\sigma(\alpha\lambda_i)\psi_i=0
	\]
	for all~$\alpha\in\bQ_p^\nr$ and all~$\sigma\in G_v$. Taking a suitable linear combination of this identity then shows that
	\[
	\sum_i\tr_{\bQ_p^\nr/\bQ_p}(\alpha\lambda_i)\psi_i=0
	\]
	for all~$\alpha\in\bQ_p^\nr$, where $\tr_{\bQ_p^\nr/\bQ_p}$ denotes the normalised trace (so that it is the identity on~$\bQ_p$). By $\bQ_p$-linear independence of the $\psi_i$ and non-degeneracy of the trace pairing, this implies that $\lambda_i=0$ for all~$i$. So $\sum_i\lambda_i\otimes\psi_i=0$ and hence~\eqref{eq:endomorphism_embedding} is injective.
	
	Thus we have shown that $\End(D)$ is finite-dimensional, and so its group-scheme of units $\uAut(D)$ is a $\bQ_p$-algebraic group. To bound its dimension, we observe that~\eqref{eq:endomorphism_embedding}, being an injective morphism of non-commutative $\bQ_p^\nr$-algebras induces a closed immersion
	\begin{equation}\label{eq:automorphism_embedding}\tag{$\ast\ast$}
		\uAut(D)_{\bQ_p^\nr}\hookrightarrow\GL_{\bQ_p^\nr}(D)
	\end{equation}
	on the $\bQ_p^\nr$-group-schemes of units. Since the right-hand side has $\bQ_p^\nr$-dimension~$d^2$, we find that $\dim_{\bQ_p}\uAut(D)\leq d^2$ as desired.
	
	\eqref{proppart:dim_of_aut_symplectic}. This follows from the first part. It is easy to check that preserving~$\omega$ up to a scalar factor of similitude is a closed condition on~$\uAut(D)$, so $\SpAut(D)$ is a closed $\bQ_p$-subgroup-scheme of~$\uAut(D)$. Moreover, the embedding~\eqref{eq:automorphism_embedding} takes $\SpAut(D)_{\bQ_p^\nr}$ into $\GSp_{\bQ_p^\nr}(D)$, whence
	\[
	\dim_{\bQ_p}(\SpAut(D)) \leq \dim_{\bQ_p^\nr}(\GSp_{\bQ_p^\nr}(D)) \leq d(2d+1)+1 \,. \qedhere
	\]
\end{proof}
\end{prop}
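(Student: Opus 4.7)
The plan is to realize $\uAut(D)$ as the group scheme of units of the $\bQ_p$-algebra $\End(D)$ of $(\varphi,N,G_v)$-module endomorphisms of $D$. Since any such endomorphism is $\bQ_p^\nr$-linear and commutes with the semilinear $G_v$-action, the natural $\bQ_p^\nr$-action on $\End(D)$ fails to be $\bQ_p^\nr$-linear: only $\bQ_p$-scalars preserve $G_v$-equivariance. So $\End(D)$ is naturally a $\bQ_p$-algebra, and $\uAut(D)$ is its functor of units. Provided that $\End(D)$ has finite $\bQ_p$-dimension, $\uAut(D)$ is automatically representable as an open subvariety of the affine space underlying $\End(D)\times\End(D)$ (cut out by the equation $\psi\psi' = 1$).

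The central claim, from which both finite-dimensionality and the dimension bound will follow, is that the natural map
\[
\bQ_p^\nr\otimes_{\bQ_p}\End(D)\longrightarrow\End_{\bQ_p^\nr}(D)
\]
is injective. To prove this, I would take $\bQ_p$-linearly independent endomorphisms $\psi_1,\dots,\psi_k\in\End(D)$ and coefficients $\lambda_i\in\bQ_p^\nr$ with $\sum_i\lambda_i\psi_i=0$ as a $\bQ_p^\nr$-linear endomorphism, then exploit $G_v$-equivariance of each $\psi_i$: applying $\sigma\in G_v$ to a test vector and comparing, one obtains $\sum_i\sigma(\alpha\lambda_i)\psi_i=0$ for every $\alpha\in\bQ_p^\nr$. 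Averaging over a sufficiently large finite quotient of $G_v$ (so that $\bQ_p^\nr/\bQ_p$ is replaced by a suitable finite Galois extension) replaces $\sigma$-conjugation by the trace $\tr_{\bQ_p^\nr/\bQ_p}$, and nondegeneracy of the trace form $\alpha\mapsto\tr(\alpha\lambda_i)$ then forces all $\lambda_i=0$. This is the only delicate step: without $G_v$-equivariance the map would not be injective, so the trace-pairing step is the genuine content.

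Once injectivity is established, the right-hand side $\End_{\bQ_p^\nr}(D)$ has $\bQ_p^\nr$-dimension $d^2$, giving $\dim_{\bQ_p}\End(D)\leq d^2$ and hence the representability and dimension bound for $\uAut(D)$. After base change to $\bQ_p^\nr$, the injection induces a closed embedding $\uAut(D)_{\bQ_p^\nr}\hookrightarrow\GL_{\bQ_p^\nr}(D)$ of algebraic groups, which on dimensions gives exactly $\dim_{\bQ_p}\uAut(D)\leq d^2$.

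For the symplectic statement, I would cut out $\SpAut(D)\subset\uAut(D)$ by the closed condition that $\omega$ is preserved up to a scalar similitude factor; this is a closed subgroup scheme of $\uAut(D)$. The same base-change embedding then refines to a closed embedding $\SpAut(D)_{\bQ_p^\nr}\hookrightarrow\GSp_{\bQ_p^\nr}(D)$, since the condition of respecting $\omega$ up to similitude is visible on the underlying $\bQ_p^\nr$-linear structure. The dimension bound $\dim\GSp_{2d} = d(2d+1)+1$ then finishes the proof.
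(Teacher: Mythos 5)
Your proposal is correct and follows essentially the same route as the paper's proof: representing $\uAut(D)$ as the unit group of $\End(D)$, proving injectivity of $\bQ_p^\nr\otimes_{\bQ_p}\End(D)\to\End_{\bQ_p^\nr}(D)$ via $G_v$-equivariance and the non-degeneracy of the trace pairing, and then deducing both dimension bounds from the induced closed embeddings into $\GL_{\bQ_p^\nr}(D)$ and $\GSp_{\bQ_p^\nr}(D)$. The only cosmetic point is that the unit locus $\{\psi\psi'=1\}$ is a \emph{closed} subvariety of the affine space attached to $\End(D)\times\End(D)$ (which is exactly why one passes to the pair), not an open one, but your parenthetical shows you have the right construction in mind.
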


\subsection{Self-conjugate places and average Hodge--Tate weights}\label{ss:friendly}

In order to make certain numerics work in a key step of our generalisation of the Lawrence--Venkatesh method, it will be necessary for us to restrict attention to places~$v$ of~$K$ which satisfy a particular technical assumption, which we call \emph{self-conjugacy}. This is a mild generalisation of the notion of a \emph{friendly place} from  \cite[Definition~2.7]{LVinventiones}.

\begin{defi}[Self-conjugate places, cf.\ {\cite[Definition~2.7]{LVinventiones}}]\label{def:friendly}
We say that two finite places~$v$ and~$v'$ 
of~$K$ are \emph{conjugate} just when their restrictions $v_E$ and $v'_E$ to every CM subfield~$E$ of~$K$ are conjugate under the complex conjugation on~$E$, i.e. $v'_E = \ov{v_E}$. We say that a finite place  is \emph{self-conjugate} if it is conjugate to itself.
\end{defi}

\begin{rmk}\leavevmode
\begin{enumerate}
	\item If~$K$ contains no CM subfield (e.g.\ $K$ totally real or $K$ of odd degree), then every finite place is self-conjugate.
	\item If~$K$ contains a CM subfield, then it has a maximal CM subfield~$E$, and a finite place~$v$ of~$K$ is self-conjugate if and only if its restriction to~$E$ is invariant under the conjugation on~$E$.
	\item The \emph{friendly places} of \cite[Definition~2.7]{LVinventiones} are exactly the self-conjugate places which are unramified over~$\bQ$.
\end{enumerate}
\end{rmk}

The property of self-conjugate places~$v$ which is relevant in the Lawrence--Venkatesh argument is that if~$V$ is a representation of~$G_K$ which is pure of some weight, then the average Hodge--Tate weight of~$V|_{G_v}$ is determined by the weight of~$V$. To prove this, one first deals with the case of $1$-dimensional representations.

\begin{prop}[cf.\ {\cite[Lemma~2.8]{LVinventiones}}]\label{prop:friendly_characters}
	Let $\chi\colon G_K\rightarrow\bQ_p^\times$ be a $p$-adic character such that:
	\begin{enumerate}[label=(\roman*)]
		\item $\chi$ is unramified and pure of weight~$n\in\bZ$ outside a finite set~$S$ of places of~$K$, and
		\item $\chi$ is de Rham at all $p$-adic places of~$K$.
	\end{enumerate}
	For a $p$-adic place~$v$, write~$r_v$ for the Hodge--Tate weight\footnote{The \emph{Hodge--Tate weights} of a representation $V$ of $G_v$ are the integers $r$ such that $(\bC_p(r)\otimes_{\bQ_p}V)^{G_v}\neq0$, where the $G_v$ action on $\bC_p(r)$ is the $r$th Tate twist of the natural action on $\bC_p$.} of~$\chi|_{G_v}$. Then if~$v$ and~$v'$ are conjugate $p$-adic places of~$K$, then $r_v+r_{v'}=n$.
	
	In particular, if~$v$ is self-conjugate, then~$n$ is even and $r_v=n/2$.
	\begin{proof}
		We follow the proof of \cite[Lemma~2.8]{LVinventiones}. Let~$\eta\colon\bA_K^\times\to\bQ_p^\times$ be the idele class character corresponding to~$\chi$ by global class field theory\footnote{There are two opposite conventions for normalising the isomorphisms of class field theory, depending on whether the local Artin maps $K_u^\times\to G_u^\ab$ send uniformisers to arithmetic or geometric Frobenii. We adopt the convention of~\cite{milne:cft}, that a uniformiser corresponds to an arithmetic Frobenius.}. We write~$\eta_u\colon K_u^\times\to\bQ_p^\times$ for the component of~$\eta$ at a place~$u$ of~$K$. The conditions on~$\chi$ translate into the following conditions on the~$\eta_u$:
		\begin{enumerate}[label=\alph*),ref=(\alph*)]
			\item for all places~$u$ outside a finite set~$S$ (assumed to contain all places dividing~$p\infty$) $\eta_u(\cO_u^\times)=\{1\}$, and~$\eta_u(\varpi_u)$ is a $q_u$-Weil number of weight~$-n$, where~$\varpi_u$ is a uniformiser of~$K_u$ and~$q_u$ is the order of the residue field of~$K_u$; and
			\item for all $p$-adic places~$v$, the component $\eta_v$ agrees with the $r_v$th power of the norm character $\rN_{K_v|\bQ_p}$ on an open subgroup of~$\cO_v^\times$.
		\end{enumerate}
		Of these, the second deserves some explanation. Being Hodge--Tate of weight~$r_v$, the restricted character~$\chi|_{G_v}$ agrees with the~$-r_v$th power of the cyclotomic character~$\chi_\cyc$ on an open subgroup of the inertia group~$I_v$ \cite[Theorem~III.A5.2]{serre:reps_and_elliptic_curves}. It follows from Lubin--Tate theory that the composite of~$\chi_\cyc$ with the local Artin map $\cO_v^\times\to I_v^\ab$ is equal to the \emph{inverse} of the norm map $\cO_v^\times\to\bZ_p^\times$; for instance this holds when~$K_v=\bQ_p$ by the explicit description on \cite[p40]{milne:cft}, and then holds in general by norm-compatibility of local Artin maps. So~$\eta_v$ agrees with the $r_v$th power of the norm character on an open subgroup of~$\cO_v^\times$.
		
		Now let us write
		\[
		\eta_p^\alg\colon(K\otimes_\bQ\bQ_p)^\times=\prod_{v\mid p}K_v^\times\to\bQ_p^\times
		\]
		for the product of the maps $\rN_{K_v|\bQ_p}^{r_v}\colon K_v^\times\to\bQ_p^\times$. So, by construction, $\eta_p^\alg$ agrees with the product of the local characters~$\eta_v$ for $v\mid p$ on an open neighbourhood of~$1\in(K\otimes\bQ_p)^\times$. From the first condition above and the fact that~$\eta$ vanishes on~$K^\times\subset\bA_K^\times$, we deduce the following regarding~$\eta_p^\alg$:
		\begin{enumerate}[label=\roman*),ref=(\roman*)]
			\item $\eta_p^\alg(\alpha)=1$ for all~$\alpha$ in a finite-index subgroup of~$\cO_K^\times\subset(K\otimes\bQ_p)^\times$; and
			\item\label{condn:weight_for_eta} for~$\alpha\in\cO_K$, $\alpha \not= 0$, not divisible by any prime in~$S$, we have that~$\eta_p^\alg(\alpha)$ is algebraic over~$\bQ$, and $|\iota(\eta_p^\alg(\alpha))|=|\rN_{K|\bQ}(\alpha)|^{n/2}$ for every complex embedding~$\iota\colon\bQ_p\hookrightarrow\bC$.
		\end{enumerate}
		
		Now the character~$\eta_p^\alg$ above is algebraic, meaning that it is the map on~$\bQ_p$-points of a homomorphism
		\[
		\eta_p^\alg\colon\left(\Res^K_{\bQ}\bG_{m,K}\right)_{\bQ_p}=\prod_{v\mid p}\Res^{K_v}_{\bQ_p}\bG_{m,K_v}\to\bG_{m,\bQ_p}
		\]
		of tori over~$\bQ_p$. The fact that~$\eta_p^\alg$ vanishes on an open subgroup of~$\cO_K^\times$ ensures that it factors over~$\bS_{K,\bQ_p}$ where~$\bS_K$ is the Serre torus: the quotient of $\Res^K_\bQ\bG_{m,K}$ by the identity component of the Zariski-closure\footnote{Here we are implicitly using that Zariski-closures are stable under base extension. That is, suppose that~$F'/F$ is an extension of fields, $X$ is a variety over~$F$ and $X_0\subseteq X(F)$ is a subset of the~$F$-rational points of~$X$. Write~$Z\subseteq X$ and $Z'\subseteq X_{F'}$ for the Zariski-closures of~$X_0$ in~$X$ and~$X_{F'}$, respectively. Then~$Z'=Z_{F'}$.} of~$\cO_K^\times$.
		
		We conclude using results of Serre on the structure of~$\bS_K$. Suppose firstly that~$K$ has no CM subfield. Then the norm map~$\rN_{K|\bQ}\colon\bS_K\to\bS_\bQ=\bG_{m,\bQ}$ is an isogeny \cite[II-34]{serre:reps_and_elliptic_curves}. So, replacing~$\chi$ by a power if necessary we may assume that~$\eta_p^\alg=\rN_{K|\bQ}^m$ is a power of the global norm character. Now on the one hand, the fact that~$\eta_p^\alg$ is the product of the local norm characters means that $r_v=m$ for all $p$-adic places~$v$. On the other hand, condition~\ref{condn:weight_for_eta} above implies that~$m=n/2$. Hence we have~$r_v=n/2$ for all $p$-adic places~$v$ and we are done.
		
		Suppose instead that~$K$ has a CM subfield, and write~$E$ for the maximal CM subfield of~$K$ and $E^+$ for the maximal totally real subfield of~$E$. Then the norm map~$\rN_{K|E}\colon\bS_K\to\bS_E$ is an isogeny, as is the norm map~$\rN_{E^+|\bQ}\colon\bS_{E^+}\to\bS_\bQ=\bG_{m,\bQ}$ by \cite[II-34]{serre:reps_and_elliptic_curves}. So, replacing~$\chi$ by a power if necessary, we may assume that~$\eta_p^\alg$ factors as $\eta^\alg_0\circ\rN_{K|E}$ for some character~$\eta_0^\alg\colon\bS_{E,\bQ_p}\to\bG_{m,\bQ_p}$, and that $\eta_0^\alg\bar\eta_0^\alg=\rN_{E|\bQ}^m$ is a power of the norm character, where~$\bar\eta_0^\alg$ denotes the composite of~$\eta_0^\alg$ and the conjugation on~$E$. On the one hand, the definition of~$\eta_p^\alg$ ensures that~$r_v+r_{v'}=m$ whenever~$v$ and~$v'$ are conjugate $p$-adic places of~$K$. On the other hand, condition~\ref{condn:weight_for_eta} implies that~$m=n$, so $r_v+r_{v'}=n$ and we are done.
	\end{proof}
\end{prop}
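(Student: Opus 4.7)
\medskip

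\noindent\textbf{Proof plan.} The plan is to translate the local Galois-theoretic condition on $\chi$ into an assertion about idele class characters via global class field theory, and then to appeal to Serre's structural results on the connected Serre torus $\bS_K$. Let $\eta\colon\bA_K^\times\to\bQ_p^\times$ be the idele class character corresponding to $\chi$, with local components $\eta_u\colon K_u^\times\to\bQ_p^\times$. I would first extract two facts: at every place $u\notin S$ the character $\eta_u$ is trivial on $\cO_u^\times$ and sends a uniformiser $\varpi_u$ to a $q_u$-Weil number of weight $-n$; and at each $p$-adic place $v$, the character $\eta_v$ agrees on some open subgroup of $\cO_v^\times$ with $\rN_{K_v/\bQ_p}^{r_v}$. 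The second point comes from combining Sen's theorem (the Hodge--Tate condition pins down $\chi|_{G_v}$ on inertia up to the $(-r_v)$th power of the cyclotomic character) with the fact that the local Artin map identifies the restriction of $\chi_\cyc$ to $\cO_v^\times$ with the inverse of the norm to $\bZ_p^\times$ (via Lubin--Tate theory).

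Next I would introduce the algebraic character $\eta_p^\alg\colon \prod_{v\mid p}\Res^{K_v}_{\bQ_p}\bG_{m,K_v}\to\bG_{m,\bQ_p}$ given by $\prod_{v\mid p}\rN_{K_v/\bQ_p}^{r_v}$. By construction $\eta_p^\alg$ agrees with $\prod_{v\mid p}\eta_v$ in an open neighbourhood of $1$. Using that $\eta$ kills $K^\times$, together with the formula for $\eta_u$ on integers coprime to $S$, I obtain:
\begin{enumerate}[label=(\roman*),leftmargin=*]
    \item $\eta_p^\alg$ is trivial on a finite-index subgroup of $\cO_K^\times$, so it descends to a homomorphism of tori $\bS_K\to\bG_{m,\bQ_p}$;
    \item for every nonzero $\alpha\in\cO_K$ coprime to $S$, the value $\eta_p^\alg(\alpha)$ is algebraic and satisfies $|\iota(\eta_p^\alg(\alpha))|=|\rN_{K/\bQ}(\alpha)|^{n/2}$ for every complex embedding $\iota\colon\bQ_p\hookrightarrow\bC$, the latter because the product of archimedean absolute values of $\eta(\alpha)$ must cancel those at finite places.
\end{enumerate}

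Now I would apply Serre's description of $\bS_K$. If $K$ has no CM subfield, then $\rN_{K/\bQ}\colon\bS_K\to\bG_{m,\bQ}$ is an isogeny, so after replacing $\chi$ by a sufficiently divisible power (which rescales $n$ and every $r_v$ by the same integer and is therefore harmless for the sought equality) one may assume $\eta_p^\alg=\rN_{K/\bQ}^m$. Comparing to the definition $\eta_p^\alg=\prod_v\rN_{K_v/\bQ_p}^{r_v}$ forces $r_v=m$ for all $v\mid p$, while condition (ii) above forces $m=n/2$, giving the result (which in this case is stronger). In the CM case, let $E\subseteq K$ be the maximal CM subfield and $E^+\subseteq E$ its maximal totally real subfield; Serre shows that $\rN_{K/E}\colon\bS_K\to\bS_E$ is an isogeny and so is $\rN_{E^+/\bQ}\colon\bS_{E^+}\to\bG_{m,\bQ}$. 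After passing to a power of $\chi$, one may therefore write $\eta_p^\alg=\eta_0^\alg\circ\rN_{K/E}$ with $\eta_0^\alg\cdot\bar\eta_0^\alg=\rN_{E/\bQ}^m$, where the bar denotes precomposition with complex conjugation on $E$. The factorisation through $\rN_{K/E}$ forces $r_v=r_{v'}$ whenever $v,v'$ have the same restriction to $E$, while the relation $\eta_0^\alg\cdot\bar\eta_0^\alg=\rN_{E/\bQ}^m$ yields $r_v+r_{v'}=m$ for conjugate places, and condition (ii) pins down $m=n$.

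The main obstacle I anticipate is the CM case: one has to combine global CFT, Sen's theorem, and Serre's description of $\bS_K$ cleanly so that the algebraic identity on characters of $\bS_E$ translates into the additive identity $r_v+r_{v'}=n$ on Hodge--Tate weights. The self-conjugate case is then immediate on setting $v'=v$.
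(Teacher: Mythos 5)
Your proposal is correct and follows essentially the same route as the paper's proof: global class field theory plus Sen/Serre--Tate to pin down the local components, the algebraic character $\eta_p^\alg$ factoring through the Serre torus $\bS_K$, and Serre's isogeny results to split into the no-CM and CM cases, with $\eta_0^\alg\bar\eta_0^\alg=\rN_{E|\bQ}^m$ yielding $r_v+r_{v'}=m=n$. The only quibble is your parenthetical justification of the absolute-value condition (ii): the archimedean components of $\eta$ are essentially trivial (finite order), and the cancellation is between the $p$-adic components and the components at finite places outside~$S$, where the Weil-number condition enters --- but this is a minor slip of phrasing that does not affect the argument.
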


\begin{rmk}
	The conditions of Proposition~\ref{prop:friendly_characters} impose very strong restrictions on the character~$\chi$: if~$v$ is self-conjugate, then we have $\chi|_{G_v}=\chi_0\cdot\chi_\cyc^{-n/2}$ where~$\chi_0$ is a finite-order character of~$G_v$ (cf.\ the statement of \cite[Lemma~2.8]{LVinventiones}). Although we will not need this finer statement in what follows, we indicate how to deduce this from Proposition~\ref{prop:friendly_characters} as stated.
	
	Replacing~$\chi$ by a twist by a power of the cyclotomic character~$\chi_\cyc$, it suffices to prove this in the case~$n=0$. If~$K$ contains no CM subfield, then we have~$r_v=0$ for all~$v\mid p$, so that the idele class character~$\eta$ is only finitely ramified at all places of~$K$, including $p$-adic places. It follows that~$\eta$ factors through a ray class group of~$K$, so~$\eta$ is a finite-order character. Thus~$\chi$ itself is a finite-order character.
	
	If instead~$K$ contains a CM subfield, we write~$E$ for its maximal CM subfield and~$\eta_0$ for the restriction of~$\eta$ to~$\bA_E^\times\subseteq\bA_K^\times$. The local component of~$\eta_0$ at a $p$-adic place~$v_0$ of~$E$ agrees, in a neighbourhood of~$1\in E_{v_0}^\times$, with the $r_{v_0}$th power of the norm map~$\rN_{E_{v_0}|\bQ_p}$, where~$r_{v_0}\colonequals \sum_{v\mid v_0}[K_v:E_{v_0}]r_v$. If~$v_0'$ is the place of~$E$ conjugate to~$v_0$, then we have $r_{v_0}+r_{v_0'}=0$ courtesy of Proposition~\ref{prop:friendly_characters}, and hence~$\eta_0\bar\eta_0$ is finitely ramified at every place of~$E$, where~$\bar\eta_0$ is the composite of~$\eta_0$ with the conjugation on~$E$. As before, this implies that~$\eta_0\bar\eta_0$ is a finite-order character of~$\bA_E^\times$. In particular, the restriction of~$\eta_v^2$ to~$E_{v_0}^\times$ is a finite-order character. Since~$\eta_v$ is finitely ramified, this implies that~$\eta_v$ is a finite-order character, and so is~$\chi|_{G_v}$.
\end{rmk}

\begin{cor}[cf.\ {\cite[Lemmas~2.9~and~2.10]{LVinventiones}}]\label{cor:friendly_representations}
Let $L/K$ be a finite extension and~$V$ a representation of~$G_L$ which is de Rham at all places of~$L$ above~$p$ and unramified and pure of weight $n\in\bZ$ outside a finite set of places of~$L$. Let~$v$ be a self-conjugate place of~$K$ above~$p$. Then
\[
\sum_{w\mid v}[L_w:K_v]\left(\sum_{i\in\bZ}i\dim_{L_w}\gr_{\rF}^i\sD_{\deR,w}(V)\right) = \frac{n\cdot[L:K]\cdot\dim_{\bQ_p}(V)}{2}\,,
\]
where the summation runs through places~$w$ of~$L$ dividing~$v$, and $\sD_{\deR,w}(V) \colonequals  \sD_{\deR}(V|_{G_w})$ as filtered $L_w$-vector spaces.
\end{cor}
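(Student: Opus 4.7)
The plan is to reduce the claim to the one-dimensional Proposition~\ref{prop:friendly_characters}, applied to the character $\chi \colonequals \det(W)$ attached to the induced Galois representation $W \colonequals \Ind_{G_L}^{G_K} V$ of $G_K$. First, I would verify that $\chi$ satisfies the hypotheses of Proposition~\ref{prop:friendly_characters}, namely that $\chi$ is unramified and pure of some weight outside a finite set of places of $K$ and de Rham at all $p$-adic places of $K$. For any place $v$ of $K$, Mackey's formula provides the decomposition
\[
W|_{G_v} \cong \bigoplus_{w \mid v} \Ind_{G_w}^{G_v}(V|_{G_w}),
\]
which I would use to transport the hypotheses from $V$ to $W$. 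For $v$ outside a finite set, the Frobenius eigenvalues on $\Ind_{G_w}^{G_v}(V|_{G_w})$ are $[L_w:K_v]$-th roots of those of $\Frob_w$ on $V|_{G_w}$; since $q_w = q_v^{[L_w:K_v]}$, these are $q_v$-Weil numbers of weight $n$. Thus $W$ is unramified and pure of weight $n$ outside a finite set, and so $\chi$ is unramified and pure of weight $n \cdot [L:K] \cdot \dim_{\bQ_p}(V)$.

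For the de Rham condition at a $p$-adic place $v$, I would invoke Frobenius reciprocity (for $\sB_\deR$-valued homomorphisms) to produce a canonical isomorphism
\[
\sD_\deR\bigl(\Ind_{G_w}^{G_v}(U)\bigr) \cong \sD_\deR(U)
\]
of filtered $K_v$-vector spaces for any representation $U$ of $G_w$, where the $L_w$-vector space $\sD_\deR(U)$ is regarded as a filtered $K_v$-vector space by restriction of scalars. A dimension count using $\dim_{\bQ_p}\Ind_{G_w}^{G_v}(U) = [L_w:K_v] \cdot \dim_{\bQ_p}(U)$ shows that $\Ind_{G_w}^{G_v}(U)$ is de Rham whenever $U$ is. Hence $W|_{G_v}$, and therefore also $\chi|_{G_v}$, is de Rham. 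Proposition~\ref{prop:friendly_characters} then applies at the self-conjugate place $v$ and yields
\[
r_v = \frac{n \cdot [L:K] \cdot \dim_{\bQ_p}(V)}{2}
\]
for the Hodge--Tate weight $r_v$ of $\chi|_{G_v}$.

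Finally, I would match $r_v$ with the left-hand side of the claimed formula. For any de Rham representation $V'$ of $G_v$, the single Hodge--Tate weight of $\det(V')$ equals the sum (with multiplicities) of the Hodge--Tate weights of $V'$, namely $\sum_i i \cdot \dim_{K_v} \gr_\rF^i \sD_\deR(V')$. Applying this to $V' = W|_{G_v}$ and combining the Mackey decomposition with the identification $\sD_\deR(\Ind_{G_w}^{G_v}(V|_{G_w})) \cong \sD_{\deR,w}(V)$ of filtered $K_v$-vector spaces gives
\[
r_v = \sum_{w \mid v} \sum_{i \in \bZ} i \cdot \dim_{K_v} \gr_\rF^i \sD_{\deR,w}(V) = \sum_{w \mid v} [L_w:K_v] \sum_{i \in \bZ} i \cdot \dim_{L_w} \gr_\rF^i \sD_{\deR,w}(V),
\]
where the second equality uses $\dim_{K_v} = [L_w:K_v] \cdot \dim_{L_w}$. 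Comparing the two expressions for $r_v$ then yields the stated identity. The only genuine technical point is the compatibility of the induction isomorphism with Hodge filtrations, which is where the factor $[L_w:K_v]$ in the final formula originates; everything else is routine bookkeeping.
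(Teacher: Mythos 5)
Your proposal is correct and follows essentially the same route as the paper's proof: induce $V$ to $G_K$, apply Proposition~\ref{prop:friendly_characters} to $\det(\Ind_{G_L}^{G_K}V)$, and compute the local Hodge--Tate weights via the Mackey decomposition and the Frobenius-reciprocity identification $\sD_{\deR,v}(\Ind_{G_w}^{G_v}(V|_{G_w}))\cong\sD_{\deR,w}(V)$ as filtered $K_v$-vector spaces. The only (inessential) difference is that you check purity of the induced representation via the eigenvalue-root argument at unramified places, whereas the paper restricts to an open subgroup where the induction splits; both are fine.
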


\begin{proof}
We fix an identification $\Lbar \cong \Kbar$ to view~$G_L$ as a subgroup of~$G_K$.
The induced representation $W =  \Ind_{G_L}^{G_K}(V)$ is again unramified outside a finite set of places of $K$, and it is pure of the same weight $n$ (weights can be read off after restriction to open subgroups of $G_K$, and for a suitable such the induced representation will decompose as a direct sum of conjugates of the restriction of $V$). 

For a place $w \mid v$ above $p$ we also fix an identification $\Lbar_w \cong \Kbar_v$ to view~$G_w$ as a subgroup of~$G_v$. The local induced representation $W_w = \Ind_{G_w}^{G_v}(V)$ has
\[
\sD_{\deR,w}(V) = (\sB_\deR(\Lbar_w)\otimes_{\bQ_p}V)^{G_w} \cong (\sB_\deR(\Kbar_v)\otimes_{\bQ_p} W_w)^{G_v} = \sD_{\deR,v}(W_w)
\]
as filtered $K_v$-vector spaces. Since $W \simeq \bigoplus_{w \mid v} W_w$ as $G_v$-representations, the representation $W$ is also de Rham at all places above $p$. The Hodge--Tate weight $i$ occurs with multiplicity 
\[
\dim_{K_v} \Big(\gr_{\rF}^i\sD_{\deR,v}(W)\Big) 
= \sum_{w \mid v} [L_w:K_v] \cdot \dim_{L_w} \Big(\gr_{\rF}^i\sD_{\deR,w}(V)\Big).
\]

We now apply Proposition~\ref{prop:friendly_characters} to the character $\chi = \det(W)$. The character $\chi$ is unramified and pure of weight $n\cdot[L:K]\cdot\dim_{\bQ_p}(V)$ outside a finite set of places of $K$. It is also de Rham at places above $p$ and the Hodge-Tate weight at $v$ is 
\[
\sum_{i \in \bZ}  i\dim_{K_v}  \Big(\gr_{\rF}^i\sD_{\deR,v}(W)\Big) = \sum_{w\mid v}[L_w:K_v]\left(\sum_{i\in\bZ}i\dim_{L_w}  \Big(\gr_{\rF}^i\sD_{\deR,w}(V)\Big)\right).  \qedhere
\]
\end{proof}
\section{Preliminaries: families of Galois representations and $v$-adic period maps}\label{s:reps_in_families}

The method of Lawrence--Venkatesh revolves around the study of families of Galois representations arising from smooth proper morphisms $\pi\colon X\to Y$ of smooth
varieties. In this section, we recall how the local representations associated to local points $y\in Y(K_v)$ can be controlled using the theory of $v$-adic period maps. Our presentation here differs significantly from that in \cite{LVinventiones}, in that we do not make any good reduction assumptions on the family $X\to Y$ and treat the period map from a purely analytic perspective, without reference to any model. This is most evident in the proof we give that period maps control the variation of local Galois representations (Theorem~\ref{thm:period_maps_control_reps}), where we must forgo crystalline cohomology as a bridge between \'etale and de Rham cohomology, and must instead use tools from relative $p$-adic Hodge theory as developed by Scholze~\cite{scholze:relative_p-adic_hodge_theory} and Shimizu~\cite{shimizu:p-adic_monodromy}.

\subsection{Period maps}\label{ss:period_maps}

Suppose that~$Y$ is a smooth $K_v$-variety, and let~$(\dE,\nabla)$ be a vector bundle\footnote{A vector bundle on a rigid-analytic space~$S$ over~$K_v$ is an~$\cO_S$-module which is locally finite free. Here, the implicit topology can equivalently be taken to be the analytic, \'etale or pro-\'etale topology \cite[Lemma~7.3]{scholze:relative_p-adic_hodge_theory}.} with flat connection on the rigid analytification~$Y^\an$ (e.g.\ the analytification of an algebraic vector bundle on~$Y$ with flat connection). Suppose that~$\Nbd_{y_0}\subseteq Y^\an$ is an admissible open neighbourhood of a point $y_0\in Y(K_v)$, such that $\rH^0_\deR(\Nbd_{y_0}/K_v)=K_v$; for instance, $\Nbd_{y_0}$ could be isomorphic to a closed polydisc or a closed polyannulus. We say that~$(\dE,\nabla)$ has a \emph{full basis of horizontal sections} over~$\Nbd_{y_0}$ just when $(\dE,\nabla)|_{\Nbd_{y_0}}\simeq(\cO_{\Nbd_{y_0}},\rd)^{\oplus m}$ for some~$m$. The vector bundle $(\dE,\nabla)$ always admits a full basis of horizontal sections over a sufficiently small neighbourhood of~$y_0$ \cite[Theorem~9.7]{shimizu:p-adic_monodromy}.

When~$(\dE,\nabla)$ has a full basis of horizontal sections over~$\Nbd_{y_0}$, then there is a \emph{canonical} isomorphism
\[
T^\nabla_{y_0}\colon(\cO_{\Nbd_{y_0}}\otimes_{K_v}\dE_{y_0},\rd\otimes1)\xrightarrow\sim(\dE,\nabla)|_{\Nbd_{y_0}} \,,
\]
characterised by the fact that the fibre of~$T^\nabla_{y_0}$ at~$y_0$ is the identity map. The fibre of $T^\nabla_{y_0}$ at another point $y\in\Nbd_{y_0}(K_v)$ is an isomorphism $T^\nabla_{y_0,y}\colon \dE_{y_0}\xrightarrow\sim \dE_y$ from the fibre over~$y_0$ to the fibre over~$y$, known as the \emph{parallel transport} map.

Using parallel transport, one can define the period map associated to a filtered vector bundle with flat connection.

\begin{defi}\label{def:period_maps}
Let~$Y$ be a smooth variety over~$K_v$, and let $(\dE,\nabla)$ be a vector bundle with flat connection on~$Y^\an$ endowed with an exhaustive, separated descending filtration 
\[
\dots\geq\rF^{-1}\!\dE\geq\rF^0\!\dE\geq\rF^{1}\!\dE\geq\dots
\]
(not necessarily stable\footnote{In practice, the filtrations appearing will all satisfy Griffiths transversality with respect to the connection, but this is not necessary to define the period map.} under the connection~$\nabla$) whose graded pieces are all vector bundles.

For $y_0\in Y(K_v)$, we define the \emph{period domain} to be the flag variety~$\dG_{y_0}$ parametrising filtrations on~$\dE_{y_0}$ with the same dimension data as~$\rF^\bullet\!\dE_{y_0}$. If~$\Nbd_{y_0}\subseteq Y^\an$ is an admissible open neighbourhood of~$y_0$ such that~$\rH^0_\deR(\Nbd_{y_0}/K_v)=K_v$ and~$(\dE,\nabla)$ has a full basis of horizontal sections over~$\Nbd_{y_0}$, then we define the ($v$-adic) \emph{period map} to be the $K_v$-analytic map
\[
\Phi_{y_0}\colon\Nbd_{y_0}\to\dG_{y_0}^\an
\]
classifying the filtration on $\cO_{\Nbd_{y_0}}\otimes_{K_v}\dE_{y_0}$ given by pulling back the filtration~$\rF^\bullet\!\dE$ along the parallel transport map~$T_{y_0}^\nabla$. Concretely, if $y\in\Nbd_{y_0}(K_v)$, then $\Phi_{y_0}(y)$ is the $K_v$-point of $\dG_{y_0}$ corresponding to the filtration on~$\dE_{y_0}$ given by pulling back the filtration on~$\dE_y$ along the parallel transport map $T^\nabla_{y_0,y}$.
\end{defi}

Implicitly in this definition, we have used the following description of the analytification of the flag variety~$\dG_{y_0}$.

\begin{prop}\label{prop:analytic_flag_variety}
Let~$V$ be a finite-dimensional~$K_v$-vector space, and let $(d_i)_{i\in\bZ}$ be non-negative integers summing to~$\dim_{K_v}V$. Let~$\dG/K_v$ denote the flag variety parametrising filtrations on~$V$ with dimension data $(d_i)_{i\in\bZ}$. Then the rigid analytification $\dG^\an$ represents the functor from $K_v$-analytic spaces to sets
\[
S\mapsto \left\{
\begin{array}{c}
\text{exhaustive, separated filtrations $\rF^\bullet$ on $\cO_S\otimes_{K_v}V$ such that } \\
\text{each $\gr_{\rF}^i(\cO_S\otimes_{K_v}V)$ is a vector bundle of rank $d_i$}
\end{array}
\right\}.
\]
\begin{proof}
	We may assume~$V=K_v^{\oplus m}$. By a \emph{framing} of a filtration~$\rF^\bullet$ on~$\cO_S^{\oplus m}$ ($S$ a $K_v$-scheme) we mean a frame~$f_1,\dots,f_m$ of~$\cO_S^{\oplus m}$ which is adapted to~$\rF^\bullet$, meaning that~$\rF^i\!\cO_S^{\oplus m}$ is the span of~$f_1,\dots,f_{\sum_{j\geq i}d_j}$ for all~$i$. Since the filtration~$\rF^\bullet$ is uniquely determined by the framing, we see that the functor
	\[
	S\mapsto\{\text{framed filtrations~$\rF^\bullet$ on~$\cO_S^{\oplus m}$}\}
	\]
	is represented by~$\GL_m$. Moreover, two framed filtrations have the same underlying filtration if and only if their corresponding elements of~$\GL_m(S)$ differ by the right action of an $S$-point of the parabolic subgroup~$\rP\leq\GL_m$ consisting of the block upper-triangular matrices with block sizes~$d_i$. It follows from this description that$~\GL_m$ is a~$\rP$-torsor over~$\dG$, locally trivial in the Zariski topology.
	
	Now since analytification commutes with products~\cite[Example~4.3.3(4)]{fresnel-van_der_put:rigid_analytic_geometry}, it follows that~$\GL_m^\an$ is a group in the category of rigid spaces, $\rP^\an$ is a subgroup, and that the right-multiplication action makes~$\GL_m^\an$ into a~$\rP^\an$-torsor over~$\dG^\an$, locally trivial in the analytic topology. In particular, we see that~$\dG^\an$ represents the sheafification in the analytic topology of the functor
	\begin{equation}\label{eq:flag_presheaf}\tag{$\ast$}
	S \mapsto \GL^\an_m(S)/\rP^\an(S) \,.
	\end{equation}
	By definition of analytification, we see as in the algebraic setting that~$\GL_m^\an(S)$ is canonically in bijection with the set of framed filtrations on~$\cO_S^{\oplus m}$ (defined in the obvious way), and that two points differ by the action of an element of~$\rP^\an(S)$ if and only if they have the same underlying filtration. So~\eqref{eq:flag_presheaf} is the functor taking~$S$ to the set of framable filtrations on~$\cO_S^{\oplus m}$, which sheafifies to the claimed functor.
\end{proof}
\end{prop}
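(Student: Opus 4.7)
The plan is to reduce the statement to a computation with the right action of the stabiliser parabolic on $\GL(V)$, mirroring the standard algebraic description of $\dG$ as a homogeneous space and then transferring this to the analytic setting via the compatibility of analytification with fibre products and locally trivial torsors. Choosing a basis identifies $V \cong K_v^{\oplus m}$ with $m = \dim_{K_v} V$, and then $\dG$ is the quotient $\GL_m / \rP$ where $\rP \leq \GL_m$ is the block upper-triangular parabolic subgroup with block sizes $d_i$.

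First I would introduce the notion of a \emph{framed filtration}: a filtration $\rF^\bullet$ on $\cO_S^{\oplus m}$ together with an ordered frame $f_1, \dots, f_m$ of $\cO_S^{\oplus m}$ such that each $\rF^i$ is generated by an appropriate initial segment of the $f_j$'s (the lengths of these segments being determined by the $d_i$). The functor of framed filtrations is visibly represented by $\GL_m$ (with the $j$-th column of a matrix being $f_j$), both algebraically and analytically; two framings yield the same underlying filtration if and only if they differ by a right $\rP$-action. This exhibits $\GL_m \to \dG$ as a right $\rP$-torsor, and local triviality in the Zariski topology follows from explicit sections over the open subvarieties cut out by non-vanishing of suitable minors.

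Next I would transfer this to the analytic category. Since analytification commutes with fibre products, $\GL_m^\an$ is a rigid-analytic group, $\rP^\an$ a closed subgroup, and the algebraic local trivialisation of $\GL_m \to \dG$ analytifies to an analytic local trivialisation exhibiting $\GL_m^\an \to \dG^\an$ as a right $\rP^\an$-torsor in the analytic topology. Consequently $\dG^\an$ represents the sheafification (in the analytic topology) of the presheaf $S \mapsto \GL_m^\an(S)/\rP^\an(S)$, which by the previous paragraph is precisely the presheaf of \emph{framable} filtrations with the prescribed graded ranks.

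The remaining point, which I expect to be the only genuine content beyond the formalism, is to show that every filtration $\rF^\bullet$ satisfying the hypotheses of the proposition admits a framing locally in the analytic topology; once this is established, the sheafification identifies with the target functor and the proof is complete. This local framability reduces to the fact that vector bundles on a sufficiently small affinoid are free, applied inductively along the filtration: one lifts a local frame of each graded piece $\gr_\rF^i$ and uses local splittings of the short exact sequences $0 \to \rF^{i+1} \to \rF^i \to \gr_\rF^i \to 0$ to assemble these into a compatible frame of $\cO_S^{\oplus m}$ on a suitable admissible open cover.
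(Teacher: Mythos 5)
Your proposal follows the same route as the paper's proof: represent framed filtrations by $\GL_m$, exhibit $\GL_m\to\dG$ as a Zariski-locally trivial right $\rP$-torsor, analytify to get an analytically locally trivial $\rP^\an$-torsor, and identify $\dG^\an$ with the analytic sheafification of $S\mapsto\GL_m^\an(S)/\rP^\an(S)$. Your final paragraph spelling out local framability (lifting frames of the graded pieces and splitting the filtration steps over a suitable admissible cover) makes explicit the step the paper compresses into ``which sheafifies to the claimed functor,'' and is a correct justification of it.
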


The importance of period maps is that they give a concrete description of how Galois representations vary in families. This is true in a great level of generality, but for our purposes, it suffices to consider only those families of Galois representations arising from smooth proper families of varieties. Consider a smooth proper morphism~$\pi\colon X\to Y$ of smooth $K_v$-varieties, and let~$\cH^i_\deR(X/Y)$ denote the $i$th relative de Rham cohomology of~$X$ over~$Y$ \cite[\S2]{katz-oda:relative_de_rham}. This is a vector bundle on~$Y$ endowed with a Hodge filtration~$\rF^\bullet$ and a flat Gau\ss--Manin connection~$\nabla$ \cite[Theorem~1]{katz-oda:relative_de_rham}. For any point~$y\in Y(K_v)$, the fibre of~$\cH^i_\deR(X/Y)^\an$ at~$y$ is canonically identified with the (algebraic) de Rham cohomology~$\rH^i_\deR(X_y/K_v)$ of the fibre~$X_y$, via the base change theorem for de Rham cohomology. If~$\Nbd\subseteq Y^\an$ is an admissible open subset, isomorphic to a closed polydisc, over which the analytification of~$\cH^i_\deR(X/Y)$ has a full basis of horizontal sections, and $y_0,y\in U(K_v)$, then by mild abuse of notation we will denote the composite
\[
\rH^i_\deR(X_{y_0}/K_v) \cong \cH^i_\deR(X/Y)^\an_{y_0} \xrightarrow[\sim]{T_{y_0,y}^\nabla} \cH^i_\deR(X/Y)^\an_y \cong \rH^i_\deR(X_y/K_v)
\]
also by~$T_{y_0,y}^\nabla$, the unlabelled isomorphisms being the base-change isomorphisms for de Rham cohomology.

\begin{thm}\label{thm:period_maps_control_reps}
Let $\pi\colon X\to Y$ be a smooth proper morphism of smooth $K_v$-varieties, and let $\Nbd\subseteq Y^\an$ be an admissible open subset, isomorphic to a closed polydisc, over which $\cH^i_\deR(X/Y)^\an$ has a full basis of horizontal sections for the Gau\ss--Manin connection. Then for every $y_0,y\in\Nbd(K_v)$, there is a unique isomorphism
\[
T_{y_0,y}\colon\sD_\pst\left(\rH^i_\et(X_{y_0,\Kbar_v},\bQ_p)\right) \xrightarrow\sim \sD_\pst\left(\rH^i_\et(X_{y,\Kbar_v},\bQ_p)\right)
\]
of $(\varphi,N,G_v)$-modules making the following diagram commute:
\begin{center}
\begin{tikzcd}
	\Kbar_v\otimes_{\bQ_p^\nr}\sD_\pst\left(\rH^i_\et(X_{y_0,\Kbar_v},\bQ_p)\right) \arrow[r,"c_\BO","\sim"']\arrow[d,"1\otimes T_{y_0,y}"',"\wr"] & \Kbar_v\otimes_{K_v}\sD_\deR\left(\rH^i_\et(X_{y_0,\Kbar_v},\bQ_p)\right) \arrow[r,"c_\deR","\sim"'] & \Kbar_v\otimes_{K_v}\rH^i_\deR(X_{y_0}/K_v) \arrow[d,"1\otimes T_{y_0,y}^\nabla","\wr"'] \\
	\Kbar_v\otimes_{\bQ_p^\nr}\sD_\pst\left(\rH^i_\et(X_{y,\Kbar_v},\bQ_p)\right) \arrow[r,"c_\BO","\sim"'] & \Kbar_v\otimes_{K_v}\sD_\deR\left(\rH^i_\et(X_{y,\Kbar_v},\bQ_p)\right) \arrow[r,"c_\deR","\sim"'] & \Kbar_v\otimes_{K_v}\rH^i_\deR(X_y/K_v) \,.
\end{tikzcd}
\end{center}
\begin{proof}
	This will come in~\S\ref{ss:shimizu}.
\end{proof}
\end{thm}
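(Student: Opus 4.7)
The plan is to use relative $p$-adic Hodge theory to bundle the pointwise $(\varphi,N,G_v)$-modules $\sD_\pst\bigl(\rH^i_\et(X_{y,\Kbar_v},\bQ_p)\bigr)$ into a single geometric object over $\Nbd$ whose parallel transport matches the de Rham parallel transport under the comparison isomorphisms. First I would consider the $\bQ_p$-local system $\bL \colonequals R^i\pi_{\et*}\bQ_p$ on $Y^\an$; since $\pi$ is smooth and proper, the relative de Rham comparison theorem of Scholze \cite{scholze:relative_p-adic_hodge_theory} shows that $\bL$ is de Rham in the relative sense, with associated filtered vector bundle with flat connection $\bigl(\cH^i_\deR(X/Y)^\an,\nabla_{\GM}\bigr)$ on $Y^\an$ whose fibre at $y$ recovers $\sD_\deR\bigl(\rH^i_\et(X_{y,\Kbar_v},\bQ_p)\bigr)\cong\rH^i_\deR(X_y/K_v)$ via $c_\deR$.

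The central step would be to apply Shimizu's potential horizontal semistability theorem \cite{shimizu:p-adic_monodromy} to the restriction $\bL|_\Nbd$. This produces (possibly after an \'etale base change on $\Nbd$, which is harmless since $\Nbd$ is a polydisc) a vector bundle $\relH^i_\pst$ on $\Nbd$ equipped with a horizontal $\bQ_p^\nr$-semilinear Frobenius $\varphi$, a horizontal monodromy operator $N$, and a horizontal semilinear action of $G_v$, together with a flat connection $\nabla_\pst$, such that (a) the fibre of $\relH^i_\pst$ at each $y\in\Nbd(K_v)$ is canonically identified with the $(\varphi,N,G_v)$-module $\sD_\pst\bigl(\rH^i_\et(X_{y,\Kbar_v},\bQ_p)\bigr)$, and (b) after base-changing to an appropriate relative period sheaf, the connection $\nabla_\pst$ is identified with $\nabla_{\GM}$ via the composite $c_\BO\circ c_\deR$ of comparison isomorphisms.

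Given this, I would define $T_{y_0,y}$ as the parallel transport along $\nabla_\pst$. Because $\nabla_{\GM}$ admits a full basis of horizontal sections over $\Nbd$ by hypothesis, the intertwining property (b) forces the same for $\nabla_\pst$ (the presence of horizontal sections is detected after faithfully flat base change to the period sheaf), so parallel transport yields a canonical isomorphism
\[
T_{y_0,y}\colon\sD_\pst\bigl(\rH^i_\et(X_{y_0,\Kbar_v},\bQ_p)\bigr)\xrightarrow\sim\sD_\pst\bigl(\rH^i_\et(X_{y,\Kbar_v},\bQ_p)\bigr)
\]
which is $(\varphi,N,G_v)$-equivariant precisely because $\varphi$, $N$ and $G_v$ are horizontal for $\nabla_\pst$. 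The commutativity of the displayed diagram is then the assertion that the two parallel transports agree after base change to $\Kbar_v$ and passage through $c_\BO\circ c_\deR$, which is exactly the intertwining (b). Uniqueness of $T_{y_0,y}$ is automatic, since the horizontal arrows of the diagram are isomorphisms and the right vertical arrow is fixed, so the $\Kbar_v$-linear extension of $T_{y_0,y}$ is forced; the extension $\sD_\pst\hookrightarrow\Kbar_v\otimes_{\bQ_p^\nr}\sD_\pst$ being injective then pins down $T_{y_0,y}$ itself.

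The main obstacle will be the second step, namely extracting from Shimizu's framework the clean statement that the pointwise $\sD_\pst$ functor upgrades to a vector bundle on $\Nbd$ with horizontal $\varphi$, $N$, $G_v$-structures and a flat connection compatible with $\nabla_{\GM}$ under the period isomorphisms. This involves juggling Scholze's pro-\'etale period sheaves $\sO\bB_\deR$ and the semistable variants, invoking potential semistability to produce the $N$ and $G_v$ structures fibrewise, and checking that these structures glue analytically on $\Nbd$ rather than merely pro-\'etale-locally. Once this geometric input is secured, the rest of the argument, including the compatibility and uniqueness assertions, is essentially formal.
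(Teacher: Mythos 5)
Your outline follows the same route as the paper: use Scholze's relative comparison to make $(\rR^i\!\pi^\an_*\hat\bZ_{p},\cH^i_\deR(X/Y)^\an,c_\deR)$ a de Rham pair, then use Shimizu's potential horizontal semistability to produce a ``constant'' $(\varphi,N,G_v)$-structure interpolating the fibres, define $T_{y_0,y}$ by transport, and get uniqueness from injectivity of $\bQ_p^\nr\hookrightarrow\Kbar_v$ after scalar extension. But the step you defer as ``the main obstacle'' is exactly where the content of the theorem lies, and as you have formulated it it needs repair, not just elaboration. First, Shimizu's theory lives on spherical polyannuli, not closed polydiscs: the paper first reduces to a spherical polyannulus containing both $y_0$ and $y$ (translating coordinates away from their residue discs), with a separate argument through the unramified extensions of degrees $2$ and $3$ when the residue field is $\bF_2$. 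Your parenthetical that an \'etale base change on $\Nbd$ is ``harmless since $\Nbd$ is a polydisc'' is not the relevant mechanism: potential semistability is absorbed by the colimit over finite extensions $L_w/K_v$ in the definition of $\sD_\pst^\nabla$, and finite \'etale covers of a $p$-adic polydisc need not be trivial anyway.

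Second, what one actually extracts from Shimizu is not a vector bundle $\relH^i_\pst$ on $\Nbd$ with a flat connection having horizontal $\varphi$, $N$, $G_v$. The paper works with the representation $E$ of the arithmetic fundamental group $G_{R_v}$ of the polyannulus and the single $\bQ_p^\nr$-module $\sD_\pst^\nabla(E)=\varinjlim_w(\sB_\st^\nabla(R_v)\otimes E)^{G_{R_w}}$, which is constant by construction; the full-basis-of-horizontal-sections hypothesis is used precisely to invoke Shimizu's Lemma~8.9 and conclude that $E$ is potentially horizontally semistable, which is what makes the specialisation maps $\rho_{\tilde y}\colon\sD_\pst^\nabla(E)\to\sD_\pst(E_{\tilde y})$ isomorphisms, and then $T_{y_0,y}\colonequals\rho_{\tilde y}\circ\rho_{\tilde y_0}^{-1}$. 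So the hypothesis is not used to produce horizontal sections of a ``pst-connection,'' and your claim that such horizontality is detected after faithfully flat base change to the period sheaf is not how the argument closes. Finally, the commutativity of the rectangle is not a formal consequence of an intertwining statement: the paper compares the two horizontal $G_{R_v}$-equivariant trivialisations of $\sB_\deR(R_v)\otimes E$, uses $\sB_\deR(R_v)^{G_{R_v},\nabla=0}=K_v$ to see they differ by a matrix over $K_v$, and then specialises along $\tilde y_0^*$ to identify that matrix with the identity before specialising along $\tilde y^*$. That constants-plus-specialisation step is the concrete piece missing from your sketch.
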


\begin{rmk}\label{rmk:which_comparison_iso}
	There are several different definitions of the \'etale--de Rham comparison isomorphism~$c_\deR$ in the literature (e.g.\ \cite[Theorem~8.1]{faltings:crystalline_cohomology}, \cite[Theorem~A1]{tsuji:c_st_survey}), and the validity of Theorem~\ref{thm:period_maps_control_reps} depends, \emph{a priori}, on which definition is used. For the sake of clarity, the comparison isomorphism~$c_\deR$ for which we prove Theorem~\ref{thm:period_maps_control_reps} is the comparison isomorphism constructed by Scholze \cite[Corollary~1.8]{scholze:relative_p-adic_hodge_theory} in the setting of rigid analysis. Throughout this paper, all references to~$c_\deR$ should be taken to refer to this particular comparison isomorphism, whose construction we will spell out in~\S\ref{ss:c_dR}.
	
	Later on, we will need to know that the comparison isomorphism~$c_\deR$ is compatible with all of the natural constructions in cohomology: pullbacks (functoriality), cup products, K\"unneth decompositions, trace maps, Poincar\'e duality, pushforwards, cycle class maps and Chern classes. Although these compatibilities are known for some definitions of~$c_\deR$, e.g.\ \cite[Theorem~A1]{tsuji:c_st_survey}, it will be important that these hold for the above-mentioned~$c_\deR$, and so we will be forced to re-prove these compatibilities ourselves.
	
	Let us also remark that Nizio\l\ has shown in~\cite{niziol:too_many_comparison_isos} that many definitions of the comparison isomorphism~$c_\deR$ are equivalent to one another, but Scholze's definition is not among those covered by her work.
\end{rmk}

From now on, we adopt the notation as in Example~\ref{ex:p-adic_hodge_cohomology}. That is, if~$Z$ is a variety defined over a finite extension~$L_w$ of~$K_v$ contained in~$\Kbar_v$ we adopt the shorthand $\rH^\bullet_\pst(Z/\bQ_p^\nr) =\sD_\pst(\rH^\bullet_\et(Z_{\Lbar_w},\bQ_p))$, and have the $p$-adic Hodge cohomology
\[
\rH^\bullet_\pH(Z/L_w) = (\rH^\bullet_\pst(Z/\bQ_p^\nr),\rH^\bullet_\deR(Z/L_w),c_\deR\circ c_\BO) \,,
\]
which is a filtered discrete~$(\varphi,N,G_w)$-module. So $\rH^\bullet_\pH(Z/L_w)\cong\sD_{\pH,w}(\rH^\bullet_\et(Z_{\Kbar_v},\bQ_p))$.

When~$\pi\colon X\to Y$ is a smooth proper morphism of smooth $K_v$-varieties, Theorem~\ref{thm:period_maps_control_reps} does \emph{not} say that the isomorphism class of the filtered discrete~$(\varphi,N,G_v)$-module $\rH^i_\pH(X_y/K_v)$ is constant on the neighbourhood~$\Nbd_{y_0}$, because the parallel transport map $T_{y_0,y}^\nabla$ does not preserve Hodge filtrations in general. Instead, it says that the isomorphism class of $\rH^i_\pst(X_y/\bQ_p^\nr)$ is constant on~$\Nbd_{y_0}$ as a discrete~$(\varphi,N,G_v)$-module, and that the variation of the Hodge filtration on $\rH^i_\pH(X_y/K_v)$ is controlled by the $v$-adic period map.

We give a precise statement. If~$\Phi$ is a filtration on $\rH^i_\deR(X_{y_0}/K_v)$, we define a filtered discrete $(\varphi,N,G_v)$-module~$\MM^i(\Phi)$ by
\[
\MM^i(\Phi) \colonequals (\rH^i_\pst(X_{y_0}/\bQ_p^\nr),\rH^i_\deR(X_{y_0}/K_v),c_\deR\circ c_\BO) \,,
\]
where the $(\varphi,N,G_v)$-module structure on $\rH^i_\pst(X_{y_0}/\bQ_p^\nr)$ is the usual one, but the filtration on~$\rH^i_\deR(X_{y_0}/K_v)$ is given by~$\Phi$ instead of the Hodge filtration. The following then follows directly from Theorem~\ref{thm:period_maps_control_reps}.

\begin{prop}[Period maps control variation of local Galois representations]\label{prop:period_maps_control_reps}
Let $\pi\colon X\to Y$ be a smooth proper morphism of smooth $K_v$-varieties, let $y_0\in Y(K_v)$ be a $K_v$-rational point, and let $\Nbd_{y_0}\subseteq Y^\an$ be an admissible open neighbourhood of~$y_0$, isomorphic to a closed polydisc, over which $\cH^i_\deR(X/Y)^\an$ has a full basis of horizontal sections.

Then for every~$y\in\Nbd_{y_0}(K_v)$, there is an isomorphism
\[
\rH^i_\pH(X_y/K_v) \simeq \MM^i(\Phi_{y_0}(y))
\]
of filtered discrete~$(\varphi,N,G_v)$-modules.
\end{prop}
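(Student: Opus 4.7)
The plan is to read off the desired isomorphism directly from Theorem~\ref{thm:period_maps_control_reps}, using the definition of the period map. Recall that an isomorphism in $\MF(\varphi,N,G_v)$ is by definition a pair $(\alpha,\beta)$ consisting of an isomorphism $\alpha$ on the underlying $(\varphi,N,G_v)$-modules and an isomorphism $\beta$ on the underlying filtered $K_v$-vector spaces, compatible with the comparison isomorphisms after base change to $\Kbar_v$. Since $\MM^i(\Phi_{y_0}(y))$ and $\rH^i_\pH(X_{y_0}/K_v)$ share the same $\pst$-component $\rH^i_\pst(X_{y_0}/\bQ_p^\nr)$ and the same underlying $\deR$-vector space $\rH^i_\deR(X_{y_0}/K_v)$ (differing only in filtration), the natural candidate for an isomorphism $\rH^i_\pH(X_y/K_v)\xrightarrow\sim\MM^i(\Phi_{y_0}(y))$ has $\pst$-component $T_{y_0,y}^{-1}\colon\rH^i_\pst(X_y/\bQ_p^\nr)\xrightarrow\sim\rH^i_\pst(X_{y_0}/\bQ_p^\nr)$, with $T_{y_0,y}$ as in Theorem~\ref{thm:period_maps_control_reps}, and $\deR$-component the inverse parallel transport $(T^\nabla_{y_0,y})^{-1}\colon\rH^i_\deR(X_y/K_v)\xrightarrow\sim\rH^i_\deR(X_{y_0}/K_v)$.

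The first thing I would verify is that this pair really is compatible with the filtrations. Concretely, I need $(T^\nabla_{y_0,y})^{-1}$ to carry the Hodge filtration $\rF^\bullet\rH^i_\deR(X_y/K_v)$ to the filtration $\Phi_{y_0}(y)$ on $\rH^i_\deR(X_{y_0}/K_v)$. But this is immediate from Definition~\ref{def:period_maps}: the point $\Phi_{y_0}(y)\in\dG_{y_0}(K_v)$ is defined as precisely the pullback of $\rF^\bullet\rH^i_\deR(X_y/K_v)$ along $T^\nabla_{y_0,y}$, applied to the Hodge-filtered bundle $\cH^i_\deR(X/Y)^\an$. Hence $(T^\nabla_{y_0,y})^{-1}$ is a filtered isomorphism to $(\rH^i_\deR(X_{y_0}/K_v),\Phi_{y_0}(y))$ essentially by fiat.

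The second thing I would check is compatibility with the comparison isomorphism $c_\deR\circ c_\BO$ after base change to $\Kbar_v$. But this is, upon inverting all vertical arrows, exactly the commutativity of the square of Theorem~\ref{thm:period_maps_control_reps}: the left-hand square states compatibility with $c_\BO$ and the right-hand square states compatibility with $c_\deR$. Combining the two gives compatibility with $c_\deR\circ c_\BO$, which is the structural isomorphism in both $\rH^i_\pH(X_y/K_v)$ and $\MM^i(\Phi_{y_0}(y))$. Thus $(T_{y_0,y}^{-1},(T^\nabla_{y_0,y})^{-1})$ is an isomorphism in $\MF(\varphi,N,G_v)$, as required. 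There is no real obstacle here: the content of the proposition is entirely concentrated in Theorem~\ref{thm:period_maps_control_reps}, and what remains is pure bookkeeping, hinging on the observation that the filtration $\Phi_{y_0}(y)$ was defined to make parallel transport a filtered map.
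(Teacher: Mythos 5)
Your proposal is correct and is exactly the deduction the paper intends: the paper simply asserts that the proposition ``follows directly from Theorem~\ref{thm:period_maps_control_reps}'', and your bookkeeping with the pair $(T_{y_0,y}^{-1},(T^\nabla_{y_0,y})^{-1})$ — filtration compatibility being built into the definition of $\Phi_{y_0}(y)$, and compatibility with $c_\deR\circ c_\BO$ being the commutativity of the theorem's rectangle — is precisely that argument spelled out.
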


Proposition~\ref{prop:period_maps_control_reps} can be expressed diagrammatically as asserting the commutativity of the diagram
\begin{center}
	\begin{tikzcd}
		Y(K_v) \arrow[r,phantom,"\supset"]\arrow[d, "\rH^i_\et"] &[-8ex] \Nbd_{y_0}(K_v) \arrow[r,"\Phi_{y_0}"] & 
		\dG_{y_0}(K_v) \arrow[d,"\MM^i"] \\
		\pi_0\Rep_{\bQ_p}^\deR(G_v) \arrow[rr,"\DpH",hook]  & & \pi_0\MF(\varphi,N,G_v) 
	\end{tikzcd}
\end{center}
where the left-hand vertical arrow sends the point~$y$ to the isomorphism class of the $G_v$-representation $\rH^i_\et(X_{y,\Kbar_v},\bQ_p)$ and $\pi_0$ denotes the set of isomorphism classes in an essentially small category. Since~$\DpH$ is a fully faithful functor, this diagram shows that the period map~$\Phi_{y_0}$ determines the isomorphism class of the local representation $\rH^i_\et(X_{y,\Kbar_v},\bQ_p)$ for all~$y\in\Nbd_{y_0}(K_v)$.
\smallskip

The rest of this section is devoted to the proof of Theorem~\ref{thm:period_maps_control_reps}. As mentioned before, when $X\to Y$ has suitably good reduction at~$v$, this can be proved by relating both \'etale and de Rham cohomology of~$X_{y_0}$ to the crystalline cohomology of its special fibre. This is the approach taken in~\cite{LVinventiones}. However, in the absence of any assumptions on the reduction type of $X\to Y$, we are forced to take a different approach, using the relative $p$-adic Hodge theory of Scholze \cite{scholze:relative_p-adic_hodge_theory}. Since the proof will be rather technical, the reader who is prepared to take Theorem~\ref{thm:period_maps_control_reps} on faith can skip ahead to the next section~\S\ref{s:abelian-by-finite}.
\smallskip

Let us sketch in outline the proof of Theorem~\ref{thm:period_maps_control_reps} which we will give. If~$Y^\an$ denotes the rigid analytification of~$Y$, then Scholze defines what it means for a $\bQ_p$-local system~$\bE$ and filtered vector bundle~$\dE$ with flat connection on~$Y^\an$ to be \emph{associated}, meaning roughly that they become isomorphic when tensored with a certain sheaf of de Rham periods on the pro-\'etale site of~$Y^\an$. Scholze's relative comparison theorem shows that the relative analytic \'etale cohomology and relative analytic de Rham cohomology of $\pi^\an\colon X^\an\to Y^\an$ are associated in this sense. Moreover, these relative analytic cohomology objects are just the analytifications of the corresponding algebraic relative cohomology objects.

So proving Theorem~\ref{thm:period_maps_control_reps} reduces to proving the following: if~$\bE$ and~$\dE$ are an associated $\bQ_p$-local system and filtered vector bundle with flat connection on~$Y^\an$, then for any two points $y_0,y\in Y(K_v)$ which are sufficiently close in the $v$-adic topology, there is an isomorphism
\[
T_{y_0,y} \colon \sD_\pst(\bE_{\bar y_0}) \xrightarrow\sim \sD_\pst(\bE_{\bar y})
\]
of discrete $(\varphi,N,G_v)$-modules making a certain rectangle commute. This is a consequence of theory developed by Shimizu \cite{shimizu:p-adic_monodromy}, on potential horizontal semistability of horizontal de Rham local systems on spherical polyannuli.

\subsection{Scholze's relative $p$-adic Hodge theory}\label{ss:relative_p-adic_hodge}

We begin by recalling some relative $p$-adic Hodge theory for rigid analytic varieties, as developed by Scholze \cite{scholze:relative_p-adic_hodge_theory}, in particular what it means for a local system and a filtered vector bundle with flat connection to be associated (Definition~\ref{def:de_rham_pair}). The main input we need from Scholze's theory is the relative comparison isomorphism (Theorem~\ref{thm:pushforward_of_pairs}), which shows that the relative analytic \'etale and analytic de Rham cohomology of smooth proper morphisms are associated in this sense.

Let~$U$ be a smooth $K_v$-analytic space. Scholze associates to~$U$ a site~$U_\proet$, called the pro-\'etale site \cite[Definition~3.9]{scholze:relative_p-adic_hodge_theory}\cite[Erratum~(1)]{scholze:relative_p-adic_hodge_theory_erratum}. This site carries several important sheaves of $\bZ_p$-algebras, including:
\begin{itemize}
	\item the structure sheaf~$\cO_U$ (a sheaf of $K_v$-algebras);
	\item the sheaf~$\hat\bZ_{p,U}\colonequals\varprojlim(\underline{\bZ/p^n}_U)$ where $\underline{\bZ/p^n}_U$ denotes the constant sheaf on~$U_\proet$ ($\hat\bZ_{p,U}$ is not the constant sheaf with value~$\bZ_p$, but plays a very similar role) \cite[Definition~8.1]{scholze:relative_p-adic_hodge_theory};
	\item the \emph{positive de Rham sheaf} $\bB_{\deR,U}^+$ \cite[Definition~6.1(ii)]{scholze:relative_p-adic_hodge_theory}, which is a sheaf of $\hat\bZ_{p,U}$-algebras; and
	\item the \emph{structural de Rham sheaf} $\cO\bB_{\deR,U}$ \cite[Definition~6.8(iv)]{scholze:relative_p-adic_hodge_theory}\cite[Erratum~(3)]{scholze:relative_p-adic_hodge_theory_erratum}, which is a sheaf of filtered $\cO_U$-algebras endowed with a flat connection
	\[
	\nabla\colon\cO\bB_{\deR,U}\to\Omega^1_{U/K_v}\otimes_{\cO_U}\cO\bB_{\deR,U}
	\]
	satisfying the Leibniz rule $\nabla(fg)=\nabla(f)g+\nabla(g)f$, extending the connection~$\rd$ on~$\cO_U$, and satisfying Griffiths transversality with respect to the filtration~$\rF^\bullet$. The positive de Rham period sheaf~$\bB_{\deR,U}^+=(\rF^0\!\cO\bB_{\deR,U})^{\nabla=0}$ is the sheaf of horizontal sections of~$\rF^0\!\cO\bB_{\deR,U}$ \cite[Proof of Lemma~7.7]{scholze:relative_p-adic_hodge_theory}.
\end{itemize}

These sheaves of algebras allow one to identify several important categories of sheaves on $U_\proet$.

\begin{defi}[Local systems and vector bundles]
\leavevmode
\begin{enumerate}
	\item A \emph{$\hat\bZ_p$-local system} on~$U$ is a sheaf~$\bE$ of~$\hat\bZ_{p,U}$-modules which, locally on~$U_\proet$, is isomorphic to the sheaf $\hat\bZ_{p,U}\otimes_{\bZ_p}M$ for a finitely generated $\bZ_p$-module~$M$ \cite[Definition~8.1]{scholze:relative_p-adic_hodge_theory}.
	\item A \emph{$\bB_\deR^+$-local system} on~$U$ is a sheaf~$\bM$ of~$\bB_{\deR,U}^+$-modules which, locally on~$U_\proet$, is free of finite rank \cite[Definition~7.1]{scholze:relative_p-adic_hodge_theory}.
	\item A \emph{filtered vector bundle with flat connection} on~$U$ is a locally finite free sheaf~$\dE$ of $\cO_U$-modules, together with an exhaustive, separated decreasing filtration $\rF^\bullet\!\dE$ whose graded pieces are all locally finite free $\cO_U$-modules, and a flat connection
	\[
	\nabla\colon\dE\to\Omega^1_{U/K_v}\otimes_{\cO_U}\dE
	\]
	satisfying the Leibniz rule with respect to the connection~$\rd$ on~$\cO_U$ and Griffiths transversality with respect to the filtration~$\rF^\bullet$ \cite[Definition~7.4]{scholze:relative_p-adic_hodge_theory}.
	\item A \emph{filtered $\cO\bB_\deR$-vector bundle with flat connection} on~$U$ is a sheaf~$\dM$ of filtered~$\cO\bB_{\deR,U}$-modules which, locally on~$U_\proet$, is filtered-isomorphic to $\cO\bB_{\deR,U}^{\oplus m}$ for some~$m$, together with a flat connection
	\[
	\nabla\colon\dM\to\Omega^1_{U/K_v}\otimes_{\cO_U}\dM
	\]
	satisfying the Leibniz rule with respect to the connection~$\nabla$ on~$\cO\bB_{\deR,U}$ and Griffiths transversality with respect to the filtration~$\rF^\bullet$ on~$\dM$.
\end{enumerate}
\end{defi}

We write~$\Loc(U,\hat\bZ_p)$, $\Loc(U,\bB_\deR^+)$, $\FMIC(U,\cO_U)$ and $\FMIC(U,\cO\bB_\deR)$ for the $\otimes$-categories of $\hat\bZ_p$-local systems, $\bB_\deR^+$-local systems, filtered vector bundles with flat connection, and filtered $\cO\bB_\deR$-vector bundles with flat connection on~$U$, respectively. There are $\bZ_p$-linear $\otimes$-functors
\begin{equation}\label{eq:base-change_functors}
\Loc(U,\hat\bZ_p) \rightarrow \Loc(U,\bB_\deR^+) \rightarrow \FMIC(U,\cO\bB_\deR) \leftarrow \FMIC(U,\cO_U)
\end{equation}
given by base change along the morphisms\footnote{There is a small thing to be checked here: that if $\dE$ is a filtered vector bundle with flat connection on~$U$, then $\cO\bB_{\deR,U}\otimes_{\cO_U}\dE$ is locally filtered-isomorphic to~$\cO\bB_{\deR,U}^{\oplus m}$ for some~$m$. This follows from the fact that~$\dE$ and all graded pieces of its filtration are locally free of finite rank, and the fact that, locally on~$U_\proet$, there is a section~$t$ of~$\cO\bB_{\deR,U}$ such that multiplication by~$t$ gives a filtered isomorphism~$\cO\bB_{\deR,U}\xrightarrow\sim\cO\bB_{\deR,U}$ of degree~$1$.}
\[
\hat\bZ_{p,U} \hookrightarrow \bB_{\deR,U}^+ \hookrightarrow \cO\bB_{\deR,U} \hookleftarrow \cO_U \,.
\]

\begin{prop}\label{prop:p-adic_riemann-hilbert}
The functors 
\[
\Loc(U,\bB_\deR^+) \hookrightarrow \FMIC(U,\cO\bB_\deR) \hookleftarrow \FMIC(U,\cO_U)
\]
are fully faithful, and the image of $\FMIC(U,\cO_U)\hookrightarrow\FMIC(U,\cO\bB_\deR)$ is contained in the essential image of $\Loc(U,\bB_\deR^+) \hookrightarrow \FMIC(U,\cO\bB_\deR)$.
\begin{proof}
Let $\dI\subseteq\FMIC(U,\cO\bB_\deR)$ denote the full subcategory consisting of those filtered $\cO\bB_\deR$-vector bundles with flat connection~$\dM$ such that\footnote{The authors do not know whether this condition is in fact true for all~$\dM\in\FMIC(U,\cO\bB_\deR)$.}~$\rF^0\!\dM^{\nabla=0}$ is a $\bB_\deR^+$-local system on~$U$ (for the natural~$\bB_{\deR,U}^+=\rF^0\!\cO\bB_{\deR,U}^{\nabla=0}$-module structure). The proof of \cite[Theorem~7.6(i)]{scholze:relative_p-adic_hodge_theory} shows that for any $\bB_\deR^+$-local system~$\bM$ on~$U$, the natural map $\eta_\bM\colon\bM\to\rF^0(\cO\bB_{\deR,U}\otimes_{\bB_{\deR,U}^+}\bM)^{\nabla=0}$ is an isomorphism. Thus the image of the functor $\cO\bB_{\deR,U}\otimes_{\bB_{\deR,U}^+}(-)\colon\Loc(U,\bB_\deR^+)\to\FMIC(U,\cO\bB_\deR)$ has image contained in~$\dI$.

Now the construction~$\dM\mapsto\rF^0\!\dM^{\nabla=0}$ provides a functor $\dI\to\Loc(U,\bB_\deR^+)$ which is right adjoint to the functor $\cO\bB_{\deR,U}\otimes_{\bB_{\deR,U}^+}(-)$. The unit of this adjunction is the natural isomorphism $\eta_\bM$ above, which implies that the functor $\cO\bB_{\deR,U}\otimes_{\bB_{\deR,U}}(-)\colon\Loc(U,\bB_\deR^+)\to\FMIC(U,\cO\bB_\deR)$ is fully faithful.

\smallskip

Then the proof of \cite[Theorem~7.6(ii)]{scholze:relative_p-adic_hodge_theory} shows that for every filtered vector bundle with flat connection~$\dE$, there is a $\bB_\deR^+$-local system~$\bM$ such that $\cO\bB_{\deR,U}\otimes_{\cO_U}\dE$ is isomorphic to $\cO\bB_{\deR,U}\otimes_{\bB_{\deR,U}^+}\bM$ compatibly with the filtrations and connection. This says that the essential image of the functor $\cO\bB_{\deR,U}\otimes_{\cO_U}(-)\colon\FMIC(U,\cO_U)\to\FMIC(U,\cO\bB_\deR)$ is contained in the essential image of the functor $\cO\bB_{\deR,U}\otimes_{\bB_{\deR,U}^+}(-)\colon\Loc(U,\bB_\deR^+)\to\FMIC(U,\cO\bB_\deR)$. The fact that the functor $\dE\mapsto\cO\bB_{\deR,U}\otimes_{\cO_U}\dE$ is fully faithful follows then from the fact that the composite functor $\dE\mapsto\rF^0\!(\cO\bB_{\deR,U}\otimes_{\cO_U}\dE)^{\nabla=0}$ is fully faithful \cite[Theorem~7.6(ii)]{scholze:relative_p-adic_hodge_theory}.
\end{proof}
\end{prop}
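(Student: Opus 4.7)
The plan is to deduce both full-faithfulness statements from the existence of a common right adjoint given by horizontal sections of the filtration piece in degree zero, and to handle the essential-image containment via a local-triviality argument on the pro-étale site.

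First, I would define a candidate functor $\dM \mapsto \rF^0\dM^{\nabla=0}$ from $\FMIC(U,\cO\bB_\deR)$ back towards the local-system side. Since $\bB_{\deR,U}^+ = (\rF^0\cO\bB_{\deR,U})^{\nabla=0}$, this lands in sheaves of $\bB_{\deR,U}^+$-modules, and restricted to the full subcategory $\dI \subseteq \FMIC(U,\cO\bB_\deR)$ of objects $\dM$ for which $\rF^0\dM^{\nabla=0}$ happens to be a $\bB_{\deR}^+$-local system, it provides a right adjoint to the base-change functor $\bM \mapsto \cO\bB_{\deR,U}\otimes_{\bB_{\deR,U}^+}\bM$. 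The crucial computation is that the unit of this adjunction, namely
\[
\eta_\bM\colon \bM \longrightarrow \rF^0\bigl(\cO\bB_{\deR,U} \otimes_{\bB_{\deR,U}^+} \bM\bigr)^{\nabla=0},
\]
is an isomorphism for every $\bB_{\deR}^+$-local system $\bM$. This is essentially the filtered Poincaré lemma for the connection on $\cO\bB_{\deR,U}$, which can be checked pro-étale locally where $\cO\bB_{\deR,U}$ becomes a power series ring over $\bB_{\deR,U}^+$; this is exactly the content of the argument given for Scholze's Theorem~7.6(i). Unit being an isomorphism yields full faithfulness of $\cO\bB_{\deR,U}\otimes_{\bB_{\deR,U}^+}(-)$, and the image lands in $\dI$.

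Second, I would handle the classical functor $\FMIC(U,\cO_U) \hookrightarrow \FMIC(U,\cO\bB_\deR)$. Here the key claim is that for any filtered vector bundle with flat connection $\dE$ on $U$, the object $\cO\bB_{\deR,U} \otimes_{\cO_U} \dE$ lies in $\dI$, i.e.\ admits a full basis of horizontal sections after tensoring. Locally on $U_\proet$ one can choose étale coordinates, and then use the local description of $\cO\bB_{\deR,U}$ as a formal power series ring over $\bB_{\deR,U}^+$ in these coordinates to formally solve parallel transport along each coordinate direction --- this is the content of Scholze's Theorem~7.6(ii). The resulting $\bB_{\deR}^+$-local system $\bM = \rF^0(\cO\bB_{\deR,U}\otimes_{\cO_U}\dE)^{\nabla=0}$ yields a filtered, flat isomorphism $\cO\bB_{\deR,U} \otimes_{\bB_{\deR,U}^+} \bM \cong \cO\bB_{\deR,U}\otimes_{\cO_U}\dE$, establishing the essential-image containment.

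Finally, full faithfulness of $\cO\bB_{\deR,U}\otimes_{\cO_U}(-)$ follows formally: since its essential image is contained in that of the fully faithful embedding $\cO\bB_{\deR,U}\otimes_{\bB_{\deR,U}^+}(-)$, it suffices to check that the composite $\dE \mapsto \rF^0(\cO\bB_{\deR,U}\otimes_{\cO_U}\dE)^{\nabla=0}$ is fully faithful, which is the second Riemann--Hilbert assertion in Scholze's Theorem~7.6(ii). The main technical obstacle throughout is the Poincaré-lemma-type computation showing that horizontal sections of $\cO\bB_{\deR,U}$ recover $\bB_{\deR,U}^+$ in a filtered way, but this is exactly the input we are entitled to import wholesale from Scholze's paper.
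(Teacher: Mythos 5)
Your proposal is correct and follows essentially the same route as the paper: the adjunction between $\cO\bB_{\deR,U}\otimes_{\bB_{\deR,U}^+}(-)$ and $\dM\mapsto\rF^0\!\dM^{\nabla=0}$ on the subcategory $\dI$, with the unit isomorphism imported from the proof of Scholze's Theorem~7.6(i), the essential-image containment from the proof of Theorem~7.6(ii), and full faithfulness of $\cO\bB_{\deR,U}\otimes_{\cO_U}(-)$ deduced from full faithfulness of the composite $\dE\mapsto\rF^0\!(\cO\bB_{\deR,U}\otimes_{\cO_U}\dE)^{\nabla=0}$. The only difference is expository (you sketch the local Poincar\'e-lemma computation that the paper cites wholesale), so there is nothing to correct.
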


\begin{rmk}
In \cite{scholze:relative_p-adic_hodge_theory}, Scholze does not work with filtered $\cO\bB_\deR$-vector bundles with flat connection: instead, he uses $\cO\bB_\deR^+$-vector bundles with flat connection \cite[Definition~7.1(ii)]{scholze:relative_p-adic_hodge_theory} where $\cO\bB_{\deR,U}^+$ is the positive structural de Rham period sheaf \cite[Definition~6.8(iii)]{scholze:relative_p-adic_hodge_theory}\cite[Erratum~(3)]{scholze:relative_p-adic_hodge_theory_erratum}. The advantage of Scholze's approach is that the functor $\cO\bB_{\deR,U}^+\otimes_{\bB_{\deR,U}^+}(-)$ from $\bB_\deR^+$-local systems on~$U$ to $\cO\bB_\deR^+$-vector bundles with flat connection is then an equivalence \cite[Theorem~7.2]{scholze:relative_p-adic_hodge_theory} -- this is a $p$-adic version of the Riemann--Hilbert correspondence. On the other hand, defining a functor directly from $\FMIC(U,\cO_U)$ to $\MIC(U,\cO\bB_\deR^+)$ is not as simple as just base-changing along a morphism of sheaves of rings, so in the interest of simplicity of exposition, we have chosen to focus on the category of filtered $\cO\bB_\deR$-vector bundles with flat connection instead.
\end{rmk}

The $\otimes$-functors~\eqref{eq:base-change_functors} allow one to isolate a class of \emph{de Rham $\hat\bZ_p$-local systems} on~$U$, which are those for which there is a corresponding filtered vector bundle with flat connection under~\eqref{eq:base-change_functors}.

\begin{defi}\label{def:de_rham_pair}
	A \emph{de Rham pair} on~$U$ is a triple $(\bE,\dE,c)$ consisting of a $\hat\bZ_p$-local system~$\bE$ on~$U$, a filtered vector bundle with flat connection~$\dE$ on~$U$, and an isomorphism
	\[
	c\colon \cO\bB_{\deR,U}\otimes_{\hat\bZ_{p,U}}\bE\xrightarrow\sim\cO\bB_{\deR,U}\otimes_{\cO_U}\dE
	\]
	of filtered $\cO\bB_{\deR,U}$-vector bundles with flat connection on~$U$. The collection of all de Rham pairs naturally forms a $\bZ_p$-linear abelian $\otimes$-category, the precise formulation of which we leave to the reader.
	
	It follows from Proposition~\ref{prop:p-adic_riemann-hilbert} that the forgetful functor from de Rham pairs to $\hat\bZ_p$-local systems on~$U$ is fully faithful. We say that a $\hat\bZ_p$-local system~$\bE$ is \emph{de Rham} just when it lies in the essential image of this functor, i.e.\ just when~$\bE$ is the first component of a de Rham pair $(\bE,\dE,c)$.
\end{defi}

\begin{rmk}
	A $\hat\bZ_p$-local system~$\bE$ and filtered vector bundle~$\dE$ appearing in a de Rham pair $(\bE,\dE,c)$ are said to be \emph{associated} \cite[Definition~7.5]{scholze:relative_p-adic_hodge_theory}.
\end{rmk}

\subsubsection{Pullback of de Rham pairs}

The theory described above is contravariant-functorial in the smooth $K_v$-analytic space~$U$ in a natural way. Suppose that $f\colon V\to U$ is a morphism of smooth rigid-analytic spaces over~$K_v$. This induces a morphism $f\colon V_\proet\to U_\proet$ on pro-\'etale sites, and there are morphisms of sheaves of $\bZ_p$-algebras
\begin{align*}
	f^{-1}\hat\bZ_{p,U} &\to \hat\bZ_{p,V} & f^{-1}\bB_{\deR,U}^+ &\to \bB_{\deR,V}^+ \\
	f^{-1}\cO_U &\to \cO_V & f^{-1}\cO\bB_{\deR,U} &\to \cO\bB_{\deR,V}
\end{align*}
compatible with all inclusions among these sheaves. Via these maps, we obtain pullback $\otimes$-functors
\begin{align*}
	f^*\colon\Loc(U,\hat\bZ_p) &\to \Loc(V,\hat\bZ_p) & f^*\colon\Loc(U,\bB_\deR^+) &\to \Loc(V,\bB_\deR^+) \\
	f^*\colon\FMIC(U,\cO_U) &\to \FMIC(V,\cO_V) & f^*\colon\FMIC(U,\cO\bB_\deR) &\to \FMIC(V,\cO\bB_\deR) \,.
\end{align*}

Now if~$\bE$ is a $\hat\bZ_p$-local system on~$U$, resp.~$\dE$ is a filtered vector bundle with flat connection on~$U$, then we have\footnote{Strictly speaking, these are canonical isomorphisms of filtered $\cO\bB_\deR$-vector bundles on~$V$ rather than actual equalities.}
\[
\cO\bB_{\deR,V}\otimes_{\hat\bZ_{p,V}}f^*\bE = f^*(\cO\bB_{\deR,U}\otimes_{\hat\bZ_{p,U}}\bE) \,, \hspace{0.4cm}\text{resp.}\hspace{0.4cm} \cO\bB_{\deR,V}\otimes_{\cO_V}f^*\dE = f^*(\cO\bB_{\deR,U}\otimes_{\cO_U}\dE) \,.
\]
In particular, if $c\colon\cO\bB_{\deR,U}\otimes_{\hat\bZ_{p,U}}\bE\xrightarrow\sim\cO\bB_{\deR,U}\otimes_{\cO_U}\dE$ is an isomorphism of filtered $\cO\bB_\deR$-vector bundles, then so too is $f^*c\colon\cO\bB_{\deR,V}\otimes_{\hat\bZ_{p,V}}f^*\bE \xrightarrow\sim \cO\bB_{\deR,V}\otimes_{\cO_V}f^*\dE$. We have thus proven the following.

\begin{prop}
	Let~$f\colon V\to U$ be a morphism of smooth $K_v$-analytic spaces, and suppose that $(\bE,\dE,c)$ is a de Rham pair on~$U$. Then~$f^*(\bE,\dE,c)\colonequals(f^*\bE,f^*\dE,f^*c)$ is a de Rham pair on~$V$.
\end{prop}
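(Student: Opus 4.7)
The proposition is essentially a formal consequence of functoriality, so the plan is to unpack the three conditions defining a de Rham pair and verify that each is preserved by pullback along~$f$.

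First, I would observe that the pullback $f^*\bE$ is automatically a $\hat\bZ_p$-local system on~$V$: this is precisely the assertion that the pullback $\otimes$-functor $f^*\colon\Loc(U,\hat\bZ_p)\to\Loc(V,\hat\bZ_p)$ recalled just before the statement is well-defined, and similarly $f^*\dE$ is a filtered vector bundle with flat connection on~$V$ via the functor $f^*\colon\FMIC(U,\cO_U)\to\FMIC(V,\cO_V)$. The Griffiths transversality and the Leibniz rule are preserved because the morphism of sheaves of rings $f^{-1}\cO_U\to\cO_V$ is compatible with the differentials. Thus only the third datum, namely $f^*c$, requires any real verification.

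Next, I would use the canonical identifications
\[
\cO\bB_{\deR,V}\otimes_{\hat\bZ_{p,V}}f^*\bE \;=\; f^*\bigl(\cO\bB_{\deR,U}\otimes_{\hat\bZ_{p,U}}\bE\bigr),\qquad \cO\bB_{\deR,V}\otimes_{\cO_V}f^*\dE \;=\; f^*\bigl(\cO\bB_{\deR,U}\otimes_{\cO_U}\dE\bigr),
\]
which follow from the compatibility of the morphisms $f^{-1}\hat\bZ_{p,U}\to\hat\bZ_{p,V}$, $f^{-1}\cO_U\to\cO_V$, and $f^{-1}\cO\bB_{\deR,U}\to\cO\bB_{\deR,V}$ with the respective inclusions. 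Under these identifications, applying the pullback $\otimes$-functor $f^*\colon\FMIC(U,\cO\bB_\deR)\to\FMIC(V,\cO\bB_\deR)$ to the isomorphism~$c$ produces a morphism
\[
f^*c\colon\cO\bB_{\deR,V}\otimes_{\hat\bZ_{p,V}}f^*\bE\longrightarrow\cO\bB_{\deR,V}\otimes_{\cO_V}f^*\dE
\]
of filtered $\cO\bB_{\deR,V}$-vector bundles with flat connection, which remains an isomorphism because $f^*$ carries isomorphisms to isomorphisms. This exhibits $(f^*\bE,f^*\dE,f^*c)$ as a de Rham pair on~$V$, completing the proof.

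There is no genuine obstacle here: the whole argument is a chase through the definitions, and the only substantive content was already established when setting up the pullback functors on each of the four categories in~\eqref{eq:base-change_functors}. The result can be viewed as saying that the formation of de Rham pairs, as a subcategory of the $2$-fibre product $\Loc(-,\hat\bZ_p)\times_{\FMIC(-,\cO\bB_\deR)}\FMIC(-,\cO_U)$, inherits contravariant functoriality in the smooth $K_v$-analytic space from its three constituent pieces.
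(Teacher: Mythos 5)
Your argument is correct and is essentially identical to the paper's: both rely on the canonical identifications $\cO\bB_{\deR,V}\otimes_{\hat\bZ_{p,V}}f^*\bE = f^*(\cO\bB_{\deR,U}\otimes_{\hat\bZ_{p,U}}\bE)$ and $\cO\bB_{\deR,V}\otimes_{\cO_V}f^*\dE = f^*(\cO\bB_{\deR,U}\otimes_{\cO_U}\dE)$, together with the fact that the pullback $\otimes$-functor on $\FMIC(-,\cO\bB_\deR)$ carries the isomorphism $c$ to an isomorphism $f^*c$. Nothing further is needed.
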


\subsubsection{Derived pushforward of de Rham pairs}\label{sss:derived_pushforward}

In certain circumstances, it is also possible to push forward local systems and filtered vector bundles with flat connection. This will all take place in the setting of derived categories and filtered derived categories; for precise definitions and basic properties, see Appendix~\ref{appx:derived}. Note in particular that the definition of the filtered derived category in \cite{scholze:relative_p-adic_hodge_theory} corresponds to what we refer to as the \emph{complete} filtered derived category.

Suppose that~$\pi\colon V\to U$ is a smooth proper morphism of smooth $K_v$-analytic spaces. If~$\bE$ is a $\hat\bZ_p$-local system on~$V$, then the derived pushforward~$\rR^i\!\pi_*\bE$ is a $\hat\bZ_{p,U}$-module on~$U_\proet$. In fact, $\rR^i\!\pi_*\bE$ is always a $\hat\bZ_p$-local system on~$U$, as follows from \cite[Corollary~6.3.5]{diao-lan-liu-zhu:foundations} (in the case of trivial log structure) and \cite[Proposition~8.2]{scholze:relative_p-adic_hodge_theory} and the remark below it.

The derived pushforward of a filtered vector bundle with flat connection is slightly more complicated to define. If~$\dE$ is a filtered vector bundle with flat connection on~$V$, then one can form its \emph{relative de Rham complex}
\[
\DeR_{V/U}(\dE)\colonequals\dE\xrightarrow\nabla\Omega^1_{V/U}\otimes_{\cO_V}\dE\xrightarrow\nabla\Omega^2_{V/U}\otimes_{\cO_V}\dE\xrightarrow\nabla\dots
\]
defined e.g.\ as in \cite[\S1]{katz-oda:relative_de_rham}. This is a filtered complex of sheaves of $\pi^{-1}\cO_U$-modules on~$V$, where the filtration on~$\Omega^n_{V/U}\otimes_{\cO_V}\dE$ is the tensor product of the filtration on~$\dE$ and the filtration placing~$\Omega^n_{V/U}$ in degree~$n$. Hence the \emph{derived de Rham pushforward} $\rR^i\!\pi_{\deR*}\dE\colonequals\rR^i\!\pi_*\DeR_{V/U}(\dE)$ is a filtered sheaf of $\cO_U$-modules, where~$\rF^n\!\rR^i\!\pi_{\deR*}\dE$ is the image of the map
\[
\rR^i\!\pi_*(\rF^n\!\DeR_{V/U}(\dE)) \to \rR^i\!\pi_*(\DeR_{V/U}(\dE)) \,.
\]

Moreover, the derived de Rham pushforward $\rR^i\!\pi_{\deR*}\dE$ comes with a flat Gau\ss--Manin connection
\[
\nabla\colon\rR^i\!\pi_{\deR*}\dE \to \Omega^1_U\otimes_{\cO_U}\rR^i\!\pi_{\deR*}\dE
\]
satisfying Griffiths transversality with respect to the filtration. This is defined in the usual way~\cite{katz-oda:relative_de_rham}. The \emph{absolute de Rham complex}
\[
\DeR_V(\dE)\colonequals\dE\xrightarrow\nabla\Omega^1_{V}\otimes_{\cO_V}\dE\xrightarrow\nabla\Omega^2_{V}\otimes_{\cO_V}\dE\xrightarrow\nabla\dots
\]
is naturally a left dg-module under the de Rham complex $\Omega^\bullet_V$. There is a natural ``relatively stupid'' filtration $\sigma^\bullet$ on~$\Omega^\bullet_V$, where $\sigma^n\Omega^\bullet_V$ is the dg-ideal of~$\Omega^\bullet_V$ generated by $\pi^{-1}\Omega^n_U$. The graded pieces of the induced filtration on~$\DeR_{V/U}(\dE)$ can be canonically identified as
\[
\gr_n^\sigma\DeR_V(\dE) \cong \pi^{-1}\Omega^n_U\otimes_{\pi^{-1}\cO_U}\DeR_{V/U}(\dE)[-n] \,,
\]
just as in \cite[(3.2.4)]{katz:nilpotent_connections}\footnote{In \cite[(3.2.4)]{katz:nilpotent_connections}, the right-hand side of this expression is rendered as $\pi^*\Omega^n_U\otimes_{\cO_V}\DeR_{V/U}(\dE)[-n]$, but this does not strictly speaking make sense, since $\DeR_{V/U}(\dE)$ is not a complex of $\cO_V$-algebras (its differential is not $\cO_V$-linear).}. In particular, we have an extension
\[
0 \to \pi^{-1}\Omega^1_U\otimes_{\pi^{-1}\cO_U}\DeR_{V/U}(\dE)[-1] \to \DeR_V(\dE)/\sigma^2 \to \DeR_{V/U}(\dE) \to 0
\]
in the category of filtered complexes of abelian sheaves on~$V$. The coboundary map associated to this extension is, by definition, the Gau\ss--Manin connection~$\nabla$. It follows for purely formal reasons that the connection~$\nabla$ satisfies the Leibniz rule, is flat, and is Griffiths transverse with respect to the filtration.
\smallskip

Analogous constructions apply in the category of filtered $\cO\bB_\deR$-vector bundles with flat connection. If~$\dM$ is a filtered $\cO\bB_\deR$-vector bundle with flat connection on~$V$, then one can form its \emph{relative de Rham complex} $\DeR_{V/U}(\dM)$ exactly as for~$\DeR_{V/U}(\dE)$ above. The relative de Rham complex~$\DeR_{V/U}(\dM)$ is a filtered complex of $\pi^{-1}\cO\bB_{\deR,U}$-modules, so
\[
\rR^i\!\pi_{\deR*}\dM \colonequals \rR^i\!\pi_*(\DeR_{V/U}(\dM))
\]
is a filtered~$\cO\bB_{\deR,U}$-module. Moreover, $\rR^i\!\pi_{\deR*}\dM$ comes with a flat Gau\ss--Manin connection
\[
\nabla\colon\rR^i\!\pi_{\deR*}\dM \to \Omega^1_U\otimes_{\cO_U}\rR^i\!\pi_{\deR*}\dM
\]
satisfying the Leibniz rule with respect to the connection on~$\cO\bB_{\deR,U}$; this is constructed from the absolute de Rham complex~$\DeR_V(\dM)$ exactly as for~$\DeR_V(\dE)$ above.
\medskip

These constructions of the various derived pushforwards are compatible in a natural way. If~$\bE$ is a $\hat\bZ_p$-local system on~$V$, then the map $\bE\to\cO\bB_{\deR,V}\otimes_{\hat\bZ_{p,V}}\bE$ induces a $\hat\bZ_{p,V}$-linear morphism
\[
\bE \to \DeR_{V/U}(\cO\bB_{\deR,V}\otimes_{\hat\bZ_{p,V}}\bE) = \DeR_{V/U}(\cO\bB_{\deR,V})\otimes_{\hat\bZ_{p,V}}\bE
\]
of filtered complexes. Since this map even lifts to a morphism into the absolute de Rham complex $\DeR_V(\cO\bB_{\deR,V}\otimes_{\hat\bZ_{p,V}}\bE)$, it follows that the induced map
\[
\rR^i\!\pi_*\bE\to\rR^i\!\pi_{\deR*}(\cO\bB_{\deR,V}\otimes_{\hat\bZ_{p,v}}\bE)
\]
factors through the kernel of the connection on~$\rR^i\!\pi_{\deR*}(\cO\bB_{\deR,V}\otimes_{\hat\bZ_{p,v}}\bE)$. Hence the induced  $\cO\bB_{\deR,U}$-linear map
\[
\eta_\bE\colon\cO\bB_{\deR,U}\otimes_{\hat\bZ_{p,U}}\rR^i\!\pi_*\bE \to \rR^i\!\pi_{\deR*}(\cO\bB_{\deR,V}\otimes_{\hat\bZ_{p,v}}\bE)
\]
is compatible with connection and filtration.

Similarly, given a filtered vector bundle with flat connection~$\dE$ on~$V$, there is an evident morphism of relative de Rham complexes
\[
\DeR_{V/U}(\dE) \to \DeR_{V/U}(\cO\bB_{\deR,V}\otimes_{\cO_V}\dE)
\]
which is $\pi^{-1}\cO_U$-linear and compatible with filtrations. Since this morphism extends to a morphism between the absolute de Rham complexes, it follows that the induced $\cO\bB_{\deR,U}$-linear map
\[
\eta_\dE\colon\cO\bB_{\deR,U}\otimes_{\cO_U}\rR^i\!\pi_{\deR*}\dE \to \rR^i\!\pi_{\deR*}(\cO\bB_{\deR,V}\otimes_{\cO_V}\dE)
\]
is compatible with the connection and filtration.
\smallskip

Scholze proves the following comparison theorem relating these constructions.

\begin{thm}[{\cite[Theorem~1.10]{scholze:relative_p-adic_hodge_theory}}]\label{thm:pushforward_of_pairs}
	Let $\pi\colon V\to U$ be a smooth proper morphism of smooth rigid-analytic spaces over~$K_v$, and let~$(\bE,\dE,c)$ be a de Rham pair on~$V$. Then:
	\begin{enumerate}
		\item $\rR^i\!\pi_{\deR*}\dE$ is a filtered vector bundle with flat connection on~$U$ for all~$i$;
		\item\label{thmpart:completed_eta} the maps~$\eta_\bE$ and~$\eta_\dE$ above become isomorphisms after completing with respect to the filtration; and
		\item there is a unique isomorphism $\tilde c\colon\cO\bB_{\deR,U}\otimes_{\hat\bZ_{p,U}}\rR^i\!\pi_*\bE\xrightarrow\sim\cO\bB_{\deR,U}\otimes_{\cO_U}\rR^i\!\pi_{\deR*}\dE$ of filtered $\cO\bB_\deR$-vector bundles on~$U$ making the square
		\begin{center}
		\begin{tikzcd}[column sep = huge]
			(\cO\bB_{\deR,U}\otimes_{\hat\bZ_{p,U}}\rR^i\!\pi_*\bE)^\wedge \arrow[r,dashed,"\tilde c^\wedge","\sim"']\arrow[d,"\eta_\bE^\wedge","\wr"'] & (\cO\bB_{\deR,U}\otimes_{\cO_U}\rR^i\!\pi_{\deR*}\dE)^\wedge \arrow[d,"\eta_\dE^\wedge","\wr"'] \\
			\rR^i\!\pi_{\deR*}(\cO\bB_{\deR,V}\otimes_{\hat\bZ_{p,V}}\bE)^\wedge \arrow[r,"(\rR^i\!\pi_{\deR*}c)^\wedge","\sim"'] & \rR^i\!\pi_{\deR*}(\cO\bB_{\deR,V}\otimes_{\cO_V}\dE)^\wedge
		\end{tikzcd}
		\end{center}
		commute ($(-)^\wedge$ denotes completion with respect to the filtration).
	\end{enumerate}
	We write $\rR^i\!\pi_*(\bE,\dE,c)$ for the de Rham pair $(\rR^i\!\pi_*\bE,\rR^i\!\pi_{\deR*}\dE,\tilde c)$.
\end{thm}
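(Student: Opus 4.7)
The plan is to prove the three assertions in the stated order, with the key technical input being the filtered Poincar\'e lemma for the structural de Rham sheaf $\cO\bB_{\deR}$, which asserts that for any $\hat\bZ_p$-local system $\bF$ on $V$ (resp.\ any filtered vector bundle with flat connection $\dF$ on $V$), the natural map
\[
\bF \longrightarrow \DeR_{V}\!\bigl(\cO\bB_{\deR,V}\otimes_{\hat\bZ_{p,V}}\bF\bigr) \qquad \text{(resp.\ } \dF \longrightarrow \DeR_{V}\!\bigl(\cO\bB_{\deR,V}\otimes_{\cO_V}\dF\bigr)\text{)}
\]
is a filtered quasi-isomorphism after completion in the filtered derived category. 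This is proved locally by choosing toric coordinates on $V$ and using that $\cO\bB_{\deR}$ is filtered-locally a polynomial algebra over $\bB_{\deR}^+$ in the coordinate differences, so the usual algebraic Poincar\'e lemma applies fibrewise.

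First I would prove assertion~\eqref{thmpart:completed_eta}. Applying the Poincar\'e lemma above in the \emph{relative} direction (i.e.\ along the fibres of $\pi$) and pushing down gives filtered quasi-isomorphisms
\[
\rR^i\!\pi_*\bE \xrightarrow{\sim} \rR^i\!\pi_{\deR*}\bigl(\cO\bB_{\deR,V}\otimes_{\hat\bZ_{p,V}}\bE\bigr), \qquad
\rR^i\!\pi_{\deR*}\dE \xrightarrow{\sim} \rR^i\!\pi_{\deR*}\bigl(\cO\bB_{\deR,V}\otimes_{\cO_V}\dE\bigr)
\]
after filtered completion (the second uses that the Poincar\'e resolution sits inside $\DeR_V$, so the Gau\ss--Manin differentials match up). Tensoring the first over $\hat\bZ_{p,U}$ with $\cO\bB_{\deR,U}$ and the second over $\cO_U$ with $\cO\bB_{\deR,U}$, and using the (formal) projection formula together with the local triviality of $\cO\bB_{\deR,V}$ as a $\pi^{-1}\cO\bB_{\deR,U}$-algebra, identifies these with $\eta_\bE^\wedge$ and $\eta_\dE^\wedge$ respectively.

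Next I would turn to assertion~(1), which I expect to be the main obstacle. By Scholze's (and Diao--Lan--Liu--Zhu's) results cited in the excerpt, $\rR^i\!\pi_*\bE$ is a $\hat\bZ_p$-local system on $U$, so in particular the associated $\bB_\deR^+$-local system $\bM \colonequals \bB_{\deR,U}^+\otimes_{\hat\bZ_{p,U}}\rR^i\!\pi_*\bE$ is a $\bB_\deR^+$-local system. Via the completed isomorphism from Step~1 and the isomorphism $c$, one obtains a filtered isomorphism between $\cO\bB_{\deR,U}\otimes_{\bB_{\deR,U}^+}\bM$ and the filtered completion of $\cO\bB_{\deR,U}\otimes_{\cO_U}\rR^i\!\pi_{\deR*}\dE$. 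The essential image part of Proposition~\ref{prop:p-adic_riemann-hilbert} then produces a filtered vector bundle with flat connection $\dE'$ on $U$ whose associated $\cO\bB_\deR$-object agrees with that of $\bM$. It remains to identify $\dE'$ with $\rR^i\!\pi_{\deR*}\dE$, which one does by descending through the fully faithful functor $\FMIC(U,\cO_U)\hookrightarrow\FMIC(U,\cO\bB_\deR)$: the comparison on the $\cO\bB_\deR$-side is $\eta_\dE$ composed with $(\rR^i\!\pi_{\deR*}c)$ and its inverse, which is obtained from the analogous $\bB_\deR^+$-side construction, hence descends.

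Finally, assertion~(3) follows formally. Existence of $\tilde c$ is obtained by transporting the completed isomorphism $\eta_\dE^{\wedge,-1}\circ(\rR^i\!\pi_{\deR*}c)^\wedge\circ\eta_\bE^\wedge$ through the fully faithful embedding of $\FMIC(U,\cO_U)$ and $\Loc(U,\bB_\deR^+)$ into $\FMIC(U,\cO\bB_\deR)$ established in Proposition~\ref{prop:p-adic_riemann-hilbert}: both sides of the desired $\tilde c$ lie in the essential image of the $\bB_\deR^+$-local system functor, and a morphism of $\bB_\deR^+$-local systems is determined by (hence defined by) its $\cO\bB_\deR$-extension, which we have produced on the completed level. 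Uniqueness is likewise immediate: two candidate $\tilde c$'s inducing the same completed map would differ by an automorphism of $\cO\bB_{\deR,U}\otimes_{\cO_U}\rR^i\!\pi_{\deR*}\dE$ becoming trivial after filtered completion, and such an automorphism is trivial by the full faithfulness in Proposition~\ref{prop:p-adic_riemann-hilbert}.
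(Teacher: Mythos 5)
There is a genuine gap, and it sits in your treatment of assertion~(1). You invoke ``the essential image part of Proposition~\ref{prop:p-adic_riemann-hilbert}'' to produce a filtered vector bundle with flat connection~$\dE'$ on~$U$ out of the $\bB_\deR^+$-local system $\bM=\bB_{\deR,U}^+\otimes_{\hat\bZ_{p,U}}\rR^i\!\pi_*\bE$, but the containment in that proposition goes the other way: it says the image of $\FMIC(U,\cO_U)$ lies in the essential image of $\Loc(U,\bB_\deR^+)$, not that every $\bB_\deR^+$-local system descends to a filtered vector bundle with flat connection. The latter statement is exactly the assertion that $\rR^i\!\pi_*\bE$ is de Rham with associated bundle $\rR^i\!\pi_{\deR*}\dE$, which is the substance of Scholze's Theorem~8.8 and not a formal consequence of the Riemann--Hilbert-type statements. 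Moreover, even granting such a~$\dE'$, your identification of $\dE'$ with $\rR^i\!\pi_{\deR*}\dE$ via full faithfulness of $\FMIC(U,\cO_U)\hookrightarrow\FMIC(U,\cO\bB_\deR)$ presupposes that $\rR^i\!\pi_{\deR*}\dE$ is already an object of $\FMIC(U,\cO_U)$ --- i.e.\ assertion~(1), the thing being proved --- and your comparison isomorphism exists only after completion, so you would additionally need a decompletion statement of the shape of Lemma~\ref{lem:decompletion}, whose hypotheses again require~(1). Coherence of $\rR^i\!\pi_{\deR*}\dE$, local freeness of the graded pieces and degeneration of the relative Hodge--de Rham spectral sequence are genuine analytic inputs (proper finiteness plus the comparison with the local system), which the paper obtains simply by quoting Scholze's Theorem~8.8.

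A related soft spot is your step for~(2): what Scholze's argument yields is that $\eta_\bE$ and $\eta_\dE$ are isomorphisms in the \emph{complete filtered derived category}, i.e.\ graded quasi-isomorphisms; to conclude that the individual cohomology sheaves become isomorphic after completing the filtration one needs both sides to have filtered cohomology objects, which on the de Rham side rests on degeneration of the Hodge--de Rham spectral sequence via Lemma~\ref{lem:degeneration_implies_filtered_cohomology}, followed by Lemma~\ref{lem:iso_on_completed_cohomology}. Your appeal to a relative Poincar\'e lemma plus a projection formula is in effect a sketch of Scholze's proof of Theorem~8.8 and glosses over precisely this passage from derived-category statements to completed cohomology objects, which is the bookkeeping the paper's proof is written to clarify. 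Your argument for~(3) is in the right spirit --- it is essentially the paper's Lemma~\ref{lem:decompletion} --- but note that it silently uses completeness of $\bB_{\deR,U}^+$ for its filtration, which the paper points out is itself a nontrivial verification; once~(1) is taken from Scholze and that completeness is checked, (3) goes through as you describe.
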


\begin{rmk}
	It seems likely that~$\eta_\bE$ and~$\eta_\dE$ should be isomorphisms even before completing. However, we were unable to extract this from the theory in \cite{scholze:relative_p-adic_hodge_theory}. Note that $\cO\bB_{\deR,U}$ is not complete for its filtration in general, so the completion operations are non-trivial here.
\end{rmk}

\begin{proof}[Proof of Theorem~\ref{thm:pushforward_of_pairs}]
	This is essentially contained in \cite[Theorem~8.8]{scholze:relative_p-adic_hodge_theory}. We give a small additional commentary on the second two parts, to clarify where completions enter the picture. In the proof of \cite[Theorem~8.8]{scholze:relative_p-adic_hodge_theory}, Scholze shows that the natural maps
	\[
	\cO\bB_{\deR,U}\otimes_{\hat\bZ_{p,U}}\rR\!\pi_*\bE \to \rR\!\pi_{\deR*}(\cO\bB_{\deR,V}\otimes_{\hat\bZ_{p,V}}\bE) \hspace{0.4cm}\text{and}\hspace{0.4cm} \cO\bB_{\deR,U}\otimes_{\cO_U}\rR\!\pi_{\deR*}\dE \to \rR\!\pi_{\deR*}(\cO\bB_{\deR,V}\otimes_{\cO_V}\dE)
	\]
	(which act on cohomology objects as~$\eta_\bE$ and~$\eta_\dE$ above) are isomorphisms in the complete filtered derived category of abelian sheaves on~$U_\proet$, i.e.\ they induce quasi-isomorphisms on all graded pieces. Both $\cO\bB_{\deR,U}\otimes_{\hat\bZ_{p,U}}\rR\!\pi_*\bE$ and $\cO\bB_{\deR,U}\otimes_{\cO_U}\rR\!\pi_{\deR*}\dE$ have filtered cohomology objects in the sense of~\S\ref{ss:cohomology_objects} in the appendix; the second of these follows from the fact that the relative Hodge--de Rham spectral sequence for~$\dE$ degenerates at the first page using Lemma~\ref{lem:degeneration_implies_filtered_cohomology}. So by Lemma~\ref{lem:iso_on_completed_cohomology}, the two above maps induce isomorphisms on completed cohomology objects, i.e.\ $\eta_\bE^\wedge$ and~$\eta_\dE^\wedge$ are isomorphisms.
	
	The final part is then a special case of the following lemma.
\end{proof}

\begin{lem}\label{lem:decompletion}
	Let~$\dM_0$ and~$\dM_1$ be filtered $\cO\bB_\deR$-vector bundles with flat connection on a smooth rigid-analytic space~$U$ over~$K_v$. Suppose that~$\dM_0$ and~$\dM_1$ lie in the essential image of the functor $\Loc(U,\bB_\deR^+)\hookrightarrow\FMIC(U,\cO\bB_\deR)$. Then every filtered $\cO\bB_{\deR,U}^\wedge$-linear map
	\[
	\phi^\wedge\colon\dM_0^\wedge\to\dM_1^\wedge
	\]
	compatible with connections is the completion of a unique morphism $\phi\colon\dM_0\to\dM_1$ of filtered $\cO\bB_\deR$-vector bundles with flat connection. Moreover, $\phi$ is an isomorphism if and only if $\phi^\wedge$ is.
	\begin{proof}
		Let~$\bM_0\colonequals(\rF^0\!\dM_0)^{\nabla=0}$ be the $\bB_\deR^+$-local system on~$U$ corresponding to~$\dM_0$, so that $\dM_0=\cO\bB_{\deR,U}\otimes_{\bB_{\deR,U}^+}\bM_0$. Since~$\bM_0$ is locally finite free, we have $\dM_0^\wedge=\cO\bB_{\deR,U}^\wedge\otimes_{\bB_{\deR,U}^+}\bM_0$ with the filtration and connection induced from those on $\cO\bB_{\deR,U}^\wedge$. And since $\bB_{\deR,U}^+=(\rF^0\!\cO\bB_{\deR,U})^{\nabla=0}$ is complete, we find by taking completions that $\bB_{\deR,U}^+=(\rF^0\!\cO\bB_{\deR,U}^\wedge)^{\nabla=0}$ and hence
		\[
		\bM_0=(\rF^0\!\dM_0^\wedge)^{\nabla=0} \,.
		\]
		
		It follows from this that every $\cO\bB_{\deR,U}^\wedge$-linear map $\phi^\wedge\colon\dM_0^\wedge\to\dM_1^\wedge$ compatible with filtrations and connections is the base-change of a unique $\bB_{\deR,U}^+$-linear map $\phi\colon\bM_0\to\bM_1$. In light of Proposition~\ref{prop:p-adic_riemann-hilbert}, this implies that~$\phi^\wedge$ is the completion of a unique morphism $\phi\colon\dM_0\to\dM_1$ of filtered $\cO\bB_\deR$-vector bundles with flat connection. The final claim, that~$\phi$ is an isomorphism if and only if~$\phi^\wedge$ is comes by applying the lemma to~$(\phi^\wedge)^{-1}$.
	\end{proof}
\end{lem}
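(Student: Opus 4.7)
The strategy is to exploit Proposition~\ref{prop:p-adic_riemann-hilbert}, which tells us that the full subcategory of $\FMIC(U,\cO\bB_\deR)$ spanned by objects coming from $\bB_\deR^+$-local systems is equivalent to $\Loc(U,\bB_\deR^+)$ via the right adjoint $\dM\mapsto(\rF^0\dM)^{\nabla=0}$. The plan is to show that this same right adjoint can be computed equally well after completing with respect to the filtration, so that the datum of a filtered, connection-preserving map $\phi^\wedge$ on the completions is equivalent to the datum of a map between the underlying $\bB_\deR^+$-local systems.

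First I would introduce the $\bB_\deR^+$-local systems $\bM_i\colonequals(\rF^0\dM_i)^{\nabla=0}$ for $i=0,1$. By hypothesis and Proposition~\ref{prop:p-adic_riemann-hilbert}, the unit of the adjunction gives canonical identifications $\dM_i\cong\cO\bB_{\deR,U}\otimes_{\bB_{\deR,U}^+}\bM_i$ of filtered $\cO\bB_\deR$-vector bundles with flat connection. Since the $\bM_i$ are locally finite free over $\bB_{\deR,U}^+$, completion commutes with the tensor product, giving a description
\[
\dM_i^\wedge \;=\; \cO\bB_{\deR,U}^\wedge\otimes_{\bB_{\deR,U}^+}\bM_i
\]
with the induced filtration and connection. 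The key step is then to verify the equality $\bB_{\deR,U}^+=(\rF^0\cO\bB_{\deR,U}^\wedge)^{\nabla=0}$; this holds because $\bB_{\deR,U}^+=(\rF^0\cO\bB_{\deR,U})^{\nabla=0}$ is already complete with respect to its filtration (its filtration is the $t$-adic one on a discrete valuation ring), so passing to the completion of $\cO\bB_{\deR,U}$ does not change the $\nabla=0$, $\rF^0$-part. Granting this, one deduces $\bM_i=(\rF^0\dM_i^\wedge)^{\nabla=0}$, so the completed object still computes the underlying $\bB_\deR^+$-local system.

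Having established this, any filtered, connection-preserving $\cO\bB_{\deR,U}^\wedge$-linear map $\phi^\wedge\colon\dM_0^\wedge\to\dM_1^\wedge$ restricts on $\rF^0$-horizontal sections to a $\bB_{\deR,U}^+$-linear map $\bar\phi\colon\bM_0\to\bM_1$, and base-changing $\bar\phi$ along $\bB_{\deR,U}^+\hookrightarrow\cO\bB_{\deR,U}$ produces a morphism $\phi\colon\dM_0\to\dM_1$ in $\FMIC(U,\cO\bB_\deR)$ whose completion is $\phi^\wedge$. Uniqueness of $\phi$ follows because both $\dM_i$ are recovered from $\bM_i$ via the same base-change formula, and the passage $\phi\mapsto\phi^\wedge\mapsto\bar\phi$ is a bijection on morphism sets. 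Finally, for the isomorphism statement, the ``only if'' direction is immediate from functoriality of completion, and the ``if'' direction follows by applying the lemma to $(\phi^\wedge)^{-1}$ to produce a two-sided inverse $\psi\colon\dM_1\to\dM_0$ of $\phi$.

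The main obstacle is the verification $\bB_{\deR,U}^+=(\rF^0\cO\bB_{\deR,U}^\wedge)^{\nabla=0}$, which requires some care about the meaning of filtered completion in this pro-\'etale context; everything else is essentially formal once this identification is in hand.
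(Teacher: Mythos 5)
Your proof is correct and takes essentially the same route as the paper: set $\bM_i=(\rF^0\!\dM_i)^{\nabla=0}$, use local finite freeness to write $\dM_i^\wedge=\cO\bB_{\deR,U}^\wedge\otimes_{\bB_{\deR,U}^+}\bM_i$, deduce $\bM_i=(\rF^0\!\dM_i^\wedge)^{\nabla=0}$ from completeness of $\bB_{\deR,U}^+$, and then conclude existence, uniqueness and the isomorphism statement via Proposition~\ref{prop:p-adic_riemann-hilbert} exactly as the paper does. One small caution: the completeness of the sheaf $\bB_{\deR,U}^+$ for its filtration is not just the ``$t$-adic filtration on a discrete valuation ring'' statement valid on sections over a point, but a sheaf-theoretic assertion on the pro-\'etale site requiring an $\rR^1\!\varprojlim$-vanishing argument (the paper devotes a separate remark to this); you rightly flag it as the genuine obstacle, but the parenthetical justification you give is not by itself sufficient.
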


\begin{rmk}
	In the proof of Lemma~\ref{lem:decompletion}, we used the fact that~$\bB_{\deR,U}^+$ was complete for its filtration. This is actually rather subtle: although $\bB_{\deR,U}^+$ is defined as the completion of a filtered period sheaf $\bB_{\inf,U}$, this alone is not enough to guarantee that~$\bB_{\deR,U}^+$ is complete. We sketch the proof. To show that~$\bB_{\deR,U}^+$ is complete, it suffices to show that the natural maps
	\[
	\rF^i\!\bB_{\deR,U}^+/\rF^j\!\bB_{\deR,U}^+\to\rF^i\!\bB_{\inf,U}/\rF^j\!\bB_{\inf,U}
	\]
	are isomorphisms for all $j\geq i\geq0$. Using the sequence
	\[
	0 \to \rF^j\!\bB_{\deR,U}^+ \to \rF^i\!\bB_{\deR,U}^+ \to \rF^i\!\bB_{\inf,U}/\rF^j\!\bB_{\inf,U} \to \rR^1\!\varprojlim_{k\geq j}(\rF^j\!\bB_{\inf,U}/\rF^k\!\bB_{\inf,U}) \,,
	\]
	it suffices to show the vanishing of the right-hand term. This follows from \cite[Lemma~3.18]{scholze:relative_p-adic_hodge_theory}, using the fact that $\gr^k_{\rF}\!\bB_{\inf,U}\cong\hat\cO_U(k)$ has vanishing higher cohomology over any affinoid perfectoid $U'\in U_\proet$ over which $\bZ_p(1)$ is trivial, for any~$k\geq0$ \cite[Lemma~4.10(v)]{scholze:relative_p-adic_hodge_theory}.
\end{rmk}

\subsubsection{Compatibilities}

We will also need to know that Scholze's comparison isomorphism is compatible with cup product and base change. We take care to spell these out carefully here.

First, cup product. Fix a smooth proper morphism~$\pi\colon V\to U$ of smooth rigid-analytic spaces over~$K_v$. If we are given a pairing $\beta_\bE\colon \bE_1\otimes_{\hat\bZ_{p,V}}\bE_2\to\bE_3$ in the category of $\hat\bZ_p$-local systems on~$V$, resp.\ a pairing $\beta_\dE\colon \dE_1\otimes_{\cO_V}\dE_2\to\dE_3$ in the category of filtered vector bundles with flat connection on~$V$, then there is an induced cup product map
\begin{equation}\label{eq:cup_products}
	\beta_{\bE*}\colon\rR^i\!\pi_*\bE_1 \otimes_{\hat\bZ_{p,U}} \rR^j\!\pi_*\bE_2 \to \rR^{i+j}\!\pi_*\bE_3 \,, \hspace{0.4cm}\text{resp.}\hspace{0.4cm} \beta_{\dE*}\colon\rR^i\!\pi_{\deR*}\dE_1 \otimes_{\cO_U} \rR^j\!\pi_{\deR*}\dE_2 \to \rR^{i+j}\!\pi_{\deR*}\bE_3
\end{equation}
for all~$i,j$. The latter of these is induced by the evident $\pi^{-1}\cO_U$-linear pairing $\DeR_{V/U}(\dE_1)\otimes_{\pi^{-1}\cO_U}\DeR_{V/U}(\dE_2)\to\DeR_{V/U}(\dE_3)$ on relative de Rham complexes. Compatibility of Scholze's comparison isomorphism with these cup product maps amounts to the following.

\begin{prop}\label{prop:cup_product_compatibility_analytic}
	Let~$\pi\colon V\to U$ be a smooth proper morphism of smooth rigid-analytic spaces over~$K_v$, and let~$\beta=(\beta_\bE,\beta_\dE)\colon(\bE_1,\dE_1,c_1)\otimes(\bE_2,\dE_2,c_2)\to(\bE_3,\dE_3,c_3)$ be a pairing in the category of de Rham pairs on~$V$. Suppose that the derived pushforwards $\rR^i\!\pi_*\bE_1$, $\rR^i\!\pi_*\bE_2$ and $\rR^i\!\pi_*\bE_3$ are all $\hat\bZ_p$-local systems on~$U$ for all~$i$. Then the cup product maps~\eqref{eq:cup_products} are the components of a pairing
	\[
	\beta_*\colon\rR^i\!\pi_*(\bE_1,\dE_1,c_1)\otimes\rR^j\!\pi_*(\bE_2,\dE_2,c_2) \to \rR^{i+j}\!\pi_*(\bE_3,\dE_3,c_3)
	\]
	in the category of de Rham pairs on~$U$ for all~$i,j$.
	\begin{proof}
		If~$\beta_\dM\colon\dM_1\otimes_{\cO\bB_{\deR,V}}\dM_2\to\dM_3$ is a pairing in the category of filtered $\cO\bB_\deR$-vector bundles on~$V$, then there is an induced cup product pairing
		\[
		\beta_{\dM*}\colon\rR^i\!\pi_{\deR*}(\dM_1)\otimes_{\cO\bB_{\deR,U}}\rR^j\!\pi_{\deR*}(\dM_2) \to \rR^{i+j}\!\pi_{\deR*}(\dM_3)
		\]
		for all~$i,j$, defined analogously to~$\beta_{\dE*}$ above. We consider the diagram
		\begin{equation}\label{diag:cup_product_compatibility}
		\begin{tikzcd}
			\cO\bB_{\deR,U}\otimes\rR^i\!\pi_*(\bE_1)\otimes\rR^j\!\pi_*(\bE_2) \arrow[r,"1\otimes\beta_{\bE*}"]\arrow[d,"\eta_{\bE_1}\otimes\eta_{\bE_2}"]\arrow[ddd,bend right = 75,shift right = 21,dashed,"\wr"'] & \cO\bB_{\deR,U}\otimes\rR^{i+j}\!\pi_*(\bE_3) \arrow[d,"\eta_{\bE_3}"]\arrow[d]\arrow[ddd,bend left = 60,shift left = 12,dashed,"\wr"] \\
			\rR^i\!\pi_{\deR*}(\cO\bB_{\deR,V}\otimes\bE_1)\otimes\rR^j\!\pi_{\deR*}(\cO\bB_{\deR,V}\otimes\bE_2) \arrow[r]\arrow[d,"\rR^i\!\pi_{\deR*}(c_1)\otimes\rR^j\!\pi_{\deR*}(c_2)","\wr"'] & \rR^{i+j}\!\pi_{\deR*}(\cO\bB_{\deR,V}\otimes\bE_3) \arrow[d,"\rR^{i+j}\!\pi_{\deR*}(c_3)","\wr"'] \\
			\rR^i\!\pi_{\deR*}(\cO\bB_{\deR,V}\otimes\dE_1)\otimes\rR^j\!\pi_{\deR*}(\cO\bB_{\deR,V}\otimes\dE_2) \arrow[r] & \rR^{i+j}\!\pi_{\deR*}(\cO\bB_{\deR,V}\otimes\dE_3) \\
			\cO\bB_{\deR,U}\otimes\rR^i\!\pi_{\deR*}(\dE_1)\otimes\rR^i\!\pi_{\deR*}(\dE_2) \arrow[r,"1\otimes\beta_{\dE*}"]\arrow[u,"\eta_{\dE_1}\otimes\eta_{\dE_2}"'] & \cO\bB_{\deR,U}\otimes\rR^i\!\pi_{\deR*}(\dE_3) \arrow[u,"\eta_{\dE_3}"']
		\end{tikzcd}
		\end{equation}
		in the category of filtered $\cO\bB_\deR$-vector bundles on~$U$ (in which subscripts on tensor products are omitted for readability). The horizontal maps are the various cup product pairings, and the dashed vertical maps are the comparison isomorphisms for the de Rham pairs~$\rR^i\!\pi_*(\bE_1,\dE_1,c_1)\otimes\rR^j\!\pi_*(\bE_2,\dE_2,c_2)$ and~$\rR^{i+j}\!\pi_*(\bE_3,\dE_3,c_3)$.
		
		The three squares in~\eqref{diag:cup_product_compatibility} formed by the solid arrows each commute. For the top square, this follows from the commutativity of the square
		\begin{center}
		\begin{tikzcd}
			\bE_1\otimes_{\hat\bZ_{p,V}}\bE_2 \arrow[r]\arrow[d] & \bE_3 \arrow[d] \\
			\DeR_{V/U}(\cO\bB_{\deR,V}\otimes_{\hat\bZ_{p,V}}\bE_1)\otimes_{\pi^{-1}\cO\bB_{\deR,U}}\DeR_{V/U}(\cO\bB_{\deR,V}\otimes_{\hat\bZ_{p,V}}\bE_2) \arrow[r] & \DeR_{V/U}(\cO\bB_{\deR,V}\otimes_{\hat\bZ_{p,V}}\bE_3)
		\end{tikzcd}
		\end{center}
		of pairings of filtered complexes of abelian sheaves on~$U_\proet$; commutativity of the other squares in~\eqref{diag:cup_product_compatibility} follows similarly.
		
		Now once we take completions with respect to the filtration, the two squares in~\eqref{diag:cup_product_compatibility} formed by the dashed arrows and the vertical maps both commute, and all the maps in the rightmost of these squares become isomorphisms. It follows that the outermost square in~\eqref{diag:cup_product_compatibility} formed by the dashed arrows and the top and bottom horizontal maps commutes after completing. By Lemma~\ref{lem:decompletion} it even commutes before completing, so $(\beta_\bE,\beta_\dE)$ is a morphism of de Rham pairs, as desired.
	\end{proof}
\end{prop}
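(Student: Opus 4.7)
The goal is to verify that $(\beta_{\bE*},\beta_{\dE*})$ intertwines the comparison isomorphisms $\tilde c_k$ supplied by Theorem~\ref{thm:pushforward_of_pairs}, i.e.\ that
\[
\tilde c_3\circ(\beta_{\bE*}\otimes_{\cO\bB_{\deR,U}}\!1) = \beta_{\dE*}\circ(\tilde c_1\otimes_{\cO\bB_{\deR,U}}\tilde c_2)
\]
as morphisms in $\FMIC(U,\cO\bB_\deR)$. My plan is to exhibit both sides as two outer faces of a large commutative cube of filtered $\cO\bB_\deR$-vector bundles on $U$, whose interior encodes pushforward-functoriality of cup products and the hypothesis that $\beta$ is a morphism of de Rham pairs on $V$. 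The first preliminary step is to extend $\beta_{\dE*}$ to the $\cO\bB_\deR$-setting: given any pairing $\beta_\dM\colon\dM_1\otimes_{\cO\bB_{\deR,V}}\dM_2\to\dM_3$ of filtered $\cO\bB_\deR$-vector bundles, the evident pairing $\DeR_{V/U}(\dM_1)\otimes_{\pi^{-1}\cO\bB_{\deR,U}}\DeR_{V/U}(\dM_2)\to\DeR_{V/U}(\dM_3)$ of relative de Rham complexes induces a cup product pairing $\beta_{\dM*}$ on the $\rR^\bullet\!\pi_{\deR*}$. This construction is functorial in pairings, so the natural transformations $\eta_{\bE_k}$ and $\eta_{\dE_k}$ from~\S\ref{sss:derived_pushforward} are automatically compatible with the cup products on their source and target.

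With this in hand, the hypothesis that $\beta$ is a morphism of de Rham pairs on $V$ reads $\beta_\dE\circ(c_1\otimes c_2)=c_3\circ\beta_\bE$ as pairings of filtered $\cO\bB_\deR$-vector bundles, and applying $\rR^\bullet\!\pi_{\deR*}$ preserves this equality. Stacking the resulting commutative squares with the compatibilities of $\eta_{\bE_k},\eta_{\dE_k}$ from the previous paragraph, one obtains a commutative cube whose top face is $\beta_{\bE*}$ base-changed to $\cO\bB_{\deR,U}$, whose bottom face is $\beta_{\dE*}$ base-changed to $\cO\bB_{\deR,U}$, and whose four vertical composites are precisely the maps appearing in the defining characterisation of the $\tilde c_k$ given in Theorem~\ref{thm:pushforward_of_pairs}.

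The last step is extraction: after completing the cube with respect to the filtration, every vertical arrow becomes an isomorphism by Theorem~\ref{thm:pushforward_of_pairs}\eqref{thmpart:completed_eta}, and the defining property of the $\tilde c_k^\wedge$ gives commutativity of the outer face. Lemma~\ref{lem:decompletion} then promotes this to commutativity before completion, provided both sides of the desired equation lie in the essential image of $\Loc(U,\bB_\deR^+)\hookrightarrow\FMIC(U,\cO\bB_\deR)$; but this is automatic because each $\rR^i\!\pi_*(\bE_k,\dE_k,c_k)$ is itself a de Rham pair, so its base change to $\cO\bB_{\deR,U}$ corresponds via Proposition~\ref{prop:p-adic_riemann-hilbert} to a $\bB_\deR^+$-local system, and tensor products of such are of the same form. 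The main technical hurdle is precisely this interplay between completion and decompletion: working directly at the uncompleted level is not available, since $\eta_{\bE_k}$ and $\eta_{\dE_k}$ are only known to be isomorphisms after completion, so the defining characterisation of $\tilde c_k$ is an honest equation only at the completed level, and it is Lemma~\ref{lem:decompletion} that closes the argument.
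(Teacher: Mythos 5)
Your proposal is correct and follows essentially the same route as the paper: extend the cup product to filtered $\cO\bB_\deR$-vector bundles via the relative de Rham complex, stack the resulting squares formed by the $\eta$-maps and the pushforwards of the $c_k$, pass to completions where Theorem~\ref{thm:pushforward_of_pairs}\eqref{thmpart:completed_eta} applies, and decomplete via Lemma~\ref{lem:decompletion}, with your verification that the relevant objects come from $\bB_\deR^+$-local systems being exactly what makes that lemma applicable. One minor caveat: your claim that \emph{every} completed vertical arrow is an isomorphism is slightly more than Theorem~\ref{thm:pushforward_of_pairs}\eqref{thmpart:completed_eta} directly provides for the tensor-product column (completion need not commute with tensor products), but this is harmless, since the diagram chase only requires inverting the completed maps on the $\bE_3$/$\dE_3$ side, which is precisely what the paper uses.
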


Second, base change. Fix a commuting square
\begin{equation}\label{diag:base-change_square}
\begin{tikzcd}
	V' \arrow[r,"g"]\arrow[d,"\pi'"] & V \arrow[d,"\pi"] \\
	U' \arrow[r,"f"] & U
\end{tikzcd}
\end{equation}
in the category of smooth rigid-analytic spaces over~$K_v$, with~$\pi$ and~$\pi'$ both smooth and proper. This square need not be a base change square. If~$\bE$ is a $\hat\bZ_p$-local system on~$V$, resp.~$\dE$ is a filtered vector bundle with flat connection on~$V$, then there are associated base-change maps
\begin{equation}\label{eq:base_change_maps}
	\bc_\bE\colon f^*\!\rR^i\!\pi_*\bE \to \rR^i\!\pi'_*g^*\bE \,, \hspace{0.4cm}\text{resp.}\hspace{0.4cm} \bc_\dE\colon f^*\!\rR^i\!\pi_{\deR*}\dE \to \rR^i\!\pi'_{\deR*}g^*\dE
\end{equation}
for all~$i$. We recall the construction of the latter for the benefit of the reader. There is a natural map
\[
g^{-1}\DeR_{V/U}(\dE) \to \DeR_{V'/U'}(g^*\dE)
\]
in the category of filtered sheaves of~$g^{-1}\pi^{-1}\cO_U$-modules on~$V'_\proet$. Applying~$\rR^i\!\pi'_*$ and precomposing with the base-change map $f^{-1}\!\rR^i\!\pi_*\to\rR^i\!\pi'_*g^{-1}$ for abelian sheaves yields a $f^{-1}\cO_U$-linear map
\[
f^{-1}\!\rR^i\!\pi_{\deR*}\dE \to \rR^i\!\pi'_{\deR*}g^*\dE \,;
\]
the base change map~$\bc_\dE$ is the unique $\cO_{U'}$-linear map through which this factors. Compatibility of Scholze's comparison isomorphism with these base change maps amounts to the following.

\begin{prop}\label{prop:base_change_compatibility_analytic}
	Suppose we are in the above setup: \eqref{diag:base-change_square} is a commuting square in the category of smooth rigid-analytic spaces over~$K_v$, with~$\pi$ and~$\pi'$ both smooth and proper. Let~$(\bE,\dE,c)$ be a de Rham pair on~$V$, and suppose that $\rR^i\!\pi_*\bE$ and $\rR^i\!\pi'_*g^*\bE$ are $\hat\bZ_p$-local systems on~$U$ and~$U'$, respectively, for all~$i$. Then $\rR^i\!\pi_{\deR*}\dE$ and $\rR^i\!\pi'_{\deR*}g^*\dE$ are filtered vector bundles with flat connection on~$U$ and~$U'$, respectively, and the base-change maps~\eqref{eq:base_change_maps} are the components of a morphism of de Rham pairs
	\[
	\bc_{\bE,\dE}\colon f^*\!\rR^i\!\pi_*(\bE,\dE,c) \to \rR^i\!\pi'_*g^*(\bE,\dE,c)
	\]
	for all~$i$.
	\begin{proof}
		For a filtered $\cO\bB_\deR$-vector bundle~$\dM$ on~$V$ there is a base change map
		\[
		\bc_\dM\colon g^*\rR^i\!\pi'_{\deR*}\dM \to \rR^i\!\pi_{\deR*}g^*\dM
		\]
		for all~$i$, defined analogously to~$\bc_\dE$ above. We consider the diagram
		\begin{equation}\label{diag:base_change_compatibility}
		\begin{tikzcd}
			\cO\bB_{\deR,U'}\otimes f^*\!\rR^i\!\pi'_*\bE \arrow[r,"1\otimes\bc_\bE"]\arrow[d,"f^*\eta_\bE"]\arrow[ddd,bend right = 60,shift right = 12,dashed,"\wr"'] & \cO\bB_{\deR,U'}\otimes\rR^i\!\pi_*g^*\bE \arrow[d,"\eta_{g^*\bE}"]\arrow[ddd,bend left = 60,shift left = 12,dashed,"\wr"] \\
			f^*\!\rR^i\!\pi'_{\deR*}(\cO\bB_{\deR,V}\otimes\bE) \arrow[r]\arrow[d,"f^*\!\rR^i\!\pi'_{\deR*}(c)","\wr"'] & \rR^i\!\pi_{\deR*}(g^*(\cO\bB_{\deR,V}\otimes\bE)) \arrow[d,"\rR^i\!\pi_{\deR*}g^*(c)","\wr"'] \\
			f^*\!\rR^i\!\pi'_{\deR*}(\cO\bB_{\deR,V}\otimes\dE) \arrow[r] & \rR^i\!\pi_{\deR*}(g^*(\cO\bB_{\deR,V}\otimes\dE)) \\
			\cO\bB_{\deR,U'}\otimes f^*\!\rR^i\!\pi'_{\deR*}(\dE) \arrow[r,"1\otimes\bc_\dE"]\arrow[u,"f^*\eta_\dE"'] & \cO\bB_{\deR,U'}\otimes\rR^i\!\pi_{\deR*}g^*\dE \arrow[u,"\eta_{g^*\dE}"']
		\end{tikzcd}
		\end{equation}
		in the category of filtered~$\cO\bB_\deR$-vector bundles on~$U$ (in which subscripts on tensor products are again omitted for readability). The horizontal maps are the various base change maps, and the dashed vertical maps are the comparison isomorphisms for the de Rham pairs~$f^*\!\rR^i\!\pi'_*(\bE,\dE,c)$ and~$\rR^i\!\pi_*g^*(\bE,\dE,c)$.
		
		The three squares in~\eqref{diag:base_change_compatibility} formed by the solid arrows each commute. To see this for the top square, start from the commuting square
		\begin{center}
		\begin{tikzcd}
			g^{-1}\bE \arrow[r]\arrow[d] & g^*\bE \arrow[d] \\
			g^{-1}\DeR_{V/U}(\cO\bB_{\deR,V}\otimes\bE) \arrow[r] & \DeR_{V'/U'}(\cO\bB_{\deR,V'}\otimes g^*\bE)
		\end{tikzcd}
		\end{center}
		in the category of filtered complexes of abelian sheaves on~$U'_\proet$. Applying the functor~$\rR^i\!\pi_*$ and precomposing the horizontal maps with the base change map $f^{-1}\!\rR^i\!\pi'_*\to\rR^i\!\pi_*g^*$ for abelian sheaves yields a commuting square
		\begin{center}
		\begin{tikzcd}
			f^{-1}\!\rR^i\!\pi'_*\bE \arrow[r]\arrow[d] & \rR^i\!\pi_*g^*\bE \arrow[d] \\
			f^{-1}\!\rR^i\!\pi'_{\deR*}(\cO\bB_{\deR,V}\otimes\bE) \arrow[r] & \rR^i\!\pi_{\deR*}g^*(\cO\bB_{\deR,V}\otimes\bE)
		\end{tikzcd}
		\end{center}
		in the category of filtered abelian sheaves on~$U'_\proet$. The top map is $f^{-1}\hat\bZ_{p,U}$-linear, so factors uniquely through a $\hat\bZ_{p,U'}$-linear map $f^*\!\rR^i\!\pi'_*\bE \to \rR^i\!\pi_*g^*\bE$, namely the base change map~$\bc_\bE$. Similarly, the bottom map factors uniquely through a $\cO\bB_{\deR,U'}$-linear map $f^*\!\rR^i\!\pi'_{\deR*}(\cO\bB_{\deR,V}\otimes\bE) \to \rR^i\!\pi_{\deR*}g^*(\cO\bB_{\deR,V}\otimes\bE)$, which is the base change map for~$\cO\bB_{\deR,V}\otimes\bE$. Commutativity of the top square in~\eqref{diag:base_change_compatibility} follows; commutativity of the other squares in~\eqref{diag:base_change_compatibility} follow similarly.
		
		Now once we take completions with respect to the filtration, the two squares in~\eqref{diag:base_change_compatibility} formed by the dashed arrows and the vertical maps both commute, and all the maps in the rightmost of these squares become isomorphisms. It follows that the outermost square in~\eqref{diag:base_change_compatibility} formed by the dashed arrows and the top and bottom horizontal maps commutes after completing. By Lemma~\ref{lem:decompletion} it even commutes before completing, so $(\bc_\bE,\bc_\dE)$ is a morphism of de Rham pairs, as desired.
	\end{proof}
\end{prop}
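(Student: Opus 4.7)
The plan is to mimic closely the proof of Proposition~\ref{prop:cup_product_compatibility_analytic}, replacing cup products by base-change maps throughout. First, I would introduce an analogue of the base-change map at the level of filtered $\cO\bB_\deR$-vector bundles with flat connection: for such a $\dM$ on $V$, there is a natural $f^{-1}\pi^{-1}\cO\bB_{\deR,U}$-linear map $g^{-1}\DeR_{V/U}(\dM) \to \DeR_{V'/U'}(g^*\dM)$ of filtered complexes on $V'_\proet$, and composing with the abelian-sheaf base-change $f^{-1}\rR^i\!\pi_*\to\rR^i\!\pi'_*g^{-1}$ and $\cO\bB_{\deR,U'}$-linearising gives a map $\bc_\dM\colon f^*\rR^i\!\pi_{\deR*}\dM\to\rR^i\!\pi'_{\deR*}g^*\dM$.

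Next, by Theorem~\ref{thm:pushforward_of_pairs}(1), $\rR^i\!\pi_{\deR*}\dE$ and $\rR^i\!\pi'_{\deR*}g^*\dE$ are filtered vector bundles with flat connection (note $g^*(\bE,\dE,c)$ is a de Rham pair on $V'$ by the pullback construction in \S\ref{ss:relative_p-adic_hodge}, and its first component $g^*\bE$ has the required local-system pushforwards by hypothesis). The task is to show commutativity of the square
\begin{center}
\begin{tikzcd}[column sep=large]
\cO\bB_{\deR,U'}\otimes_{\hat\bZ_{p,U'}}f^*\!\rR^i\!\pi_*\bE \arrow[r,"1\otimes\bc_\bE"]\arrow[d,"f^*\tilde c"',"\wr"] & \cO\bB_{\deR,U'}\otimes_{\hat\bZ_{p,U'}}\rR^i\!\pi'_*g^*\bE \arrow[d,"\tilde c'","\wr"'] \\
\cO\bB_{\deR,U'}\otimes_{\cO_{U'}}f^*\!\rR^i\!\pi_{\deR*}\dE \arrow[r,"1\otimes\bc_\dE"'] & \cO\bB_{\deR,U'}\otimes_{\cO_{U'}}\rR^i\!\pi'_{\deR*}g^*\dE
\end{tikzcd}
\end{center}
in filtered $\cO\bB_\deR$-vector bundles with flat connection on $U'$, where $\tilde c$ and $\tilde c'$ are the Scholze comparison isomorphisms of Theorem~\ref{thm:pushforward_of_pairs}(3).

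To do this I would interpolate exactly as in~\eqref{diag:cup_product_compatibility}: form the analogue of the four-row diagram whose left column is $f^*\eta_\bE$ followed by $f^*\rR^i\!\pi_{\deR*}(c)$ followed by (the inverse of) $f^*\eta_\dE$, whose right column is the corresponding chain on the $g^*$-side, and whose three inner horizontal rows are, respectively, base-change on $\bE$, on $\cO\bB_{\deR,V}\otimes\bE$ (equivalently on $\cO\bB_{\deR,V}\otimes\dE$ via $c$), and on $\dE$. Each of the three inner squares commutes: the top and bottom by naturality of the abelian-sheaf base-change map applied to the morphisms $\bE\to\DeR_{V/U}(\cO\bB_{\deR,V}\otimes\bE)$ and $\DeR_{V/U}(\dE)\to\DeR_{V/U}(\cO\bB_{\deR,V}\otimes\dE)$, and the middle one by functoriality of $\rR^i\!\pi_{\deR*}$ and $\rR^i\!\pi'_{\deR*}$ applied to $c$ and $g^*c$.

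The main obstacle, just as in the cup product case, is that the vertical $\eta$-maps are only known to be isomorphisms after completion (Theorem~\ref{thm:pushforward_of_pairs}\eqref{thmpart:completed_eta}). I would therefore pass to completions with respect to the filtration: the two left and right ``long'' vertical columns become isomorphisms, yielding that the outer square commutes after completion. Since $\rR^i\!\pi_*\bE$ and $\rR^i\!\pi'_*g^*\bE$ are $\hat\bZ_p$-local systems, the filtered $\cO\bB_\deR$-vector bundles $\cO\bB_{\deR,U'}\otimes f^*\!\rR^i\!\pi_*\bE$ and $\cO\bB_{\deR,U'}\otimes\rR^i\!\pi'_*g^*\bE$ come from $\bB_\deR^+$-local systems on $U'$, so Lemma~\ref{lem:decompletion} applies and decompletes the identity. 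Concretely, both composites $\tilde c'\circ(1\otimes\bc_\bE)$ and $(1\otimes\bc_\dE)\circ f^*\tilde c$ satisfy the hypothesis of Lemma~\ref{lem:decompletion}, and agree after completion, hence agree. This yields the required morphism of de Rham pairs $\bc_{\bE,\dE}\colon f^*\rR^i\!\pi_*(\bE,\dE,c)\to\rR^i\!\pi'_*g^*(\bE,\dE,c)$.
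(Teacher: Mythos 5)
Your proposal is correct and follows essentially the same route as the paper: the same interpolating diagram with the three inner squares (whose commutativity reduces, as you indicate, to naturality of the abelian-sheaf base-change map applied to the comparison morphisms into the relative de Rham complexes, plus functoriality applied to $c$), followed by passing to completions where the $\eta$-maps become isomorphisms, and decompleting via Lemma~\ref{lem:decompletion} using that the relevant objects come from $\bB_\deR^+$-local systems. No gaps.
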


\subsection{Analytification of algebraic maps}

We now specialise the preceding discussion to the case of analytifications of algebraic varieties, following \cite[\S9]{scholze:relative_p-adic_hodge_theory}.

If~$X$ is a smooth algebraic variety over~$K_v$, then there is a natural morphism of sites $X^\an_\proet\to X_\et$, and we write $(-)^\an$ for the pullback map on categories of sheaves. So if~$\dE$ is a filtered vector bundle\footnote{Here, we always think of algebraic vector bundles as locally finite free $\cO_X$-modules in the \'etale topology. This is equivalent to the usual definition with the Zariski topology.} with flat connection on~$X$, then~$\dE^\an$ is a filtered vector bundle with flat connection on~$X^\an$, and if~$\bE$ is a $\bZ_p$-local system on~$X$ given as the inverse system of $\bZ/p^n$-local systems~$\bE_n$, then $\hat\bE^\an\colonequals\varprojlim\bE_n^\an$ is a $\hat\bZ_p$-local system on~$X^\an$.

Suppose now that~$\pi\colon X\to Y$ is a smooth proper morphism of smooth $K_v$-varieties, so that we have a 2-commuting square
\begin{equation}\label{diag:analytification_square}
	\begin{tikzcd}
		X^\an_\proet \arrow[r]\arrow[d,"\pi^\an"] & X_\et \arrow[d,"\pi"] \\
		Y^\an_\proet \arrow[r] & Y_\et
	\end{tikzcd}
\end{equation}
of sites. Scholze proves that the expected compatibility between analytic and algebraic derived pushforwards holds. We will use this in the proof of Theorem~\ref{thm:period_maps_control_reps} to relate relative algebraic \'etale and de Rham cohomology by comparing their analytic cousins via Scholze's relative comparison theorem.

\begin{prop}\label{prop:analytification_of_algebraic}\leavevmode
	\begin{enumerate}
		\item If~$\dE$ is a filtered vector bundle with flat connection on~$X$, then the base change map
		\[
		\left(\rR^i\!\pi_{\deR*}(\dE)\right)^\an \to \rR^i\!\pi^\an_{\deR*}(\dE^\an)
		\]
		associated to the square~\eqref{diag:analytification_square} is an isomorphism for all~$i$.
		\item If~$\bE$ is a $\bZ_p$-local system on~$X$, then the derived pushforward $\rR^i\!\pi^\an_*(\hat\bE^\an)$ is a $\hat\bZ_p$-local system on~$Y^\an$ for all~$i$ and the base change map
		\[
		\left(\rR^i\!\pi_{\et*}(\bE)\right)^{\wedge,\an} \to \rR^i\!\pi^\an_*(\hat\bE^\an)
		\]
		associated to the square~\eqref{diag:analytification_square} is an isomorphism for all~$i$.
	\end{enumerate}
	\begin{proof}
		See \cite[Theorem~9.1(ii)]{scholze:relative_p-adic_hodge_theory} for the first point; the second follows from a combination of \cite[Theorem~9.3]{scholze:relative_p-adic_hodge_theory}, \cite[Corollary~3.17(ii)]{scholze:relative_p-adic_hodge_theory} and \cite[Proposition~8.2]{scholze:relative_p-adic_hodge_theory}.
	\end{proof}
\end{prop}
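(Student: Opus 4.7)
The plan is to handle the two parts separately, with the de Rham part following from rigid GAGA and the étale part from a rigid-analytic analogue of Artin's comparison theorem combined with a passage to the limit.

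For part~(1), I would argue as follows. The relative derived de Rham pushforward $\rR^i\!\pi_{\deR*}\dE$ is defined as $\rR^i\!\pi_*\DeR_{X/Y}(\dE)$, where $\DeR_{X/Y}(\dE)$ is the bounded relative de Rham complex, which is a complex of coherent $\cO_X$-modules (though with non-$\cO_X$-linear differentials). Analytification $(-)^\an$ commutes with forming the relative de Rham complex in an obvious way, so the question reduces to showing that the base change map
\[
(\rR^i\!\pi_*\dF^\bullet)^\an \to \rR^i\!\pi^\an_*\dF^{\bullet,\an}
\]
is an isomorphism for the bounded complex $\dF^\bullet = \DeR_{X/Y}(\dE)$ of coherent $\cO_X$-modules. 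Using the stupid filtration on $\dF^\bullet$ and a spectral sequence argument, this reduces to the corresponding statement for a single coherent $\cO_X$-module, which is precisely the content of rigid GAGA for the smooth proper morphism $\pi$ (proved by Köpf in the rigid analytic setting). Compatibility with filtrations and connections is then automatic, because the base change maps are natural in the filtered derived category.

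For part~(2), the strategy is first to handle the case of $\bZ/p^n$-local systems and then pass to the inverse limit. For a $\bZ/p^n$-local system $\bE_n$ on $X$, one needs to show that $\rR^i\!\pi^\an_*\bE_n^\an$ is a $\bZ/p^n$-local system on $Y^\an$ and that the base-change map $(\rR^i\!\pi_{\et *}\bE_n)^\an \to \rR^i\!\pi^\an_*\bE_n^\an$ is an isomorphism. Constructibility of $\rR^i\!\pi_{\et *}\bE_n$ (by Deligne) combined with proper base-change reduces constructibility of the analytic pushforward to a fibrewise statement, and Huber's comparison theorem between algebraic and rigid-analytic étale cohomology for proper morphisms (cited in the form \cite[Theorem~9.3]{scholze:relative_p-adic_hodge_theory}) gives the comparison isomorphism on each fibre. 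Local constancy of the cohomology sheaves follows from smoothness of $\pi$ together with proper-smooth base change.

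To pass from $\bZ/p^n$ to $\hat\bZ_p$-coefficients, I would invoke the argument from \cite[Corollary~3.17(ii), Proposition~8.2]{scholze:relative_p-adic_hodge_theory}, which controls $\rR\!\varprojlim$ on the pro-étale site for an inverse system of constructible sheaves that is essentially constant modulo each $p^n$. This shows that $\rR^i\!\pi^\an_*(\hat\bE^\an)=\varprojlim \rR^i\!\pi^\an_*(\bE_n^\an)$ is a $\hat\bZ_p$-local system, and simultaneously that the base-change map identifies with the inverse limit of the finite-level base change isomorphisms, hence is itself an isomorphism. The main obstacle here is the limit step: one must verify that the relevant $\rR^1\!\varprojlim$-terms vanish, which is where the Mittag-Leffler property provided by local constancy and finiteness of each $\rR^i\!\pi^\an_*\bE_n^\an$ is essential.
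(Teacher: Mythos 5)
Your proposal is correct and follows essentially the same route as the paper, whose proof is simply a citation of Scholze's Theorem~9.1(ii) for the de Rham statement and of Theorem~9.3, Corollary~3.17(ii) and Proposition~8.2 for the \'etale statement. Your sketch just unwinds those citations in the standard way (K\"opf's relative GAGA plus the stupid filtration for part~(1); Huber's finite-coefficient comparison at each level $\bZ/p^n$ followed by the pro-\'etale limit formalism of Corollary~3.17(ii) and Proposition~8.2 for part~(2)), so there is no substantive difference in approach.
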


\subsection{The absolute comparison theorem for algebraic varieties}\label{ss:c_dR}

Before continuing with the proof of Theorem~\ref{thm:period_maps_control_reps}, let us take the time to precisely spell out the definition of the comparison isomorphism~$c_\deR$ appearing there, as promised in Remark~\ref{rmk:which_comparison_iso}.

This essentially amounts to unpacking Scholze's comparison theory in the case of the analytification of a smooth proper morphism $\pi\colon X\to \Spec(L_w)$ for a finite extension~$L_w$ of~$K_v$ contained in~$\Kbar_v$. We write~$U=\Sp(L_w)=\Spec(L_w)^\an$, and consider the object~$\Ubar\in U_\proet$ given by
\[
\Ubar = \varprojlim_{L_w'}\Sp(L_w') \,,
\]
where the colimit is taken over all finite extensions~$L_w'$ of~$L_w$ contained in~$\Kbar_v$. There is a natural right action of the absolute Galois group~$G_w$ of~$L_w$ on~$\Ubar$, and so for any sheaf~$\dF$ on~$U_\proet$ there is an induced left $G_w$-action on $\dF(\Ubar)$. In the particular case of the de Rham period sheaf~$\cO\bB_{\deR,U}$, its sections over~$\Ubar$ exactly recovers Fontaine's ring~$\sB_\deR$ of de Rham periods, with its Hodge filtration and Galois action.

So, applying Theorem~\ref{thm:pushforward_of_pairs} to the map $\pi^\an\colon X^\an \to \Sp(L_w)$ and the de Rham pair $(\bE,\dE,c)=(\hat\bZ_{p,X^\an},\cO_{X^\an},1)$, we obtain a series of isomorphisms
\begin{align*}
	\cO\bB_{\deR,U}\otimes_{\hat\bZ_{p,U}}\bigl(\rR^i\!\pi_{\et*}\bZ_{p,X}\bigr)^{\wedge,\an} &\cong \cO\bB_{\deR,U}\otimes_{\hat\bZ_{p,U}}\rR^i\!\pi^\an_*\hat\bZ_{p,X^\an} \\
	&\cong \cO\bB_{\deR,U}\otimes_{\cO_U}\rR^i\!\pi^\an_{\deR*}\cO_{X^\an} \\
	&\cong \cO\bB_{\deR,U}\otimes_{\cO_U}\bigl(\rR^i\!\pi_{\deR*}\cO_X\bigr)^\an
\end{align*}
of filtered $\cO\bB_\deR$-vector bundles with flat connection on~$U$, the first and third of which are the isomorphisms from Proposition~\ref{prop:analytification_of_algebraic}. Taking sections over~$\Ubar$, we thus obtain a comparison isomorphism
\begin{equation}\label{eq:comparison_iso}
c_\deR\colon \sB_\deR\otimes_{\bQ_p}\rH^i_\et(X_{\Lbar_w},\bQ_p) \xrightarrow\sim \sB_\deR\otimes_{L_w}\rH^i_\deR(X/L_w)
\end{equation}
for every smooth proper algebraic variety~$X/L_w$, which is $\sB_\deR$-linear, $G_w$-equivariant and strictly compatible with filtrations. (We write~$\Lbar_w$ instead of~$\Kbar_v$ on the left-hand side to emphasis that the base-change is from~$L_w$ to~$\Lbar_w=\Kbar_v$, not from~$K_v$ to~$\Kbar_v$.) This is the comparison isomorphism~$c_\deR$ for which we will prove Theorem~\ref{thm:period_maps_control_reps}.

\subsubsection{Compatibilities}

We will also need to know that the comparison isomorphism~$c_\deR$ from~\eqref{eq:comparison_iso} is compatible with the usual constructions in cohomology. As we said in Remark~\ref{rmk:which_comparison_iso}, these compatibilities are already known for the comparison isomorphisms constructed by Faltings, Tsuji and others; however, it is important for us that these compatibilities hold for the comparison isomorphism defined above, so we have to check this by hand.

The compatibilities come in two types. The first are those compatibilities which follow formally from the setup.

\begin{prop}\label{prop:compatibility_i}
	The comparison isomorphism~$c_\deR$ from~\eqref{eq:comparison_iso} has the following properties.
	\begin{enumerate}
		\item\label{proppart:compatibility_cup_products} For any smooth proper variety~$X/L_w$, the total comparison isomorphism
		\[
		c_\deR\colon \sB_\deR\otimes_{\bQ_p}\rH^\bullet_\et(X_{\Lbar_w},\bQ_p) \xrightarrow\sim \sB_\deR\otimes_{L_w}\rH^\bullet_\deR(X/L_w)
		\]
		is an isomorphism of graded algebras with respect to cup product.
		\item\label{proppart:compatibility_functoriality} For fixed~$L_w$, the isomorphism~$c_\deR$ is natural in~$X$.
		\item\label{proppart:compatibility_base_change} For a finite extension~$L_{w_2}/L_{w_1}$ contained in~$\Kbar_v$ and a smooth proper variety~$X/L_{w_1}$, the square
		\begin{center}
		\begin{tikzcd}
			\sB_\deR\otimes_{\bQ_p}\rH^i_\et(X_{\Lbar_{w_1}},\bQ_p) \arrow[r,"c_\deR","\sim"']\arrow[d,"\wr"] & \sB_\deR\otimes_{L_{w_1}}\rH^i_\deR(X/L_{w_1}) \arrow[d] \\
			\sB_\deR\otimes_{\bQ_p}\rH^i_\et((X_{L_{w_2}})_{\Lbar_{w_2}},\bQ_p) \arrow[r,"c_\deR","\sim"'] & \sB_\deR\otimes_{L_{w_2}}\rH^i_\deR(X_{L_{w_2}}/L_{w_2})
		\end{tikzcd}
		\end{center}
		commutes for all~$i$, where the vertical maps are the base change maps.
		\item\label{proppart:compatibility_kuenneth} For two smooth proper varieties~$X,Y/L_w$, the square
		\begin{center}
		\begin{tikzcd}[column sep = large]
			\sB_\deR\otimes_{\bQ_p}\rH^\bullet_\et(X_{\Lbar_w},\bQ_p)\otimes_{\bQ_p}\rH^\bullet_\et(Y_{\Lbar_w},\bQ_p) \arrow[r,"c_\deR\otimes c_\deR","\sim"']\arrow[d,"\wr"] & \sB_\deR\otimes_{L_w}\rH^\bullet_\deR(X/L_w)\otimes_{L_w}\rH^\bullet_\deR(Y/L_w) \arrow[d,"\wr"] \\
			\sB_\deR\otimes_{\bQ_p}\rH^\bullet_\et((X\times_{L_w}Y)_{\Lbar_w},\bQ_p) \arrow[r,"c_\deR","\sim"'] & \sB_\deR\otimes_{L_w}\rH^\bullet_\deR(X\times_{L_w}Y/L_w)
		\end{tikzcd}
		\end{center}
		commutes, where the vertical maps are the K\"unneth isomorphisms.
	\end{enumerate}
	\begin{proof}
		The first three points follow straightforwardly from Propositions~\ref{prop:cup_product_compatibility_analytic} and~\ref{prop:base_change_compatibility_analytic}. For the final point, the K\"unneth isomorphism in de Rham cohomology is the unique isomorphism of graded algebras induced by the pullback maps $\rH^\bullet_\deR(X/L_w)\to\rH^\bullet_\deR(X\times_{L_w}Y/L_w)$ and $\rH^\bullet_\deR(Y/L_w)\to\rH^\bullet_\deR(X\otimes_{L_w}Y/L_w)$ induced from the product projections, and similarly for \'etale cohomology. Hence~\eqref{proppart:compatibility_kuenneth} follows from~\eqref{proppart:compatibility_cup_products} and~\eqref{proppart:compatibility_functoriality}.
	\end{proof}
\end{prop}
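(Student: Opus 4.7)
My plan is to reduce each of the four statements to the corresponding compatibility at the analytic level established in Propositions~\ref{prop:cup_product_compatibility_analytic} and~\ref{prop:base_change_compatibility_analytic}, then pass back to algebraic cohomology via the identifications of Proposition~\ref{prop:analytification_of_algebraic}. Recall that $c_\deR$ was constructed by applying Scholze's pushforward theorem to the trivial de Rham pair $(\hat\bZ_{p,X^\an},\cO_{X^\an},1)$ on $X^\an$ along $\pi^\an\colon X^\an\to\Sp(L_w)$, and then evaluating on the pro-\'etale object $\Ubar$; this is the formal bridge all four arguments will cross.

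For \eqref{proppart:compatibility_cup_products}, I observe that the multiplications on $\hat\bZ_{p,X^\an}$ and on $\cO_{X^\an}$ assemble into a pairing of de Rham pairs (the compatibility with the identity comparison isomorphism being automatic). Proposition~\ref{prop:cup_product_compatibility_analytic} then yields that the induced analytic cup product on $\rR^i\pi^\an_*$ commutes with the comparison isomorphism. Evaluating on $\Ubar$ and using that the identifications of Proposition~\ref{prop:analytification_of_algebraic} intertwine algebraic and analytic cup products (a formal consequence of how those identifications are built from the relative de Rham complex and the pushforward of sheaves of algebras) then yields the claimed graded-algebra statement.

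For \eqref{proppart:compatibility_functoriality}, given a morphism $f\colon X_1\to X_2$ of smooth proper $L_w$-varieties, I view the pullback maps $f^*$ on \'etale and de Rham cohomology as the base-change maps associated to the square whose vertical arrows are $\pi_i^\an\colon X_i^\an\to\Sp(L_w)$, whose top horizontal arrow is $f^\an$, and whose bottom horizontal arrow is the identity. Since the pullback of the trivial de Rham pair $(\hat\bZ_{p,X_2^\an},\cO_{X_2^\an},1)$ is the trivial de Rham pair on $X_1^\an$, Proposition~\ref{prop:base_change_compatibility_analytic} yields functoriality after evaluation on $\Ubar$. Statement~\eqref{proppart:compatibility_base_change} is proved in exactly the same way, by applying Proposition~\ref{prop:base_change_compatibility_analytic} to the analytified base-change square between $X$ and $X_{L_{w_2}}$, once one notes that the pullback of the trivial pair remains trivial and that evaluation on $\Ubar$ for the relevant pro-\'etale sites matches the base-change of $\sB_\deR$ for $L_{w_2}/L_{w_1}$.

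Finally, \eqref{proppart:compatibility_kuenneth} should follow formally from \eqref{proppart:compatibility_cup_products} and \eqref{proppart:compatibility_functoriality}: in both \'etale and de Rham cohomology the K\"unneth map $\rH^\bullet(X)\otimes\rH^\bullet(Y)\to\rH^\bullet(X\times_{L_w}Y)$ is characterised as the unique graded-algebra map induced by cupping pullbacks along the two projections, so compatibility with $c_\deR$ propagates from functoriality and cup product compatibility. The step I anticipate will need the most care is \eqref{proppart:compatibility_cup_products}, since one must verify that Scholze's analytic cup product really does recover the classical algebraic cup products after evaluation on $\Ubar$ and passage through the algebraic-to-analytic comparisons of Proposition~\ref{prop:analytification_of_algebraic}; the other three parts are then essentially formal diagram chases.
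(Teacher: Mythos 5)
Your proposal is correct and follows essentially the same route as the paper: the first three compatibilities are deduced from Propositions~\ref{prop:cup_product_compatibility_analytic} and~\ref{prop:base_change_compatibility_analytic} applied to the trivial de Rham pair (with functoriality treated as a base-change square over the identity of $\Sp(L_w)$), and the K\"unneth compatibility is deduced formally from~\eqref{proppart:compatibility_cup_products} and~\eqref{proppart:compatibility_functoriality}. The extra care you flag about matching analytic and algebraic cup products under the identifications of Proposition~\ref{prop:analytification_of_algebraic} is exactly the kind of routine verification the paper leaves implicit.
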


The second set of compatibilities are those related to Poincar\'e duality. In the statement, we use~$\langle n\rangle$ to denote a shift in filtration by~$n$, i.e.\ if~$V$ is a filtered object then~$V\langle n\rangle$ denotes the filtered object with~$\rF^i(V\langle n\rangle)=\rF^{i+n}\!V$.

\begin{prop}\label{prop:compatibility_ii}
	There is a $G_v$-equivariant filtered $\sB_\deR$-linear isomorphism
	\[
	a\colon \sB_\deR(-1) \xrightarrow\sim \sB_\deR\langle-1\rangle
	\]
	with the following properties.
	\begin{enumerate}\setcounter{enumi}{4}
		\item\label{proppart:compatibility_traces} For any smooth proper geometrically connected variety~$X/L_w$ of dimension~$n$, the square
		\begin{equation}\label{diag:trace_compatibility}
		\begin{tikzcd}
			\sB_\deR\otimes\rH^{2n}_\et(X_{\Lbar_w},\bQ_p) \arrow[r,"c_\deR","\sim"']\arrow[d,"1\otimes\tr_\et","\wr"'] & \sB_\deR\otimes\rH^{2n}_\deR(X/L_w) \arrow[d,"1\otimes\tr_\deR","\wr"'] \\
			\sB_\deR(-n) \arrow[r,"a^{\otimes n}","\sim"'] & \sB_\deR\langle-n\rangle
		\end{tikzcd}
		\end{equation}
		commutes, where~$\tr_\et$ and~$\tr_\deR$ are the trace maps.
		\item\label{proppart:compatibility_poincare} For any smooth proper geometrically connected variety~$X/L_w$ of dimension~$n$, the square
		\begin{center}
		\begin{tikzcd}
			\sB_\deR\otimes\rH^i_\et(X_{\Lbar_w},\bQ_p)\otimes_{\bQ_p}\rH^{2n-i}(X_{\Lbar_w},\bQ_p) \arrow[r,"c_\deR\otimes c_\deR","\sim"']\arrow[d] & \sB_\deR\otimes\rH^i_\deR(X/L_w)\otimes\rH^{2n-i}_\deR(X/L_w) \arrow[d] \\
			\sB_\deR(-n) \arrow[r,"a^{\otimes n}","\sim"'] & \sB_\deR\langle-n\rangle
		\end{tikzcd}
		\end{center}
		commutes for all~$i$, where the vertical maps are the Poincar\'e duality pairings.
		\item\label{proppart:compatibility_cycle_classes} For any smooth proper variety~$X/L_w$ and any codimension~$r$ cycle~$Z$ on~$X$, the \'etale and de Rham cycle classes of~$Z$ are identified with one another under the isomorphism
		\begin{equation}\label{eq:twisted_comparison}
		c_\deR\otimes a^{\otimes-r}\colon \sB_\deR\otimes_{\bQ_p}\rH^{2r}_\et(X_{\Lbar_w},\bQ_p)(r) \xrightarrow\sim \sB_\deR\otimes_{L_w}\rH^{2r}_\deR(X/L_w)\langle r\rangle \,.
		\end{equation}
		\item\label{proppart:compatibility_chern_classes} For any smooth proper variety~$X/L_w$ and vector bundle~$V$ on~$X$, the $r$th \'etale and de Rham Chern classes of~$V$ are identified with one another under the isomorphism~\eqref{eq:twisted_comparison}.
	\end{enumerate}
	\begin{proof}
		First we must define the isomorphism~$a$: we take~$a$ to be the unique isomorphism making~\eqref{diag:trace_compatibility} commute for~$X=\bP^1_{K_v}$.
		
		Now we prove~\eqref{proppart:compatibility_traces}. Note firstly that if~$f\colon X'\to X$ is a generically finite morphism of degree~$d$ between smooth proper geometrically connected $L_w$-varieties of dimension~$n$, then the squares
		\begin{center}
		\begin{tikzcd}
			\rH^{2n}_\et(X_{\Lbar_w},\bQ_p) \arrow[r,"f^*"]\arrow[d,"\tr_\et"] & \rH^{2n}_\et(X'_{\Lbar_w},\bQ_p) \arrow[d,"\tr_\et"] & \rH^{2n}_\deR(X/L_w) \arrow[r,"f^*"]\arrow[d,"\tr_\deR"] & \rH^{2n}_\deR(X'/L_w) \arrow[d,"\tr_\deR"] \\
			\bQ_p(-n) \arrow[r,"d"] & \bQ_p(-n) & L_w\langle-n\rangle \arrow[r,"d"] & L_w\langle-n\rangle
		\end{tikzcd}
		\end{center}
		both commute. Hence~\eqref{diag:trace_compatibility} commutes for~$X$ if and only if it does for~$X'$. Hence it suffices to prove commutativity of~\eqref{diag:trace_compatibility} for one $L_w$-variety of each dimension~$n$: we do this for~$X=(\bP^1_{L_w})^n$. When~$n=1$ and~$L_w=K_v$ this follows by definition of~$a$; when~$n=1$ and~$L_w$ is general this follows by point~\eqref{proppart:compatibility_base_change}. For~$n$ general, the trace maps for~$(\bP^1_{L_w})^n$ are identified, via the K\"unneth isomorphism, with the $n$th tensor power of the trace maps for~$\bP^1_{L_w}$. So commutativity of~\eqref{diag:trace_compatibility} follows from point~\eqref{proppart:compatibility_kuenneth}.
		
		Point~\eqref{proppart:compatibility_poincare} follows immediately from points~\eqref{proppart:compatibility_cup_products} and~\eqref{proppart:compatibility_traces}. For~\eqref{proppart:compatibility_cycle_classes}, by~\eqref{proppart:compatibility_functoriality} and~\eqref{proppart:compatibility_base_change} we need only consider the case that~$X$ is geometrically connected of dimension~$n$ and~$Z$ is geometrically integral. Choose a resolution of singularities~$\tilde Z\to Z$, and let~$\tilde\iota$ denote the composite $\tilde Z\to Z\hookrightarrow X$. The de Rham cycle class of~$Z$ is the unique element of~$\rH^{2r}(X/L_w)\langle r\rangle$ satisfying $\langle\cl_\deR(Z),\xi\rangle=\tr_\deR(\tilde\iota^*\xi)$ for all~$\xi\in\rH^{2n-2r}_\deR(X/L_w)$, where~$\langle\cdot,\cdot\rangle$ is the Poincar\'e duality pairing, and similarly for the \'etale cycle class. So~\eqref{proppart:compatibility_cycle_classes} follows from points~\eqref{proppart:compatibility_functoriality} and~\eqref{proppart:compatibility_poincare}.
		
		For point~\eqref{proppart:compatibility_chern_classes}, Grothendieck's formalism of Chern classes implies that it suffices to prove the result when~$V$ is a line bundle and~$r=1$. But in this case the projection $V\to X$ from the total space of~$V$ induces an isomorphism on \'etale and de Rham cohomology, and the first Chern class of~$V$ is none other than the cycle class of the zero section in~$V$. So we are done by~\eqref{proppart:compatibility_functoriality} and~\eqref{proppart:compatibility_cycle_classes}.
	\end{proof}
\end{prop}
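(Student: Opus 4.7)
The plan is to first define the isomorphism $a$ using the comparison isomorphism on a single well-chosen variety, and then bootstrap the compatibilities~\eqref{proppart:compatibility_traces}--\eqref{proppart:compatibility_chern_classes} from the formal properties of $c_\deR$ already established in Proposition~\ref{prop:compatibility_i}. Concretely, I would take $a$ to be the unique $\sB_\deR$-linear map making diagram~\eqref{diag:trace_compatibility} commute when $X = \bP^1_{K_v}$; this exists and is unique because $\tr_\et\colon \rH^2_\et(\bP^1_{\Kbar_v},\bQ_p) \xrightarrow{\sim} \bQ_p(-1)$ and $\tr_\deR\colon \rH^2_\deR(\bP^1_{K_v}/K_v) \xrightarrow{\sim} K_v\langle-1\rangle$ are both isomorphisms. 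Its $G_v$-equivariance, filtered compatibility, and $\sB_\deR$-linearity follow automatically from the corresponding properties of $c_\deR$ and the trace maps.

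For~\eqref{proppart:compatibility_traces} in general, I would exploit the fact that a generically finite morphism $f\colon X' \to X$ of degree $d$ between smooth proper geometrically connected $L_w$-varieties of common dimension $n$ induces pullback maps on $\rH^{2n}$ satisfying $\tr \circ f^* = d \cdot \tr$ in both cohomology theories. Combined with functoriality~\eqref{proppart:compatibility_functoriality}, this shows that~\eqref{diag:trace_compatibility} commutes for $X$ if and only if it commutes for $X'$, so it suffices to verify the diagram for a single $L_w$-variety in each dimension. Taking $X = (\bP^1_{L_w})^n$, the K\"unneth compatibility~\eqref{proppart:compatibility_kuenneth} reduces to $n = 1$ over $L_w$, and then base change~\eqref{proppart:compatibility_base_change} reduces further to $L_w = K_v$, which is the defining property of $a$. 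The implicit claim that every smooth proper geometrically connected $L_w$-variety of dimension $n$ can be linked to $(\bP^1_{L_w})^n$ by a chain of generically finite morphisms is justified by Noether normalization over the infinite field $L_w$: any such $X$ admits a finite surjective morphism $X \to \bP^n_{L_w}$, as does $(\bP^1_{L_w})^n$ (via a generic projection from its Segre embedding), and these two maps together connect $X$ and $(\bP^1_{L_w})^n$ through the intermediate variety $\bP^n_{L_w}$.

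The remaining three compatibilities follow formally. Poincar\'e duality~\eqref{proppart:compatibility_poincare} is immediate since both pairings are of the form $\xi \otimes \eta \mapsto \tr(\xi \cup \eta)$, so~\eqref{proppart:compatibility_poincare} follows by combining the cup product compatibility~\eqref{proppart:compatibility_cup_products} with~\eqref{proppart:compatibility_traces}. For cycle classes~\eqref{proppart:compatibility_cycle_classes}, I would reduce by linearity to an integral cycle $Z$ of codimension $r$, choose a resolution of singularities $\tilde\iota\colon \tilde Z \to Z \hookrightarrow X$, and use the characterization of both the \'etale and de Rham cycle classes by the identity $\langle\cl(Z),\xi\rangle = \tr_{\tilde Z}(\tilde\iota^*\xi)$ under Poincar\'e duality; compatibility then follows from~\eqref{proppart:compatibility_functoriality}, \eqref{proppart:compatibility_poincare}, and~\eqref{proppart:compatibility_traces} applied to $\tilde Z$. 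Finally, for Chern classes~\eqref{proppart:compatibility_chern_classes}, Grothendieck's axiomatic construction via the projective bundle formula reduces everything to the case $r = 1$ of a line bundle, whose first Chern class coincides with the cycle class of any effective divisor representing it, bringing us back to~\eqref{proppart:compatibility_cycle_classes}. I expect the main obstacle to be step~\eqref{proppart:compatibility_traces}: specifically, verifying uniform behavior of \'etale and de Rham traces under generically finite pullback and performing the chain-of-morphisms reduction to $(\bP^1_{L_w})^n$. Once that step is in place, the remaining compatibilities are essentially formal bookkeeping on top of Proposition~\ref{prop:compatibility_i}.
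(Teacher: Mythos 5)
Your proposal is correct and follows the paper's proof essentially step for step: $a$ is defined by the $\bP^1_{K_v}$ trace square, part~\eqref{proppart:compatibility_traces} is reduced along generically finite morphisms to $(\bP^1_{L_w})^n$ and then handled by the K\"unneth and base-change compatibilities of Proposition~\ref{prop:compatibility_i}, and the remaining parts are deduced formally exactly as in the paper (your only deviation is using $c_1(\dL)=\cl(D)$ for a representing divisor in place of the paper's zero-section-of-the-total-space argument, which is an equally standard route). Two small patches to your added justifications: Noether normalization produces a finite map $X\to\bP^n_{L_w}$ only when $X$ is projective, so for merely proper smooth $X$ you should first pass to a smooth projective modification $X'\to X$ (Chow's lemma plus resolution of singularities, a generically finite morphism of degree~$1$) before projecting; and not every line bundle admits an \emph{effective} representing divisor, so instead write $\dL\cong\cO_X(D_1-D_2)$ with $D_1,D_2$ effective and use additivity of $c_1$ and of both cycle class maps.
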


\begin{rmk}
	The construction of the period ring~$\sB_\deR$ provides it with a canonical $G_v$-equivariant map $\bQ_p(1)\to\sB_\deR$ such that the image of any non-zero element spans~$\rF^1\!\sB_\deR$ as a $\sB_\deR^+$-module. Thus there is a canonical choice of an isomorphism $\sB_\deR(-1)\cong\sB_\deR\langle-1\rangle$; we suspect that the isomorphism~$a$ from Proposition~\ref{prop:compatibility_ii} should be this canonical choice of isomorphism, but we do not prove it here.
\end{rmk}

\subsection{Horizontal de Rham local systems}\label{ss:shimizu}

\newcommand{\clash}[1]{{\color{red}#1}}

With Scholze's comparison theorem in hand, Theorem~\ref{thm:period_maps_control_reps} becomes a special case of a general result about de Rham local systems over polydiscs. In this level of generality, the problem was studied by Shimizu~\cite{shimizu:p-adic_monodromy}; we explain carefully here how to extract the result we need from his theory. The same result we need already appears in work by the first author~\cite[Theorem~6.1]{alex:local_constancy}, but we repeat the derivation here for the sake of completeness.

If~$\bE$ is a $\hat\bZ_p$-local system on a smooth rigid-analytic space~$U$ over~$K_v$ and~$y\in U(K_v)$ is a $K_v$-rational point, then one obtains a continuous representation~$\bE_{\bar y}$ of~$G_v$ on a finite $\bZ_p$-module by first pulling back~$\bE$ along $y\colon\Sp(K_v)\to U$ and then taking sections over the object
\[
\varprojlim_{L_w}\Sp(L_w) \in \Sp(K_v)_\proet
\]
where~$L_w$ ranges over finite extensions of~$K_v$ inside~$\Kbar_v$, as in \S\ref{ss:c_dR}. The result we need to extract from Shimizu's theory says that if~$U$ is a closed polydisc, and if~$\bE$ is a de Rham local system whose associated vector bundle has a full basis of horizontal sections over~$U$, then the $(\varphi,N,G_v)$-modules $\sD_\pst(\bE_{\bar y})$ are all canonically isomorphic to one another for~$y\in U(K_v)$. The precise statement is as follows.

\begin{thm}\label{thm:parallel_transport_is_phi-compatible}
	Let~$U$ be a rigid-analytic space over~$K_v$ isomorphic to a closed polydisc or spherical polyannulus\footnote{A spherical polyannulus over~$K_v$ is a rigid-analytic space over~$K_v$ isomorphic to $\Sp(K_v\langle t_1^{\pm1},\dots,t_n^{\pm1}\rangle)$ for some~$n\geq0$. In other words, it is a polyannulus whose inner and outer radii are equal.}, and let~$(\bE,\dE,c)$ be a de Rham pair on~$U$. Suppose that~$\dE$ has a full basis of horizontal sections. Then for every $y_0,y\in\Nbd(K_v)$, there is a unique isomorphism
	\[
	T_{y_0,y}\colon\sD_\pst\left(\bE_{\bar y_0}\right) \xrightarrow\sim \sD_\pst\left(\bE_{\bar y}\right)
	\]
	of $(\varphi,N,G_v)$-modules making the diagram
	\begin{center}
	\begin{tikzcd}
		\Kbar_v\otimes_{\bQ_p^\nr}\sD_\pst\left(\bE_{\bar y_0}\right) \arrow[r,"c_\BO","\sim"']\arrow[d,"1\otimes T_{y_0,y}"',"\wr"] & \Kbar_v\otimes_{K_v}\sD_\deR\left(\bE_{\bar y_0}\right) \arrow[r,"1\otimes c_{y_0}","\sim"'] & \Kbar_v\otimes_{K_v}\dE_{y_0} \arrow[d,"1\otimes T_{y_0,y}^\nabla","\wr"'] \\
		\Kbar_v\otimes_{\bQ_p^\nr}\sD_\pst\left(\bE_{\bar y}\right) \arrow[r,"c_\BO","\sim"'] & \Kbar_v\otimes_{K_v}\sD_\deR\left(\bE_{\bar y}\right) \arrow[r,"1\otimes c_y","\sim"'] & \Kbar_v\otimes_{K_v}\dE_y
	\end{tikzcd}
	\end{center}
	commute.
\end{thm}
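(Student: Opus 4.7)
The map $T_{y_0,y}$ is essentially forced upon us. Since $c_\BO$ at both $y_0$ and $y$, the evaluations $1 \otimes c_{y_0}$ and $1 \otimes c_y$ of the comparison isomorphism $c$ at the two points, and the parallel transport $1 \otimes T^\nabla_{y_0,y}$ are all $\Kbar_v$-linear isomorphisms, there is a unique $\Kbar_v$-linear isomorphism $1 \otimes T_{y_0,y}$ of $\Kbar_v \otimes_{\bQ_p^\nr} \sD_\pst(\bE_{\bar y_0})$ with $\Kbar_v \otimes_{\bQ_p^\nr} \sD_\pst(\bE_{\bar y})$ completing the diagram. The content of the theorem is therefore that this $\Kbar_v$-linear map descends to a $\bQ_p^\nr$-linear map and is automatically compatible with $\varphi$, $N$ and the $G_v$-action. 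This is essentially a local constancy statement for $\sD_\pst$ applied fiberwise to a de Rham local system over a polydisc or polyannulus.

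To establish this, the plan is to invoke Shimizu's theory of potential horizontal semistability for de Rham local systems on spherical polyannuli \cite{shimizu:p-adic_monodromy}. Shimizu constructs a relative analogue of $\sD_\pst$: after possibly passing to a finite \'etale cover $U' \to U$ and a finite extension $L_w/K_v$, there is a vector bundle $\sD_\pst(\bE)$ on $U'$ equipped with a flat connection, a semilinear $\varphi$-action, a monodromy operator $N$, and a discrete semilinear $G_w$-action, all commuting in the expected way. Its fiber at a point $y' \in U'(L_w)$ recovers $\sD_\pst(\bE_{\bar y'})|_{G_w}$ as a $(\varphi,N,G_w)$-module, and after base-change along a suitable period ring $\cO\bB_\st$ there is a comparison isomorphism with $\dE|_{U'}$ refining the input datum $c$. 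Crucially, on polyannuli this sheaf is \emph{horizontally} semistable: the $\varphi$-, $N$- and $G_w$-structures are parallel for the connection.

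Because $\dE$ has a full basis of horizontal sections over $U$, the same is true, after base change, of $\sD_\pst(\bE)$ on $U'$, so both sides are trivialized by horizontal sections and carry parallel transport maps $T^\nabla$ and $T^{\sD_\pst}$ respectively. The comparison isomorphism then intertwines these two parallel transports. Evaluating at $y_0$ and $y$, and using that $\varphi$, $N$ and the $G_w$-action are parallel, the resulting identification $T^{\sD_\pst}_{y_0,y}$ of fibers is automatically a $(\varphi,N,G_w)$-module isomorphism making the diagram of the theorem commute after restricting to $G_w$. Descent from $L_w$ to $K_v$ (and from $U'$ to $U$) is then handled by the uniqueness clause: Galois translates of $T_{y_0,y}$ still complete the diagram, so by uniqueness they agree, which gives both the descent to $\bQ_p^\nr$ and the full $G_v$-equivariance.

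The main obstacle is the careful bookkeeping in the second step: extracting from \cite{shimizu:p-adic_monodromy} precisely the statement that $\sD_\pst(\bE)$ is a horizontal $(\varphi,N,G_w)$-module on a polyannulus and that its comparison with $\dE$ is compatible, in a way that intertwines the two parallel-transport maps. A closely related extraction has already been carried out by the first author in \cite[Theorem~6.1]{alex:local_constancy}, and the plan is to follow that template; once the horizontal nature of $\sD_\pst(\bE)$ is in hand, the remaining verifications of $\varphi$-, $N$- and $G_v$-equivariance are formal consequences.
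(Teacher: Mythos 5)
Your overall strategy is the same as the paper's: uniqueness is trivial because the other three maps in the rectangle are isomorphisms, and existence is extracted from Shimizu's potential horizontal semistability, exactly along the lines of \cite[Theorem~6.1]{alex:local_constancy}. Two caveats, one cosmetic and one substantive. The cosmetic one: Shimizu's theory does not literally produce "a vector bundle $\sD_\pst(\bE)$ on a finite \'etale cover $U'$ with flat connection"; what it produces is a single discrete $(\varphi,N,G_v)$-module $\sD_\pst^\nabla(E)$, cut out of $\sB_\st^\nabla(R_v)\otimes E$ for the global Galois group $G_{R_v}$ of the polyannulus, together with specialisation maps $\rho_{\tilde y}\colon\sD_\pst^\nabla(E)\to\sD_\pst(E_{\tilde y})$ which are isomorphisms when $E$ is potentially horizontal semistable (which holds here because $\dE$ has a full basis of horizontal sections). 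One then takes $T_{y_0,y}\colonequals\rho_{\tilde y}\circ\rho_{\tilde y_0}^{-1}$, and the commutativity of the rectangle is not quite formal: it is proved by a rigidity argument over $\sB_\deR(R_v)$ (the relevant diagram of $G_{R_v}$-equivariant horizontal isomorphisms commutes up to an element of $\GL_n(K_v)$ since $\sB_\deR(R_v)^{G_{R_v},\nabla=0}=K_v$, and the ambiguity is killed by evaluating at $y_0$). Your "parallel transport on $\sD_\pst(\bE)$ intertwined with $T^\nabla$" picture is a reasonable paraphrase of this, but the actual verification requires this evaluation step rather than being purely formal.

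The substantive gap is the closed polydisc case. The theorem is stated for $U$ a closed polydisc \emph{or} a spherical polyannulus, but the horizontal semistability machinery you invoke (the period ring $\sB_\st^\nabla(R_v)$, the computation of its invariants, and the criterion via a full basis of horizontal sections) is set up for the toric algebras $\cO_v\langle t_1^{\pm1},\dots,t_n^{\pm1}\rangle$; a closed polydisc is not in its scope, so applying it directly there would fail. In the paper this is handled by a separate reduction: given $y_0,y$ in a polydisc, one embeds a spherical polyannulus through both points via $t_i\mapsto t_i-c_i$ with $c_i\in\cO_v$ avoiding the residue discs of both coordinates of $y_0$ and $y$. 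This choice of $c_i$ is impossible when the residue field is $\bF_2$, and that case needs an extra argument: apply the polyannulus case over the unramified extensions of degrees $2$ and $3$, use the uniqueness clause to see the resulting maps coincide, and conclude $G_v$-equivariance because $G_{w,2}$ and $G_{w,3}$ generate $G_v$. Your uniqueness-based descent remark is in the right spirit for this last step, but as written your proposal never reduces the polydisc case to the polyannulus case at all, and the $\bF_2$ wrinkle shows the reduction is not entirely automatic.
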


\begin{proof}[Proof of Theorem~\ref{thm:period_maps_control_reps}] It is straightforward to deduce Theorem~\ref{thm:period_maps_control_reps} from Theorem~\ref{thm:parallel_transport_is_phi-compatible}. Given a smooth proper morphism $\pi\colon X\to Y$ of smooth $K_v$-varieties, we know by Theorem~\ref{thm:pushforward_of_pairs} applied to the unit de Rham pair~$(\hat\bZ_{p,X^\an},\cO_{X^\an},1)$ on~$X^\an$ that there is an isomorphism
\[
c_\deR\colon\cO\bB_{\deR,Y^\an}\otimes_{\hat\bZ_{p,Y^\an}}\rR^i\!\pi^\an_*\hat\bZ_{p,X^\an} \xrightarrow\sim \cO\bB_{\deR,Y^\an}\otimes_{\cO_{Y^\an}}\rR^i\!\pi^\an_{\deR*}\cO_{X^\an}
\]
making $(\rR^i\!\pi^\an_*\hat\bZ_{p,X^\an},\rR^i\!\pi^\an_{\deR*}\cO_{X^\an},c_\deR)$ into a de Rham pair on~$Y^\an$. As discussed in \S\ref{ss:c_dR}, $\rR^i\!\pi^\an_*\hat\bZ_{p,X^\an}$ and $\rR^i\!\pi^\an_{\deR*}\cO_{X^\an}$ can be identified as the analytifications of the relative \'etale and de Rham cohomologies $\rR^i\!\pi_{\et*}\bZ_{p,X_\et}$ and~$\cH^i_\deR(X/Y)$, respectively, and the fibre of the comparison isomorphism~$c_\deR$ at a $K_v$-point $y\in Y^\an(K_v)=Y(K_v)$ is the comparison isomorphism
\[
c_\deR\colon \sB_\deR\otimes_{\bQ_p}\rH^i_\et(X_{y,\Kbar_v},\bQ_p) \xrightarrow\sim \sB_\deR\otimes_{K_v}\rH^i_\deR(X_y/K_v)
\]
of~\eqref{eq:comparison_iso}. So we obtain Theorem~\ref{thm:period_maps_control_reps} as the special case of Theorem~\ref{thm:parallel_transport_is_phi-compatible}, applied to the de Rham pair 
\[
(\rR^i\!\pi^\an_*\hat\bZ_{p,X^\an}|_U,\rR^i\!\pi^\an_{\deR*}\cO_{X^\an}|_U,c_\deR|_U). \qedhere
\]
\end{proof}

Now we turn to the proof of Theorem~\ref{thm:period_maps_control_reps}, which we want to extract from \cite{shimizu:p-adic_monodromy}. The first step is to reduce to the case of spherical polyannuli.

\begin{lem}
	Suppose that Theorem~\ref{thm:parallel_transport_is_phi-compatible} holds whenever~$U$ is a spherical polyannulus, for all finite extensions~$K_v/\bQ_p$. Then it holds in general.
	\begin{proof}
		Let~$U$ be a closed polydisc, and suppose first that the residue field of~$K_v$ is not~$\bF_2$. Then there is a spherical polyannulus~$U^\circ\subset U$ such that~$y_0,y\in U^\circ(K_v)$. Indeed, after picking coordinates on~$U$ we may write~$y_0=(a_1,\dots,a_n)$ and $y=(b_1,\dots,b_n)$ with all $a_i,b_i\in\cO_v$; the desired inclusion $U^\circ\hookrightarrow U$ is then given by $t_i\mapsto t_i-c_i$ where $c_i\in\cO_v$ is not in the residue disc of~$a_i$ or~$b_i$. Since~$\dE|_{U^\circ}$ again has a full basis of horizontal sections, the result for~$(U,y_0,y)$ follows from the corresponding result for~$(U^\circ,y_0,y)$.
		
		If instead the residue field of~$K_v$ is~$\bF_2$, let us write~$L_{w,r}$ for the unramified extension of~$K_v$ of degree~$r$ inside~$\Kbar_v$, and~$G_{w,r}$ for its absolute Galois group. The result applied to~$(U_{L_{w,r}},y_0,y)$ implies that for each~$r>1$ there exists a unique isomorphism
		\[
		T_{y_0,y,r}\colon\sD_\pst(\bE_{\bar y_0}) \xrightarrow\sim \sD_\pst(\bE_{\bar y})
		\]
		of $(\varphi,N,G_{w,r})$-modules making the claimed diagram commute. Unicity implies that the maps~$T_{y_0,y,r}$ are all equal, to~$T_{y_0,y}$ say, and so $T_{y_0,y}$ commutes with the actions of~$\varphi$ and~$N$, and with the action of~$G_{w,r}$ for all~$r>1$. Since~$G_{w,2}$ and~$G_{w,3}$ generate~$G_v$, it follows that~$T_{y_0,y}$ is $G_v$-equivariant, i.e.\ an isomorphism of~$(\varphi,N,G_v)$-modules. So we are done also in this case.
	\end{proof}
\end{lem}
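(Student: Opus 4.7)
The plan is to split into two cases according to the size of the residue field $k_v$ of $K_v$. Since the spherical polyannulus case is covered by hypothesis, I may assume $U$ is isomorphic to a closed polydisc and fix coordinates $t_1,\dots,t_n$ on $U$ so that $y_0=(a_1,\dots,a_n)$ and $y=(b_1,\dots,b_n)$ with all $a_i,b_i\in\cO_v$.

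First I would handle the case $|k_v|\geq 3$. For each $i$, I can pick $c_i\in\cO_v$ whose image in $k_v$ differs from the residues of both $a_i$ and $b_i$, because at least three residue classes are available. The admissible open locus $U^\circ\subseteq U$ defined by $|t_i-c_i|=1$ for all $i$ is then a spherical polyannulus containing both $y_0$ and $y$. The restriction $(\bE,\dE,c)|_{U^\circ}$ is again a de Rham pair on which $\dE|_{U^\circ}$ has a full basis of horizontal sections, so applying the hypothesis to $(U^\circ,y_0,y)$ directly yields the desired isomorphism $T_{y_0,y}$ together with its commuting diagram.

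For the case $k_v=\bF_2$ the above trick fails over $K_v$, so the plan is to pass to unramified extensions. For each $r\geq2$, let $L_{w,r}\subseteq\Kbar_v$ be the unramified extension of degree $r$, with absolute Galois group $G_{w,r}\leq G_v$; its residue field has $2^r\geq4$ elements. The base change of $(\bE,\dE,c)$ to $U_{L_{w,r}}$ satisfies the hypotheses of the first case over $L_{w,r}$, hence produces a unique isomorphism
\[
T_{y_0,y,r}\colon \sD_\pst(\bE_{\bar y_0})\xrightarrow{\sim}\sD_\pst(\bE_{\bar y})
\]
of $(\varphi,N,G_{w,r})$-modules fitting into the characterising diagram. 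Since the diagram itself is independent of $r$, the uniqueness clause in the first case forces all $T_{y_0,y,r}$ to coincide as a single $\bQ_p^\nr$-linear map $T_{y_0,y}$, which therefore commutes simultaneously with $\varphi$, with $N$, and with the action of $G_{w,r}$ for every $r\geq 2$.

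The key remaining step, and the main obstacle, is to upgrade this to full $G_v$-equivariance, which amounts to the claim that $G_{w,2}$ and $G_{w,3}$ topologically generate $G_v$. Since $[L_{w,2}\!:\!K_v]=2$ and $[L_{w,3}\!:\!K_v]=3$ are coprime, the fields $L_{w,2}$ and $L_{w,3}$ are linearly disjoint over $K_v$ and their intersection is $K_v$ itself; by the Galois correspondence the closed subgroup of $G_v$ generated by $G_{w,2}$ and $G_{w,3}$ has fixed field $K_v$, hence equals $G_v$. This promotes $T_{y_0,y}$ to a $(\varphi,N,G_v)$-module isomorphism and completes the reduction.
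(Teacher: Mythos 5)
Your proof is correct and follows essentially the same route as the paper: a translated spherical polyannulus $|t_i-c_i|=1$ catching both points when the residue field has at least three elements, and for $k_v=\bF_2$ a passage to the unramified extensions $L_{w,2},L_{w,3}$ together with the uniqueness clause and the fact that $G_{w,2}$ and $G_{w,3}$ generate $G_v$. The only addition is your explicit Galois-correspondence justification of that generation statement (via $L_{w,2}\cap L_{w,3}=K_v$), which the paper simply asserts.
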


We now build up to the proof of Theorem~\ref{thm:parallel_transport_is_phi-compatible}. Let~$U=\bT^n\colonequals\Sp(R_v[p^{-1}])$ be the spherical polyannulus with $R_v\colonequals\cO_v\langle t_1^{\pm1},t_2^{\pm1},\dots,t_n^{\pm1}\rangle$. As in~\cite[\S3]{shimizu:p-adic_monodromy}, fix an algebraic closure~$\overline{\Frac(R_v)}$ of the fraction field of~$R_v$, and let~$\Rbar_v\subseteq\overline{\Frac(R_v)}$ be the union of all finite $R_v$-subalgebras~$R_v'$ of~$\overline{\Frac(R_v)}$ for which~$R_v'[p^{-1}]$ is \'etale over~$R_v[p^{-1}]$. We write
\[
G_{R_v}\colonequals\Aut(\Rbar_v[p^{-1}]/R_v[p^{-1}]) \,.
\]
This group is canonically isomorphic to the \'etale fundamental group of~$\Spec(R_v[p^{-1}])$ based at the geometric point determined by~$\overline{\Frac(R_v)}$ \cite[Exp.~I, Proposition~10.2]{sga1}, and hence carries a natural profinite topology. We always suppose that we chose~$\overline{\Frac(R_v)}$ to contain~$\Kbar_v$, so that~$\Kbar_v\subseteq\Rbar_v[p^{-1}]$ and we have a restriction homomorphism
\begin{equation}\label{eq:galois_restriction_map}
G_{R_v}\twoheadrightarrow G_v \,.
\end{equation}
If~$L_w$ is a finite extension of~$K_v$ contained in~$\Kbar_v$, we write~$G_{R_w}\subseteq G_{R_v}$ for the preimage of~$G_w$ under the restriction map.
\smallskip

Shimizu's theory isolates a class of $G_{R_v}$-representations, generalising the class of potentially semistable representations of~$G_v$ (which is recovered by setting~$R_v=\cO_v$). For this, he introduces a certain \emph{horizontal semistable period ring}~$\sB_\st^\nabla(R_v)$, which is a $\bQ_p^\nr$-algebra and comes with a semilinear crystalline Frobenius endomorphism~$\varphi$ and monodromy operator~$N$ \cite[Definition~4.7]{shimizu:p-adic_monodromy}. Moreover, the construction of~$\sB_\st^\nabla(R_v)$ is functorial in~$\Rbar_v$, and in particular carries a natural action of~$G_{R_v}$ induced by the tautological action on~$\Rbar_v$.  For any finite extension~$L_w/K_v$ contained in~$\Kbar_v$, we have that $(\sB_\st^\nabla)^{G_{R_w}}=L_{w,0}=L_w\cap\bQ_p^\nr$  \cite[Corollary~4.10]{shimizu:p-adic_monodromy}. This period ring allows one to identify the desired class of $G_{R_v}$-representations.

\begin{defi}[Potentially horizontal semistable representations {\cite[Definition~4.15]{shimizu:p-adic_monodromy}}]
	If~$E$ is a continuous representation of~$G_{R_v}$ on a finite free~$\bZ_p$-module, then we write
	\[
	\sD_\pst^\nabla(E)\colonequals\varinjlim_w(\sB_\st^\nabla(R_v)\otimes_{\bZ_p}E)^{G_{R_w}}
	\]
	where the colimit is taken over finite extensions $L_w/K_v$ contained in~$\Kbar_v$. The natural action of $G_{R_v}$ on~$\sD_\pst^\nabla(E)$ factors through~$G_v$ and has open point-stabilisers: together with the crystalline Frobenius~$\varphi$ and monodromy operator~$N$ induced from those on~$\sB_\st^\nabla(R_v)$, this makes $\sD_\pst^\nabla(E)$ into a discrete $(\varphi,N,G_v)$-module. We always have the inequality $\dim_{\bQ_p^\nr}\sD_\pst^\nabla(E)\leq\dim_{\bQ_p}E$ \cite[Lemma~4.12]{shimizu:p-adic_monodromy}, and we say that~$E$ is \emph{potentially horizontal semistable} just when equality holds. Equivalently, $E$ is potentially horizontal semistable just when the evident map
	\[
	\alpha_\pst^\nabla\colon \sB_\st^\nabla(R_v)\otimes_{\bQ_p^\nr}\sD_\pst^\nabla(E) \to \sB_\st^\nabla(R_v)\otimes_{\bZ_p}E
	\]
	is a $\sB_\st^\nabla(R_v)$-linear isomorphism.
\end{defi}

In the particular case that~$R_v=\cO_v$ (and $\Rbar_v=\cO_{\Kbar_v}$), we have that~$\sB_\st^\nabla(\cO_v)=\sB_\st$ is the usual semistable period ring of Fontaine. In general, if~$y\in U(K_v)$ is a $K_v$-point, we can choose an extension of the pullback map $y^*\colon R_v[p^{-1}]\to K_v$ to a $\Kbar_v$-algebra homomorphism $\tilde y^*\colon \Rbar_v[p^{-1}]\to\Kbar_v$. The map~$\tilde y^*$ determines a continuous homomorphism
\[
s_{\tilde y}\colon G_v \to G_{R_v} \,,
\]
identifying~$G_v$ as the setwise stabiliser of the kernel of~$\tilde y^*$. For a continuous representation~$E$ of~$G_{R_v}$ on a finite free~$\bZ_p$-module, we write~$E_{\tilde y}$ for the $G_v$-representation given by~$E$ with the $G_v$-action given by restriction of the $G_{R_v}$-action along~$s_{\tilde y}$.

Since the construction of~$\sB_\st^\nabla(R_v)$ is functorial in~$\Rbar_v$, there is an induced morphism
\[
\tilde y^*\colon\sB_\st^\nabla(R_v) \to \sB_\st
\]
of~$\bQ_p^\nr$-algebras, compatible with crystalline Frobenii and monodromy operators. It is also $G_v$-equivariant where $G_v$ acts on~$\sB_\st^\nabla(R_v)$ via restriction along~$s_{\tilde y}$. So for a continuous representation~$E$ of~$G_{R_v}$ on a finitely generated~$\bZ_p$-module there is then a natural morphism
\[
\rho_{\tilde y}\colon\sD_\pst^\nabla(E) \to \sD_\pst(E_{\tilde y})
\]
of $(\varphi,N,G_v)$-modules given by taking invariants in $\tilde y^*\otimes1\colon\sB_\st^\nabla(R_v)\otimes_{\bZ_p}E \to \sB_\st\otimes_{\bZ_p}E_{\tilde y}$.

\begin{lem}
Suppose that~$E$ is potentially horizontal semistable. Then~$E_{\tilde y}$ is potentially semistable as a representation of~$G_v$ and~$\rho_{\tilde y}$ is an isomorphism.
\begin{proof}
	Special case of~\cite[Lemma~4.4]{shimizu:p-adic_monodromy}.
\end{proof}
\end{lem}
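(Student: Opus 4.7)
My plan is to deduce the lemma directly by base-changing the defining comparison isomorphism for potential horizontal semistability along the evaluation at $\tilde y$, and then comparing the result with Fontaine's usual comparison map for $E_{\tilde y}$.

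By hypothesis, the $\sB_\st^\nabla(R_v)$-linear map
\[
\alpha_\pst^\nabla\colon \sB_\st^\nabla(R_v)\otimes_{\bQ_p^\nr}\sD_\pst^\nabla(E) \xrightarrow\sim \sB_\st^\nabla(R_v)\otimes_{\bZ_p}E
\]
is an isomorphism compatible with $\varphi$, $N$, and the $G_{R_v}$-action. Functoriality of $\sB_\st^\nabla(-)$ in $\Rbar_v$, applied to $\tilde y^*\colon \Rbar_v[p^{-1}]\to\Kbar_v$, yields the $\bQ_p^\nr$-algebra homomorphism $\tilde y^*\colon \sB_\st^\nabla(R_v)\to\sB_\st$, which is Frobenius- and monodromy-compatible and is $G_v$-equivariant when the source is endowed with the $G_v$-action induced by $s_{\tilde y}$. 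Applying $\sB_\st\otimes_{\sB_\st^\nabla(R_v)}(-)$ to $\alpha_\pst^\nabla$ therefore yields a $\sB_\st$-linear, $G_v$-equivariant isomorphism
\[
\tilde\alpha\colon \sB_\st\otimes_{\bQ_p^\nr}\sD_\pst^\nabla(E) \xrightarrow\sim \sB_\st\otimes_{\bZ_p}E_{\tilde y}
\]
compatible with $\varphi$ and $N$, where we use that $E=E_{\tilde y}$ as $\bZ_p$-modules (only the $G_v$-action differs).

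A direct unwinding of the definition of $\rho_{\tilde y}$ then shows that $\tilde\alpha$ factors as
\[
\sB_\st\otimes_{\bQ_p^\nr}\sD_\pst^\nabla(E) \xrightarrow{1\otimes\rho_{\tilde y}} \sB_\st\otimes_{\bQ_p^\nr}\sD_\pst(E_{\tilde y}) \xrightarrow{\alpha_\pst} \sB_\st\otimes_{\bZ_p}E_{\tilde y},
\]
where $\alpha_\pst$ is Fontaine's comparison map for $E_{\tilde y}$. Since $\tilde\alpha$ is surjective and $\alpha_\pst$ is injective (Fontaine's formalism for the $(\sB_\st,G_v)$-regular triple), $\alpha_\pst$ is bijective, so $E_{\tilde y}$ is potentially semistable. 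Consequently $1\otimes\rho_{\tilde y}=\alpha_\pst^{-1}\circ\tilde\alpha$ is an isomorphism of free $\sB_\st$-modules of equal finite rank; as $\rho_{\tilde y}$ is a $\bQ_p^\nr$-linear map between finite-dimensional $\bQ_p^\nr$-spaces of equal dimension whose determinant lives in $\bQ_p^\nr\subset\sB_\st$, invertibility of its base change implies invertibility of $\rho_{\tilde y}$ itself. Since $\rho_{\tilde y}$ respects $\varphi$, $N$, and $G_v$ by construction, the lemma follows.

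I expect the only subtle point to be verifying that $\tilde\alpha$ does indeed factor through $\sB_\st\otimes_{\bQ_p^\nr}\sD_\pst(E_{\tilde y})$ via $1\otimes\rho_{\tilde y}$ as claimed. This amounts to tracking how an element of $\sD_\pst^\nabla(E)\subset(\sB_\st^\nabla(R_v)\otimes E)^{G_{R_w}}$ is transported under $\tilde y^*\otimes 1$ into $(\sB_\st\otimes E_{\tilde y})^{G_w}=\sD_\pst(E_{\tilde y})$; the fact that $s_{\tilde y}$ sends $G_w$ into $G_{R_w}$ and that $\tilde y^*$ is equivariant for these actions is exactly what makes this factorisation hold. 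Everything else is formal base change and dimension-counting.
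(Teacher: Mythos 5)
Your argument is correct. Note, though, that the paper does not prove this lemma at all: it simply invokes Shimizu's result (\cite[Lemma~4.4]{shimizu:p-adic_monodromy}), of which the statement is a special case, so what you have written is in effect a self-contained proof of that special case rather than a variant of the paper's argument. Your route is the natural one: base-change the defining isomorphism $\alpha_\pst^\nabla$ along $\tilde y^*\colon\sB_\st^\nabla(R_v)\to\sB_\st$, observe that the resulting map factors as $\alpha_\pst\circ(1\otimes\rho_{\tilde y})$ because $\rho_{\tilde y}$ is by definition the restriction of $\tilde y^*\otimes1$ to invariants (here the key equivariance facts --- that $\tilde y^*$ is a $\bQ_p^\nr$-algebra map compatible with $\varphi$, $N$ and $G_v$ via $s_{\tilde y}$, and that $s_{\tilde y}(G_w)\subseteq G_{R_w}$ --- are exactly the ones the paper records when setting up $\rho_{\tilde y}$), and then conclude by surjectivity of the base-changed map together with Fontaine's general bound. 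One small remark: you do not strictly need injectivity of $\alpha_\pst$ as a separate input; surjectivity of $\alpha_\pst$ plus the inequality $\dim_{\bQ_p^\nr}\sD_\pst(E_{\tilde y})\leq\dim_{\bQ_p}E_{\tilde y}[1/p]$ (which the paper quotes from Fontaine, Th\'eor\`eme~5.6.7(i)) already forces equality of dimensions, hence potential semistability and bijectivity of $\alpha_\pst$; your determinant/faithful-flatness step for descending invertibility of $1\otimes\rho_{\tilde y}$ to $\rho_{\tilde y}$ is fine. What the citation buys the paper is brevity and the assurance that the relative-period-ring formalism behaves as expected; what your argument buys is transparency, since it makes visible that the lemma is pure base change plus Fontaine's dimension bound.
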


\begin{cor}\label{cor:pst_parallel_transport}
	Let~$y_0,y\in U(K_v)$ and choose lifts $y_0^*,y^*\colon\Rbar_v[p^{-1}]\to\Kbar_v$. Then for any potentially horizontal semistable representation~$E$ of~$G_{R_v}$ there is a canonical isomorphism
	\[
	T_{y_0,y}\colon\sD_\pst(E_{\tilde y_0}) \xrightarrow\sim \sD_\pst(E_{\tilde y})
	\]
	of $(\varphi,N,G_v)$-modules, given by~$T_{y_0,y}\colonequals \rho_{\tilde y}\circ\rho_{\tilde y_0}^{-1}$.
\end{cor}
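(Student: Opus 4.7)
The plan is to read the corollary off directly from the preceding lemma, which contains all the substantive content. That lemma asserts that for any potentially horizontal semistable representation $E$ of $G_{R_v}$ and any lift $\tilde y$ of a $K_v$-point $y \in U(K_v)$, the representation $E_{\tilde y}$ is potentially semistable and the specialisation map $\rho_{\tilde y}\colon \sD_\pst^\nabla(E) \to \sD_\pst(E_{\tilde y})$ is an isomorphism of $(\varphi,N,G_v)$-modules. Applying this twice, once with the lift $\tilde y_0$ and once with $\tilde y$, produces two such isomorphisms sharing the common source $\sD_\pst^\nabla(E)$.

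Composing these, one sets $T_{y_0,y} \colonequals \rho_{\tilde y} \circ \rho_{\tilde y_0}^{-1}$, which is then automatically an isomorphism $\sD_\pst(E_{\tilde y_0}) \xrightarrow\sim \sD_\pst(E_{\tilde y})$ in the category of $(\varphi,N,G_v)$-modules, since each constituent respects the full $(\varphi,N,G_v)$-structure. For the canonicity claim, one observes that the maps $\rho_{\tilde y_0}$ and $\rho_{\tilde y}$ arise functorially from the $\bQ_p^\nr$-algebra homomorphisms $\tilde y_0^*, \tilde y^*\colon \sB_\st^\nabla(R_v) \to \sB_\st$ induced by the chosen lifts; so once the lifts are fixed, no further choices enter the construction.

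There is essentially no obstacle at the level of the corollary itself: the only genuine work was done in the preceding lemma (which in turn rested on Shimizu's horizontal comparison $\sB_\st^\nabla(R_v) \otimes_{\bQ_p^\nr} \sD_\pst^\nabla(E) \xrightarrow\sim \sB_\st^\nabla(R_v) \otimes_{\bZ_p} E$, base-changed along $\tilde y^*$). The point worth emphasising for the sequel is that $T_{y_0,y}$ is compatible with $\varphi$, $N$, and the full $G_v$-action, which is exactly the structure one needs in order to deduce Theorem~\ref{thm:parallel_transport_is_phi-compatible}, and hence Theorem~\ref{thm:period_maps_control_reps}, from the existence of a common ``global'' model $\sD_\pst^\nabla(E)$ over $\sB_\st^\nabla(R_v)$ through which all fibrewise Fontaine modules factor.
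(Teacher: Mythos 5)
Your proposal is correct and is exactly the paper's (implicit) argument: the corollary follows by applying the preceding lemma to both lifts $\tilde y_0$ and $\tilde y$, so that $\rho_{\tilde y_0}$ and $\rho_{\tilde y}$ are isomorphisms of $(\varphi,N,G_v)$-modules out of the common source $\sD_\pst^\nabla(E)$, and composing gives $T_{y_0,y}=\rho_{\tilde y}\circ\rho_{\tilde y_0}^{-1}$. Your remarks on canonicity and on the role of the common model over $\sB_\st^\nabla(R_v)$ match the paper's use of the corollary in the proof of Theorem~\ref{thm:parallel_transport_is_phi-compatible}.
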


Now we want to translate between the languages of $\hat\bZ_p$-local systems on~$U$ and representations of~$G_{R_v}$. For this, let us write
\[
\Ubar\colonequals\varprojlim_{R_v'}\Sp(R_v'[p^{-1}]) \,,
\]
where $R_v'$ runs over all finite $R_v$-algebras in~$\overline{\Frac(R_v)}$ such that $R_v'[p^{-1}]$ is \'etale over~$R_v[p^{-1}]$. Thus~$\Ubar$ is an object of the pro-\'etale site of~$U$, on which~$G_{R_v}$ acts naturally from the right.

\begin{lem}\label{lem:coverings_functor}
	The category~$\Cov_U^\alg$ of finite \'etale coverings of~$U$ is a Galois category \cite[Exp.~V, D\'efinition~5.1]{sga1}, and the functor
	\begin{align*}
		F_{\Ubar}\colon \Cov_U^\alg &\to \{\text{finite sets}\} \\
		V &\mapsto \Hom_U(\Ubar,V)
	\end{align*}
	is a fibre functor.
	
	In particular, the functor from $\hat\bZ_p$-local systems on~$U$ to continuous representations of~$G_{R_v}$ on finitely generated~$\bZ_p$-modules given by~$\bE\mapsto\rH^0(\Ubar,\bE)$ is an equivalence, where the~$G_{R_v}$-action on~$\rH^0(\Ubar,\bE)$ is the one induced from the action on~$\Ubar$.
	\begin{proof}
		It follows from \cite[Definition~4.5.7~\&~Proposition~8.1.1]{fresnel-van_der_put:rigid_analytic_geometry} that the category of finite \'etale coverings of~$U$ is opposite to the category of finite \'etale $R_v[p^{-1}]$-algebras (see the proof of \cite[Lemma~6.4]{alex:local_constancy} for a careful argument). That~$F_{\Ubar}$ is a fibre functor follows from \cite[Exp.~V, Proposition~5.6]{sga1}. The final part follows since the category of~$\hat\bZ_p$-local systems on~$U$ in the pro-\'etale topology is equivalent to the category of~$\bZ_p$-local systems in the \'etale topology \cite[Proposition~8.2]{scholze:relative_p-adic_hodge_theory}.
	\end{proof}
\end{lem}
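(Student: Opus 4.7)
The plan is to leverage the affinoid structure of $U$ to reduce to a standard algebraic Galois-category situation, and then apply Scholze's comparison between analytic pro-\'etale $\hat\bZ_p$-local systems and analytic \'etale $\bZ_p$-local systems to pass to continuous representations.

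First I would use the hypothesis that $U = \Sp(R_v[p^{-1}])$ is affinoid, together with the equivalence between finite \'etale morphisms of affinoids and finite \'etale algebras (\cite[Definition~4.5.7 and Proposition~8.1.1]{fresnel-van_der_put:rigid_analytic_geometry}), to identify $\Cov_U^\alg$ with the opposite of the category of finite \'etale $R_v[p^{-1}]$-algebras via the $\Sp$-functor:
\[
\Cov_U^\alg \simeq \{\text{finite \'etale }R_v[p^{-1}]\text{-algebras}\}^\op.
\]
Under this identification, $F_{\Ubar}$ becomes the classical algebraic fibre functor sending $A$ to $\Hom_{R_v[p^{-1}]}(A, \Rbar_v[p^{-1}])$. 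Indeed, writing $\Ubar = \varprojlim_{R_v'} \Sp(R_v'[p^{-1}])$ and using that $A$ is of finite presentation, any map $\Ubar \to \Sp(A)$ factors through some $\Sp(R_v'[p^{-1}])$, yielding
\[
\Hom_U(\Ubar, \Sp(A)) \;=\; \varinjlim_{R_v'} \Hom_{R_v[p^{-1}]}(A, R_v'[p^{-1}]) \;=\; \Hom_{R_v[p^{-1}]}(A, \Rbar_v[p^{-1}]).
\]
Then \cite[Exp.~V, Proposition~5.6]{sga1}, applied to $\Spec(R_v[p^{-1}])$ together with the geometric point determined by $\overline{\Frac(R_v)} \supseteq \Rbar_v[p^{-1}]$, immediately provides the Galois category structure. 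The automorphism group of $F_{\Ubar}$ is then canonically identified with $G_{R_v}$ by its definition in the text (cf.\ also \cite[Exp.~I, Proposition~10.2]{sga1}).

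For the second assertion, I would invoke \cite[Proposition~8.2]{scholze:relative_p-adic_hodge_theory} to identify pro-\'etale $\hat\bZ_p$-local systems on $U$ with \'etale $\bZ_p$-local systems on $U$, realised as inverse systems $(\bE_n)_n$ of locally constant \'etale $\bZ/p^n$-sheaves of finite rank. Each $\bE_n$ is trivialised by some finite \'etale cover, hence by the Galois category equivalence established in the first step corresponds to a continuous representation of $G_{R_v}$ on a finite $\bZ/p^n$-module, namely its set of sections over $\Ubar$. Passing to the inverse limit in $n$ yields the equivalence with continuous $G_{R_v}$-representations on finitely generated $\bZ_p$-modules, and the functor $\bE \mapsto \rH^0(\Ubar,\bE) = \varprojlim_n \rH^0(\Ubar, \bE_n)$ realises it concretely.

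The main obstacle is the technical but essentially routine first step: verifying in the rigid-analytic setting that every finite \'etale cover of $U$ arises from a finite \'etale $R_v[p^{-1}]$-algebra, and that the pro-\'etale fibre functor $F_{\Ubar}$ matches the standard algebraic geometric fibre functor at $\overline{\Frac(R_v)}$. Once this identification is established, everything else reduces to formal consequences of SGA1 and of \cite{scholze:relative_p-adic_hodge_theory}.
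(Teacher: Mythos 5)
Your argument is correct and follows essentially the same route as the paper: identify $\Cov_U^\alg$ with the opposite of finite \'etale $R_v[p^{-1}]$-algebras via Fresnel--van der Put, recognise $F_{\Ubar}$ as the algebraic fibre functor attached to $\overline{\Frac(R_v)}$ so that \cite[Exp.~V, Proposition~5.6]{sga1} applies, and then use \cite[Proposition~8.2]{scholze:relative_p-adic_hodge_theory} to pass from pro-\'etale $\hat\bZ_p$-local systems to \'etale $\bZ_p$-local systems and hence, by the Galois-category equivalence and a limit over $n$, to continuous $G_{R_v}$-representations. The only difference is that you spell out the (correct) verification that $\Hom_U(\Ubar,\Sp(A))=\Hom_{R_v[p^{-1}]}(A,\Rbar_v[p^{-1}])$, which the paper delegates to the cited careful argument.
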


\begin{rmk}\label{rmk:galois_action_on_fibres}
	If~$y\in U(K_v)$ is a $K_v$-rational point, then a lifting of~$y^*\colon R_v[p^{-1}]\to K_v$ to some $\tilde y^*\colon\Rbar_v[p^{-1}] \to \Kbar_v$ is equivalent to a choice of point~$\tilde y\in\Ubar_{\bar y}$ in the fibre of~$\Ubar$ over~$\bar y$. Hence by the Yoneda Lemma $\tilde y$ determines a natural isomorphism $\gamma_{\tilde y}\colon F_{\Ubar}\xrightarrow\sim F_{\bar y}$ of fibre functors on~$\Cov_U^\alg$. This isomorphism~$\gamma_{\tilde y}$ gives a $G_v$-equivariant isomorphism
	\[
	\rH^0(\Ubar,\bE)_{\tilde y} \cong \bE_{\bar y}
	\]
	for all $\hat\bZ_p$-local systems~$\bE$ on~$U$.
\end{rmk}

\begin{rmk}
	Lemma~\ref{lem:coverings_functor} implies that the group~$G_{R_v}$ is isomorphic to the algebraic fundamental group of~$U$ in the sense of de Jong \cite[p.~94]{de_jong:rigid_pi_1}, albeit based at a fibre functor that does not canonically come from a geometric point of the Berkovich space associated to~$U$.
\end{rmk}

In the proof of Theorem~\ref{thm:parallel_transport_is_phi-compatible} which follows, we adopt the notation of \cite[p.~47]{shimizu:p-adic_monodromy} in writing
\[
\sB_\deR(R_v)\colonequals\rH^0(\Ubar,\cO\bB_{\deR,U}) \,.
\]
This is a filtered $\Rbar_v[p^{-1}]$-algebra endowed with an action of~$G_{R_v}$ extending the tautological action on~$\Rbar_v[p^{-1}]$, and with a flat connection
\[
\nabla\colon \sB_\deR(R_v) \to \Omega^{1,f}_{R_v[p^{-1}]/K_v}\otimes_{R_v[p^{-1}]}\sB_\deR(R_v)
\]
satisfying the Leibniz rule with respect to the derivation on~$R_v[p^{-1}]$. Here~$\Omega^{1,f}_{R_v[p^{-1}]/K_v}=\rH^0(U,\Omega^1_{U/K_v})$ is the module of finite differentials. The horizontal semistable period ring~$\sB_\st^\nabla(R_v)$ is then a subring of~$\sB_\deR(R_v)$ contained in the kernel of the connection~$\nabla$.

\begin{proof}[Proof of Theorem~\ref{thm:parallel_transport_is_phi-compatible} for~$U$ a spherical polyannulus]
	Let~$E=\rH^0(\Ubar,\bE)$ be the $G_{R_v}$-representation corresponding to the local system~$\bE$. Since~$\dE$ has a full basis of horizontal sections, we have that~$E$ is potentially horizontal semistable~\cite[Lemma~8.9]{shimizu:p-adic_monodromy}. We fix lifts~$\tilde y_0^*,\tilde y^*\colon\Rbar_v[p^{-1}]\to\Kbar_v$ of~$y_0^*,y^*$ as usual, so there are canonical $G_v$-equivariant identifications~$E_{\tilde y_0}\cong\bE_{\bar y_0}$ and~$E_{\tilde y}\cong\bE_{\bar y}$ by Remark~\ref{rmk:galois_action_on_fibres}.
	
	We will show that, under these identifications, the map
	\[
	T_{y_0,y} \colonequals \rho_{\tilde y}\circ \rho_{\tilde y_0}^{-1} \colon \sD_\pst(E_{\tilde y_0}) \xrightarrow\sim \sD_\pst(E_{\tilde y})
	\]
	from Corollary~\ref{cor:pst_parallel_transport} makes the rectangle in Theorem~\ref{thm:parallel_transport_is_phi-compatible} commute; it is automatically the unique such map. For this, we write~$\dE_U\colonequals\rH^0(U,\dE)$ for the projective $R_v[p^{-1}]$-module associated to~$\dE$. We have $G_{R_v}$-equivariant isomorphisms
	\begin{equation}\label{eq:trivialisation_over_annulus}\tag{$\ast$}
		\sB_\deR(R_v)\otimes_{\bQ_p^\nr}\sD_\pst^\nabla(E) \xrightarrow\sim \sB_\deR(R_v)\otimes_{\bZ_p}E \xrightarrow[\sim]{c} \sB_\deR(R_v)\otimes_{R_v[p^{-1}]}\dE_U
	\end{equation}
	of $\sB_\deR(R_v)$-modules with flat connection, the first of which is the base-change of the natural map $\sB_\st^\nabla(R_v)\otimes_{\bQ_p^\nr}\sD_\pst^\nabla(E)\xrightarrow\sim\sB_\st^\nabla(R_v)\otimes_{\bZ_p}E$, and the second of which is the sections over~$\Ubar$ of the isomorphism~$c\colon\cO\bB_{\deR,U}\otimes_{\hat\bZ_{p,U}}\bE\xrightarrow\sim\cO\bB_{\deR,U}\otimes_{\cO_U}\dE$.
	
	Base-changing~\eqref{eq:trivialisation_over_annulus} along~$\tilde y_0^*\colon\sB_\deR(R_v)\to\sB_\deR$ yields a commuting diagram
	\begin{equation}\label{diag:trivialisation_over_annulus_fibres}\tag{$\ast\ast$}
		\begin{tikzcd}
			\sB_\deR(R_v)\otimes_{\bQ_p^\nr}\sD_\pst^\nabla(E) \arrow[r,"\sim"]\arrow[d,"\tilde y_0^*\otimes\rho_{\tilde y_0}"] & \sB_\deR(R_v)\otimes_{\bZ_p}E \arrow[r,"c","\sim"']\arrow[d,"\tilde y_0^*\otimes1"] & \sB_\deR(R_v)\otimes_{R_v[p^{-1}]}\dE_U \arrow[d,"\tilde y_0^*"] \\
			\sB_\deR\otimes_{\bQ_p^\nr}\sD_\pst(E_{\tilde y_0}) \arrow[r,"\sim"] & \sB_\deR\otimes_{\bZ_p}E_{\tilde y_0} \arrow[r,"c_{y_0}","\sim"'] & \sB_\deR\otimes_{K_v}\dE_{y_0}
		\end{tikzcd}
	\end{equation}
	of $G_v$-equivariant $\sB_\deR$-modules. Using this, we claim that the rectangle
	\begin{equation}\label{diag:parallel_transport_over_annulus}\tag{$\dag$}
		\begin{tikzcd}
			\sB_\deR(R_v)\otimes_{\bQ_p^\nr}\sD_\pst^\nabla(E) \arrow[r,"\sim"]\arrow[d,"1\otimes\rho_{\tilde y_0}","\wr"'] & \sB_\deR(R_v)\otimes_{\bZ_p}E \arrow[r,"c","\sim"'] & \sB_\deR(R_v)\otimes_{R_v[p^{-1}]}\dE_U \\
			\sB_\deR(R_v)\otimes_{\bQ_p^\nr}\sD_\pst(E_{\tilde y_0}) \arrow[r,"\sim"] & \sB_\deR(R_v)\otimes_{\bZ_p}E_{\tilde y_0} \arrow[r,"c_{y_0}","\sim"'] & \sB_\deR(R_v)\otimes_{K_v}\dE_{y_0} \arrow[u,"T_{y_0}^\nabla"',"\wr"]
		\end{tikzcd}
	\end{equation}
	commutes. Since all the maps in~\eqref{diag:parallel_transport_over_annulus} are $G_{R_v}$-equivariant isomorphisms of~$\sB_\deR(R_v)$-modules with connection and $\sB_\deR(R_v)^{G_{R_v},\nabla=0}=K_v$ \cite[Proposition~4.9]{shimizu:p-adic_monodromy}, it is certainly true that~\eqref{diag:parallel_transport_over_annulus} commutes up to an element of~$\GL_n(K_v)$. So it suffices to check that~\eqref{diag:parallel_transport_over_annulus} commutes after base-changing along~$\tilde y_0^*\colon\sB_\deR(R_v)\to\sB_\deR$, which follows from commutativity of~\eqref{diag:trivialisation_over_annulus_fibres}.
	
	Base-changing~\eqref{diag:parallel_transport_over_annulus} along~$\tilde y^*$ then yields the desired result.
\end{proof}
\section{Abelian-by-finite families}\label{s:abelian-by-finite}

In the method of Lawrence and Venkatesh, one studies smooth projective families $X\to Y$ of a particular form, known as \emph{abelian-by-finite families}.

\begin{defi}[{\cite[Definition~5.1]{LVinventiones}}]
Let~$Y$ be a scheme. An \emph{abelian-by-finite family} over~$Y$ 
\[
X\to Y'\to Y
\]
consists of a surjective finite \'etale covering $\pif\colon Y'\to Y$ and 
a polarised abelian scheme $\pia\colon X\to Y'$. We usually suppress the polarisation from the notation; when we need to refer to it, we denote it~$\lambda$.
\end{defi}

In this section, we recall the basic theory of abelian-by-finite families, primarily that the \'etale and de Rham cohomology of the fibres of $X\to Y$ carries extra structures arising from the abelian-by-finite structure. Using this, we show that the period map associated to an abelian-by-finite family takes on a particular form.

\subsection{Cohomology of fibres of abelian-by-finite families}

To begin with, we introduce the extra structures on the cohomology of the fibres of an abelian-by-finite family. It suffices to discuss this in the case that $Y=\Spec(K)$ where~$K$ is a field, assumed for simplicity of characteristic~$0$ (we will apply this with $K$ a number field, or $K=K_v$ a finite extension of~$\bQ_p$). So~$X$ is then a disjoint union of polarised abelian varieties defined over finite extensions of~$K$.

\subsubsection{\'Etale cohomology}

If we fix an algebraic closure~$\Kbar$ of~$K$, then the Leray spectral sequence implies that the cohomology algebra $\rH^\bullet_\et\colonequals\rH^\bullet_\et(X_{\Kbar},\bQ_p)$ is the sections over $Y'_{\Kbar}$ of the derived \'etale pushforward $\rR^\bullet\!\pia_{\et*}\bQ_{p,X_{\Kbar}}$ to~$Y'_{\Kbar}$. It follows from the usual calculation of the cohomology of abelian varieties that the cup product induces canonical isomorphisms
\[
\bigwedgek\nolimits_{\rH^0_\et}\rH^1_\et \xrightarrow\sim \rH^k_\et
\]
of $\rH^0_\et$-modules for all~$k\geq0$, where the $\rH^0_\et$-module structure on either side is the one coming from cup product. Concretely, this is just saying the following. If we choose, for each closed point~$y'\in|Y'|$, a $K$-embedding $K(y')\hookrightarrow\Kbar$, then the absolute Galois group~$G_{K(y')}$ of~$K(y')$ is identified as an open subgroup of~$G_K$. Since~$X$ is a disjoint union of abelian schemes over the fields~$K(y')$, we have
\begin{equation}\label{eq:etale_decomposition}
\rH^\bullet_\et(X_{\Kbar},\bQ_p) = \prod_{y'\in|Y'|}\Ind_{G_{K(y')}}^{G_K}\rH^\bullet_\et(X_{\bar y'},\bQ_p)=\prod_{y'\in|Y'|}\Ind_{G_{K(y')}}^{G_K}\bigwedge\nolimits^{\!\!\bullet}\rH^1_\et(X_{\bar y'},\bQ_p)
\end{equation}
as $\bQ_p$-algebras with $G_K$-action, where $X_{\bar y'}$ denotes the fibre of~$X$ over the $\Kbar$-point of~$Y'$ determined by the embedding $K(y')\hookrightarrow\Kbar$.
\smallskip

Moreover, the polarisation~$\lambda$ on~$X$ induces a further structure on the \'etale cohomology, in the form of a pairing on $\rH^1_\et$. This is constructed as follows. Let $c_1^\et(\lambda)\in\rH^2_\et(1)$ denote the first \'etale Chern class of a line bundle on~$X_{\Kbar}$ representing the polarisation~$\lambda$ (this is independent of the choice of line bundle). Equivalently, if~$\dL\colonequals(1,\lambda)^*\dP$ with $\dP$ the Poincar\'e line bundle on~$X\times_{Y'}X^\vee$, then $c_1^\et(\lambda)=\frac12 c_1^\et(\dL)$. This latter construction makes it clear that in fact $c_1^\et(\lambda)\in\rH^2_\et(1)^{G_K}$.

Since $\rH^2_\et=\bigwedgesquare_{\rH^0_\et}\rH^1_\et$, we can view $c_1^\et(\lambda)$ as an element of $\Hom_{\rH^0_\et}(\bigwedgesquare_{\rH^0_\et}\rH_1^\et,\rH^0_\et(1))^{G_K}$, where 
\[
\rH_1^\et\colonequals\Hom_{\rH^0_\et}(\rH^1_\et,\rH^0_\et).
\]
In other words, we can view $c_1^\et(\lambda)$ as a $G_K$-equivariant $\rH^0_\et$-linear pairing on $\rH_1^\et$.

\begin{lem}\label{lem:etale_pairing_perfect}
	The $\rH^0_\et$-linear pairing
	\[
	\check\omega_\lambda^\et\colon\bigwedgesquare\nolimits_{\rH^0_\et}\rH_1^\et\to\rH^0_\et(1)
	\]
	determined by~$c_1^\et(\lambda)$ is perfect, i.e.\ determines an $\rH^0_\et$-linear $G_K$-equivariant isomorphism
	\[
	\rH_1^\et \xrightarrow\sim \Hom_{\rH^0_\et}(\rH_1^\et,\rH^0_\et(1)) = \rH^1_\et(1) \,.
	\]
	\begin{proof}
		It suffices to prove this in the case that $K=\Kbar$ is algebraically closed. In this case, $X$ is a disjoint union of polarised abelian varieties $(X_{y'},\lambda_{y'})$ indexed by closed points~$y'$ of~$Y'$. It follows that $c_1^\et(\lambda)\in\rH^2_\et(X,\bQ_p)=\bigoplus_{y'\in|Y'|}\rH^2_\et(X_{y'},\bQ_p)$ is the element whose $y'$th component is $c_1^\et(\lambda_{y'})$, so the pairing $\check\omega_\lambda^\et$ on $\rH_1^\et=\bigoplus_{y'\in|Y'|}\rH_1^\et(X_{y'},\bQ_p)$ is the orthogonal direct sum of the pairings $\check\omega_{\lambda_{y'}}^\et$ on $\rH_1^\et(X_{y'},\bQ_p)$. It thus suffices to treat only the case that $\pif\colon Y'\to Y$ is the identity, so that $\pia\colon X\to Y$ is a polarised abelian variety.
		
		In this case, $\rH_1^\et=\rH^1_\et(X,\bQ_p)^*$ is isomorphic to the $\bQ_p$-linear Tate module of~$X$, and under this identification, the pairing $\check\omega_{\lambda}^\et$ is identified up to sign with the Weil pairing associated to~$\lambda$, see the proof of \cite[Lemma~2.6]{orr-skorobogatov-zarhin:k3_uniformity}. The Weil pairing is known to be perfect.
	\end{proof}
\end{lem}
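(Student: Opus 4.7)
\begin{pro}[Plan of proof]
The plan is to reduce the statement to the standard perfectness of the Weil pairing attached to a polarised abelian variety, via two formal reduction steps. First, perfectness of the $\rH^0_\et$-linear pairing $\check\omega_\lambda^\et$ can be checked after base-change along the faithfully flat inclusion $\rH^0_\et \hookrightarrow \rH^0_\et \otimes_{\bQ_p} \Kbar'$ for any extension $\Kbar'/\bQ_p$, and in particular after the forgetful passage to $K = \Kbar$ algebraically closed: the formation of $\rH^\bullet_\et$, of $c_1^\et(\lambda)$, and of the pairing $\check\omega_\lambda^\et$ is all compatible with extending scalars on $K$, while the Galois-equivariance constraint disappears in the algebraically closed setting.

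Next, once $K=\Kbar$, the finite \'etale cover $Y' \to \Spec(K)$ decomposes as a disjoint union $Y' = \bigsqcup_{y'\in|Y'|}\Spec(K)$ of closed points, so $X = \bigsqcup_{y'\in|Y'|} X_{y'}$ is a disjoint union of polarised abelian varieties $(X_{y'},\lambda_{y'})$ over $K$. Correspondingly, $\rH^\bullet_\et$ decomposes as a product of $\rH^\bullet_\et(X_{y'},\bQ_p)$, and both the $\rH^0_\et$-module structure on $\rH^1_\et$ and the cup product identification $\rH^2_\et = \bigwedgesquare_{\rH^0_\et}\rH^1_\et$ respect this decomposition componentwise. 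Functoriality of the first Chern class for the open-and-closed inclusions $X_{y'}\hookrightarrow X$ shows that the component of $c_1^\et(\lambda)$ at $y'$ is exactly $c_1^\et(\lambda_{y'})$. Consequently $\check\omega_\lambda^\et$ is the orthogonal direct sum, as $\rH^0_\et = \prod_{y'}\bQ_p$-modules, of the pairings $\check\omega_{\lambda_{y'}}^\et$ on $\rH^\et_1(X_{y'},\bQ_p)$, and it suffices to show that each of these pairings is perfect.

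This reduces us to the case of a single polarised abelian variety $(A,\lambda)$ over an algebraically closed field of characteristic zero. Here the $\bQ_p$-linear homology $\rH_1^\et(A,\bQ_p)$ is naturally identified with the rational $p$-adic Tate module $V_p A$, and the pairing on $\rH_1^\et(A,\bQ_p)$ induced by $c_1^\et(\lambda)$ via the cup-product identification $\rH^2_\et(A,\bQ_p) = \bigwedgesquare \rH^1_\et(A,\bQ_p)$ is, up to sign, the Weil pairing attached to the polarisation $\lambda$; this identification is exactly the content of \cite[Lemma~2.6]{orr-skorobogatov-zarhin:k3_uniformity}, which I would cite directly.

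The main obstacle is this last step, namely matching the Chern-class-induced pairing to the Weil pairing. While the construction of $c_1^\et(\lambda)$ as $\tfrac12 c_1^\et(\dL)$ from the Poincar\'e pullback $\dL=(1,\lambda)^*\dP$ makes the polarisation explicit and the normalisation transparent, the actual translation between the cup-product formalism and the Weil pairing on the Tate module involves unwinding the identification $\rH^2_\et(A,\bQ_p(1))\cong\Hom(\bigwedge^2 V_pA,\bQ_p)$ and the compatibility of the Kummer sequence with the divisor class of $\dL$. All of this is standard but somewhat tedious; fortunately it is already packaged in the reference above, so I would not reprove it here.
\end{pro}
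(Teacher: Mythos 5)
Your proposal is correct and follows essentially the same route as the paper's proof: reduce to $K=\Kbar$, decompose $X$ as a disjoint union of polarised abelian varieties so that $\check\omega_\lambda^\et$ becomes an orthogonal direct sum, and then identify each component pairing up to sign with the Weil pairing via \cite[Lemma~2.6]{orr-skorobogatov-zarhin:k3_uniformity}, whose perfectness is standard. The extra detail you supply on base-change compatibility in the first reduction is fine but not a genuinely different argument.
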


Now the pairing~$\check\omega_\lambda^\et$ induced by~$c_1^\et(\lambda)$, being perfect, induces a dual pairing on $\rH^1_\et(1)$, and hence by Tate twisting, we obtain a $G_K$-equivariant $\rH^0_\et$-linear perfect pairing
\[
\omega_\lambda^\et\colon\bigwedgesquare\nolimits_{\rH^0_\et}\rH^1_\et \to \rH^0_\et(-1) \,.
\]
Under the identification $\rH^1_\et=\bigoplus_{y'\in|Y'|}\Ind_{G_{K(y')}}^{G_K}\rH^1_\et(X_{\bar y'},\bQ_p)$, the pairing~$\omega_\lambda^\et$ is just the orthogonal direct sum of the inductions of the pairings on each $\rH^1_\et(X_{\bar y'},\bQ_p)$ induced from the polarisation on the abelian variety~$X_{\bar y'}$.
\smallskip

In summary, we have seen that the triple
\[
(\rH^1_\et,\rH^0_\et(-1),\omega_\lambda^\et)
\]
is an example of a symplectic $\rH^0_\et$-module in the following sense.

\begin{defi}
	Let~$A$ be a finite-dimensional $\bQ_p$-algebra endowed with a continuous action of~$G_K$ compatible with the algebra structure. A \emph{symplectic $A$-module} is a triple~$V=(V,L,\omega)$ consisting of:
	\begin{itemize}
		\item a finite locally free $A$-module~$V$ and a free rank $1$ $A$-module~$L$, each endowed with a continuous action of~$G_K$ compatible with the action on~$A$; and
		\item a $G_K$-equivariant $A$-linear perfect pairing
		\[
		\bigwedgesquare\nolimits_AV\to L \,, 
		\]
		meaning that the induced map $V\to\Hom_A(V,L)$ is an isomorphism.
	\end{itemize}
	Symplectic $A$-modules form a category, where a morphism $f\colon(V,L,\omega)\to(V',L',\omega')$ consists of a pair of $G_K$-equivariant $A$-linear maps $f_V\colon V\to V'$ and $f_L\colon L\to L'$ compatible with the pairings~$\omega$ and~$\omega'$.
\end{defi}

Later, we will want to also consider symplectic modules over varying algebras~$A$, for which we adopt the following terminology.

\begin{defi}
	A \emph{symplectic pair} in the category of $G_K$-representations is a pair 
	\[
	P=(A,V)
	\]
	consisting of a finite-dimensional $\bQ_p$-algebra~$A$ endowed with a continuous action of $G_K$ and a symplectic $A$-module~$V=(V,L,\omega)$. Symplectic pairs form a category 
	\[
	\SPair(\Rep_{\bQ_p}(G_K)),
	\]
	whose morphisms $f\colon(A,(V,L,\omega)) \to (A',(V',L',\omega'))$ are triples of $G_K$-equivariant maps $f_A\colon A\to A'$, $f_V\colon V\to V'$ and $f_L\colon L\to L'$ compatible with all relevant structures (algebra structures, module structures, pairing).
\end{defi}

\begin{ex}
	If~$X\to Y'\to Y$ is an abelian-by-finite family over a base scheme~$Y$ and~$y\in Y(K)$ is a $K$-rational point, then the fibre $X_y\to Y'_y\to \Spec(K)$ is an abelian-by-finite family over~$\Spec(K)$. So it follows from the above discussion that
	\[
	\rH^{\leq 1}_{\et}(X_{y,\Kbar},\bQ_p) \colonequals  (\rH^0_\et(X_{y,\Kbar},\bQ_p),\rH^1_\et(X_{y,\Kbar},\bQ_p))
	\]
	is a symplectic pair in the category of $G_K$-representations, the symplectic pairing on $\rH^1_\et=\rH^1_\et(X_{y,\Kbar},\bQ_p)$ being the pairing discussed above arising from the polarisation. We will see later in \S\ref{ss:relative_etale_cohomology} that these symplectic pairs for varying~$y$ are interpolated by a single symplectic pair in the category of~$\bQ_p$-local systems on~$Y_\et$, given by the relative \'etale cohomology of~$X\to Y$.
\end{ex}

One can equally make sense of symplectic modules and symplectic pairs in other appropriate categories.

\subsubsection{De Rham cohomology}

A similar story holds for the de Rham cohomology of the abelian-by-finite family $X\to Y'\to Y=\Spec(K)$ for any field~$K$ of characteristic~$0$. The cohomology algebra $\rH^\bullet_\deR\colonequals\rH^\bullet_\deR(X/K)$ is canonically isomorphic to the graded-commutative algebra $\bigwedgebullet_{\rH^0_\deR}\!\rH^1_\deR$. Again, this can be described concretely: the decomposition of~$X$ into its connected components gives an isomorphism
\begin{equation}\label{eq:de_rham_decomposition}
\rH^\bullet_\deR \cong \prod_{y'\in|Y'|}\rH^\bullet_\deR(X_{y'}/K(y')) \cong \prod_{y'\in|Y'|}\bigwedgebullet\nolimits_{K(y')}\rH^1_\deR(X_{y'}/K(y'))
\end{equation}
of $K$-algebras, where the $K(y')$-vector spaces $\rH^\bullet_\deR(X_{y'}/K(y'))$ are viewed as $K$-vector spaces in the usual way.

The polarisation~$\lambda$ on~$X$ induces a pairing on $\rH^1_\deR$, which is constructed analogously to the pairing on \'etale cohomology. That is, we define the first de Rham Chern class $c_1^\deR(\lambda)\in\rF^1\!\rH^2_\deR$ of the polarisation~$\lambda$ by $c_1^\deR(\lambda)\colonequals\frac12c_1^\deR(\dL)$ where $\dL=(1,\lambda)^*\dP$ with $\dP$ the Poincar\'e line bundle on~$X\times_{Y'}X^\vee$. Via the identification $\rH^2_\deR\cong\bigwedgesquare_{\rH^0_\deR}\!\rH^1_\deR$, the class $c_1^\deR(\lambda)$ can be thought of as an element
\[
\check\omega_\lambda^\deR \in \rF^0\!\Hom_{\rH^0_\deR}(\bigwedgesquare\nolimits_{\rH^0_\deR}\!\rH_1^\deR,\rH^0_\deR\langle1\rangle),
\]
where $\rH_1^\deR\colonequals\Hom_{\rH^0_\deR}(\rH^1_\deR,\rH^0_\deR)$ and $(-)\langle1\rangle$ denotes a shift in filtration: $\rF^i(V\langle j\rangle)\colonequals\rF^{i+j}\!V$. Analogously to the \'etale case, we have the following.

\begin{lem}\label{lem:de_rham_pairing_perfect}
	The $\rH^0_\deR$-linear pairing
	\[
	\check\omega_\lambda^\deR\colon\bigwedgesquare\nolimits_{\rH^0_\deR}\rH_1^\deR\to\rH^0_\deR\langle1\rangle
	\]
	determined by~$c_1^\deR(\lambda)$ is perfect, i.e.\ determines an $\rH^0_\deR$-linear filtered isomorphism
	\[
	\rH_1^\deR \xrightarrow\sim \Hom_{\rH^0_\deR}(\rH_1^\deR,\rH^0_\deR\langle1\rangle) = \rH^1_\deR\langle1\rangle \,.
	\]
	\begin{proof}
		The easiest way to prove this is by comparing with \'etale cohomology. That is, via a Lefschetz argument, it suffices to deal only with the case that~$K=K_v$ is a finite extension of~$\bQ_p$. As we shall explain shortly, the isomorphism $\sD_\deR(\rH_1^\et)\cong\rH_1^\deR$ induced from the comparison isomorphisms $\sD_\deR(\rH^k_\et)\cong\rH^k_\deR$ for~$k=0,1$ carries the \'etale Chern class $c_1^\et(\lambda)$ to $c_1^\deR(\lambda)$. Hence perfectness follows from the corresponding statement for \'etale cohomology (Lemma~\ref{lem:etale_pairing_perfect}).
	\end{proof}
\end{lem}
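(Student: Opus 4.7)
The strategy, as the proof sketch indicates, is to transport the statement via Scholze's étale--de Rham comparison isomorphism to its analogue in étale cohomology, Lemma~\ref{lem:etale_pairing_perfect}.

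First I would reduce to the case $K = K_v$ a finite extension of~$\bQ_p$. Perfectness of an $\rH^0_\deR$-linear pairing on a finite locally free $\rH^0_\deR$-module is preserved and reflected by faithfully flat scalar extension $K \to K'$, so one is free to enlarge~$K$. Since the data $(X \to Y' \to \Spec(K), \lambda)$ descends to a finitely generated subfield $K_0 \subset K$, a standard spreading-out argument---picking a model over a finitely generated $\bZ$-algebra and specialising at a closed point of residue characteristic~$p$---reduces matters to a finite extension $K_v$ of~$\bQ_p$. As in the proof of Lemma~\ref{lem:etale_pairing_perfect}, passing further to $\Kbar_v$ splits $X$ as a finite disjoint union of polarised abelian varieties $(X_{y'}, \lambda_{y'})$; the pairing becomes an orthogonal direct sum, so it suffices to treat a single polarised abelian variety $(A, \lambda)$ over~$K_v$.

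In that case I would invoke the Scholze comparison isomorphism~$c_\deR$ of~\eqref{eq:comparison_iso} together with its compatibilities in Propositions~\ref{prop:compatibility_i} and~\ref{prop:compatibility_ii}. Part~\eqref{proppart:compatibility_cup_products} of Proposition~\ref{prop:compatibility_i} makes $c_\deR$ an algebra isomorphism, while Proposition~\ref{prop:compatibility_ii}\eqref{proppart:compatibility_chern_classes} identifies $c_1^\et(\dL)$ with $c_1^\deR(\dL)$ under $c_\deR \otimes a^{\otimes -1}$; the same identification then holds for the half-Chern classes $c_1^?(\lambda) = \tfrac12 c_1^?(\dL)$. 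Hence after $\sB_\deR$-base change, $c_\deR$ induces a $\sB_\deR$-linear $G_v$-equivariant isomorphism between the étale and de Rham data $(\rH^0, \rH^1, \check\omega_\lambda)$, compatibly with the identification $a\colon\sB_\deR(-1)\xrightarrow\sim\sB_\deR\langle-1\rangle$ on target line bundles. Perfectness of $\check\omega_\lambda^\et$ is Lemma~\ref{lem:etale_pairing_perfect}, and since $K_v \to \sB_\deR$ is faithfully flat, perfectness of $\check\omega_\lambda^\deR$ follows.

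The main obstacle is purely bookkeeping: one must verify that the duality $\rH_1^? = \Hom_{\rH^0_?}(\rH^1_?, \rH^0_?)$ on either side is intertwined by the comparison---which follows formally from its being an algebra isomorphism together with the fact that $\rH^0_?$ has rank one over itself on each connected component---and that Tate twists match correctly through~$a$. No genuinely new input beyond the compatibilities of Propositions~\ref{prop:compatibility_i}~and~\ref{prop:compatibility_ii} and Lemma~\ref{lem:etale_pairing_perfect} is required.
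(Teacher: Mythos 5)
Your proposal is correct and follows essentially the same route as the paper: reduce via a Lefschetz-type argument to a finite extension of $\bQ_p$, then use Scholze's comparison isomorphism together with its compatibility with cup products and Chern classes (Propositions~\ref{prop:compatibility_i} and~\ref{prop:compatibility_ii}) to identify $\check\omega_\lambda^\deR$ with $\check\omega_\lambda^\et$ and deduce perfectness from Lemma~\ref{lem:etale_pairing_perfect}. One small correction to the reduction step: rather than ``specialising at a closed point'' of a model over a finitely generated $\bZ$-algebra (which would land you in positive characteristic), one should embed the finitely generated field of definition into a finite extension of~$\bQ_p$ and base change, perfectness being preserved and reflected along the resulting flat field extension.
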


Again, the fact that $\check\omega_\lambda^\deR$ is perfect means that it induces a twisted dual pairing
\[
\omega_\lambda^\deR\colon\bigwedgesquare\nolimits_{\rH^0_\deR}\rH^1_\deR\to\rH^0_\deR\langle-1\rangle \,,
\]
which is a filtered $\rH^0_\deR$-linear perfect pairing, equal to the orthogonal direct sum of the pairings on each $\rH^1_\deR(X_{y'}/K(y'))$ induced from the polarisation on the abelian variety~$X_{y'}$. As in the \'etale case, this can be conveniently summarised by saying that  the triple
\[
(\rH^1_\deR,\rH^0_\deR\langle-1\rangle,\omega_\lambda^\deR)
\]
is a symplectic $\rH^0_\deR$-module in the category of filtered vector spaces.

This means something very concrete with regards to the Hodge filtration. Since the Hodge filtration on $\rH^0_\deR$ is supported in degree~$0$ and the Hodge filtration on $\rH^1_\deR$ is supported in degrees~$0$ and~$1$, the only interesting step of the Hodge filtration is $\rF^1\!\rH^1_\deR$. The fact that $\rH^1_\deR$ is a symplectic $\rH^0_\deR$-module says exactly that $\rF^1\!\rH^1_\deR$ is a \emph{Lagrangian} $\rH^0_\deR$-submodule, i.e.\ is a $\rH^0_\deR$-submodule on which~$\omega_\lambda^\deR$ vanishes, maximal with this property. In other words, with respect to the decomposition 
\[
\rH^1_\deR\cong\prod_{y'\in|Y'|}\rH^1_\deR(X_{y'}/K(y')),
\]
the subspace $\rF^1\!\rH^1_\deR$ is the product of Lagrangian $K(y')$-subspaces $\rF^1\!\rH^1_\deR(X_{y'}/K(y'))$ for each $y'\in|Y'|$.

\subsubsection{The comparison isomorphism}\label{sss:comparison_of_pairs}

In the particular case that~$K=K_v$ is a finite extension of~$\bQ_p$, the \'etale and de Rham cohomology of an abelian-by-finite family $X\to Y'\to\Spec(K_v)$ are related by the comparison isomorphisms $c_\deR\colon\sD_\deR(\rH^\bullet_\et)\xrightarrow\sim\rH^\bullet_\deR$. Being an isomorphism of algebras with respect to cup product, $c_\deR$ is an isomorphism of algebras on~$\rH^0$, and an isomorphism of $\rH^0$-modules on~$\rH^1$. The fact that $c_\deR$ preserves Chern classes ensures that the isomorphism $c_\deR\colon\sD_\deR(\rH^2_\et(1))\xrightarrow\sim\rH^2_\deR\langle1\rangle$ takes $c_1^\et(\lambda)=\frac12c_1^\et(\dL)$ to $c_1^\deR(\lambda)=\frac12c_1^\deR(\dL)$, where $\dL=(1,\lambda)^*\dP$ with $\dP$ the Poincar\'e line bundle as above. This ensures that the pairings $\check\omega_\lambda^\et$ and $\check\omega_\lambda^\deR$ are identified under~$c_\deR$ (so in particular the latter pairing is perfect as per Lemma~\ref{lem:de_rham_pairing_perfect}), and hence so too are the twisted dual pairings $\omega_\lambda^\et$ and~$\omega_\lambda^\deR$.

This can all be succinctly summarised on the level of the $p$-adic Hodge cohomology groups 
$\rH^\bullet_\pH\colonequals\rH^\bullet_\pH(X/K_v)$, see   Example~\ref{ex:p-adic_hodge_cohomology} for the definition. The triple
\[
(\rH^1_\pH,\rH^0_\pH\langle-1\rangle,\omega_\lambda^\pH)
\]
is a symplectic $\rH^0_\pH$-module in the category of filtered discrete $(\varphi,N,G_v)$-modules, where 
\[
\omega_\lambda^\pH\colon\bigwedgesquare\nolimits_{\rH^0_\pH}\rH^1_\pH\to\rH^0_\pH\langle-1\rangle
\]
is the morphism with components $\sD_\pst(\omega_\lambda^\et)$ and $\omega_\lambda^\deR$.

So in particular, if $X \to Y$ is an abelian-by-finite family over a general scheme~$Y$ and~$y\in Y(K_v)$ is a $K_v$-rational point, then
\[
\rH^{\leq 1}_\pH(X_y/K_v) \colonequals  (\rH^0_\pH(X_y/K_v),\rH^1_\pH(X_y/K_v))
\]
is a symplectic pair in the category of filtered discrete $(\ph, N, G_v)$-modules, the symplectic pairing on $\rH^1_\pH=\rH^1_\pH(X_{y}/K_v)$ being the pairing discussed above arising from the polarisation.

\subsection{The period map associated to an abelian-by-finite family}\label{ss:abf_period_map}

The particular structure of abelian-by-finite families implies that their period maps take on a particular form. Before we state this, we observe that de Rham Chern classes of line bundles behave well in families.

\begin{lem}\label{lem:relative_de_rham_chern}
	Let~$\pi\colon X\to Y$ be a smooth morphism of smooth varieties over a field~$K$ of characteristic zero. Then to any line bundle~$\dL$ on~$X$ one can associate a \emph{first relative de Rham Chern class} 
	\[
	c_1^\deR(\dL)_{/Y}\in \rF^1\!\rH^0(Y,\cH^2_\deR(X/Y))^{\nabla=0}
	\]
	with the following property. For any field extension~$L/K$ and any point~$y\in Y(L)$, the first de Rham Chern class $c_1^\deR(\dL|_{X_y})\in\rF^1\!\rH^2_\deR(X_y/L)$ of the restriction of~$\dL$ to the fibre at~$y$ is equal to the fibre of~$c_1^\deR(\dL)_{/Y}$ at~$y$ (once we identify $\cH^2_\deR(X/Y)_y\cong\rH^2_\deR(X_y/L)$ in the usual way).
	\begin{proof}
		To begin with, let us recall the definition of the first de Rham Chern class~$c_1^\deR(\dL)$ in the absolute case ($Y=\Spec(K)$), from \cite[\S7.7]{hartshorne:de_rham_cohomology}\footnote{In the proof of Proposition~\ref{prop:compatibility_ii}, we used a different definition of the first Chern class, namely that it is the cycle class of the zero-section in the total space of~$\dL$. That these two definitions agree follows from \cite[Proposition~7.7.1]{hartshorne:de_rham_cohomology}.}. The image of the morphism $\rd\log\colon\cO_X^\times\to\Omega^1_{X/K}$ of abelian sheaves, given by~$\rd\log(f) = f^{-1} \rd\! f$,	consists of exact differentials. So it induces a map $\cO_X^\times\to\Omega^\bullet_{X/K}[1]$ of complexes. The first de Rham Chern class of~$\dL$ is defined to be the image of~$\dL$ under the induced map
		\[
		\Pic(X)=\rH^1(X,\cO_X^\times) \to \rH^2(X,\Omega^\bullet_{X/K})=\rH^2_\deR(X/K) \,.
		\]
		The fact that~$\rd\log\colon\cO_X\to\Omega^\bullet_{X/K}[1]$ factors through $\rF^1\!\Omega^\bullet_{X/K}$ ensures that $c_1^\deR(\dL)\in\rF^1\!\rH^2_\deR(X/K)$.
		
		To relativise this construction, if~$\pi\colon X\to Y$ is a smooth morphism of smooth $K$-schemes, then there are maps
		\[
		\rH^1(X,\cO_X^\times) \to \rH^0(Y,\rR^1\!\pi_*\cO_X^\times) \xrightarrow{\rd \log} \rH^0(Y,\rR^2\!\pi_*\Omega^\bullet_{X/K}) \to \rH^0(Y,\rR^2\!\pi_*\Omega^\bullet_{X/Y})
		\]
		where the first map is part of the exact sequence of low-degree terms in the Leray spectral sequence, and the second and third maps are induced by the maps 
		\[
		\cO_X^\times\xrightarrow{\rd\log}\Omega^\bullet_{X/K}[1]\to\Omega^\bullet_{X/Y}[1].
		\]
		By definition of the Gau\ss--Manin connection on $\cH^2_\deR(X/Y)=\rR^2\!\pi_*\Omega^\bullet_{X/Y}$ as a connecting map \cite{katz-oda:relative_de_rham}, the image of the right-hand map is contained in $\rH^0(Y,\cH^2_\deR(X/Y))^{\nabla=0}$. We define the first relative de Rham Chern class 
		\[
		c_1^\deR(\dL)_{/Y}\in\rH^0(Y,\cH^2_\deR(X/Y))^{\nabla=0}
		\]
		to be the image of~$[\dL]\in\Pic(X)=\rH^1(X,\cO_X^\times)$ under the composite of the above maps. It is easy to check, using the naturality properties of the Leray spectral sequence and base-change maps, that this construction satisfies the desired property: the fibre of~$c_1^\deR(\dL)_{/Y}$ at $y\in Y(L)$ is $c_1^\deR(\dL|_{X_y})$. In particular, since each $c_1^\deR(\dL|_{X_y})$ lies in~$\rF^1$, so too does $c_1^\deR(\dL)_{/Y}$.
	\end{proof}
\end{lem}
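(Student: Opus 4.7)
The plan is to relativise the standard Atiyah/Hodge-theoretic construction of the de Rham first Chern class via the $\rd\log$ map.

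First I would recall the absolute construction. The map $\rd\log \colon \cO_X^\times \to \Omega^1_{X/K}$, $f \mapsto f^{-1}\rd\! f$, lands in closed (indeed, exact) forms, and so factors through a morphism of complexes of abelian sheaves $\cO_X^\times \to \Omega^{\bullet\geq 1}_{X/K}[1] \subseteq \Omega^\bullet_{X/K}[1]$. In particular, the associated morphism $\Pic(X) = \rH^1(X,\cO_X^\times) \to \rH^2(X,\Omega^\bullet_{X/K}) = \rH^2_\deR(X/K)$ lands inside $\rF^1\rH^2_\deR(X/K)$. This defines~$c_1^\deR$ in the absolute case.

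To relativise, I would follow the same construction but push forward by $\pi_*$. The Leray spectral sequence gives an edge map $\rH^1(X,\cO_X^\times) \to \rH^0(Y,\rR^1\pi_*\cO_X^\times)$, and composing with the map $\rR^1\pi_*\cO_X^\times \xrightarrow{\rd\log} \rR^2\pi_*\Omega^\bullet_{X/K}$ followed by the natural morphism $\rR^2\pi_*\Omega^\bullet_{X/K} \to \rR^2\pi_*\Omega^\bullet_{X/Y} = \cH^2_\deR(X/Y)$, I obtain a class $c_1^\deR(\dL)_{/Y} \in \rH^0(Y,\cH^2_\deR(X/Y))$. The fact that this class lies in $\rF^1$ comes, as in the absolute case, from the observation that $\rd\log$ factors through $\Omega^{\bullet\geq 1}_{X/Y}$.

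The key point, and expected main technical step, is to show that $c_1^\deR(\dL)_{/Y}$ is horizontal for the Gau\ss--Manin connection. Recall that $\nabla$ is the coboundary associated to the short exact sequence
\[
0 \to \pi^{-1}\Omega^1_Y \otimes_{\pi^{-1}\cO_Y}\Omega^\bullet_{X/Y}[-1] \to \Omega^\bullet_X/\sigma^2 \to \Omega^\bullet_{X/Y} \to 0
\]
of complexes on $X$ (the relatively-stupid filtration of the absolute de Rham complex). A class in $\rH^0(Y,\rR^2\pi_*\Omega^\bullet_{X/Y})$ is horizontal precisely when it lifts to $\rH^0(Y,\rR^2\pi_*(\Omega^\bullet_X/\sigma^2))$; but our class lifts even further, coming from the absolute object $\rR^2\pi_*\Omega^\bullet_X$ by construction. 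Hence $\nabla c_1^\deR(\dL)_{/Y} = 0$.

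Finally, the fibrewise compatibility with the absolute construction follows from naturality. For $y \in Y(L)$ the Cartesian square with $X_y \to \Spec L$ and $X \to Y$ gives compatible edge maps in the respective Leray spectral sequences and compatible base-change maps $\rR^i\pi_*(-)|_y \to \rH^i(X_y,(-)|_{X_y})$. Chasing $[\dL] \in \Pic(X)$ through the two routes in the evident functoriality diagram yields the identity $c_1^\deR(\dL|_{X_y}) = c_1^\deR(\dL)_{/Y}|_y$ in $\rF^1\rH^2_\deR(X_y/L) \cong \rF^1\!\cH^2_\deR(X/Y)_y$, which is what is claimed.
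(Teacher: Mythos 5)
Your proof is correct and follows essentially the same route as the paper: the $\rd\log$ construction, the Leray edge map to $\rR^1\pi_*\cO_X^\times$, the projection to the relative de Rham complex, horizontality via the Gau\ss--Manin connection being the connecting map of the relatively-stupid filtration, and fibrewise compatibility by naturality. The only (harmless) variation is that you obtain the $\rF^1$ membership directly from the factorisation of $\rd\log$ through $\Omega^{\bullet\geq1}_{X/Y}$, whereas the paper deduces it from the fact that each fibre class $c_1^\deR(\dL|_{X_y})$ lies in $\rF^1$.
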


So suppose now that $X\to Y'\to Y$ is an abelian-by-finite family with~$Y$ a smooth variety over a finite extension~$K_v$ of~$\bQ_p$, and write~$\cH^\bullet_\deR=\cH^\bullet_\deR(X/Y)$ for its relative de Rham cohomology. We define the first relative de Rham Chern class of the polarisation~$\lambda$ to be 
\[
c_1^\deR(\lambda)_{/Y}=\frac12c_1^\deR(\dL)_{/Y}\in\rF^1\!\rH^0(Y,\cH^2_\deR)^{\nabla=0},
\]
where $\dL=(1,\lambda)^*\dP$ with~$\dP$ the Poincar\'e line bundle on~$X\times_{Y'}X$. The class~$c_1^\deR(\lambda)_{/Y}$ corresponds to a $\cH^0_\deR$-linear morphism
\[
\check\omega^\deR_\lambda\colon \bigwedgesquare\nolimits_{\cH^0_\deR}\cH_1^\deR \to \cH^0_\deR\langle1\rangle
\]
of filtered vector bundles with integrable connection on~$Y$, where $\cH_1^\deR=\cHom_{\cH^0_\deR}(\cH^1_\deR,\cH^0_\deR)$. The fibre of~$\check\omega^\deR_\lambda$ at a point~$y\in Y(K_v)$ is equal to the pairing~$\check\omega^\deR_{\lambda_y}$ associated to the polarisation~$\lambda_y$ on the fibre~$X_y$. In particular, it follows from Lemma~\ref{lem:de_rham_pairing_perfect} that $\check\omega_\lambda^\deR$ is a perfect pairing, i.e.\ induces a $\cH^0_\deR$-linear isomorphism
\[
\cH_1^\deR \xrightarrow\sim \cHom_{\cH^0_\deR}(\cH_1^\deR,\cH^0_\deR\langle1\rangle) = \cH^1_\deR\langle1\rangle
\]
of filtered vector bundles with integrable connection on~$Y$. Hence, we have the twisted dual pairing
\[
\omega_\lambda^\deR\colon\bigwedgesquare\nolimits_{\cH^0_\deR}\cH^1_\deR\to\cH^0_\deR\langle-1\rangle \,,
\]
which is a $\cH^0_\deR$-linear perfect pairing whose fibre at a $K_v$-point~$y\in Y(K_v)$ is the pairing $\omega_{\lambda_y}^\deR$. The fact that the pairing~$\omega_\lambda^\deR$ is compatible with connections implies the following compatibility result for parallel transport.

\begin{lem}\label{lem:abf_compatibility_for_parallel_transport}
	Let~$y_0\in Y(K_v)$ be a $K_v$-rational point, and let $\Nbd_{y_0}\subseteq Y^\an$ be an admissible open neighbourhood of~$y_0$, isomorphic to a closed polydisc, over which $\cH^0_\deR(X/Y)^\an$ and $\cH^1_\deR(X/Y)^\an$ have a full basis of flat sections. Then the parallel transport maps
	\[
	T_{y_0}^\nabla\colon (\cO_{\Nbd_{y_0}}\otimes_{K_v}\rH^i_\deR(X_{y_0}/K_v),\rd\otimes1) \xrightarrow\sim (\cH^i_\deR(X/Y),\nabla)^\an|_{\Nbd_{y_0}}
	\]
	for~$i=0,1$ are compatible with all extra structures: algebra structure, module structure, and pairing.
\end{lem}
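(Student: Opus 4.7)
The plan is to observe that all the structures in question (algebra multiplication on $\cH^0_\deR$, the $\cH^0_\deR$-module structure on $\cH^1_\deR$ via cup product, and the symplectic pairing $\omega_\lambda^\deR$) are encoded by morphisms of vector bundles with integrable connection defined globally on $Y$, and then to invoke the general principle that canonical parallel transport intertwines any such morphism with its fiber at $y_0$.

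More precisely, I would first record the general principle: if $\phi\colon \dE_1\to\dE_2$ is a morphism of vector bundles with integrable connection on $Y$, and both $\dE_i^\an$ admit a full basis of horizontal sections over $\Nbd_{y_0}$, then the square
\[
\begin{tikzcd}[column sep=large]
(\cO_{\Nbd_{y_0}}\otimes_{K_v}\dE_{1,y_0},\rd\otimes1) \arrow[r, "T^\nabla_{y_0}", "\sim"']\arrow[d, "1\otimes\phi_{y_0}"']&  (\dE_1^\an,\nabla)|_{\Nbd_{y_0}}\arrow[d,"\phi^\an|_{\Nbd_{y_0}}"]\\
(\cO_{\Nbd_{y_0}}\otimes_{K_v}\dE_{2,y_0},\rd\otimes1) \arrow[r, "T^\nabla_{y_0}","\sim"']& (\dE_2^\an,\nabla)|_{\Nbd_{y_0}}
\end{tikzcd}
\]
commutes. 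Indeed, both composites are morphisms of vector bundles with flat connection whose fibers at $y_0$ agree with $\phi_{y_0}$, and the canonical trivialization $T^\nabla_{y_0}$ is characterised by the identity condition at $y_0$, so the two composites must coincide. Note also that if $\dE_1,\dE_2$ admit full bases of horizontal sections over $\Nbd_{y_0}$ then so do $\dE_1\otimes\dE_2$ and $\bigwedge^2\dE_1$, with horizontal sections given by (antisymmetrised) tensor products; this takes care of applying the principle to multilinear morphisms.

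Next I would apply this principle to three specific morphisms of vector bundles with integrable connection on $Y$. The cup product
\[
\cup\colon \cH^0_\deR\otimes_{\cO_Y}\cH^0_\deR\to\cH^0_\deR \qquad\text{and}\qquad \cup\colon \cH^0_\deR\otimes_{\cO_Y}\cH^1_\deR\to\cH^1_\deR
\]
are compatible with the Gau\ss--Manin connection by the construction in \cite{katz-oda:relative_de_rham}; applying the principle gives compatibility of parallel transport with the algebra and module structures. For the symplectic pairing, one uses Lemma~\ref{lem:relative_de_rham_chern}: the relative first de Rham Chern class $c_1^\deR(\lambda)_{/Y}$ is a horizontal section of $\cH^2_\deR$, hence (via the cup product identification $\cH^2_\deR\cong\bigwedge^2_{\cH^0_\deR}\cH^1_\deR$) defines a $\cH^0_\deR$-linear morphism $\check\omega_\lambda^\deR\colon \bigwedge^2_{\cH^0_\deR}\cH_1^\deR\to\cH^0_\deR\langle1\rangle$ compatible with $\nabla$. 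Dualising (using that $\check\omega_\lambda^\deR$ is a perfect pairing fiberwise by Lemma~\ref{lem:de_rham_pairing_perfect}, hence a perfect pairing of vector bundles with flat connection on $Y$), one obtains the pairing $\omega_\lambda^\deR\colon \bigwedge^2_{\cH^0_\deR}\cH^1_\deR\to\cH^0_\deR\langle-1\rangle$ compatible with $\nabla$, whose fiber at $y_0$ is the pairing used to form the symplectic $\rH^0_\deR(X_{y_0}/K_v)$-module $\rH^1_\deR(X_{y_0}/K_v)$. Applying the general principle to $\omega_\lambda^\deR$ completes the proof.

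I expect no serious obstacle: the only non-formal input is the flatness of $c_1^\deR(\lambda)_{/Y}$, which is already built into Lemma~\ref{lem:relative_de_rham_chern}. Everything else is a formal consequence of the characterisation of $T^\nabla_{y_0}$ as the unique horizontal trivialisation which is the identity at $y_0$.
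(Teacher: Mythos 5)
Your proposal is correct and matches the paper's (implicit) argument: the paper gives no separate proof, treating the lemma as an immediate consequence of the fact that the cup products and the pairing $\omega_\lambda^\deR$ (via the horizontal class $c_1^\deR(\lambda)_{/Y}$ of Lemma~\ref{lem:relative_de_rham_chern} and dualisation using Lemma~\ref{lem:de_rham_pairing_perfect}) are morphisms of vector bundles with flat connection, combined with the characterisation of $T^\nabla_{y_0}$ by its fibre at $y_0$. You have simply made explicit the formal uniqueness argument that the paper leaves to the reader.
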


\begin{rmk}\label{rmk:trivialised_finite_cover}
	The fact that $\cH^0_\deR(X/Y)^\an=\cH^0_\deR(Y'/Y)^\an$ admits a trivialisation compatible with the algebra structure over the neighbourhood~$\Nbd_{y_0}$ in Lemma~\ref{lem:abf_compatibility_for_parallel_transport} means that the finite \'etale covering $Y'\to Y$ becomes trivialised over~$\Nbd_{y_0}$, i.e.\ there is an isomorphism $(Y')^\an|_{\Nbd_{y_0}}\cong\Nbd_{y_0}\times (Y')^\an_{y_0}$ of rigid-analytic spaces over~$K_v$, uniquely characterised by the fact that it restricts to the identity on the fibre at~$y_0$. On the fibre at another point~$y\in\Nbd_{y_0}(K_v)$, this isomorphism $(Y'_y)^\an\cong(Y'_{y_0})^\an$ is the one induced from the isomorphism
	\[
	\cO(Y'_{y_0}) = \rH^0_\deR(Y'_{y_0}/K_v) \xrightarrow[\sim]{T_{y_0,y}^\nabla} \rH^0_\deR(Y'_y/K_v) = \cO(Y'_y)
	\]
	of $K_v$-algebras.
\end{rmk}

If now~$y_0\in Y(K_v)$ is a $K_v$-rational point, let us define $\dH_{y_0}$ to be the $K_v$-variety parametrising Lagrangian $\rH^0_\deR(X_{y_0}/K_v)$-submodules of $\rH^1_\deR(X_{y_0}/K_v)$. This is a closed $K_v$-subvariety of the flag variety~$\dG_{y_0}$ parametrising filtrations on~$\rH^1_\deR(X_{y_0}/K_v)$ with the same dimension data as $\rF^\bullet\!\rH^1_\deR(X_{y_0}/K_v)$. As a consequence of Lemma~\ref{lem:abf_compatibility_for_parallel_transport}, if~$\Nbd_{y_0}\subseteq Y^\an$ is an admissible open neighbourhood of~$y_0$ over which $\cH^0_\deR(X/Y)^\an$ and $\cH^1_\deR(X/Y)^\an$ have a full basis of flat sections, then the image of the $v$-adic period map $\Phi_{y_0}\colon\Nbd_{y_0}\to\dG_{y_0}^\an$ of \S\ref{ss:period_maps} is contained in~$\dH_{y_0}^\an$. When we refer to the period map of an abelian-by-finite family, we will always regard it as having codomain~$\dH_{y_0}^\an$ rather than the flag variety $\dG_{y_0}^\an$.

To every point~$\Phi\in\dH_{y_0}(K_v)$, we assign a symplectic pair $\MM^{\leq 1}(\Phi)=(\MM^0,\MM^1(\Phi))$ in the category of filtered discrete $(\varphi,N,G_v)$-modules as follows.

\begin{defi}
	Let~$\Phi\in\dH_{y_0}(K_v)$, i.e.\ $\Phi$ is a Lagrangian $\rH^0_\deR(X_{y_0}/K_v)$-submodule of~$\rH^1_\deR(X_{y_0}/K_v)$. We define a filtered discrete $(\varphi,N,G_v)$-module~$\MM^1(\Phi)$ by
	\[
	\MM^1(\Phi) \colonequals (\rH^1_\pst(X_{y_0}/\bQ_p^\nr),\rH^1_\deR(X_{y_0}/K_v),c_\deR\circ c_\BO) \,,
	\]
	where the $(\varphi,N,G_v)$-module structure on~$\rH^1_\pst(X_{y_0}/\bQ_p^\nr)$ is the usual one, but where the filtration on $\rH^1_\deR(X_{y_0}/K_v)$ is the one given by
	\[
	\rF^i\!\rH^1_\deR(X_{y_0}/K_v) \colonequals 
	\begin{cases}
		\rH^1_\deR(X_{y_0}/K_v) & \text{if $i\leq0$,} \\
		\Phi & \text{if $i=1$,} \\
		0 & \text{if $i\geq2$,}
	\end{cases}
	\]
	instead of the Hodge filtration. Since~$\Phi$ is a Lagrangian $\rH^0_\deR(X_{y_0}/K_v)$-submodule of~$\rH^1_\deR(X_{y_0}/K_v)$, this implies that $\MM^1(\Phi)$ is a symplectic module under
		\[
	\MM^0 \colonequals (\rH^0_\pst(X_{y_0}/\bQ_p^\nr),\rH^0_\deR(X_{y_0}/K_v),c_\deR\circ c_\BO)
	\]
	in the category of filtered discrete $(\varphi,N,G_v)$-modules, with respect to the pairing~$\omega_\lambda^\pH$ described in~\S\ref{sss:comparison_of_pairs}. We write
	\[
	\MM^{\leq1}(\Phi) \colonequals (\MM^0,\MM^1(\Phi))
	\]
	for the corresponding symplectic pair.
\end{defi}

As in Proposition~\ref{prop:period_maps_control_reps}, the $v$-adic period map associated to an abelian-by-finite family controls the variation of the local Galois representations attached to $K_v$-points, now compatibly with symplectic structures. In the proposition below, we write $\pi_0\SPair(\Rep_{\bQ_p}^\deR(G_v))$ (respectively $\pi_0\SPair(\MF(\varphi,N,G_v))$) for the set of isomorphism classes of symplectic pairs in the category of de Rham $G_v$-representations (respectively filtered discrete $(\varphi,N,G_v)$-modules).

\begin{prop}\label{prop:period_maps_control_abf_reps}
	Let $X\to Y'\to Y$ be an abelian-by-finite family over a smooth $K_v$-variety~$Y$, let $y_0\in Y(K_v)$ be a $K_v$-rational point, and let $\Nbd_{y_0}\subseteq Y^\an$ be an admissible open neighbourhood of~$y_0$, isomorphic to a closed polydisc, over which $\cH^0_\deR(X/Y)^\an$ and $\cH^1_\deR(X/Y)^\an$ have a full basis of horizontal sections.
	
	Then the map $Y(K_v)\to\pi_0\SPair(\Rep_{\bQ_p}^\deR(G_v))$ sending a $K_v$-point $y\in Y(K_v)$ to the isomorphism class of $\rH^{\leq 1}_\et(X_{y,\Kbar_v},\bQ_p)$ fits into a commuting diagram
	\begin{center}
	\begin{tikzcd}
		Y(K_v) \arrow[r,phantom,"\supset"]\arrow[d] &[-8ex] \Nbd_{y_0}(K_v) \arrow[r,"\Phi_{y_0}"] & 
		\dH_{y_0}(K_v) \arrow[d,"\MM^{\leq 1}"] \\
		\pi_0\SPair(\Rep_{\bQ_p}^\deR(G_v)) \arrow[rr,"\DpH",hook]  & & \pi_0\SPair(\MF(\varphi,N,G_v)) \,.
	\end{tikzcd}
	\end{center}
\end{prop}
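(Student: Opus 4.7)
The plan is to deduce this from Proposition~\ref{prop:period_maps_control_reps} applied in degrees $i = 0$ and $i = 1$, upgrading the resulting isomorphisms of filtered discrete $(\varphi,N,G_v)$-modules to an isomorphism of symplectic pairs by tracking three pieces of additional structure: the algebra structure on $\rH^0_\pH$, the $\rH^0_\pH$-module structure on $\rH^1_\pH$, and the symplectic pairing $\omega_\lambda^\pH$ induced by the polarisation. Fix $y \in \Nbd_{y_0}(K_v)$, and for $i = 0, 1, 2$ let $T_{y_0,y}^i \colon \sD_\pst(\rH^i_\et(X_{y_0,\Kbar_v},\bQ_p)) \xrightarrow\sim \sD_\pst(\rH^i_\et(X_{y,\Kbar_v},\bQ_p))$ be the isomorphism of $(\varphi,N,G_v)$-modules supplied by Theorem~\ref{thm:period_maps_control_reps}. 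That $T_{y_0,y}^i$ for $i = 0, 1$ already induces an isomorphism $\rH^i_\pH(X_y/K_v) \xrightarrow\sim \MM^i(\Phi_{y_0}(y))$ of filtered discrete $(\varphi,N,G_v)$-modules is precisely Proposition~\ref{prop:period_maps_control_reps} (recalling that $\Phi_{y_0}(y)$ is by construction the pullback of the Hodge filtration on $\rH^1_\deR(X_y/K_v)$ along parallel transport), so the remaining content is entirely about compatibility with the three additional structures.

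The driving principle for each of these verifications is the unicity clause of Theorem~\ref{thm:period_maps_control_reps}: any morphism of $(\varphi,N,G_v)$-modules $\sD_\pst(\rH^i_\et(X_{y_0,\Kbar_v},\bQ_p)) \to \sD_\pst(\rH^j_\et(X_{y,\Kbar_v},\bQ_p))$ is uniquely determined by the $\Kbar_v$-linear map on de Rham cohomology obtained by extending scalars along $c_\deR \circ c_\BO$. Consequently, to verify commutativity of a diagram of $(\varphi,N,G_v)$-modules built from the $T_{y_0,y}^i$ together with morphisms arising naturally from the abelian-by-finite family, it suffices to check that the corresponding diagram of $\Kbar_v$-linear maps on de Rham cohomology commutes, with each $T_{y_0,y}^i$ replaced by the parallel transport map $T_{y_0,y}^\nabla$.

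Applying this, the algebra and module compatibilities reduce to compatibility of parallel transport with cup products (Lemma~\ref{lem:abf_compatibility_for_parallel_transport}) together with compatibility of $c_\deR$ with cup products (Proposition~\ref{prop:compatibility_i}\eqref{proppart:compatibility_cup_products}). For the symplectic pairing, the key extra input is that $T_{y_0,y}^2$ sends the \'etale Chern class $c_1^\et(\lambda_{y_0})$ to $c_1^\et(\lambda_y)$; once this is established, cup-product compatibility and the identification $\rH^2 \cong \bigwedgesquare_{\rH^0}\rH^1$ translate it into preservation of the pairing. Via the driving principle above, the Chern class statement reduces to the analogous statement for $c_1^\deR$, which holds because by Lemma~\ref{lem:relative_de_rham_chern} both $c_1^\deR(\lambda_{y_0})$ and $c_1^\deR(\lambda_y)$ are fibres of a single horizontal section $c_1^\deR(\lambda)_{/Y}$ of $\cH^2_\deR(X/Y)$, combined with compatibility of $c_\deR$ with Chern classes (Proposition~\ref{prop:compatibility_ii}\eqref{proppart:compatibility_chern_classes}). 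I do not expect any serious obstacle beyond carefully assembling these compatibilities, which have been established earlier in the paper with precisely this application in mind.
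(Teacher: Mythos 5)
Your argument is correct and takes essentially the same route as the paper, whose proof simply assembles the comparison isomorphism of symplectic pairs (\S\ref{sss:comparison_of_pairs}) with the structure-compatibility of parallel transport (Lemma~\ref{lem:abf_compatibility_for_parallel_transport}) on top of Theorem~\ref{thm:period_maps_control_reps}, exactly the compatibilities you invoke. One tiny caveat: invoking Theorem~\ref{thm:period_maps_control_reps} at $i=2$ needs $\cH^2_\deR(X/Y)^\an$ to be horizontally trivialised over $\Nbd_{y_0}$, which is not among the stated hypotheses but follows from $\cH^2_\deR\cong\bigwedgesquare\nolimits_{\cH^0_\deR}\cH^1_\deR$ compatibly with the Gau\ss--Manin connection (or can be sidestepped entirely by using the pairing clause of Lemma~\ref{lem:abf_compatibility_for_parallel_transport} directly).
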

\begin{proof}
This follows from
\[
\DpH\big(\rH^{\leq 1}_\et(X_{y,\Kbar_v},\bQ_p)\big) \simeq \rH^{\leq 1}_\pH(X_{y}/K_v) \simeq 
\MM^{\leq 1}(\Phi_{y_0}(y)) \,,
\]
where the first isomorphism is the comparison isomorphism and the second is parallel transport.
\end{proof}

\subsubsection{Decomposition of the period map}\label{sss:period_map_decomposition}

The period map can be understood somewhat explicitly. For this, let us enumerate the closed points of~$Y'_{y_0}$ as $(y_i')_{i\in I}$. For each~$i$, we write~$L_{w_i}\colonequals K_v(y_i')$ for short, and fix a $K_v$-embedding $L_{w_i}\hookrightarrow\Kbar_v$ so as to make the absolute Galois group~$G_{w_i}$ of~$L_{w_i}$ into an open subgroup of~$G_v$.

From~\eqref{eq:de_rham_decomposition}, we see that every Lagrangian $\rH^0_\deR(X_{y_0}/K_v)=\prod_iL_{w_i}$-submodule~$\Phi$ of
\[
\rH^1_\deR(X_{y_0}/K_v)=\prod_i\rH^1_\deR(X_{y_i'}/L_{w_i})
\]
factorises as a product $\prod_i\Phi_i'$, where each $\Phi_i'$ is a Lagrangian $L_{w_i}$-subspace of $\rH^1_\deR(X_{y_i'}/L_{w_i})$. Accordingly, the period domain~$\dH_{y_0}$ factorises as
\begin{equation}\label{eq:grassmannian_decomposition}
\dH_{y_0} = \prod_{y_0'\in|Y'_{y_0}|}\Res^{L_{w_i}}_{K_v}\dH_{y_i'} \,,
\end{equation}
where we write $\dH_{y_i'}\colonequals\LGrass(\rH^1_\deR(X_{y_i'}/L_{w_i}))$ for the Grassmannian of Lagrangian $L_{w_i}$-subspaces of $\rH^1_\deR(X_{y_i'}/L_{w_i})$ viewed as a $L_{w_i}$-variety.
\smallskip

This decomposition of~$\dH_{y_0}$ induces a corresponding decomposition for the map 
\[
M^{\leq1}\colon\dH_{y_0}(K_v)\to\pi_0\SPair(\MF(\varphi,N,G_v)) \,.
\]
For a Lagrangian $L_{w_i}$-subspace~$\Phi_i'$ of~$\rH^1_\deR(X_{y_i'}/L_{w_i})$, let us write~$\MM^1_i(\Phi_i')$ for the symplectic filtered discrete $(\varphi,N,G_{w_i})$-module with
\[
\MM^1_i(\Phi_i') \colonequals (\rH^1_\pst(X_{y_i'}/\bQ_p^\nr),\rH^1_\deR(X_{y_i'}/L_{w_i}),c_\deR\circ c_\BO) \,,
\]
where the $(\varphi,N,G_{w_i})$-module structure on~$\rH^1_\pst(X_{y_i'}/\bQ_p^\nr)$ is the usual one, but where the filtration on $\rH^1_\deR(X_{y_i'}/L_{w_i})$ is the one given by
\[
\rF^j\!\rH^1_\deR(X_{y_i'}/L_{w_i}) = 
\begin{cases}
	\rH^1_\deR(X_{y_i'}/L_{w_i}) & \text{if $j\leq0$,} \\
	\Phi_i' & \text{if $j=1$,} \\
	0 & \text{if $j\geq2$,}
\end{cases}
\]
instead of the usual Hodge filtration. 

Denoting the unit object in $\MF(\varphi,N,G_{w_i})$ by $\unit = (\bQ_p^\nr, L_{w_i}, 1)$, we then have the following.

\begin{lem}\label{lem:M_decomposition}
	Let~$\Phi\in\dH_{y_0}(K_v)$ be a Lagrangian $\rH^0_\deR(X_{y_0}/K_v)$-submodule of $\rH^1_\deR(X_{y_0}/K_v)$, factorising as the product $\prod_i\Phi_i'$ of Lagrangian $L_{w_i}$-subspaces~$\Phi_i'$ of~$\rH^1_\deR(X_{y_0'}/L_{w_i})$. Then there is a decomposition
	\[
	\MM^1(\Phi) = \prod_i\Ind_{G_{w_i}}^{G_v}\!\MM^1_i(\Phi_i')
	\]
	in the category of filtered discrete $(\varphi,N,G_v)$-modules, compatible with symplectic module structures over
	\[
	\MM^0 \cong \prod_i\Ind_{G_{w_i}}^{G_v}\!\unit \,.
	\]
	\begin{proof}
		As in Example~\ref{ex:restriction_and_coinduction_of_p-adic_hodge_coh}, the decomposition~$X_{y_0}=\coprod_iX_{y_i'}$ induces a decomposition
		\[
		\rH^1_\pH(X_{y_0}/K_v) = \prod_i\Ind_{G_{w_i}}^{G_v}\rH^1_\pH(X_{y_i'}/L_{w_i})
		\]
		in the category of filtered discrete $(\varphi,N,G_v)$-modules, compatible with symplectic module structures over~$\rH^0_\pH(X_{y_0}/K_v)=M^0$, whose de Rham component is the decomposition~\eqref{eq:de_rham_decomposition}. After replacing the Hodge filtration on~$\rH^1_\deR(X_{y_0}/K_v)$ and on each~$\rH^1_\deR(X_{y_i'}/L_{w_i})$ by the filtrations determined by~$\Phi$ and~$\Phi_i'$, respectively, we obtain the desired decomposition of~$M^1(\Phi)$.
	\end{proof}
\end{lem}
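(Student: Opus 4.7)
\begin{pro}[Proof plan for Lemma~\ref{lem:M_decomposition}]
The plan is to deduce this from the unfiltered analogue, which has already been established in the preceding theory. Concretely, the decomposition $X_{y_0} = \coprod_i X_{y_i'}$ into connected components, together with the restriction--induction isomorphism~\eqref{eq:p-adic_hodge_coh_of_restriction} from Example~\ref{ex:restriction_and_coinduction_of_p-adic_hodge_coh}, provides canonical isomorphisms
\[
\rH^0_\pH(X_{y_0}/K_v) \cong \prod_i \Ind_{G_{w_i}}^{G_v}\!\rH^0_\pH(X_{y_i'}/L_{w_i}) \cong \prod_i \Ind_{G_{w_i}}^{G_v}\!\unit
\]
and
\[
\rH^1_\pH(X_{y_0}/K_v) \cong \prod_i \Ind_{G_{w_i}}^{G_v}\!\rH^1_\pH(X_{y_i'}/L_{w_i})
\]
in $\MF(\varphi,N,G_v)$, whose de Rham components are the algebra/module decompositions~\eqref{eq:de_rham_decomposition}.

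First I would verify that this decomposition respects the symplectic module structure. On the $\pst$-component, the required compatibility is the \'etale analogue, which follows from the decomposition~\eqref{eq:etale_decomposition} combined with the observation (made just before Lemma~\ref{lem:etale_pairing_perfect}) that $c_1^\et(\lambda)$ is the sum of the Chern classes $c_1^\et(\lambda_{y_i'})$ on each connected component, and hence that $\omega_\lambda^\et$ is the orthogonal direct sum of the induced pairings. On the de Rham component the same statement holds by definition. Both components are then identified via the comparison isomorphism $c_\deR$ as in~\S\ref{sss:comparison_of_pairs}, using compatibility of $c_\deR$ with cup products, Chern classes, and base change (Propositions~\ref{prop:compatibility_i} and~\ref{prop:compatibility_ii}); alternatively, since induction is a lax $\otimes$-functor and the pairing is induced by a single global Chern class, the compatibility is automatic once one checks that the Chern class $c_1^\pH(\lambda)$ of the polarisation respects the product decomposition, which is immediate because Chern classes are additive over disjoint unions.

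Next I would replace the Hodge filtration on the de Rham side by the $\Phi$-filtration. Under the decomposition~\eqref{eq:de_rham_decomposition}, the Lagrangian submodule $\Phi = \prod_i \Phi_i'$ corresponds precisely to the product of the filtrations defining each $\MM^1_i(\Phi_i')$; note that the filtration on $\Ind_{G_{w_i}}^{G_v}\!\MM^1_i(\Phi_i')$ was defined purely on the de Rham part (with the $K_v$-vector space $\rH^1_\deR(X_{y_i'}/L_{w_i})$ viewed through restriction of scalars), so replacing the Hodge filtration of $\rH^1_\pH(X_{y_i'}/L_{w_i})$ by the one given by $\Phi_i'$ is exactly what is needed. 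Since the $(\varphi,N,G_v)$-module structure and the comparison isomorphism $c_\BO$ are unaffected by this modification of the filtration, the above isomorphism becomes the desired decomposition
\[
\MM^1(\Phi) \cong \prod_i \Ind_{G_{w_i}}^{G_v}\!\MM^1_i(\Phi_i')
\]
in $\MF(\varphi,N,G_v)$, and it is still compatible with the symplectic module structure over $\MM^0$ because that compatibility involves only the underlying $(\varphi,N,G_v)$-module and the de Rham vector space together with its pairing, none of which depend on which Lagrangian filtration has been chosen.

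The only real content beyond bookkeeping is the symplectic compatibility in the first step, and I expect this to be routine given the additivity of Chern classes under disjoint unions and the compatibilities of $c_\deR$ already established in Propositions~\ref{prop:compatibility_i}--\ref{prop:compatibility_ii}. No calculation more involved than this is needed.
\end{pro}
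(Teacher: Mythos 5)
Your proposal is correct and follows essentially the same route as the paper: both deduce the statement from the product/induction decomposition of $\rH^{\leq1}_\pH(X_{y_0}/K_v)$ arising from $X_{y_0}=\coprod_i X_{y_i'}$ (Example~\ref{ex:restriction_and_coinduction_of_p-adic_hodge_coh}), compatibly with the symplectic structure, and then swap the Hodge filtration for the one determined by $\Phi=\prod_i\Phi_i'$, which leaves the $\pst$-data and $c_\BO$ untouched. Your extra remarks justifying the symplectic compatibility via additivity of Chern classes and the compatibilities of $c_\deR$ simply spell out what the paper leaves implicit.
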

\section{Comparison of period maps and full monodromy}\label{s:big_monodromy}

The relevance of $v$-adic period maps in the context of Diophantine geometry is that they provide a tool for proving finiteness of subsets of the $v$-adic points on a smooth curve~$Y$: this idea underlies the approach of Lawrence and Venkatesh. In order to make this work, one needs to be able to ensure that the $v$-adic period map associated to some smooth proper family $X\to Y$ has large image. This is achieved by means of a comparison theorem relating $v$-adic period maps with period maps over the complex numbers, the latter of which can be controlled by monodromy computations and basic topology.

In this section, we review the theory of period maps over the complex numbers and its comparison with $v$-adic period maps. Using this, we recall what it means for an abelian-by-finite family to have \emph{full monodromy} \cite[p.~927]{LVinventiones}, and give a criterion for certain subsets of~$Y(K_v)$ to be finite in terms of $v$-adic period maps. The discussion here closely parallels \cite[\S3]{LVinventiones}, save that we need to compare $v$-adic and complex period maps around any $K_v$-point of~$Y$, not just those defined over~$K$.

\subsection{Period maps over the complex numbers}

Let~$Y$ be a smooth connected variety over~$\bC$. By an \emph{\'etale neighbourhood}~$\Nbd_{y_0}$ of a point~$y_0\in Y^\an$ in the analytification~$Y^\an$ of~$Y$, we mean a local biholomorphism $\Nbd_{y_0}\to Y^\an$ of complex manifolds, together with a chosen point~$\tilde y_0\in\Nbd_{y_0}$ mapping to~$y_0$. For example, $\Nbd_{y_0}$ could be an open neighbourhood of~$y_0$ in the analytic topology, or could be the universal cover of~$Y^\an$ with a chosen point in the fibre over~$y_0$.

If~$\Nbd_{y_0}$ is a simply connected \'etale neighbourhood of~$y_0$ and $\dE=(\dE,\nabla)$ is a holomorphic vector bundle with flat connection on~$Y^\an$, then by the Riemann--Hilbert correspondence, the restriction (pullback) of~$\dE$ to~$\Nbd_{y_0}$ is trivial: there is a unique isomorphism
\[
T_{\tilde y_0}^\nabla\colon(\cO_{\Nbd_{y_0}}\otimes_\bC\dE_{y_0},\rd\otimes1) \xrightarrow\sim(\dE|_{\Nbd_{y_0}},\nabla|_{\Nbd_{y_0}})
\]
of holomorphic vector bundles with flat connection on~$\Nbd_{y_0}$, characterised by the fact that the fibre of $T_{\tilde y_0}^\nabla$ at~$\tilde y_0$ is the identity on~$\dE_{y_0}$. Suppose moreover that $\dE$ comes with an exhaustive, separated descending filtration $\rF^\bullet\!\dE$ whose graded pieces are all vector bundles. One then defines, as in Definition~\ref{def:period_maps}, the period domain~$\dG_{y_0}$ to be the complex flag variety parametrising filtrations on~$\dE_{y_0}$ with the same dimension data as~$\rF^\bullet\!\dE_{y_0}$, and defines the \emph{complex period map}
\[
\Phi_{\tilde y_0}\colon\Nbd_{y_0}\to\dG_{y_0}^\an
\]
to be the holomorphic map classifying the filtration on $\cO_{\Nbd_{y_0}}\otimes_{\bC}\dE_{y_0}$ given by pulling back the filtration~$\rF^\bullet$ along the parallel transport map~$T_{\tilde y_0}^\nabla$. Here we are implicitly using a description of the functor of points of~$\dG_{y_0}^\an$ similar to Proposition~\ref{prop:analytic_flag_variety}.

Later, we will be interested in the Zariski-closure of the image of the complex period map~$\Phi_{\tilde y_0}$, by which we mean the smallest closed subvariety~$Z\subseteq\dG_{y_0}$ such that~$\Phi_{\tilde y_0}$ factors through~$Z^\an$. We note the following regarding this image.

\begin{lem}\label{lem:period_image_same}
	The Zariski-closure of the image of the complex period map $\Phi_{\tilde y_0}\colon\Nbd_{y_0}\to\dG_{y_0}^\an$ is independent of the choice of simply connected \'etale neighbourhood~$\Nbd_{y_0}$.
	\begin{proof}
		It suffices to prove the following: if~$\Nbd_{y_0}'$ is a simply connected open subset of~$\Nbd_{y_0}$ containing~$\tilde y_0$, then the Zariski-closure of the image of~$\Phi_{\tilde y_0}$ is equal to the Zariski-closure of the image of~$\Phi_{\tilde y_0}|_{\Nbd_{y_0}'}$. Let us write~$Z$ and~$Z'$, respectively, for the Zariski-closures of these images. We clearly have $Z'\subseteq Z$. For the converse inclusion, $\Phi_{\tilde y_0}^{-1}(Z')$ is a closed analytic subvariety of~$\Nbd_{y_0}$ with non-empty interior (since it contains~$\Nbd_{y_0}'$). Hence by isolation of zeroes $\Phi_{\tilde y_0}^{-1}(Z')=\Nbd_{y_0}$, so we have the converse inclusion $Z\subseteq Z'$.
	\end{proof}
\end{lem}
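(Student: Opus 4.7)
The plan is to reduce to an application of the identity principle for holomorphic maps into algebraic varieties. Given two pointed simply connected \'etale neighbourhoods $(\Nbd_{y_0}^{(1)},\tilde y_0^{(1)})$ and $(\Nbd_{y_0}^{(2)},\tilde y_0^{(2)})$ of $y_0$ with period maps $\Phi^{(1)}$ and $\Phi^{(2)}$, the strategy is to find a common simply connected open neighbourhood $W\subseteq Y^\an$ of $y_0$ that lifts biholomorphically into both, observe that the two period maps have the same restriction to $W$, and then use the identity principle to deduce that the Zariski closures of their full images coincide.

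First I would choose a simply connected open $W\subseteq Y^\an$ containing $y_0$ small enough that for each $i=1,2$ the local biholomorphism $\Nbd_{y_0}^{(i)}\to Y^\an$ restricts to a biholomorphism from some open neighbourhood $\tilde W^{(i)}$ of $\tilde y_0^{(i)}$ onto $W$ (e.g.\ take $W$ a sufficiently small polydisc in a chart around $y_0$). The pullbacks of $(\dE,\nabla,\rF^\bullet)$ to $\tilde W^{(1)}$ and $\tilde W^{(2)}$ agree, under the identifications $\tilde W^{(i)}\cong W$, with the pullback to $W$; by uniqueness of the flat trivialization with identity fibre at the basepoint, the restrictions $\Phi^{(1)}|_{\tilde W^{(1)}}$ and $\Phi^{(2)}|_{\tilde W^{(2)}}$ both coincide with a common map $\Phi^W\colon W\to\dG_{y_0}^\an$. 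Consequently the argument reduces to the following local rigidity statement: for any simply connected \'etale neighbourhood $(\Nbd_{y_0},\tilde y_0)$ and any non-empty open subset $\tilde W\subseteq\Nbd_{y_0}$, the Zariski closures $Z, Z'\subseteq\dG_{y_0}$ of $\Phi_{\tilde y_0}(\Nbd_{y_0})$ and $\Phi_{\tilde y_0}(\tilde W)$ coincide.

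The containment $Z'\subseteq Z$ is immediate. For the reverse, $\Phi_{\tilde y_0}^{-1}(Z'^\an)$ is a closed analytic subvariety of $\Nbd_{y_0}$, being the vanishing locus of the pullback of a coherent ideal sheaf on $\dG_{y_0}^\an$, and it contains the non-empty open subset $\tilde W$. Since $\Nbd_{y_0}$ is a connected complex manifold, the identity principle implies that any closed analytic subvariety with non-empty interior is the whole space, so $\Phi_{\tilde y_0}^{-1}(Z'^\an)=\Nbd_{y_0}$. This yields $Z\subseteq Z'$ and completes the proof.

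The main technical content is the identity principle, which is classical; the only mild subtlety is that ``the Zariski closure of the image'' is well-defined as a closed subvariety of the Noetherian variety $\dG_{y_0}$, which follows from the descending chain condition. I do not expect any substantive obstacle: the lemma is a clean setup statement that will let later arguments about big monodromy speak unambiguously of ``the'' Zariski closure of the period image at a point $y_0$.
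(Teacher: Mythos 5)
Your proof is correct and follows essentially the same route as the paper: reduce to comparing with the restriction to a small common neighbourhood, note the trivial inclusion, and conclude via the fact that a closed analytic subvariety of a connected complex manifold with non-empty interior is everything (the paper's ``isolation of zeroes'' is your identity principle). The only difference is that you spell out explicitly the reduction to a common lifted neighbourhood $W$ and the agreement of the two period maps there, which the paper leaves implicit in its ``it suffices to prove'' step.
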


\subsubsection{Monodromy}

One can gain some control on the Zariski-closure of the image of the complex period map using monodromy actions. Let~$\bE\colonequals\dE^{\nabla=0}$ be the $\bC$-local system on~$Y^\an$ corresponding to~$\dE$ under the Riemann--Hilbert correspondence. There is thus a monodromy action of $\pi_1(Y^\an,y_0)$ on the fibre $\bE_{y_0}=\dE_{y_0}$, and hence on the flag variety~$\dG_{y_0}$. This monodromy action gives a lower bound on the image of the period map, as follows.

\begin{lem}\label{lem:monodromy_controls_period_image}
	The Zariski-closure of the image of the complex period map $\Phi_{\tilde y_0}\colon\Nbd_{y_0}\to\dG_{y_0}^\an$ contains the orbit $\pi_1(Y^\an,y_0)\cdot h_0$ of the point~$h_0\in\dG_{y_0}(\bC)$ corresponding to the filtration $\rF^\bullet\!\dE_{y_0}$.
	\begin{proof}
		By Lemma~\ref{lem:period_image_same}, it suffices to prove this in the case that~$\Nbd_{y_0}$ is the universal cover of~$Y^\an$. We will show that in this case, $\pi_1(Y^\an,y_0)\cdot h_0$ is already contained in the image of~$\Phi_{\tilde y_0}$, without passing to Zariski-closures.
		
		So let~$\gamma\in\pi_1(Y^\an,y_0)$, and lift~$\gamma$ to a path~$\tilde\gamma$ in~$\Nbd_{y_0}$ starting from~$\tilde y_0$. The end point~$\tilde y_0'$ of~$\tilde\gamma$ also lies above~$y_0$, and the monodromy action of~$\gamma$ is given by the parallel transport map\footnote{Our convention for composition in fundamental groups is that $\gamma_2\gamma_1$ denotes the composite loop given by first following~$\gamma_1$ and then following~$\gamma_2$. This is the opposite of the usual convention in topology.}
		\[
		T_{\tilde y_0,\tilde y_0'}^\nabla\colon\dE_{y_0}\to\dE_{y_0} \,.
		\]
		Hence $\gamma^{-1}\cdot h_0=\Phi_{\tilde y_0}(\tilde y_0')$ lies in the image of the complex period map, which is what we wanted to show.
	\end{proof}
\end{lem}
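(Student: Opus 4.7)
The plan is to show a stronger statement: that for the particular choice of \'etale neighbourhood $\Nbd_{y_0}$ equal to the universal cover $\widetilde{Y^\an}$ of $Y^\an$ with basepoint $\tilde y_0$ lying over $y_0$, the orbit $\pi_1(Y^\an,y_0)\cdot h_0$ is already contained in the set-theoretic image of $\Phi_{\tilde y_0}$. Combined with Lemma~\ref{lem:period_image_same}, which says the Zariski-closure of the image is independent of the simply connected \'etale neighbourhood chosen, this will imply the lemma.

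First, I would reduce to the universal cover. Next, given $\gamma\in\pi_1(Y^\an,y_0)$, I would lift it to a path $\tilde\gamma$ in $\widetilde{Y^\an}$ starting at $\tilde y_0$; the endpoint $\tilde y_0'$ of this lift also lies in the fibre over~$y_0$, so the fibre $\dE_{y_0'}=\dE_{y_0}$. The definition of $\Phi_{\tilde y_0}(\tilde y_0')$ is the pullback of the filtration $\rF^\bullet\!\dE_{y_0}$ along the parallel transport map $T^\nabla_{\tilde y_0,\tilde y_0'}\colon \dE_{y_0}\to\dE_{y_0}$ obtained from the canonical trivialisation on the simply connected space $\widetilde{Y^\an}$.

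The key identification is then that this parallel transport map coincides with the monodromy action of $\gamma$ (up to the convention-dependent sign on the exponent): under the Riemann--Hilbert correspondence, the action of $\pi_1(Y^\an,y_0)$ on $\bE_{y_0}=\dE_{y_0}$ by monodromy is precisely parallel transport along loops based at $y_0$. Consequently $\Phi_{\tilde y_0}(\tilde y_0')=\gamma^{-1}\cdot h_0$, and since $\gamma$ was arbitrary, the whole orbit $\pi_1(Y^\an,y_0)\cdot h_0$ appears in the image.

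The only genuine care needed is bookkeeping with conventions: one must verify that pulling back filtrations along $T^\nabla_{\tilde y_0,\tilde y_0'}$ gives the same action of $\pi_1$ on $\dG_{y_0}(\bC)$ (possibly inverted) as the monodromy action used in the statement of the lemma. Since the monodromy action is an honest group action and $\gamma$ ranges over all of $\pi_1(Y^\an,y_0)$, the orbit is the same whether one uses $\gamma$ or $\gamma^{-1}$, so this convention issue causes no actual obstacle. I do not expect any serious difficulty in this argument; it is essentially unwinding definitions.
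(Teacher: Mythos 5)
Your proposal is correct and follows essentially the same route as the paper: reduce to the universal cover via Lemma~\ref{lem:period_image_same}, lift the loop, identify the monodromy action with parallel transport, and observe that $\gamma^{-1}\cdot h_0=\Phi_{\tilde y_0}(\tilde y_0')$ lies in the image, so the full orbit does since $\gamma$ is arbitrary. Your remark that the inversion caused by conventions is harmless because the orbit is unchanged is exactly the point the paper disposes of in its footnote on composition conventions.
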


\subsection{Comparison between $v$-adic and complex period maps}

Complex period maps can be used to control the image of $v$-adic period maps. Let~$K_v$ be a finite extension of~$\bQ_p$ and $Y$ a smooth geometrically connected variety over~$K_v$ with a fixed basepoint $y_0\in Y(K_v)$, as in \S\ref{ss:period_maps}. Let~$(\dE,\nabla,\rF^\bullet)$ be an \emph{algebraic} filtered vector bundle with flat connection on~$Y$, and choose an admissible open subset~$\Nbd_{y_0,v}\subseteq Y^\an$ in the rigid analytification~$Y^\an$ of~$Y$ over which~$\dE^\an$ has a full basis of flat sections, so that we have a $v$-adic period map
\[
\Phi_{y_0,v}\colon\Nbd_{y_0,v}\to\dG_{y_0}^\an \,,
\]
where~$\dG_{y_0}$ is a flag variety parametrising filtrations on~$\dE_{y_0}$, as in Definition~\ref{def:period_maps}.

Choose an embedding $\iota\colon K_v\hookrightarrow\bC$ and let~$Y_\bC^\an$ denote the complex analytification of the base change~$Y_\bC$ of~$Y$ to~$\bC$ along~$\iota$. Choose a simply connected \'etale neighbourhood~$\Nbd_{y_0,\infty}$ of~$y_0$ in~$Y_\bC^\an$, so that we also have a complex period map
\[
\Phi_{\tilde y_0,\infty}\colon\Nbd_{y_0,\infty}\to\dG_{y_0,\bC}^\an \,.
\]
We will prove the following.

\begin{prop}\label{prop:period_map_comparison}
	Let~$Z_v\subseteq\dG_{y_0}$ and~$Z_\infty\subseteq\dG_{y_0,\bC}$ denote the Zariski-closure of the image of $v$-adic period map~$\Phi_{y_0,v}$ and the complex period map~$\Phi_{\tilde y_0,\infty}$, respectively. Then $Z_\infty=Z_{v,\bC}$ is the base change of~$Z_v$ to~$\bC$ along~$\iota$.
\end{prop}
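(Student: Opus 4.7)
The plan is to identify both Zariski-closures with a common algebraic object, namely the \emph{formal period map} at $y_0$. Let $\widehat{Y}_{y_0} = \Spf\widehat{\cO}_{Y,y_0}$ denote the formal completion of $Y$ at $y_0$, and let $h_0\in\dG_{y_0}(K_v)$ correspond to the Hodge filtration $\rF^\bullet\!\dE_{y_0}$. Parallel transport with respect to the algebraic connection $\nabla$ produces a unique horizontal trivialisation of $\dE$ over $\widehat{Y}_{y_0}$, whose pullback of the filtration $\rF^\bullet\!\dE$ defines, exactly as in Definition~\ref{def:period_maps}, a canonical formal period map
\[
\hat\Phi_{y_0}\colon\widehat{Y}_{y_0}\to\widehat{\dG}_{y_0,h_0}
\]
of formal $K_v$-schemes, where $\widehat{\dG}_{y_0,h_0}$ is the formal completion of $\dG_{y_0}$ at $h_0$.

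Next I would verify that both period maps in the proposition specialise, formally at $y_0$, to $\hat\Phi_{y_0}$. First shrink $\Nbd_{y_0,v}$ to a small closed polydisc around $y_0$ and take $\Nbd_{y_0,\infty}$ to be a small complex polydisc around $\tilde y_0$. In either setting the period map is built from the horizontal trivialisation of $\dE$ on the chosen polydisc and the algebraic filtration $\rF^\bullet$; unicity of the formal solution to the parallel-transport ODE then identifies the Taylor expansion of $\Phi_{y_0,v}$ at $y_0$ with $\hat\Phi_{y_0}$ (under the injection $\cO(\Nbd_{y_0,v})\hookrightarrow\widehat{\cO}_{Y,y_0}$), and the Taylor expansion of $\Phi_{\tilde y_0,\infty}$ at $\tilde y_0$ with $\hat\Phi_{y_0}\otimes_{K_v,\iota}\bC$.

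The proof then concludes by observing that the Zariski-closure of the image of a rigid- (resp.\ complex-) analytic map from a connected polydisc is determined by its formal germ at the centre: a regular function on $\dG_{y_0}$ vanishes on the image of $\Phi_{y_0,v}$ if and only if its pullback to $\widehat{\cO}_{\dG_{y_0},h_0}$ lies in the kernel of $\hat\Phi_{y_0}^*$, and analogously over $\bC$. Hence the vanishing ideals $I(Z_v)\subseteq\Gamma(\dG_{y_0},\cO)$ and $I(Z_\infty)\subseteq\Gamma(\dG_{y_0,\bC},\cO)$ are both described by the single morphism $\hat\Phi_{y_0}^*$ and its $\iota$-base change, so by faithful flatness of $\iota\colon K_v\hookrightarrow\bC$ one has $I(Z_\infty) = I(Z_v)\otimes_{K_v,\iota}\bC$ and therefore $Z_\infty = Z_{v,\bC}$. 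A rigid-analytic analogue of Lemma~\ref{lem:period_image_same} then allows one to replace the small polydisc by the original $\Nbd_{y_0,v}$. The main obstacle I expect is the careful matching of the rigid-analytic period map's formal germ at $y_0$ with the algebraic $\hat\Phi_{y_0}$ — that is, checking that the rigid-analytic horizontal trivialisation on the polydisc agrees to all orders with the unique formal one. This is ultimately an instance of unicity of formal solutions to linear ODEs, but has to be executed in the rigid-analytic setting without the convenience of a $K$-rational basepoint.
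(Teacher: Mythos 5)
Your proposal is correct and follows essentially the same route as the paper: reduce via a rigid-analytic analogue of Lemma~\ref{lem:period_image_same} (this is Lemma~\ref{lem:period_image_same_v-adic}), identify the Taylor expansions of both the $v$-adic and complex period maps at~$y_0$ with the unique formal solution of the parallel-transport ODE (the paper cites \cite[Proposition~8.9]{katz:nilpotent_connections} for existence and unicity), and conclude that both vanishing ideals are the kernels of $\hat\Phi_{y_0}^*$ and its flat base change along~$\iota$. The only cosmetic difference is that you phrase the comparison through the completed local ring of~$\dG_{y_0}$ at~$h_0$, whereas the paper works with a projective embedding and scheme-theoretic images; the content is the same.
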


The proof of Proposition~\ref{prop:period_map_comparison} is essentially contained in \cite[\S3.4]{LVinventiones}, except that there the variety~$Y$ and point~$y_0$ are defined over a number field rather than~$K_v$. For the sake of completeness, we give the full argument here.

We begin with a preparatory observation, which parallels Lemma~\ref{lem:period_image_same} for complex period maps.

\begin{lem}\label{lem:period_image_same_v-adic}
	The Zariski-closure of the image of the $v$-adic period map $\Phi_{\tilde y_0,v}\colon\Nbd_{y_0,v}\to\dG_{y_0}^\an$ is independent of the choice of $v$-adic neighbourhood~$\Nbd_{y_0,v}$.
	\begin{proof}
		A similar proof to that of Lemma~\ref{lem:period_image_same} works. The key point is that any closed analytic subvariety of a polydisc with non-empty interior is equal to the whole polydisc: this follows since open inclusions of polydiscs induce injective maps on their affinoid rings.
	\end{proof}
\end{lem}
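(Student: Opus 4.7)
The plan is to mimic the proof of Lemma~\ref{lem:period_image_same} for complex period maps. Given two admissible open neighbourhoods $\Nbd_{y_0,v}^{(1)}$ and $\Nbd_{y_0,v}^{(2)}$ of $y_0$ as in Definition~\ref{def:period_maps}, I would first find a small closed sub-polydisc $\Nbd$ around $y_0$ contained in both, on which $\dE^{\an}$ inherits a full basis of flat sections by restriction. It thus suffices to show that for any inclusion $\Nbd \subseteq \Nbd_{y_0,v}$ of a polydisc centred at $y_0$ into an ambient admissible open neighbourhood of the requisite form, the Zariski-closures in $\dG_{y_0}$ of the images of $\Phi_{y_0,v}|_{\Nbd}$ and $\Phi_{y_0,v}|_{\Nbd_{y_0,v}}$---denoted $Z'$ and $Z$ respectively---coincide.

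The inclusion $Z' \subseteq Z$ is immediate. For the reverse, the preimage $\Phi_{y_0,v}^{-1}(Z')$ is a closed analytic subvariety of $\Nbd_{y_0,v}$ containing the sub-polydisc $\Nbd$, and hence has non-empty admissible open interior. The content of the proof is then the rigid-analytic identity principle, as flagged in the brief indicated proof: a closed analytic subvariety of a (connected) rigid-analytic polydisc with non-empty admissible interior equals the whole polydisc. Concretely, such a subvariety is cut out by an ideal $I$ of the ambient Tate algebra $R$; every element of $I$ restricts to zero on a sub-polydisc with Tate algebra $R'$; and the restriction map $R \to R'$ is injective---essentially by uniqueness of convergent power series expansions---forcing $I = 0$. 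One then propagates this from an admissible cover of $\Nbd_{y_0,v}$ by polydiscs, using the connectedness of $\Nbd_{y_0,v}$ (which we may assume by passing to the connected component of $y_0$) to chain the conclusion across overlapping polydiscs. This yields $\Phi_{y_0,v}^{-1}(Z') = \Nbd_{y_0,v}$, whence $Z \subseteq Z'$.

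The main delicate point is the propagation step: whereas in the complex-analytic setting isolation of zeros applies directly, in rigid geometry the admissible (Grothendieck) topology requires one to replace topological continuation by admissible covers and injectivity of restriction maps between nested affinoids. Once this identity principle is in place, the argument concludes in the same formal pattern as in Lemma~\ref{lem:period_image_same}.
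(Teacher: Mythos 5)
Your argument is correct and is essentially the paper's own proof: both reduce everything to the fact that a closed analytic subvariety of a polydisc with non-empty interior equals the whole polydisc, which follows from injectivity of the restriction maps between the Tate algebras of nested polydiscs. The extra propagation step over an admissible cover is harmless but not needed in the paper's setting, since the neighbourhoods $\Nbd_{y_0,v}$ actually used are themselves polydiscs, so one simply compares two such through a common sub-polydisc centred at~$y_0$.
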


Hence, in proving Proposition~\ref{prop:period_map_comparison} we are free to replace~$Y$ by a Zariski-open subvariety containing~$y_0$, shrinking~$\Nbd_{y_0,v}$ and~$\Nbd_{y_0,\infty}$ correspondingly. Thus, we are free to assume that~$Y$ is affine and connected, that the vector bundle~$\dE\simeq\cO_Y^{\oplus m}$ is trivial, and that so too are the graded pieces of the filtration~$\rF^\bullet\!\dE$. We fix an identification $\dE=\cO_Y^{\oplus m}$ for which $\rF^i\!\dE=\cO_Y^{\oplus m_i}$ is the constant subbundle spanned by the first~$m_i$ basis sections, where~$m_i\colonequals\rank\rF^i\!\dE$. Hence, the connection~$\nabla$ on~$\dE=\cO_Y^{\oplus m}$ is given by 
\[
\nabla(f)=\rd(f)+\omega\cdot f,
\]
where the connection matrix~$\omega$ is an $m\times m$ matrix with coefficients in $\Gamma(Y,\Omega^1_{Y/K_v})$. Flatness of the connection is equivalent to the equality $\rd\omega+\omega\wedge\omega=0$.

Now let us fix a system $\bt = t_1,\dots,t_n$ of local parameters at~$y_0$. \cite[Proposition~8.9]{katz:nilpotent_connections} ensures that there is a \emph{unique} $m\times m$ matrix $T$ with coefficients in $K_v\llbrack\bt\rrbrack=K_v\llbrack t_1,\dots,t_n\rrbrack$ satisfying the differential equation
\begin{equation}\label{eq:de_for_parallel_transport}
	\rd T=-\omega\cdot T\;\text{ subject to the initial condition }\;T|_{\bt=0} = \mathbf I_m \,,
\end{equation}
where 
\[
\rd\colon K_v\llbrack\bt\rrbrack\to\Omega^{1,f}_{K_v\llbrack\bt\rrbrack/K_v}=\bigoplus_{i=1}^nK_v\llbrack\bt\rrbrack\rd\! t_i
\]
is the universal $\bt$-adically continuous derivation. Here, by mild abuse of notation, we regard $\omega$ as a matrix with coefficients in $\Omega^{1,f}_{K_v\llbrack\bt\rrbrack/K_v}$ via the inclusion $\Gamma(Y,\Omega^1_{Y/K_v})\hookrightarrow\Omega^{1,f}_{K_v\llbrack\bt\rrbrack/K_v}$ given by taking power series expansions.

For $1\leq i\leq m$ we write $T_i$ for the $i$th column of $T$, and let
\[
\hat\Phi_{y_0}\colon\Spec(K_v\llbrack\bt\rrbrack)\to\dG_{y_0}
\]
denote the $K_v$-scheme map classifying the filtration on $K_v\llbrack\bt\rrbrack^{\oplus m}$ where $\rF^i$ is the span of the vectors $T_1,\dots,T_{m_i}$. Explicitly, the flag variety $\dG_{y_0}$ is a closed subvariety of $\prod_{i\in\bZ}\bP_{K_v}^{{m\choose m_i}-1}$ via the embedding taking a filtration~$\rF^\bullet$ on $\dE_{y_0}=K_v^{\oplus m}$ to the sequence of one-dimensional subspaces $\bigwedge^{m_i}\rF^i$ inside $\bigwedge^{m_i}(K_v^{\oplus m})=K_v^{\oplus{m\choose m_i}}$ for $i\in\bZ$, and $\hat\Phi_{y_0}$ is the map whose $i$th component is given by $T_1\wedge\dots\wedge T_{m_i}$, viewed as a map from $K_v\llbrack\bt\rrbrack\to\bA^{m\choose m_i}\setminus\{0\}$.
\smallskip

Now shrinking the $v$-adic neighbourhood~$\Nbd_{y_0,v}$ and rescaling the local parameters $t_1,\dots,t_n$ if necessary, we may assume that $\Nbd_{y_0,v}=\Sp(K_v\langle\bt\rangle)$ is the polydisc of radii~$1$ in the parameters~$t_1,\dots,t_n$. The sheaf of $K_v$-analytic $1$-forms on~$\Nbd_{y_0,v}$ is the free vector bundle generated by $\dt_1,\dots,\dt_n$ \cite[Proof of Theorem~3.6.1]{fresnel-van_der_put:rigid_analytic_geometry}, so the $v$-adic parallel transport map
\[
T_{y_0,v}^\nabla\colon(\cO_{\Nbd_{y_0,v}}^{\oplus m},\rd^{\oplus m})\xrightarrow\sim(\cO_{\Nbd_{y_0,v}}^{\oplus m},\nabla)
\]
is represented by an $m\times m$ matrix with coefficients in $K_v\langle\bt\rangle\subset K_v\llbrack\bt\rrbrack$ which satisfies the differential equation~\eqref{eq:de_for_parallel_transport}. It thus follows by unicity of solutions to~\eqref{eq:de_for_parallel_transport} that $T_{y_0,v}^\nabla$ is represented by the matrix~$T$ (which thus has coefficients in $K_v\langle\bt\rangle$). It then follows that the $v$-adic period map
\[
\Phi_{y_0,v}\colon\Nbd_{y_0,v}\to\dG_{y_0}^\an\subseteq\prod_{i\in\bZ}\bP_{K_v}^{{m\choose m_i}-1,\an}
\]
is the map whose $i$th component is given by $T_1\wedge\dots\wedge T_{m_i}$: in other words, the Taylor expansion of $\Phi_{y_0,v}$ at~$y_0$ is the map $\hat\Phi_{y_0}$. Using this, we see

\begin{lem}\label{lem:v-adic_image_is_formal}
	The Zariski-closure~$Z_v$ of the image of the $v$-adic period map $\Phi_{y_0,v}\colon\Nbd_{y_0,v}\to\dG_{y_0}^\an$ is the scheme-theoretic image of~$\hat\Phi_{y_0}$.
	\begin{proof}
		Choose a projective embedding $\dG_{y_0}\subseteq\bP_{K_v}^N$. The above discussion shows that there are elements $f_0,\dots,f_N\in K_v\langle\bt\rangle$ such that both $\Phi_{y_0,v}\colon\Nbd_{y_0,v}\to\dG_{y_0}^\an$ and $\hat\Phi_{y_0}\colon\Spec(K_v\llbrack\bt\rrbrack)\to\dG_{y_0}$ are given in projective coordinates by $(f_0:\ldots:f_N)$. The homogenous ideal of definition of~$Z_v\subseteq\bP^N_{K_v}$ is then the ideal generated by those homogenous elements $F\in K_v[X_0,\dots,X_N]$ such that $F(f_0,\dots,f_N)=0$. But this is also the homogenous ideal of definition of the scheme-theoretic image of~$\hat\Phi_{y_0}$.
	\end{proof}
\end{lem}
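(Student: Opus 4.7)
The plan is to reduce the identification to the elementary observation that the natural ``Taylor expansion'' map $K_v\langle\bt\rangle \hookrightarrow K_v\llbrack\bt\rrbrack$ is injective, so a convergent power series on the unit polydisc $\Nbd_{y_0,v}$ vanishes identically if and only if the corresponding formal power series vanishes.

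First, I would fix a projective embedding $\dG_{y_0}\subseteq \bP^N_{K_v}$ (for instance the product of Plücker embeddings implicit in the product-of-projective-spaces embedding used in the preceding paragraph). By the formulas derived there, both~$\Phi_{y_0,v}$ and~$\hat\Phi_{y_0}$ are, component by component, given by the wedge products $T_1\wedge\dots\wedge T_{m_i}$, whose entries are certain minors of the matrix~$T$. Since the coefficients of~$T$ lie in $K_v\langle\bt\rangle\subseteq K_v\llbrack\bt\rrbrack$, one can therefore choose elements $f_0,\dots,f_N\in K_v\langle\bt\rangle$, common to both descriptions, so that~$\Phi_{y_0,v}$ and~$\hat\Phi_{y_0}$ are both given in projective coordinates by $(f_0:\ldots:f_N)$.

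Next, I would describe the two relevant ideals concretely. The homogenous ideal of definition of~$Z_v\subseteq\bP^N_{K_v}$ is, by definition of Zariski-closure, the ideal of all homogenous $F\in K_v[X_0,\dots,X_N]$ for which $F(f_0,\dots,f_N)$ vanishes as a rigid-analytic function on~$\Nbd_{y_0,v}$. Likewise, the homogenous ideal of definition of the scheme-theoretic image of $\hat\Phi_{y_0}\colon\Spec(K_v\llbrack\bt\rrbrack)\to\dG_{y_0}\subseteq\bP^N_{K_v}$ consists of those homogenous~$F$ for which $F(f_0,\dots,f_N)$ vanishes as an element of $K_v\llbrack\bt\rrbrack$.

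Finally, I would conclude by invoking the injectivity of the Taylor-expansion map $K_v\langle\bt\rangle\hookrightarrow K_v\llbrack\bt\rrbrack$ (elements of $K_v\langle\bt\rangle$ are precisely the formal power series whose coefficients tend to~$0$, so this map is tautologically injective). This shows the two vanishing conditions on~$F$ above are equivalent, hence the two homogenous ideals agree, and therefore so do the two closed subschemes of~$\bP^N_{K_v}$. There is no serious obstacle here; the substance of the lemma is really contained in the earlier identification of the matrix~$T$ arising from the differential equation~\eqref{eq:de_for_parallel_transport} with both the $v$-adic parallel transport and the formal parallel transport, and the argument above is essentially a formality once that identification is in hand.
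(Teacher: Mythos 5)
Your proposal is correct and follows essentially the same route as the paper's proof: both reduce to the observation that $\Phi_{y_0,v}$ and $\hat\Phi_{y_0}$ are given in projective coordinates by the same tuple $(f_0:\ldots:f_N)$ with $f_i\in K_v\langle\bt\rangle$, and then identify both homogeneous ideals of definition with $\{F : F(f_0,\dots,f_N)=0\}$, the inclusion $K_v\langle\bt\rangle\subseteq K_v\llbrack\bt\rrbrack$ making the two vanishing conditions coincide. Your explicit mention of injectivity of the Taylor-expansion map is just making precise a step the paper leaves implicit.
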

	
We can apply exactly the same argument to the complex-analytic period map. Shrinking~$\Nbd_{y_0,\infty}$ if necessary, we may assume that the local parameters $t_1,\dots,t_n$ are defined on all of~$\Nbd_{y_0,\infty}$, and that $\dt_1,\dots,\dt_n$ forms a basis for the $\cO(\Nbd_{y_0,\infty})$-module of holomorphic $1$-forms on~$\Nbd_{y_0,\infty}$. The embedding $\iota\colon K_v\hookrightarrow\bC$ induces an embedding $K_v\llbrack\bt\rrbrack\hookrightarrow\bC\llbrack\bt\rrbrack$, which we also denote by~$\iota$. Since the sheaf of holomorphic $1$-forms on $\Nbd_{y_0,\infty}$ is the free vector bundle generated by $\dt_1,\dots,\dt_n$, the complex-analytic parallel transport map
\[
T_{y_0,\infty}^\nabla\colon(\cO_{\Nbd_{y_0,\infty}}^{\oplus m},\rd^{\oplus m})\xrightarrow\sim(\cO_{\Nbd_{y_0,\infty}}^{\oplus m},\nabla)
\]
is represented by an $m\times m$ matrix $T_\infty$ with coefficients in the ring $\cO(\Nbd_{y_0,\infty})\subset\bC\llbrack\bt\rrbrack$ of holomorphic functions on~$\Nbd_{y_0,\infty}$ which satisfies the differential equation
\[
\rd T_\infty=-\iota(\omega)\cdot T_\infty\;\text{ subject to the initial condition }\;T_\infty|_{\bt=0} = \mathbf I_m \,.
\]
Since this differential equation has a unique solution over $\bC\llbrack\bt\rrbrack$, it follows that $T_\infty=\iota(T)$. It then follows that the complex-analytic period map
\[
\Phi_{y_0,\infty}\colon\Nbd_{y_0,\infty}\to\dG_{y_0,\bC}^\an\subseteq\prod_{i\in\bZ}\bP_\bC^{{m\choose m_i}-1,\an}
\]
is the map whose $i$th component is given by $\iota(T_1)\wedge\dots\wedge\iota(T_{m_i})$. As for the~$v$-adic period map, this implies

\begin{lem}\label{lem:complex_image_is_formal}
	The Zariski-closure~$Z_\infty$ of the image of the complex period map $\Phi_{\tilde y_0,\infty}\colon\Nbd_{y_0,\infty}\to\dG_{y_0,\bC}^\an$ is the scheme-theoretic image of $\hat\Phi_{y_0,\bC}\colon\Spec(\bC\otimes_\iota K_v\llbrack\bt\rrbrack)\to\dG_{y_0,\bC}$.
	\begin{proof}
		A similar proof to that of Lemma~\ref{lem:v-adic_image_is_formal} works. That is, if we pick a projective embedding $\dG_{y_0}\subseteq\bP^N_{K_v}$, then we have shown that there are elements $f_0,\dots,f_N\in K_v\llbrack\bt\rrbrack$ with each $\iota(f_i)\in\bC\llbrack\bt\rrbrack$ convergent on~$\Nbd_{y_0,\infty}$, such that $\hat\Phi_{y_0}\colon\Spec(K_v\llbrack\bt\rrbrack)\to\dG_{y_0}\subseteq\bP^N_{K_v}$ and $\Phi_{y_0,\infty}\colon\Nbd_{y_0,\infty}\to\dG_{y_0,\bC}^\an\subseteq\bP_{\bC}^{N,\an}$ are given in projective coordinates by $(f_0:\dots:f_N)$ and $(\iota(f_0):\dots:\iota(f_N))$, respectively. The homogenous ideal of definition of $Z_\infty\subseteq\bP^N_{K_v}$ is then the ideal generated by those homogenous elements $F\in\bC[X_0,\dots,X_N]$ such that $F(\iota(f_0),\dots,\iota(f_N))=0$. But this is also the homogenous ideal of definition of the scheme-theoretic image of~$\hat\Phi_{y_0,\bC}$.
	\end{proof}
\end{lem}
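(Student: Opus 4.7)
The plan is to mirror the argument of Lemma~\ref{lem:v-adic_image_is_formal} on the complex side, with the principle of analytic continuation replacing the injection $K_v\langle\bt\rangle\hookrightarrow K_v\llbrack\bt\rrbrack$ used there. The key is to transport both $Z_\infty$ and the scheme-theoretic image of $\hat\Phi_{y_0,\bC}$ into the same ambient data, namely a homogeneous ideal inside $\bC[X_0,\dots,X_N]$ cut out by one system of formal equations in $\bC\llbrack\bt\rrbrack$.

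First I would fix a projective embedding $\dG_{y_0}\subseteq\bP^N_{K_v}$ and take $f_0,\dots,f_N\in K_v\langle\bt\rangle\subset K_v\llbrack\bt\rrbrack$ to be the homogeneous coordinates of $\hat\Phi_{y_0}$ in this embedding, namely the components of the wedge products $T_1\wedge\cdots\wedge T_{m_i}$. The identity $T_\infty=\iota(T)$, established in the paragraph preceding the lemma by unicity of solutions to the ODE~\eqref{eq:de_for_parallel_transport} in $\bC\llbrack\bt\rrbrack$, then implies that $\Phi_{\tilde y_0,\infty}\colon\Nbd_{y_0,\infty}\to\bP^{N,\an}_\bC$ is represented in these projective coordinates by holomorphic functions $g_0,\dots,g_N$ on $\Nbd_{y_0,\infty}$ whose Taylor expansions at $\tilde y_0$ are $\iota(f_0),\dots,\iota(f_N)\in\bC\llbrack\bt\rrbrack$.

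Next I would unravel both sides as homogeneous ideals in $\bC[X_0,\dots,X_N]$. On the one hand, $Z_\infty\subseteq\bP^N_\bC$ is cut out by the ideal $I_\infty$ of those $F$ such that $F(g_0,\dots,g_N)\equiv0$ on $\Nbd_{y_0,\infty}$; since $\Nbd_{y_0,\infty}$ is simply connected and in particular connected, analytic continuation shows that this vanishing is equivalent to vanishing of the Taylor series at $\tilde y_0$, i.e.\ to $F(\iota(f_0),\dots,\iota(f_N))=0$ in $\bC\llbrack\bt\rrbrack$. On the other hand, the scheme-theoretic image of $\hat\Phi_{y_0,\bC}$ is cut out by the ideal $\hat I$ of $F$'s such that $F(1\otimes f_0,\dots,1\otimes f_N)=0$ in $\bC\otimes_\iota K_v\llbrack\bt\rrbrack$, and the natural $\bC$-algebra map $\bC\otimes_\iota K_v\llbrack\bt\rrbrack\to\bC\llbrack\bt\rrbrack$ sending $c\otimes f\mapsto c\cdot\iota(f)$ is injective (a choice of $K_v$-basis $B$ of $\bC$ identifies the source with the subspace $\bigoplus_{b\in B}b\cdot\iota(K_v\llbrack\bt\rrbrack)\subset\bC\llbrack\bt\rrbrack$, on which vanishing of coefficients forces all summands to vanish). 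Hence $\hat I=I_\infty$, and $Z_\infty$ equals the scheme-theoretic image of $\hat\Phi_{y_0,\bC}$.

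The only genuinely new ingredient compared to Lemma~\ref{lem:v-adic_image_is_formal} is the analytic-continuation step, and this is routine once $T_\infty=\iota(T)$ has been established; indeed the real substance of the argument is already packaged into that identity, which is what links the formal, $v$-adic and complex-analytic worlds and renders the three lemmas of this subsection essentially parallel.
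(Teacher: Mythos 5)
Your proof is correct and follows essentially the same route as the paper: express both $\hat\Phi_{y_0,\bC}$ and $\Phi_{\tilde y_0,\infty}$ in common projective coordinates $(f_0:\dots:f_N)$ coming from the columns of the parallel-transport matrix, use $T_\infty=\iota(T)$, and identify both loci with the vanishing of the same homogeneous ideal in $\bC[X_0,\dots,X_N]$. Your two explicit justifications — injectivity of the Taylor-expansion map $\cO(\Nbd_{y_0,\infty})\to\bC\llbrack\bt\rrbrack$ via the identity theorem, and injectivity of $\bC\otimes_\iota K_v\llbrack\bt\rrbrack\to\bC\llbrack\bt\rrbrack$ — are exactly the steps the paper leaves implicit (the latter reappears in the paper as $\ker(\bC\otimes_\iota\hat\Phi_{y_0}^*)=\bC\otimes_\iota\ker(\hat\Phi_{y_0}^*)$ in the deduction of Proposition~\ref{prop:period_map_comparison}), so your write-up is a faithful, slightly more detailed version of the same argument.
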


Taken together, Lemmas~\ref{lem:v-adic_image_is_formal} and~\ref{lem:complex_image_is_formal} imply Proposition~\ref{prop:period_map_comparison}. Indeed, for any open affine $W\subseteq\dG_{y_0}$ containing the base point (corresponding to the filtration $\rF^\bullet\dE_{y_0}$ on $\dE_{y_0}$), the intersection $Z_v\cap W$ is the subscheme of~$W$ cut out by the kernel of $\hat\Phi_{y_0}^*\colon\cO(W)\to K_v\llbrack\bt\rrbrack$, while $Z_\infty\cap W_\bC$ is the subscheme of~$W_\bC$ cut out by the kernel of $\bC\otimes_\iota\hat\Phi_{y_0}^*\colon\bC\otimes_\iota\cO(W)\to\bC\otimes_\iota K_v\llbrack\bt\rrbrack$. But $\ker(\bC\otimes_\iota\hat\Phi_{y_0}^*)=\bC\otimes_\iota\ker(\hat\Phi_{y_0}^*)$, whence $Z_\infty\cap W_\bC=(Z_v\cap W)_\bC$. Thus $Z_\infty=Z_{v,\bC}$ as desired.\qed

\begin{rmk}
	The germ~$\hat\Phi_{y_0}$ of the period map can be characterised intrinsically, without choosing bases: the restriction~$\dE_{\hat y_0}$ of~$\dE$ to the formal neighbourhood $\Spec(\hat\cO_{Y,y_0})$ of~$y_0$ in~$Y$ is a finite $\hat\cO_{Y,y_0}$-module with a continuous flat connection, so by \cite[Proposition~8.9]{katz:nilpotent_connections} admits a unique trivialisation
	\[
	\hat T_{y_0}\colon(\hat\cO_{Y,y_0}\otimes_{K_v}\dE_{y_0},\rd\otimes1) \xrightarrow\sim (\dE_{\hat y_0},\nabla_{\hat y_0}) \,.
	\]
	The map $\hat\Phi_{y_0}\colon\Spec(\hat\cO_{Y,y_0})\to\dG_{y_0}$ is just the map classifying the filtration on $\hat\cO_{Y,y_0}\otimes_{K_v}\dE_{y_0}$ given by pulling back the filtration on~$\dE_{\hat y_0}$ along~$\hat T_{y_0}$. This description makes it clear that if the variety~$Y$, the point~$y_0$ and the filtered vector bundle with flat connection~$(\dE,\nabla,\rF^\bullet)$ are all defined over some subfield~$K$ of~$K_v$, then so too is~$\hat\Phi_{y_0}$.
\end{rmk}

\subsection{Full monodromy and a criterion for finiteness}

Let us now specialise all of the above theory to the case of abelian-by-finite families. Suppose initially that $X\to Y'\to Y$ is an abelian-by-finite family, where~$Y$ is a smooth connected variety over~$\bC$. The relative de Rham cohomology $\cH^\bullet_\deR(X/Y)^\an$ corresponds, under the Riemann--Hilbert correspondence, to the relative Betti cohomology $\rR^\bullet\!\pi^\an_*\ubC_{X^\an}$. The fibre of~$\rR^\bullet\!\pi^\an_*\ubC_{X^\an}$ at a point $y_0\in Y^\an$ is the usual $\bC$-linear Betti cohomology $\rH^\bullet_\Betti(X_{y_0},\bC)$ of the fibre~$X_{y_0}$, i.e.\ the singular cohomology of its analytification~$X^\an_{y_0}$. The fibre~$X_{y_0}$ is a disjoint union of polarised complex abelian varieties, indexed by the closed points of~$Y'$ above~$y_0$, so we have a decomposition
\[
\rH^1_\Betti(X_{y_0},\bQ) = \bigoplus_{y_0'\in|Y'_{y_0}|}\rH^1_\Betti(X_{y_0'},\bQ) \,,
\]
where each~$\rH^1_\Betti(X_{y_0'},\bQ)$ carries a perfect alternating pairing induced from the polarisation on~$X_{y_0'}$ (constructed analogously to the pairings in~\S\ref{s:abelian-by-finite}).

The following definition will play a key role.

\begin{defi}[cf.\ {\cite[(i) on p928]{LVinventiones}}]\label{def:full_monodromy}
	We say that the abelian-by-finite family $X\to Y'\to Y$ has \emph{full monodromy} just when the Zariski-closure of the image of the monodromy representation
	\[
	\rho\colon\pi_1(Y^\an,y_0)\rightarrow\GL(\rH^1_\Betti(X_{y_0},\bQ))(\bQ)
	\]
	contains 
	\[
	\prod_{y_0'\in|Y'_{y_0}|}\Sp(\rH^1_\Betti(X_{y_0'},\bQ)).
	\]
	It is easy to check that this property is independent of the point~$y_0$.
	
	More generally, if~$Y$ is a smooth geometrically connected variety over a characteristic~$0$ field~$K$, we say that an abelian-by-finite family $X\to Y'\to Y$ has \emph{full monodromy} with respect to a complex embedding $\iota\colon K\hookrightarrow\bC$ just when $X_\bC\to Y'_\bC\to Y_\bC$ has full monodromy.
\end{defi}

For us, the importance of full monodromy is that it gives a simple criterion for the image of the $v$-adic period map to be as large as possible.

\begin{lem}\label{lem:full_monodromy_gives_dense_image}
	Suppose that~$Y$ is a smooth geometrically connected variety over a finite extension~$K_v$ of~$\bQ_p$, and that $X\to Y'\to Y$ is an abelian-by-finite family with full monodromy with respect to some complex embedding $\iota\colon K_v\hookrightarrow\bC$. Then for all~$y_0\in Y(K_v)$ the $v$-adic period map
	\[
	\Phi_{y_0}\colon\Nbd_{y_0}\rightarrow\dH_{y_0}^\an
	\] 
	has Zariski-dense image, where~$\dH_{y_0}$ is the $K_v$-variety parametrising Lagrangian $\rH^0_\deR(X_{y_0}/K_v)$-submodules of $\rH^1_\deR(X_{y_0}/K_v)$, as in~\S\ref{ss:abf_period_map}.
	\begin{proof}
		By Proposition~\ref{prop:period_map_comparison} and Lemma~\ref{lem:monodromy_controls_period_image}, it suffices to show that the orbit of the base point $h_0\in\dH_{y_0}(\bC)$ under the monodromy action of $\pi_1(Y_\bC^\an,y_0)$ is Zariski-dense. By assumption, the Zariski-closure of this orbit contains the orbit of~$h_0$ under the action of $\prod_{y_0'\in|Y'_{y_0}|}\Sp(\rH^1_\deR(X_{y_0',\bC}/\bC))$. But this group acts transitively on Lagrangian submodules of $\rH^1_\deR(X_{y_0,\bC}/\bC)=\bigoplus_{y_0'\in|Y'_{y_0}|}\rH^1_\deR(X_{y_0',\bC}/\bC)$, so we are done.
	\end{proof}
\end{lem}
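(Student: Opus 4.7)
The plan is to reduce the problem to a purely algebraic statement about orbits on the Lagrangian Grassmannian over $\bC$, using the comparison between $v$-adic and complex period maps.

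First, I would invoke Proposition~\ref{prop:period_map_comparison}. Applied to $\dE = \cH^1_\deR(X/Y)$ with its Hodge filtration and Gau\ss--Manin connection (restricted from the flag variety to the Lagrangian Grassmannian $\dH_{y_0}$), it identifies the Zariski-closure $Z_v$ of the image of $\Phi_{y_0}$ with the base-change of the Zariski-closure $Z_\infty$ of the image of the complex period map $\Phi_{\tilde y_0,\infty}\colon \Nbd_{y_0,\infty} \to \dH_{y_0,\bC}^\an$, for any simply connected \'etale neighbourhood of $y_0$ in $Y_\bC^\an$ (after fixing an embedding $\iota\colon K_v\hookrightarrow\bC$, which exists because $K_v$ has characteristic zero). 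Thus it suffices to show that $Z_\infty = \dH_{y_0,\bC}$.

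Second, I would apply Lemma~\ref{lem:monodromy_controls_period_image}: the Zariski-closure $Z_\infty$ contains the full $\pi_1(Y_\bC^\an, y_0)$-orbit of the basepoint $h_0 \in \dH_{y_0}(\bC)$ corresponding to the actual Hodge filtration $\rF^1\!\rH^1_\deR(X_{y_0,\bC}/\bC)$. So the task reduces to showing that this monodromy orbit is Zariski-dense in $\dH_{y_0,\bC}$.

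Third, I would invoke the full monodromy hypothesis (Definition~\ref{def:full_monodromy}): the Zariski-closure of the image of $\rho\colon \pi_1(Y_\bC^\an, y_0)\to\GL(\rH^1_\Betti(X_{y_0},\bQ))$ contains the product $\prod_{y_0'\in|Y'_{y_0}|}\Sp(\rH^1_\Betti(X_{y_0'},\bQ))$, where each factor acts on the corresponding summand. Hence $Z_\infty$ contains the orbit of $h_0$ under this product of symplectic groups acting on $\dH_{y_0,\bC}$.

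The final (and essentially only substantive) step is to check that this product group acts transitively on $\dH_{y_0,\bC}$. Using the decomposition~\eqref{eq:grassmannian_decomposition}, namely $\dH_{y_0,\bC} = \prod_{y_0'}\dH_{y_0',\bC}$ (Weil restriction becomes a product after base-change to $\bC$, since $L_{w_i}\otimes_{K_v}\bC$ splits as a product of copies of $\bC$), this reduces to the classical fact that for each smooth complex polarised abelian variety, the complex symplectic group $\Sp(\rH^1_\Betti)$ acts transitively on Lagrangian subspaces of its first cohomology. This is standard: any two Lagrangian subspaces of a symplectic vector space are related by a symplectic automorphism (for instance by extending a symplectic basis adapted to one Lagrangian to the other). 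I do not anticipate a serious obstacle here---the main care needed is simply to ensure the product structure of $\dH_{y_0}$ matches the product structure of the symplectic group appearing in the full monodromy hypothesis, which follows from the decomposition of $X_{y_0}$ into its connected components indexed by $|Y'_{y_0}|$.
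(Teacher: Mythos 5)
Your proposal is correct and follows essentially the same route as the paper's proof: reduce to the complex period map via Proposition~\ref{prop:period_map_comparison}, lower-bound its image by the monodromy orbit via Lemma~\ref{lem:monodromy_controls_period_image}, and conclude by the full monodromy hypothesis together with transitivity of the product of symplectic groups on Lagrangian submodules of $\rH^1_\deR(X_{y_0,\bC}/\bC)=\bigoplus_{y_0'}\rH^1_\deR(X_{y_0',\bC}/\bC)$. The extra care you take with the splitting of the Weil restriction after base change to~$\bC$ is a fine (and harmless) elaboration of the same argument.
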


As a consequence of Lemma~\ref{lem:full_monodromy_gives_dense_image}, we obtain a criterion for showing that certain subsets of~$Y(K_v)$ are finite in the case that~$Y$ is a curve.

\begin{cor}[cf.\ {\cite[Lemma~3.3]{LVinventiones}}]\label{cor:non-density_gives_finiteness}
	Suppose that~$Y$ is a smooth curve over a finite extension~$K_v$ of~$\bQ_p$, and that $X\to Y'\to Y$ is an abelian-by-finite family with full monodromy with respect to some complex embedding $\iota\colon K_v\hookrightarrow\bC$. Let~$y_0\in Y(K_v)$ and let $\Nbd_{y_0}\subseteq Y^\an$ be an admissible open neighbourhood of~$y_0$, isomorphic to a closed disc, over which~$\cH^0_\deR(X/Y)^\an$ and~$\cH^1_\deR(X/Y)^\an$ have a full basis of horizontal sections.
	
	Suppose that $C_0\subseteq\Nbd_{y_0}(K_v)$ is a subset such that $\Phi_{y_0}(C_0)$ is not Zariski-dense in~$\dH_{y_0}$. Then~$C_0$ is finite.
\end{cor}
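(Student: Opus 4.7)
The plan is to combine the Zariski-density statement for $\Phi_{y_0}$ given by full monodromy (Lemma~\ref{lem:full_monodromy_gives_dense_image}) with the fact that $\Nbd_{y_0}$ is one-dimensional, forcing $C_0$ into a zero-dimensional closed analytic subspace of a closed disc, which must be finite. I would carry this out in three short steps.

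First, let $Z\subseteq\dH_{y_0}$ denote the Zariski closure of $\Phi_{y_0}(C_0)$. By hypothesis $\Phi_{y_0}(C_0)$ is not Zariski-dense, so $Z$ is a proper closed subvariety. Form the rigid-analytic preimage $W\colonequals\Phi_{y_0}^{-1}(Z^\an)\subseteq\Nbd_{y_0}$; since $Z^\an\hookrightarrow\dH_{y_0}^\an$ is a closed immersion of rigid spaces, $W$ is a closed analytic subspace of~$\Nbd_{y_0}$ and it contains the set~$C_0$.

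Second, I would check that $W$ is a \emph{proper} closed analytic subspace of~$\Nbd_{y_0}$. If instead $W=\Nbd_{y_0}$, then the map $\Phi_{y_0}$ would factor through~$Z^\an$, which would force the Zariski-closure of the image of $\Phi_{y_0}$ to be contained in~$Z\subsetneq\dH_{y_0}$. This contradicts the conclusion of Lemma~\ref{lem:full_monodromy_gives_dense_image}, which applies precisely because $X\to Y'\to Y$ is assumed to have full monodromy.

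Third, I would conclude by invoking the one-dimensionality of $\Nbd_{y_0}\cong\Sp(K_v\langle t\rangle)$. The Tate algebra $K_v\langle t\rangle$ is a principal ideal domain, and any proper closed analytic subspace is cut out by a nonzero element $f\in K_v\langle t\rangle$, which by Weierstrass preparation is a unit multiple of a polynomial and therefore has only finitely many zeroes. Thus $W$ is a finite set, and so is $C_0\subseteq W(K_v)$.

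The argument is essentially an assembly of results already in place, so there is no serious obstacle. The only place where one must be slightly careful is the reduction to the statement that $\Phi_{y_0}$ has Zariski-dense image in~$\dH_{y_0}$, which is a global statement about the period domain whereas $\Phi_{y_0}$ is only defined on the disc~$\Nbd_{y_0}$; this is exactly the content of Lemma~\ref{lem:full_monodromy_gives_dense_image}, which deduces it from the complex monodromy input via the comparison of period maps (Proposition~\ref{prop:period_map_comparison}).
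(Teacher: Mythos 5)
Your proposal is correct and follows essentially the same route as the paper's proof: take a proper Zariski-closed $Z\supseteq\Phi_{y_0}(C_0)$, observe via Lemma~\ref{lem:full_monodromy_gives_dense_image} that $\Phi_{y_0}^{-1}(Z^\an)$ is a proper closed analytic subspace of the disc, and conclude finiteness. The only difference is that you spell out (via $K_v\langle t\rangle$ being a PID and Weierstrass preparation) the fact, taken as known in the paper, that a proper closed analytic subspace of a closed disc is finite.
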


\begin{rmk}
	If $C\subseteq Y(K_v)$ is a subset for which $\Phi_{y_0}(C\cap\Nbd_{y_0}(K_v))$ is not Zariski-dense in~$\dH_{y_0}$ for all choices of point~$y_0$ and neighbourhood~$\Nbd_{y_0}$, then the whole set~$C$ must be finite by Corollary~\ref{cor:non-density_gives_finiteness} and the fact that~$Y(K_v)$ is covered by a finite number of the neighbourhoods $\Nbd_{y_0}(K_v)$ by compactness. This is how we will apply Corollary~\ref{cor:non-density_gives_finiteness} in practice.
\end{rmk}

\begin{proof}[Proof of Corollary~\ref{cor:non-density_gives_finiteness}]
	Let $Z\subsetneq\dH_{y_0}$ be a proper Zariski-closed subvariety containing $\Phi_{y_0}(C_0)$. Then Lemma~\ref{lem:full_monodromy_gives_dense_image} ensures that $Z^\an$ does not contain the image of $\Phi_{y_0}$. This implies that $\Phi_{y_0}^{-1}(Z^\an)\subsetneq\Nbd_{y_0}$ is a closed analytic proper subspace of~$\Nbd_{y_0}$, i.e.\ is the vanishing locus of a non-zero coherent sheaf of ideals in $\cO_{\Nbd_{y_0}}$. But since~$\Nbd_{y_0}$ is a disc, its only closed analytic proper subspaces are finite.
\end{proof}

%\alex{28/5/21}{I'm not sure how much to say about this final line: the only closed analytic proper subspaces of a closed disc are finite. To do this super-carefully, you first say that in fact a closed analytic subspace of a closed disc is the vanishing locus of an ideal of the Tate algebra as in \cite[Definition~4.5.6]{fresnel-van_der_put:rigid_analytic_geometry}, and then use Weierstrass preparation to deduce that the vanishing locus of a non-zero ideal is finite.}
\section{The Lawrence--Venkatesh locus}\label{s:locus}

Now we come to the heart of the paper. Let us fix a smooth projective curve~$Y$ of genus~$g \geq2$ over a number field~$K$ and a $p$-adic place~$v$ of~$K$. Inspired by obstruction theory, we make the following definition.

\begin{defi}\label{def:global_pair}
	Let $X\to Y'\to Y$ be an abelian-by-finite family, and let $p$ be a prime number. We define the \emph{adelic Lawrence--Venkatesh locus (with $p$-adic coefficients)}
	\[
	Y(\bA_K)^\LV_X \subseteq Y(\bA_K)
	\]
	to be the set of adelic points $(y_u)_u\in Y(\bA_K)$ for which there exists a symplectic pair 
	$(A,V)$ in the category of $G_K$-representations  with $\bQ_p$-coefficients such that
	\[
	\rH^{\leq1}_\et(X_{y_u,\Kbar_u},\bQ_p) \cong (A|_{G_u},V|_{G_u})
	\]
	as symplectic pairs in the category of $G_u$-representations for every place~$u$ of~$K$. 
	We say that the pair~$(A,V)$ \emph{interpolates} the local points~$y_u\in Y(K_u)$. For a $p$-adic place $v$ of~$K$, we define
	\[
	Y(K_v)_X^\LV\subseteq Y(K_v)
	\]
	to be the projection of $Y(\bA_K)_X^\LV$ to $Y(K_v)$.
\end{defi}

Although this is the most natural definition of an obstruction locus associated to an abelian-by-finite family, for our purposes it is more convenient to work with a slightly larger locus by relaxing the local conditions imposed on the interpolating pair~$(A,V)$. We recall the definition from the introduction.

\begin{defi}\label{def:s-good}
	Let~$S$ be a finite set of places of~$K$, and~$p$ a prime number. A symplectic pair~$(A,V)$ in the category of $G_K$-representations is called \emph{$S$-good} just when:
	\begin{itemize}
		\item $A$ is unramified outside~$S$;
		\item $V$ is unramified, pure and integral of weight~$1$ outside~$S$; and
		\item $V$ is de Rham at all places over~$p$, with Hodge--Tate weights in~$\{0,1\}$.
	\end{itemize}
	
	Let $X\to Y'\to Y$ be an abelian-by-finite family, and let~$S$ be a finite set of places of~$K$ containing all places dividing~$p\infty$ and all places of bad reduction for~$X\to Y$. Let~$v$ be a finite place of~$K$, lying over the rational prime~$p$. We define the \emph{Lawrence--Venkatesh locus}
	\[
	Y(K_v)_{X,S}^\LV \subseteq Y(K_v)
	\]
	to be the set of points~$y_v\in Y(K_v)$ for which there exists an $S$-good pair~$(A,V)$ such that
	\[
	\rH^{\leq1}_\et(X_{y_v,\Kbar_v},\bQ_p) \cong (A|_{G_v},V|_{G_v})
	\]
	as symplectic pairs in the category of $G_v$-representations. We say that the pair~$(A,V)$ \emph{interpolates} the point~$y_v\in Y(K_v)$.
\end{defi}

\begin{rmk}
	One could also add the restriction that~$A$ is pure and integral of weight~$0$ (i.e.\ an Artin representation) in the above definition. This does not change the locus~$Y(K_v)_{X,S}^\LV$, since if~$(A,V)$ interpolates~$y_v\in Y(K_v)$, then~$A$ is isomorphic to~$\bQ_p^n$ as an algebra and so has finite automorphism group.
\end{rmk}

The relationship between these two loci is as follows.

\begin{lem}\label{lem:properties_of_associated_pairs}
	Let~$X\to Y'\to Y$ be an abelian-by-finite family. Let $p$ be a prime and let~$S$ be a finite set of places of~$K$, containing all places dividing~$p\infty$ and all places of bad reduction\footnote{A finite place of~$K$ is of \emph{good reduction} for~$X\to Y'\to Y$ just when it extends to an abelian-by-finite family over a smooth proper model of~$Y$ over the ring of integers at that place.} for $X\to Y'\to Y$. Suppose that~$y=(y_u)_u\in Y(\bA_K)_X^\LV$ is an adelic point in the Lawrence--Venkatesh locus, interpolated by a symplectic pair~$(A,V)$ in the category of $G_K$-representations. Then the pair~$(A,V)$ is $S$-good in the sense of Definition~\ref{def:intro_locus}.
	
	In particular we have the containment
	\[
	Y(K_v)_X^\LV \subseteq Y(K_v)_{X,S}^\LV
	\]
	for all suitable\footnote{We will use ``suitable'' as a shorthand for ``containing all places dividing~$p\infty$ and all places of bad reduction''.}~$S$, where~$Y(K_v)_{X,S}^\LV$ is as in Definition~\ref{def:intro_locus}.
	\begin{proof}
		This is merely a statement about the local representations~$\rH^i_\et(X_{y_u,\Kbar_u},\bQ_p)$ associated to local points~$y_u\in Y(K_u)$: they are unramified, pure and integral of weight~$i$ whenever~$u\notin S$ (since~$X_{y_u}$ has good reduction), and they are de Rham with Hodge--Tate weights in~$\{0,1,\dots,i\}$ whenever~$u\mid p$.
	\end{proof}
\end{lem}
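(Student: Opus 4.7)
\begin{pro*}[Proof proposal]
The plan is to translate each clause of $S$-goodness into a statement about the local \'etale cohomology groups $\rH^{\leq 1}_\et(X_{y_u,\Kbar_u},\bQ_p)$, and then verify these statements place-by-place using standard properties of \'etale cohomology of smooth proper varieties. By the interpolation hypothesis we have $G_u$-equivariant isomorphisms
\[
A|_{G_u} \cong \rH^0_\et(X_{y_u,\Kbar_u},\bQ_p), \qquad V|_{G_u} \cong \rH^1_\et(X_{y_u,\Kbar_u},\bQ_p)
\]
for every place $u$, so every one of the three conditions defining $S$-goodness is a condition at a single place of~$K$.

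First I would handle the places $u \notin S$. By hypothesis $S$ contains all places of bad reduction for $X\to Y'\to Y$, so at any such $u$ the family extends to an abelian-by-finite family $\fX \to \fY' \to \fY$ over a smooth proper model~$\fY$ of $Y$ over~$\cO_u$. Since $Y$ is projective, the valuative criterion of properness extends $y_u \in Y(K_u)$ to an~$\cO_u$-point of~$\fY$, and hence $X_{y_u}$ extends to an abelian-by-finite scheme over~$\cO_u$: a disjoint union of finitely many polarised abelian schemes over the rings of integers of unramified extensions of~$K_u$. Smooth proper base change and the induction formula~\eqref{eq:etale_decomposition} then give unramifiedness of~$A|_{G_u}$ and~$V|_{G_u}$. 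Purity and integrality of weight~$1$ for~$V|_{G_u}$ follow from Deligne's theorem (Example~\ref{ex:cohomology_pure}) applied to each component abelian variety, together with the standard behaviour of weights under induction.

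For $u \mid p$ I would appeal to Faltings' theorem that \'etale cohomology of a smooth proper $K_u$-variety is de Rham, applied componentwise to~$X_{y_u}$ and combined with the fact that restriction and induction preserve the class of de Rham representations. For an abelian variety, the Hodge--Tate weights of $\rH^1_\et$ are $\{0,1\}$, so the same holds after induction for~$V|_{G_u}$, giving the last clause of $S$-goodness.

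The final containment $Y(K_v)_X^\LV \subseteq Y(K_v)_{X,S}^\LV$ is then immediate: an adelic point lying in $Y(\bA_K)_X^\LV$ is interpolated by some symplectic pair $(A,V)$, which by the first part is $S$-good, so its projection to $Y(K_v)$ lies in $Y(K_v)_{X,S}^\LV$. There is no real obstacle here; the only mild subtlety is invoking the valuative criterion to produce the integral extension of $y_u$ so as to access good reduction of the fibre, and keeping track of the induction from the components of~$Y'$ when transporting purity and Hodge--Tate data.
\end{pro*}
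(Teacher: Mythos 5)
Your proposal is correct and follows essentially the same route as the paper's proof: the statement is purely about the local representations $\rH^i_\et(X_{y_u,\Kbar_u},\bQ_p)$, which are unramified, pure and integral of weight~$i$ outside~$S$ (via good reduction of the fibre, which you rightly extract from the integral model using the valuative criterion) and de Rham with Hodge--Tate weights in $\{0,\dots,i\}$ at places above~$p$. The extra detail you supply (smooth proper base change, Deligne's purity, Faltings' de Rham theorem, compatibility with induction) is exactly what the paper's terse proof leaves implicit.
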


Our aim in this section is to prove the following, which can be seen as an abstraction and generalisation of the argument in~\cite{LVinventiones}.

\begin{thm}\label{thm:locus_finiteness}
	Suppose that the place~$v$ is self-conjugate. Then there exists an abelian-by-finite family $X\to Y'\to Y$ for which the Lawrence--Venkatesh locus~$Y(K_v)^\LV_{X,S}$ is finite for all suitable~$S$.
\end{thm}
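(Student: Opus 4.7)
The plan is to take $X \to Y' \to Y$ to be a Kodaira--Parshin family $X_q \to Y'_q \to Y$ with $q$ a sufficiently large odd prime (to be chosen depending on $Y$ and $v$). Since the Kodaira--Parshin family has full monodromy (as verified in~\cite{LVinventiones}), we may apply Corollary~\ref{cor:non-density_gives_finiteness}. Concretely, cover $Y(K_v)$ by finitely many admissible open neighbourhoods $\Nbd_{y_0}$ (with $y_0 \in Y(K_v)$), each isomorphic to a closed disc, on which both $\cH^0_\deR(X_q/Y)^\an$ and $\cH^1_\deR(X_q/Y)^\an$ have a full basis of horizontal sections; this is possible by compactness of $Y(K_v)$ and local triviality of vector bundles with flat connection. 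It then suffices to show that for every such $y_0$, the image $\Phi_{y_0}(\Nbd_{y_0}(K_v) \cap Y(K_v)_{X_q,S}^\LV)$ is not Zariski-dense in~$\dH_{y_0}$.

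By Proposition~\ref{prop:period_maps_control_abf_reps} this image is contained in the set $\dH_{y_0}(K_v)_{X_q,S}^\LV$ of Definition~\ref{def:intro_period_locus}, so the problem reduces to showing non-density of this subset of~$\dH_{y_0}(K_v)$. Now apply the principal trichotomy (Proposition~\ref{prop:intro_principal_trichotomy}), which is available because $v$ is assumed self-conjugate. This gives a decomposition
\[
\dH_{y_0}(K_v)_{X_q,S}^\LV = \dH_{y_0}(K_v)_{X_q,S,\acase}^\LV \cup \dH_{y_0}(K_v)_{X_q,\bcase}^\LV \cup \dH_{y_0}(K_v)_{X_q,\ccase}^\LV,
\]
and it is enough to show each piece is not Zariski-dense.

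For case \acase, fix an $S$-good pair $(A,V)$ of this type with prescribed dimension data. The simplicity condition in \acase combined with the induction/restriction formalism and the symplectic Faltings' Lemma~\ref{lem:faltings_symplectic} should force $(A,V)$ to lie in finitely many isomorphism classes of $G_K$-pairs. Restricting to~$G_v$ and applying $\sD_\pst$ then leaves only finitely many possibilities for the underlying discrete $(\varphi,N,G_v)$-module of $\MM^1(\Phi)$ (ignoring the Hodge filtration), so by Lemma~\ref{lem:describe orbits} the set $\dH_{y_0}(K_v)_{X_q,S,\acase}^\LV$ lies in a finite union of $\SpAut(\MM^1_{\mathrm{unfilt}})(\bQ_p)$-orbits inside $\Res^{K_v}_{\bQ_p}\dH_{y_0}$. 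Proposition~\ref{prop:dim_of_aut}\eqref{proppart:dim_of_aut_symplectic} bounds the dimension of each such orbit by $d(2d+1)+1$, while $\dim_{\bQ_p}\Res^{K_v}_{\bQ_p}\dH_{y_0}=[K_v{:}\bQ_p]\cdot\binom{d+1}{2}\cdot\deg(Y'_q/Y)$ grows with~$q$; for large~$q$ the strict inequality gives non-density. Case \bcase is handled by a refinement of the same argument: the isotropic subrepresentation~$W$ of average Hodge--Tate weight $\geq 1/2$ forces numerical constraints (via Corollary~\ref{cor:friendly_representations}) that, together with Faltings' lemma applied to the $\GSp$-irreducible quotient $(\bQ_p\otimes_{A,\psi}V|_{G_v})/W^\perp$, again leaves only finitely many possibilities and a parametrisation by orbits of strictly smaller dimension than $\dH_{y_0}$.

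For case \ccase, the strategy is different: one shows that for $q$ chosen sufficiently large relative to $g$ and $[K:\bQ]$, no $S$-good pair $(A,V)$ arising from an interpolation of a $K_v$-point of the Kodaira--Parshin family can satisfy condition \ccase, so that $\dH_{y_0}(K_v)_{X_q,\ccase}^\LV = \varnothing$. The arithmetic of the Kodaira--Parshin construction gives precise control on the $G_K$-set $\Sigma = \Hom_{\Alg(\bQ_p)}(A,\bQ_p)$ (its cardinality and the orbit structure of $G_v$ on it), from which the bound $\tfrac{1}{d+1}\dim_{\bQ_p}(A)$ in~\ccase can be violated. The hard part here is the principal trichotomy itself and the matching of the Kodaira--Parshin combinatorics to case~\ccase; cases \acase and \bcase are essentially dimension counts, so the main technical obstacle is to carry out the analysis of case \ccase and to verify that a single choice of $q$ simultaneously makes the dimension inequalities in \acase, \bcase hold and makes \ccase empty. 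Assembling these three statements over every $y_0$ and applying Corollary~\ref{cor:non-density_gives_finiteness} then yields the theorem.
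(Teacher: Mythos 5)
Your overall architecture matches the paper's: choose a Kodaira--Parshin family, use full monodromy plus Corollary~\ref{cor:non-density_gives_finiteness} and Proposition~\ref{prop:period_maps_control_abf_reps} to reduce to non-density in $\dH_{y_0}$, and split via the principal trichotomy. But the execution of cases \acase and \bcase has a genuine gap. Your dimension count is carried out globally on $\dH_{y_0}$ and relies on ``$q$ large'' to force the inequality, and that mechanism does not work. First, the bound $d(2d+1)+1$ from Proposition~\ref{prop:dim_of_aut}\eqref{proppart:dim_of_aut_symplectic} applies to a symplectic module of rank $2d$; the module $\rH^1_\pst(X_{q,y_0}/\bQ_p^\nr)$ has rank $2d_q\cdot\deg(Y'_q/Y)$, so the honest global bound grows quadratically in $\deg(Y'_q/Y)$ while $\dim\Res^{K_v}_{\bQ_p}\dH_{y_0}$ grows only linearly in it --- the inequality gets \emph{worse}, not better, as $q\to\infty$. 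Even if one insists that automorphisms respect the $\rH^0$-algebra structure so that both sides scale linearly in the number of components, the per-component comparison is $d_i(2d_i+1)$ versus $[L_{w_i}:K_v]\,d_i(d_i+1)/2$, which is favourable \emph{precisely when} $[L_{w_i}:K_v]\geq 4$ and is false for $[L_{w_i}:K_v]\leq 3$ no matter how large $d_i$ (i.e.\ $q$) is. This is why the paper decomposes $\dH_{y_0}=\prod_i\Res^{L_{w_i}}_{K_v}\dH_{y_i'}$ as in~\eqref{eq:grassmannian_decomposition} and Lemma~\ref{lem:M_decomposition}, and applies Faltings' Lemma and the automorphism-group bound only to the single factor singled out by the trichotomy's degree condition $[G_v:G_{w_\psi}]\geq4$ (Lemma~\ref{lem:non-density_componentwise_a}); non-density in that one factor gives non-density of the product. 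Largeness of $q$ plays no role in cases \acase and \bcase.

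Two further points. For case \bcase your appeal to Faltings' Lemma cannot work: the type-\bcase condition is purely local (a condition on a $G_v$-pair), and there is no global purity/integrality datum to feed into Lemma~\ref{lem:faltings_symplectic}; nor is there any reason your quotient $(\bQ_p\otimes_{A,\psi}V|_{G_v})/W^\perp$ is $\GSp$-irreducible. The paper instead handles \bcase by an explicit argument: after base change to $\Kbar_v$ the Weil restriction splits into $r=[L_{w_i}:K_v]\geq4$ copies of the Lagrangian Grassmannian, $\varphi$-stability of the isotropic submodule forces the same large intersection at all $r$ embeddings, and the pure linear-algebra Sublemma~\ref{sublem:linear_algebra} (four Lagrangians, isotropic $W$) gives non-density --- again with no dependence on $q$. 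Finally, for case \ccase, ``$q$ sufficiently large'' is not enough: the emptiness of $Y(K_v)_{X_q,\ccase}^\LV$ in Proposition~\ref{prop:choose_the_right_k-p_family} needs a congruence condition on $q-1$ (coprimality to $4$ and to the odd primes dividing $q_v(q_v+1)(q_v^3-1)$), which is what makes the $G_w$-fixed subspaces isotropic for the Weil pairing; such $q$ are produced by Dirichlet's theorem, together with the size condition $q-1\geq 9n_v$. So the one place where the choice of $q$ matters is \ccase, while your proposal locates the $q$-dependence in the wrong places.
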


This theorem already implies the Mordell Conjecture, as in \cite{LVinventiones}, once one makes the almost tautological observation that 
\[
Y(K)\subseteq Y(K_v)^\LV_{X,S}
\]
for all~$X\to Y'\to Y$. Indeed, for $y\in Y(K)$, the interpolating pair $(A,V)$
can be taken to be 
\[
\rH^{\leq 1}_{\et}(X_{y,\Kbar},\bQ_p)  = (\rH^0_\et(X_{y,\Kbar},\bQ_p),\rH^1_\et(X_{y,\Kbar},\bQ_p))
\]
with its usual $G_K$-action, which is $S$-good by Lemma~\ref{lem:properties_of_associated_pairs}.

\subsection{The Principal Trichotomy}\label{ss:principal_trichotomy}

The starting point for the argument is a certain restriction on global representations. Before we get to this, let us describe an idea which doesn't work, but nonetheless informs the general approach, as sketched in the introduction.

Suppose we knew that for every pair $(A,V)$ satisfying the conditions of Definition~\ref{def:global_pair}, the $A$-module~$V$ had no non-zero $G_K$-stable isotropic $A$-submodules. One could then show, using the symplectic version of Faltings' Lemma (Lemma~\ref{lem:faltings_symplectic}), that there are only finitely many possibilities for the pair~$(A,V)$ up to isomorphism, once one specifies the dimensions of~$A$ and~$V$. This would imply that there are only finitely many possibilities for the isomorphism class of 
$\rH^{\leq 1}_{\et}(X_{y,\Kbar_v},\bQ_p)$ for~$y\in Y(K_v)^\LV_{X,S}$. To prove finiteness we may therefore restrict to such $y$ with $\rH^{\leq 1}_{\et}(X_{y,\Kbar_v},\bQ_p)$ in a fixed isomorphism class.

Now we restrict attention to points in a suitably small analytic neighbourhood $\Nbd_{y_0}$ of some point~$y_0$, and assume for simplicity of exposition that the fibre~$Y'_{y_0}$ is connected (so isomorphic to $\Spec(L_w)$ for a finite extension $L_w/K_v$ and~$X_{y_0}$ is a polarised abelian variety over~$L_w$). For 
\[
y\in Y(K_v)^\LV_{X,S}\cap\Nbd_{y_0}(K_v),
\]
the fact that~$\rH^{\leq1}_\et(X_{y,\Kbar_v},\bQ_p)$ lies in fixed isomorphism class implies that the image $\Phi_{y_0}(y)\in\dH_{y_0}(K_v)$ under the period map would have to lie in a single orbit under the action of the automorphism group 
\[
\uAut_{(\varphi,N,G_w)}^\GSp(\rH^1_\pst(X_{y_0}/\bQ_p^\nr)) \,.
\]
See \S\ref{sss:phi-N_automorphisms}, especially Remark~\ref{rmk:induced_autos}. 

Since the scalars in $\uAut_{(\varphi,N,G_w)}^\GSp(\rH^1_\pst(X_{y_0}/\bQ_p^\nr))$ act trivially on~$\Res^{K_v}_{\bQ_p}\dH_{y_0}$, the dimension estimate of Proposition~\ref{prop:dim_of_aut}\eqref{proppart:dim_of_aut_symplectic} would show that~$\Phi_{y_0}(y)$ would lie in a closed $\bQ_p$-subvariety of~$\Res^{K_v}_{\bQ_p}\dH$ of $\bQ_p$-dimension $\leq d(2d+1)$, where $d$ is the relative dimension of~$X\to Y$. By the base change--Weil restriction adjunction, this implies that~$\Phi_{y_0}(y)$ would lie in a $K_v$-subvariety of~$\dH$ of $K_v$-dimension at most~$d(2d+1)$. So if $[L_w:K_v]\geq4$, then we would have
\[
d(2d+1) < [L_w:K_v]\cdot\frac{d(d+1)}2 = \dim_{K_v}\dH_{y_0} \,,
\]
and so~$\Phi_{y_0}(y)$ would have to lie in a proper Zariski-closed $K_v$-subvariety of~$\dH_{y_0}$. If moreover our abelian-by-finite family had full monodromy, we would obtain finiteness of $Y(K_v)^\LV_{X,S}$ via Corollary~\ref{cor:non-density_gives_finiteness}.
\smallskip

However, this approach fails as stated, since one has no guarantee that pairs~$(A,V)$ satisfying the conditions of Definition~\ref{def:global_pair} have no non-zero $G_K$-stable isotropic $A$-submodules in~$V$. The key idea in \cite{LVinventiones} is that the numerics of self-conjugate places still impose (rather technical) constraints on the possible representations which can appear, and that these constraints still suffice to prove finiteness results.

This is what we state and prove here, following \cite{LVinventiones}. Before we give a precise statement, we introduce some notation. Suppose that~$A$ is an algebra in the category of $G_K$-representations (resp.\ $G_v$-representations). Since $A$ is an artinian $\bQ_p$-algebra, the set 
\[
\Sigma_A \colonequals \Spec(A)(\bQ_p) = \{\psi : A \to \bQ_p \ ; \ \text{$\bQ_p$-algebra homomorphism}\}
\]
is finite and $G_K$ (resp.\ $G_v$) acts continuously on this finite set from the right. For $\psi \in \Sigma_A$, we write
\[
G_\psi\leq G_K \hspace{0.4cm}(\,\text{resp.}\hspace{0.4cm} G_{w_\psi}\leq G_v \,)
\]
for the stabiliser of~$\psi$. The fixed fields under $G_\psi$ and $G_{w_\psi}$ are denoted by $L_\psi\subset\Kbar$ and $L_{w_\psi}\subset\Kbar_v$, respectively. When~$A$ is an algebra in $G_K$-representations, we may view it as an algebra in $G_v$-representations in a natural way: we then have $G_{w_\psi}=G_v\cap G_\psi$, and $L_{w_\psi} = (L_\psi)_{w_\psi}$ is the completion of~$L_\psi$ at a certain $v$-adic place~$w_\psi$ of~$L_\psi$, namely the restriction of the $v$-adic place on~$\Kbar_v\supseteq\Kbar\supseteq L_\psi$. So the fields~$L_\psi$ and~$L_{w_\psi}$ fit together in the following diagram.
\begin{center}
\begin{tikzcd}
	K_v \arrow[hook]{r} & L_{w_\psi} \arrow[hook]{r} & \Kbar_v \\
	K \arrow[hook]{u}\arrow[hook]{r} & L_\psi \arrow[hook]{u}\arrow[hook]{r} & \Kbar \arrow[hook]{u}
\end{tikzcd}
\end{center}

For an~$A$-module~$V$ in the category of $G_K$- or~$G_v$-representations, we write
\[
V_\psi \colonequals \bQ_p \otimes_{A,\psi} V
\]
for the base-change of~$V$ along $\psi\colon A\to\bQ_p$. This is a symplectic representation of~$G_\psi$ or~$G_{w_\psi}$, respectively.

\begin{rmk}\label{rmk:what_is_Sigma}
	In the particular case of the symplectic pair $\rH^{\leq 1}_\et(X_{y_v,\Kbar_v}, \bQ_p)$ associated to an abelian-by-finite family and a $K_v$-rational point~$y_v\in Y(K_v)$, we have 
	\[
	\rH^0_\et(X_{y_v,\Kbar_v},\bQ_p)=\rH^0_\et(Y'_{y_v,\Kbar_v},\bQ_p) = \prod_{\pi_0(Y'_{y_v,\Kbar_v})} \bQ_p= \prod_{\psi \in Y'_{y_v}(\Kbar_v)} \bQ_p \,.
	\]
Thus there is a canonical $G_v$-equivariant bijection
\[
\{\psi :  \rH^0_\et(X_{y_v,\Kbar_v},\bQ_p) \to \bQ_p \ ; \ \text{$\bQ_p$-algebra homomorphism}\}
 = Y'_{y_v}(\Kbar_v) \,.
\]
So a $\bQ_p$-algebra homomorphism $\psi$ corresponds to a choice of a closed point $y'_v$ of $Y'_{y_v}$ and a $K_v$-embedding $K_v(y'_v)\hookrightarrow\Kbar_v$. The condition that $[G_v:G_{w_\psi}]\geq4$ just means that the 
degree~$[K_v(y'_v):K_v]$ is at least $4$.
\end{rmk}

\begin{prop}[The Principal Trichotomy, cf.\ {\cite[Sublemma on p.~936]{LVinventiones}}]\label{prop:principal_trichotomy}
Suppose that~$v$ is self-conjugate. 	

	Let~$(A,V)$ be a symplectic pair in the category of $G_K$-representations, where~$V$ has rank~$2d>0$ as an $A$-module. Suppose moreover that $V$ is de Rham at places above~$p$, has Hodge--Tate weights in~$\{0,1\}$ at~$v$, and is unramified and pure of weight~$1$ outside a finite set.

	Let~$\Sigma = \Sigma_A$ be the set of $\bQ_p$-algebra homomorphisms $A\to\bQ_p$. Then at least one of the following occurs:
	\begin{enumerate}[label=\alph*),ref=(\alph*)]
		\item\label{proppart:acase} there is a $\psi \in \Sigma$ such that $[G_v:G_{w_\psi}]\geq4$ and $V_\psi$ is $\GSp$-irreducible as a symplectic representation of~$G_\psi\leq G_K$;
		\item\label{proppart:bcase} there is a $\psi \in \Sigma$ such that $[G_v:G_{w_\psi}]\geq4$ and there exists a non-zero isotropic $G_\psi$-subrepresenta\-tion $0\neq W\leq V_\psi$ whose average Hodge--Tate weight at~$w_\psi$ is~$\geq1/2$, i.e.
		\[
		\dim_{L_{w_\psi}}\rF^1\!\sD_{\deR,w_\psi}(W) \geq \frac12\dim_{\bQ_p}(W) \,; \hspace{0.4cm}\text{or}
		\]
		\item\label{proppart:ccase} the number of $\psi \in \Sigma$ satisfying $[G_v:G_{w_\psi}]<4$ is $\geq\frac1{d+1}\dim_{\bQ_p}(A)$.
	\end{enumerate}
\end{prop}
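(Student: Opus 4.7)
My plan is to prove the Principal Trichotomy by contradiction: assuming that all three of \acase, \bcase, and \ccase fail for $(A,V)$, I will derive a contradiction. The self-conjugacy hypothesis on $v$ will enter through Corollary~\ref{cor:friendly_representations}, which I will apply to certain isotropic $G_K$-subrepresentations of $V$ constructed using the failure of \acase. The strategy follows roughly the pattern of \cite[Sublemma on p.~936]{LVinventiones}, rephrased symplectically. Throughout, I implicitly assume that $A$ is split \'etale over $\bQ_p$ so that $|\Sigma|=\dim_{\bQ_p}(A)$---the only case relevant for the applications to abelian-by-finite families.

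Partition $\Sigma = \Sigma^\ell \sqcup \Sigma^s$ into the \emph{large} and \emph{small} elements (with $[G_v:G_{w_\psi}]\geq 4$ and ${<}4$ respectively). Assuming \acase fails, the first key step is to build, for each $G_K$-orbit $\Sigma_\alpha\subseteq\Sigma$ meeting $\Sigma^\ell$, a suitable global isotropic subrepresentation. Choose $\psi_0\in\Sigma_\alpha\cap\Sigma^\ell$ and a non-zero isotropic $G_{\psi_0}$-subrepresentation $W_{\psi_0}\leq V_{\psi_0}$. For each $\psi\in\Sigma_\alpha$, pick $\sigma\in G_K$ with $\psi=\psi_0\circ\sigma^{-1}$ and set $W_\psi\colonequals\sigma(W_{\psi_0})\leq V_\psi$; this is well-defined (independent of the choice of~$\sigma$) because $W_{\psi_0}$ is $G_{\psi_0}$-stable. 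The direct sum $W^{(\alpha)}\colonequals\bigoplus_{\psi\in\Sigma_\alpha}W_\psi$ is then a $G_K$-subrepresentation of~$V$ of $\bQ_p$-dimension $r_\alpha|\Sigma_\alpha|$, where $r_\alpha\colonequals\dim_{\bQ_p}W_{\psi_0}$, and inherits from $V$ the properties of being unramified, pure and integral of weight~$1$ outside a finite set, and de Rham with Hodge--Tate weights in $\{0,1\}$ at places above~$p$. Applying Corollary~\ref{cor:friendly_representations} with $L=K$ and $n=1$, and decomposing $W^{(\alpha)}|_{G_v}$ along $G_v$-orbits of $\Sigma_\alpha$ (each summand being identified with an induction $\Ind_{G_{w_\psi}}^{G_v}\!W_\psi$ via Example~\ref{ex:restriction_and_coinduction_of_p-adic_hodge_coh}) yields the identity
\[
\sum_{\psi\in\Sigma_\alpha}h'_\psi \;=\; \frac{r_\alpha|\Sigma_\alpha|}{2}\,,\qquad h'_\psi\colonequals\dim_{L_{w_\psi}}\rF^1\!\sD_{\deR,w_\psi}(W_\psi)\,.
\]

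Now suppose additionally that \bcase fails. Then for every large $\psi$ in an orbit where $W^{(\alpha)}$ is defined, the non-zero isotropic $G_{w_\psi}$-subrepresentation $W_\psi\leq V_\psi$ has average Hodge--Tate weight ${<}1/2$, i.e.\ $h'_\psi<r_\alpha/2$. Crucially, since $h'_\psi\in\bZ$ while $r_\alpha/2\in\tfrac12\bZ$, this forces the quantitative bound $h'_\psi-r_\alpha/2\leq-\tfrac12$. Combined with the trivial estimate $h'_\psi-r_\alpha/2\leq r_\alpha/2$ for small $\psi\in\Sigma_\alpha^s\colonequals\Sigma_\alpha\cap\Sigma^s$, the identity above rearranges to
\[
0 \;=\; \sum_{\psi\in\Sigma_\alpha}\bigl(h'_\psi-r_\alpha/2\bigr) \;\leq\; -\tfrac{1}{2}|\Sigma_\alpha^\ell|+\tfrac{r_\alpha}{2}|\Sigma_\alpha^s|\,,
\]
so $|\Sigma_\alpha^\ell|\leq r_\alpha|\Sigma_\alpha^s|\leq d\,|\Sigma_\alpha^s|$ (using $r_\alpha\leq d$ since $W_\psi$ is isotropic in the rank-$2d$ symplectic space $V_\psi$). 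Summing over all $G_K$-orbits meeting $\Sigma^\ell$ (orbits wholly in $\Sigma^s$ contribute nothing to $|\Sigma^\ell|$) gives $|\Sigma^\ell|\leq d|\Sigma^s|$. But the failure of \ccase gives $|\Sigma^s|<|\Sigma|/(d+1)$, hence $|\Sigma|=|\Sigma^\ell|+|\Sigma^s|\leq(d+1)|\Sigma^s|<|\Sigma|$, the desired contradiction.

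The main technical subtlety I anticipate lies in the compatibility bookkeeping of the second paragraph: one must verify that the $G_K$-translates $W_\psi$ have Hodge--Tate numerics $h'_\psi$ depending only on the $G_v$-orbit of $\psi$ (so that inducing sense is correct), and identify $W^{(\alpha)}|_{G_v}$ as a direct sum of induced representations via Example~\ref{ex:restriction_and_coinduction_of_p-adic_hodge_coh} so as to match Corollary~\ref{cor:friendly_representations}. Beyond this, the argument is essentially arithmetic, but the integrality-induced half-integer gap $h'_\psi-r_\alpha/2\leq-\tfrac12$ is essential: without it, the mere strict inequality $h'_\psi-r_\alpha/2<0$ provides no useful lower bound on $|\Sigma_\alpha^\ell|$ when summed against the non-negative contribution of small $\psi$, and the contradiction with the failure of \ccase would not close.
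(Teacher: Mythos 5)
Your argument is correct and is, in substance, the paper's own proof: the same three inputs appear in the same roles --- failure of \acase produces a non-zero isotropic $G_{\psi_0}$-stable subspace whose $G_K$-translates fill out each orbit, failure of \bcase combined with integrality of $\dim_{L_{w_\psi}}\rF^1\!\sD_{\deR,w_\psi}$ gives the half-integer gap $h'_\psi\leq\tfrac12(r_\alpha-1)$ at the large $\psi$ (the paper uses exactly this bound), and Corollary~\ref{cor:friendly_representations} --- the only place self-conjugacy enters --- fixes the total at $\tfrac12 r_\alpha|\Sigma_\alpha|$, after which the count yields \ccase. The remaining differences are organisational: the paper argues directly (not \acase and not \bcase imply \ccase) after first reducing to a single transitive $G_K$-orbit with $A=\prod_{\psi\in\Sigma}\bQ_p$, and it applies Corollary~\ref{cor:friendly_representations} to $W_0$ as a representation of $G_{L_{\psi_0}}$, summing over places $w\mid v$ of $L_{\psi_0}$ with multiplicity $[L_w:K_v]$, whereas you apply it with $L=K$ to the induced representation $W^{(\alpha)}$ and unwind via Shapiro; these are literally the same computation (the proof of that corollary itself passes through the induction), and the constancy of $h'_\psi$ on $G_v$-orbits that you flag is the only bookkeeping point, settled by conjugating $\sD_{\deR,w_\psi}$ by the relevant element of $G_v$. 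The one discrepancy in scope is your standing assumption that $A$ is split, so that $\#\Sigma=\dim_{\bQ_p}(A)$: the proposition is stated for arbitrary finite-dimensional $A$, which the paper treats by a preliminary reduction to the split quotients $\prod_{\psi\in\Sigma_i}\bQ_p$; note, however, that since \ccase is phrased in terms of $\dim_{\bQ_p}(A)$ rather than $\#\Sigma$, that reduction only closes when the two coincide, so your hypothesis matches what is effectively used (and it holds in every application, where the interpolating $A$ is isomorphic to an $\rH^0_\et$, hence to $\bQ_p^n$) --- but you should state the reduction or the split hypothesis explicitly rather than leave it as an aside, since as written your proof does not cover the general statement.
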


\begin{proof}
	We begin with a preliminary reduction. Let us write~$\Sigma=\coprod_i\Sigma_i$ as the union of its $G_K$-orbits. We write
	\[
	A_i= \prod_{\psi \in \Sigma_i} \bQ_p,
	\]
	and write~$V_i\colonequals A_i\otimes_AV$ for the base-change of $V$ along the $G_K$-equivariant map $A \surj A_i$ whose components are the $\psi \in \Sigma_i = \Sigma_{A_i}$. It is easy to see that $(A,V)$ satisfies condition~\ref{proppart:acase} (respectively~\ref{proppart:bcase}) if and only if some $(A_i,V_i)$ does. Moreover, if all $A_i$ satisfied condition~\ref{proppart:ccase}, then the proportion of each $\Sigma_i$ contained in a $G_v$-orbit of size~$<4$ would be $\geq\frac1{d+1}$, and so the proportion of $\Sigma$ contained in a $G_v$-orbit of size~$<4$ would be $\geq\frac1{d+1}$. So $A$ would also satisfy~\ref{proppart:ccase} because $\#\Sigma \leq \dim_{\bQ_p}(A)$. Hence it suffices to prove the result for the pairs~$(A_i,V_i)$: if any of them satisfy~\ref{proppart:acase} or~\ref{proppart:bcase} then so does~$(A,V)$, and if all of them satisfy~\ref{proppart:ccase}, then so does~$(A,V)$.
	
	\smallskip
	
	We thus restrict attention to the case that~$A=\prod_{\psi\in\Sigma}\bQ_p$ is a $\bQ_p$-split Artinian algebra for a transitive $G_K$-set~$\Sigma$ (which is also the set $\Sigma_A$ of $\bQ_p$-algebra homomorphisms $A\to\bQ_p$). The natural map 
	\[
	V \xrightarrow{\sim} \prod_{\psi\in\Sigma}V_{\psi}
	\]
	 is an isomorphism by the structure theory of modules over Artinian rings. The action of some $\sigma\in G_K$ maps the factor $V_{\psi}$ isomorphically onto the factor $V_{\psi\cdot\sigma}$. In other words, if we fix some $\psi_0\in\Sigma$, then $V=\Ind_{G_{\psi_0}}^{G_K}V_{\psi_0}$ as $G_K$-equivariant symplectic modules over~$A=\Ind_{G_{\psi_0}}^{G_K}\bQ_p$.
	
	Suppose now that neither~\ref{proppart:acase} nor~\ref{proppart:bcase} holds; we will show~\ref{proppart:ccase}. If $\Sigma$ contains no $G_v$-orbit of size~$\geq4$, then~\ref{proppart:ccase} certainly holds, so we may assume without loss of generality that~$[G_v:G_{w_{\psi_0}}]\geq4$. Failure of~\ref{proppart:acase} implies that $V_{\psi_0}$ must fail to be $\GSp$-irreducible, so there exists a non-zero $G_{\psi_0}$-stable isotropic subspace $W_0\leq V_{\psi_0}$. For every~$\sigma\in G_K$, $\sigma^{-1}(W_0)$ is an isotropic subspace of $V_{\psi_0\cdot\sigma}$, stable under the action of $G_{\psi_0\cdot\sigma}=\sigma^{-1}G_{\psi_0}\sigma$. The subrepresentation~$\sigma^{-1}(W_0)$ is automatically de Rham at~$w_{\psi_0\cdot\sigma}$ with Hodge--Tate weights in~$\{0,1\}$, so by failure of~\ref{proppart:bcase}, we must have
	\begin{equation}\label{eq:no_b_bound}\tag{$\ast$}
	\dim_{L_{w_{\psi_0\cdot\sigma}}}\rF^1\!\sD_{\deR,w_{\psi_0\cdot\sigma}}(\sigma^{-1}(W_0)) \leq \frac12(\dim_{\bQ_p}(W_0)-1)
	\end{equation}
	whenever~$\psi_0\cdot\sigma$ is contained in a $G_v$-orbit of size~$\geq4$.
	
	Now the place $w_{\psi_0\cdot\sigma}$ can be viewed as a place of~$L\colonequals L_{\psi_0}$ under the identification $L_{\psi_0\cdot\sigma}\xrightarrow\sim L_{\psi_0}$ given by the action of~$\sigma$. If we let~$\sigma$ run over a set of right coset representatives of~$G_{\psi_0}\leq G_K$, then we obtain every $v$-adic place of~$L$ in this way, with the place~$w$ appearing~$[L_w\colon K_v]$ times. Hence, by summing~\eqref{eq:no_b_bound} and the trivial bound $\dim_{L_w}\rF^1\!\sD_{\deR,w}(W_0)\leq\dim_{\bQ_p}(W_0)$, we obtain the inequality
	\[
	\sum_{w\mid v}[L_w:K_v]\dim_{L_w}\rF^1\!\sD_{\deR,w}(W_0) \leq \frac12(\dim_{\bQ_p}(W_0)-1)\cdot(\#\Sigma-\#\Sigma_{<4})+\dim_{\bQ_p}(W_0)\cdot\#\Sigma_{<4} \,,
	\]
	where~$\Sigma_{<4}$ denotes the subset of $\Sigma$ consisting of the $G_v$-orbits of size~$<4$. But since $W_0$, being a subquotient of~$V$, has Hodge--Tate weights in $\{0,1\}$, Corollary~\ref{cor:friendly_representations} gives us that
	\[
	\sum_{w\mid v}[L_w:K_v]\dim_{L_w}\rF^1\!\sD_{\deR,w}(W_0) = \frac12\dim_{\bQ_p}(W_0)\cdot\#\Sigma \,.
	\]
	Equating and rearranging, we obtain
	\[
	(\dim_{\bQ_p}(W_0)+1)\cdot\#\Sigma_{<4}\geq\#\Sigma=\dim_{\bQ_p}(A) \,.
	\]
	Since~$W_0$ is an isotropic subspace of the $2d$-dimensional symplectic vector space~$V_\psi$, we have the estimate $\dim_{\bQ_p}(W_0)\leq d$ and we have shown~\ref{proppart:ccase}.
\end{proof}

The principal trichotomy allows us to decompose the Lawrence--Venkatesh locus~$Y(K_v)_{X,S}^\LV$ into three sets, according to which of the conditions~\acase, \bcase or~\ccase is satisfied for the interpolating pair~$(A,V)$. Let us make a precise definition.

\begin{defi}\label{def:types}
	Let~$S$ be a finite set of places of~$K$, ~$p$ a prime number, and~$v$ a $p$-adic place of~$K$.
	\begin{enumerate}[label=\alph*),ref=(\alph*)]
		\item We say that an $S$-good symplectic pair~$(A,V)$ in the category of $G_K$-representations is of \emph{type~\acase} just when there exists a $\psi\in\Sigma_A$ such that~$[G_v:G_{w_\psi}]\geq4$ and~$V_\psi$ is $\GSp$-irreducible as a symplectic representation of~$G_\psi\leq G_K$.
		\item We say that a symplectic pair~$(A,V)$ in the category of $G_v$-representations is of \emph{type~\bcase} just when there exists a $\psi\in\Sigma_A$ such that $[G_v:G_{w_\psi}]\geq4$ and there exists a non-zero isotropic $G_{w_\psi}$-subrepresentation $0\neq W\leq V_\psi$ whose average Hodge--Tate weight is~$\geq1/2$.
		\item Let~$d>0$ be a positive integer. We say that an algebra~$A$ in the category of~$G_v$-representations is of \emph{type~\ccase} just when the number of $\psi\in\Sigma_A$ such that $[G_v:G_{w_\psi}]<4$ is~$\geq\frac1{d+1}\dim_{\bQ_p}(A)$.
	\end{enumerate}
	
	If~$X\to Y'\to Y$ is an abelian-by-finite family of constant relative dimension~$d>0$, and if~$S$ is a suitable set of places of~$K$, then we define three subsets
	\[
	Y(K_v)_{X,S,\acase}^\LV , Y(K_v)_{X,\bcase}^\LV , Y(K_v)_{X,\ccase}^\LV \subseteq Y(K_v)
	\]
	as follows.
	\begin{enumerate}[label=\alph*),ref=(\alph*)]
		\item $Y(K_v)_{X,S,\acase}^\LV\subseteq Y(K_v)$ is the set of all points $y_v\in Y(K_v)$ for which there exists an $S$-good pair~$(A,V)$ of type~\acase such that
		\[
		\rH^{\leq1}_\et(X_{y_v,\Kbar_v},\bQ_p) \cong (A|_{G_v},V|_{G_v})
		\]
		as symplectic pairs in the category of~$G_v$-representations.
		\item $Y(K_v)_{X,\bcase}^\LV$ is the set of all points $y_v\in Y(K_v)$ for which $\rH^{\leq1}_\et(X_{y_v,\Kbar_v},\bQ_p)$ is of type~\bcase.
		\item $Y(K_v)_{X,\ccase}^\LV$ is the set of all points $y_v\in Y(K_v)$ for which $\rH^0_\et(X_{y_v,\Kbar_v},\bQ_p)$ is of type~\ccase.
	\end{enumerate}
\end{defi}

With this notation, Proposition~\ref{prop:principal_trichotomy} implies that when~$v$ is self-conjugate, then for any abelian-by-finite family $X \to Y' \to Y$ of constant relative dimension~$d>0$ we have the containment
\[
Y(K_v)_{X,S}^\LV \subseteq Y(K_v)_{X,S,\acase}^\LV \cup Y(K_v)_{X,\bcase}^\LV \cup Y(K_v)_{X,\ccase}^\LV \,.
\]
So, to prove Theorem~\ref{thm:locus_finiteness}, it suffices to study the sets $Y(K_v)_{X,S\acase}^\LV$, $Y(K_v)_{X,\bcase}^\LV$ and $Y(K_v)_{X,\ccase}^\LV$ separately. Specifically, we will prove:
\begin{itemize}
	\item if~$X\to Y'\to Y$ has full monodromy, then $Y(K_v)_{X,S\acase}^\LV$ and $Y(K_v)_{X,\bcase}^\LV$ are finite; and
	\item there exists a choice of $X\to Y'\to Y$ which has full monodromy and for which $Y(K_v)_{X,\ccase}^\LV=\emptyset$.
\end{itemize}
These together suffice to prove Theorem~\ref{thm:locus_finiteness}.

\subsection{Finiteness for points of type~\acase}\label{ss:type_a}

To begin with, we deal with the set~$Y(K_v)_{X,S,\acase}^\LV$ using the theory of period maps. For the duration of \S\ref{ss:type_a}, we fix an abelian-by-finite family $X\to Y'\to Y$ and fix a choice of finite set~$S$ of places of~$K$, containing all places dividing~$p\infty$ and all places of bad reduction for~$X\to Y'\to Y$. Our aim here is to prove the following.

\begin{prop}\label{prop:non-density_a}
	For a point~$y_0\in Y(K_v)$, let
	\[
	\dH_{y_0}(K_v)_{X,S,\acase}^\LV\subseteq\dH_{y_0}(K_v)
	\]
	denote the set of Lagrangian $\rH^0_\deR(X_{y_0}/K_v)$-submodules $\Phi$ of $\rH^1_\deR(X_{y_0}/K_v)$ for which there exists an $S$-good symplectic pair of type~\acase in the category of~$G_K$-representations such that
	\[
	\MM^{\leq 1}(\Phi) \cong (\sD_\pH(A|_{G_v}),\sD_\pH(V|_{G_v}))
	\]
	as symplectic pairs in the category of filtered discrete $(\varphi,N,G_v)$-modules.
	
	Then $\dH_{y_0}(K_v)_{X,S,\acase}^\LV$ is not Zariski-dense in~$\dH_{y_0}$.
\end{prop}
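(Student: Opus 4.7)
The plan is to prove Proposition~\ref{prop:non-density_a} by combining the Symplectic Faltings' Lemma with a dimension count on the Lagrangian Grassmannian, following the sketch in the introduction. I begin by partitioning: by Remark~\ref{rmk:what_is_Sigma}, each $\Phi \in \dH_{y_0}(K_v)_{X,S,\acase}^\LV$ has its distinguished $\psi$ corresponding to some closed point $y_i' \in |Y'_{y_0}|$ whose residue field $L_{w_i}$ satisfies $[L_{w_i}:K_v] \geq 4$. This yields a finite decomposition
\[
\dH_{y_0}(K_v)_{X,S,\acase}^\LV = \bigcup_i \dH_{y_0}(K_v)_{X,S,\acase,i}^\LV,
\]
indexed by such $i$, where the $i$-th piece collects those $\Phi$ whose distinguished $\psi$ lies over $y_i'$. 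It suffices to prove each piece is not Zariski-dense; for this I project onto the $i$-th factor of the decomposition $\dH_{y_0} = \prod_i \Res^{L_{w_i}}_{K_v}\dH_{y_i'}$ from \S\ref{sss:period_map_decomposition}, viewed as the $\bQ_p$-variety $\Res^{L_{w_i}}_{\bQ_p}\dH_{y_i'}$ of $\bQ_p$-dimension $[L_{w_i}:\bQ_p]\cdot d(d+1)/2$.

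The crucial input is to bound the possibilities for $V_\psi|_{G_{w_i}}$ using Faltings. Since $V_\psi$ is $\GSp$-irreducible as a $G_\psi$-representation, one checks that $\Ind_{G_\psi}^{G_K} V_\psi \leq V$ is also $\GSp$-irreducible as a symplectic $G_K$-representation: any $G_K$-stable isotropic subspace intersects $V_\psi$ in a $G_\psi$-stable isotropic subspace, which must vanish. As a subrepresentation of the $S$-good $V$, it inherits being unramified, pure of weight~$1$ and integral outside~$S$, and its $\bQ_p$-dimension is at most $\dim_{\bQ_p}(V)$. Applying Lemma~\ref{lem:faltings_symplectic} then gives only finitely many isomorphism classes for $\Ind_{G_\psi}^{G_K} V_\psi$ as symplectic $G_K$-representation, and hence for $V_\psi$ as symplectic $G_\psi$-representation, and hence for $\sD_{\pH,w_i}(V_\psi|_{G_{w_i}})$ by full faithfulness of $\sD_\pH$. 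Lemma~\ref{lem:M_decomposition} identifies this latter module with $M^1_i(\Phi_i')$, so by a symplectic variant of Lemma~\ref{lem:describe orbits}---proved by the same argument replacing $\uAut$ by $\SpAut$---the set of $\Phi_i'$ realising a given isomorphism class of $M^1_i(\Phi_i')$ is a single orbit of $H_i(\bQ_p)$, where $H_i \colonequals \SpAut(\rH^1_\pst(X_{y_i'}/\bQ_p^\nr))$ acts on $\Res^{L_{w_i}}_{\bQ_p}\dH_{y_i'}$ via Remark~\ref{rmk:induced_autos}. Consequently, the projection of the $i$-th piece lies in a finite union of $H_i(\bQ_p)$-orbits.

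To conclude, each orbit has $\bQ_p$-dimension at most $\dim_{\bQ_p}(H_i) - 1 \leq d(2d+1)$, using Proposition~\ref{prop:dim_of_aut}\eqref{proppart:dim_of_aut_symplectic} together with the fact that the central scalars in $H_i$ act trivially on any Grassmannian. Meanwhile
\[
\dim_{\bQ_p}\Res^{L_{w_i}}_{\bQ_p}\dH_{y_i'} = [L_{w_i}:\bQ_p]\cdot\frac{d(d+1)}{2} \geq 2d(d+1) > d(2d+1),
\]
using $[L_{w_i}:\bQ_p] \geq [L_{w_i}:K_v] \geq 4$ and $2d(d+1) - d(2d+1) = d \geq 1$. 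A finite union of lower-dimensional $\bQ_p$-subvarieties cannot be Zariski-dense, so the projection of $\dH_{y_0}(K_v)_{X,S,\acase,i}^\LV$ is not Zariski-dense in $\Res^{L_{w_i}}_{\bQ_p}\dH_{y_i'}$, whence $\dH_{y_0}(K_v)_{X,S,\acase,i}^\LV$ itself (and therefore its finite union $\dH_{y_0}(K_v)_{X,S,\acase}^\LV$) is not Zariski-dense in $\dH_{y_0}$. The main obstacle I anticipate is the careful bookkeeping when chasing isomorphism classes and symplectic structures through this chain of identifications---in particular, verifying that a single Galois representative $\psi$ really suffices to recover $V_\psi$ from its induced representation and that the symplectic structures align correctly via Lemma~\ref{lem:M_decomposition}; once this setup is in place, the dimension count itself is a one-line computation.
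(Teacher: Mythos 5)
Your overall architecture (reduce to the distinguished component $i$ with $[L_{w_i}:K_v]\geq4$, describe the relevant Lagrangians as finitely many orbits of $\uAut^{\GSp}$ of the $(\varphi,N,G_{w_i})$-module via a symplectic version of Lemma~\ref{lem:describe orbits}, then win by the dimension count $d(2d+1)<[L_{w_i}:K_v]\cdot d(d+1)/2$ from Proposition~\ref{prop:dim_of_aut}) is the same as the paper's, but the step that feeds finiteness into this machine is broken. You apply the symplectic Faltings Lemma~\ref{lem:faltings_symplectic} over $K$ to the induced representation $\Ind_{G_\psi}^{G_K}V_\psi$, on the strength of the claim that this induction is $\GSp$-irreducible because ``any $G_K$-stable isotropic subspace intersects $V_\psi$ in a $G_\psi$-stable isotropic subspace, which must vanish.'' That deduction does not work: a non-zero $G_K$-stable isotropic subspace $W$ of $\bigoplus_{\psi'}V_{\psi'}$ can meet every summand trivially (think of graph-type subspaces $\{(v,f(v))\}$ for an anti-symplectic equivariant isomorphism $f$ between two conjugate factors, which are isotropic for the orthogonal direct sum form), so $W\cap V_\psi=0$ gives no contradiction. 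Nor can you repair this by projecting instead of intersecting: the image $e_\psi W$ under the idempotent of $A$ at $\psi$ is $G_\psi$-stable, and it is isotropic if $W$ is isotropic for the $A$-valued pairing, but to be in the setting of Definition~\ref{def:sp-irred} and Lemma~\ref{lem:faltings_symplectic} the induction must carry a pairing into a \emph{one-dimensional} representation, i.e.\ the trace form, and isotropy for the trace form is strictly weaker than componentwise isotropy. So $\GSp$-irreducibility of $\Ind_{G_\psi}^{G_K}V_\psi$ is neither proved nor, in general, true, and with it the finiteness of the isomorphism classes of $\MM^1_i(\Phi_i')$ collapses; the dimension count then has no finite list of orbits to apply to. (A secondary gap: even granting finiteness for the inductions, recovering finiteness for $V_\psi$ as a symplectic $G_\psi$-representation needs a Mackey-type argument, not just full faithfulness of $\sD_\pH$.)

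The paper avoids inducing altogether: it applies Faltings over the field $L_\psi$ itself, where the type-\acase hypothesis literally says that $V_\psi$ is $\GSp$-irreducible. The price is an appeal to Hermite--Minkowski: since $A$ is unramified outside~$S$ ($S$-goodness) and $[L_\psi:K]\leq\delta=\deg(Y'\to Y)$, there are only finitely many possible fields $L_\psi$, and for each of them Lemma~\ref{lem:faltings_symplectic} gives only finitely many possibilities for $V_\psi$, hence for $\sD_{\pH,w_\psi}(V_\psi|_{G_{w_\psi}})\cong\MM^1_i(\Phi_i')$; this is exactly the content of the finite sets $T_i$ in Definition~\ref{def:T_representations_a}. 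If you substitute this input for your induction step, the remainder of your argument goes through essentially as in Lemma~\ref{lem:non-density_componentwise_a}, provided you also make explicit the small base change--Weil restriction step you elide at the end: containment in a proper closed $\bQ_p$-subvariety of $\Res^{L_{w_i}}_{\bQ_p}\dH_{y_i'}$ must be converted into containment in a proper closed $K_v$-subvariety of the factor $\Res^{L_{w_i}}_{K_v}\dH_{y_i'}$ before you can conclude non-density in the $K_v$-variety $\dH_{y_0}$.
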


\begin{cor}\label{cor:finiteness_a}
	If~$X\to Y'\to Y$ has full monodromy, then $Y(K_v)_{X,S,\acase}^\LV$ is finite.
	\begin{proof}[Proof of Corollary~\ref{cor:finiteness_a}]
		Choose a point $y_0\in Y(K_v)$ and let~$U_{y_0}\subseteq Y_{K_v}^\an$ be an admissible open neighbourhood isomorphic to a closed disc, over which~$\cH^0_\deR(X_{K_v}/Y_{K_v})^\an$ and~$\cH^1_\deR(X_{K_v}/Y_{K_v})^\an$ have a full basis of horizontal sections. It follows from Proposition~\ref{prop:period_maps_control_abf_reps} that the image of~$Y(K_v)_{X,S,\acase}^\LV\cap U_{y_0}(K_v)$ under the period map~$\Phi_{y_0}$ lies in~$\dH_{y_0}(K_v)_{X,S,\acase}^\LV$, and hence~$Y(K_v)_{X,S,\acase}^\LV\cap U_{y_0}(K_v)$ is finite by Corollary~\ref{cor:non-density_gives_finiteness}. By compactness this implies that~$Y(K_v)_{X,S,\acase}^\LV$ is finite.
	\end{proof}
\end{cor}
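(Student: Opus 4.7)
The plan is to deduce the finiteness of the global locus $Y(K_v)_{X,S,\acase}^\LV$ from the non-Zariski-density of its image under the local period map, which is the content of Proposition~\ref{prop:non-density_a}, by covering $Y(K_v)$ with finitely many suitable polydiscs and applying Corollary~\ref{cor:non-density_gives_finiteness} on each one.

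First I would fix an arbitrary point $y_0 \in Y(K_v)$ and produce an admissible open neighbourhood $U_{y_0} \subseteq Y_{K_v}^\an$ of $y_0$, isomorphic to a closed disc, over which both relative de Rham cohomology sheaves $\cH^0_\deR(X_{K_v}/Y_{K_v})^\an$ and $\cH^1_\deR(X_{K_v}/Y_{K_v})^\an$ admit a full basis of horizontal sections for their Gau\ss--Manin connections. The existence of such a disc is guaranteed by the general trivialisation result for vector bundles with flat connection on rigid-analytic spaces cited as \cite[Theorem 9.7]{shimizu:p-adic_monodromy} (applied to the direct sum of the two sheaves), which ensures full triviality over a sufficiently small admissible neighbourhood of any point.

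Next, I would track points of the locus through the period map. If $y \in U_{y_0}(K_v) \cap Y(K_v)_{X,S,\acase}^\LV$, then by definition there exists an $S$-good symplectic pair $(A,V)$ of type \acase such that $\rH^{\leq 1}_\et(X_{y,\Kbar_v},\bQ_p) \cong (A|_{G_v}, V|_{G_v})$. Applying $\sD_\pH$ and combining with the commutativity of the square of Proposition~\ref{prop:period_maps_control_abf_reps}, we obtain an isomorphism $\MM^{\leq 1}(\Phi_{y_0}(y)) \cong (\sD_\pH(A|_{G_v}), \sD_\pH(V|_{G_v}))$ of symplectic pairs in $\MF(\varphi,N,G_v)$. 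By the very definition of the period-domain locus, this shows $\Phi_{y_0}(y) \in \dH_{y_0}(K_v)_{X,S,\acase}^\LV$, so that $\Phi_{y_0}\bigl(U_{y_0}(K_v) \cap Y(K_v)_{X,S,\acase}^\LV\bigr) \subseteq \dH_{y_0}(K_v)_{X,S,\acase}^\LV$. By Proposition~\ref{prop:non-density_a}, the latter set is not Zariski-dense in $\dH_{y_0}$, hence neither is the former.

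At this point Corollary~\ref{cor:non-density_gives_finiteness} applies directly: since $X \to Y' \to Y$ has full monodromy by hypothesis, the non-Zariski-density of $\Phi_{y_0}\bigl(U_{y_0}(K_v) \cap Y(K_v)_{X,S,\acase}^\LV\bigr)$ in $\dH_{y_0}$ forces $U_{y_0}(K_v) \cap Y(K_v)_{X,S,\acase}^\LV$ itself to be finite. Finally, since $Y$ is projective, $Y(K_v)$ is compact in its $v$-adic topology, and so can be covered by finitely many such neighbourhoods $U_{y_0}$. A finite union of finite sets is finite, which yields the desired finiteness of $Y(K_v)_{X,S,\acase}^\LV$. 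No step is really an obstacle here: the content of the corollary is entirely bookkeeping, with all the real work concentrated in Proposition~\ref{prop:non-density_a} and the prior development of the period-map machinery.
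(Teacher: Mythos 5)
Your proposal is correct and follows essentially the same route as the paper's proof: trivialise $\cH^0_\deR$ and $\cH^1_\deR$ over a closed disc around each $y_0$, use Proposition~\ref{prop:period_maps_control_abf_reps} to push the locus into $\dH_{y_0}(K_v)_{X,S,\acase}^\LV$, then combine Proposition~\ref{prop:non-density_a} with Corollary~\ref{cor:non-density_gives_finiteness} and conclude by compactness of $Y(K_v)$. The only difference is that you spell out the unwinding of definitions behind the containment of the period-map image, which the paper leaves implicit.
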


For the proof, let us enumerate the closed points of~$Y'_{y_0}$ as $(y_i')_{i\in I}$ as in~\S\ref{sss:period_map_decomposition} and write~$L_{w_i}=K_v(y_i')$. Recall that any Lagrangian~$\rH^0_\deR(X_{y_0}/K_v)$-submodule~$\Phi$ of~$\rH^1_\deR(X_{y_0}/K_v)$ factors as~$\prod_i\Phi_i'$ where each~$\Phi_i'$ is a Lagrangian $L_{w_i}$-subspace of~$\rH^1_\deR(X_{y_i'}/L_{w_i})$, and that we then have by Lemma~\ref{lem:M_decomposition} a decomposition
\[
\MM^1(\Phi) = \prod_i\Ind_{G_{w_i}}^{G_v}\MM^1_i(\Phi_i')
\]
in the category of filtered discrete~$(\varphi,N,G_v)$-modules, compatible with symplectic structures. We examine the possibilities for the isomorphism classes of the~$\MM^1_i(\Phi_i')$.

\begin{defi}\label{def:T_representations_a}
	For an index~$i$, let~$d_i\colonequals\dim_{L_{w_i}}X_{y_i'}$ and write~$\delta\colonequals\deg(Y'\to Y)$. We write $T_i$ for the set of isomorphism classes of symplectic $\unit$-modules of rank~$2d_i$ in the category of filtered discrete $(\varphi,N,G_{w_i})$-modules~$D$ for which there exists:
	\begin{itemize}
		\item a finite extension $L/K$ of degree~$\leq\delta$, unramified outside~$S$;
		\item a $v$-adic place~$w_i$ of~$L$; and
		\item a $\GSp$-irreducible symplectic representation~$V$ of~$G_L$, de Rham at places of~$L$ over~$p$ and unramified, pure and integral of weight~$1$ outside places of~$L$ above~$S$
	\end{itemize}
	such that $L_{w_i}$ is $K_v$-isomorphic to the completion of~$L$ at~$w_i$ and $D\cong\sD_{\pH,w_i}(V|_{G_{w_i}})$ as symplectic filtered discrete~$(\varphi,N,G_{w_i})$-modules.
\end{defi}

As a consequence of Hermite--Minkowski and Faltings' Lemma (Lemma~\ref{lem:faltings_symplectic}), we have the following.

\begin{lem}
	The set~$T_i$ is finite for all~$i$.
\end{lem}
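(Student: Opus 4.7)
The plan is to reduce the statement to two known finiteness results: Hermite--Minkowski for number fields of bounded degree with bounded ramification, and the symplectic Faltings' Lemma (Lemma~\ref{lem:faltings_symplectic}) for Galois representations.

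First I would observe that a triple $(L,w_i,V)$ as in Definition~\ref{def:T_representations_a} has only finitely many possibilities for~$L$: Hermite--Minkowski gives finitely many number fields $L/K$ of degree $\leq\delta$ which are unramified outside the finite set~$S$. For each such~$L$, there are only finitely many $v$-adic places~$w_i$ of~$L$ such that~$L_{w_i}$ is $K_v$-isomorphic to the given completion, so there are finitely many pairs $(L,w_i)$ to consider.

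Next, fix such a pair $(L,w_i)$ and let $S_L$ denote the (finite) set of places of~$L$ lying above places in~$S$. Any representation~$V$ as in the definition is a $\GSp$-irreducible symplectic representation of~$G_L$ of rank~$2d_i$ which is unramified, pure, and integral of weight~$1$ outside~$S_L$. The symplectic Faltings' Lemma (Lemma~\ref{lem:faltings_symplectic}), applied with $K$ replaced by~$L$, $S$ replaced by $S_L\cup\{\text{places of }L\text{ above }p\infty\}$, $n=1$ and $d=d_i$, guarantees that there are only finitely many such~$V$ up to isomorphism.

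Finally, the isomorphism class of $D=\sD_{\pH,w_i}(V|_{G_{w_i}})$ as a symplectic filtered discrete $(\varphi,N,G_{w_i})$-module is determined by that of~$V|_{G_{w_i}}$, which in turn is determined by the isomorphism class of~$V$ as a $G_L$-representation. So the finitely many possibilities for the triple $(L,w_i,V)$ yield at most finitely many isomorphism classes in~$T_i$. There is no real obstacle here; the two ingredients combine cleanly once one has accounted for the fact that the field~$L$ is not fixed \emph{a priori} but only constrained in degree and ramification.
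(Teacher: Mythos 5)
Your proposal is correct and is exactly the argument the paper intends: the paper simply asserts the lemma "as a consequence of Hermite--Minkowski and Faltings' Lemma (Lemma~\ref{lem:faltings_symplectic})", and your sketch fills in those two steps in the same way (finitely many pairs $(L,w_i)$, then finitely many $\GSp$-irreducible $V$ of rank $2d_i$ up to isomorphism, whose local $\sD_{\pH,w_i}$ determines $D$).
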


Now as in \S\ref{s:abelian-by-finite} let us write
\[
\dH_{y_i'} \colonequals \LGrass(\rH^1_\deR(X_{y_i'}/L_{w_i}))
\]
for the Lagrangian Grassmannian parametrising Lagrangian $L_{w_i}$-subspaces~$\Phi_i'$ of~$\rH^1_\deR(X_{y_i'}/L_{w_i})$, and write
\[
\dH_{y_i'}(L_{w_i})_\acase\subseteq\dH_{y_i'}(L_{w_i})
\]
for the set of Lagrangian $L_{w_i}$-subspaces~$\Phi_i'$ for which $\MM^1_i(\Phi_i') \in T_i$. The key computation is as follows.

\begin{lem}[cf.\ proof of {\cite[Lemma~6.2]{LVinventiones}}]\label{lem:non-density_componentwise_a}
	If~$[L_{w_i}:K_v]\geq4$, then $\dH_{y_i'}(L_{w_i})_\acase$ is not Zariski-dense in $\Res^{L_{w_i}}_{K_v}\dH_{y_i'}$.
\end{lem}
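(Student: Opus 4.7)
\textbf{Proof plan for Lemma~\ref{lem:non-density_componentwise_a}.} Since $T_i$ is finite, it suffices to show that for each fixed isomorphism class $[D]\in T_i$, the set $X_D\subseteq\dH_{y_i'}(L_{w_i})$ of Lagrangian subspaces $\Phi_i'$ with $\MM^1_i(\Phi_i')\cong D$ as symplectic filtered discrete $(\varphi,N,G_{w_i})$-modules fails to be Zariski-dense in $\Res^{L_{w_i}}_{K_v}\dH_{y_i'}$.

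First I will apply the symplectic analogue of Lemma~\ref{lem:describe orbits}. Arguing exactly as in that lemma but restricting to symplectic automorphisms, the set $X_D$ is contained in a single orbit of the group $H_i\colonequals\SpAut(\rH^1_\pst(X_{y_i'}/\bQ_p^\nr))(\bQ_p)$ acting on $(\Res^{L_{w_i}}_{\bQ_p}\dH_{y_i'})(\bQ_p)=\dH_{y_i'}(L_{w_i})$ via the action on $\rH^1_\deR(X_{y_i'}/L_{w_i})$ described in Remark~\ref{rmk:induced_autos}. This orbit sits inside a $\bQ_p$-subvariety of $\Res^{L_{w_i}}_{\bQ_p}\dH_{y_i'}$ of dimension at most $\dim_{\bQ_p}H_i - 1$, the subtracted $1$ accounting for the central $\bG_m$ of scalars in $\SpAut$, which preserves every Lagrangian and hence stabilises every point of $\dH_{y_i'}$. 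By Proposition~\ref{prop:dim_of_aut}\eqref{proppart:dim_of_aut_symplectic} this gives an upper bound of $d_i(2d_i+1)$ on the $\bQ_p$-dimension.

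Next I will turn this $\bQ_p$-dimension bound into the desired $K_v$-non-density. The general principle needed is that if $V$ is a geometrically integral $K_v$-variety and $X\subseteq V(K_v)=(\Res^{K_v}_{\bQ_p}V)(\bQ_p)$ is contained in a closed $\bQ_p$-sub\-variety $Z\subseteq\Res^{K_v}_{\bQ_p}V$ with $\dim_{\bQ_p}Z<\dim_{K_v}V$, then $X$ is not Zariski-dense in $V$ as a $K_v$-variety. Indeed, after base-changing to $\Kbar_v$ we have $(\Res^{K_v}_{\bQ_p}V)_{\Kbar_v}\cong\prod_\sigma V^\sigma_{\Kbar_v}$ indexed by the $\bQ_p$-embeddings $\sigma\colon K_v\hookrightarrow\Kbar_v$; projecting $Z_{\Kbar_v}$ to the factor $\sigma=\mathrm{id}$ yields a constructible subset of $V_{\Kbar_v}$ containing $X$ whose Zariski closure has dimension $\leq\dim_{\bQ_p}Z<\dim_{\Kbar_v}V_{\Kbar_v}$, so it is contained in a proper closed subvariety of $V_{\Kbar_v}$, which descends (via intersection with Galois conjugates) to a proper closed $K_v$-subvariety of $V$ containing $X$.

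Finally, applying this with $V=\Res^{L_{w_i}}_{K_v}\dH_{y_i'}$, so that $\Res^{K_v}_{\bQ_p}V=\Res^{L_{w_i}}_{\bQ_p}\dH_{y_i'}$ and $\dim_{K_v}V=[L_{w_i}:K_v]\cdot\frac{d_i(d_i+1)}{2}$, the desired inequality
\[
d_i(2d_i+1) \;<\; [L_{w_i}:K_v]\cdot\frac{d_i(d_i+1)}{2}
\]
follows from $[L_{w_i}:K_v]\geq4$ and $d_i\geq1$, since then the right-hand side is at least $2d_i(d_i+1)=2d_i^2+2d_i$, strictly greater than $d_i(2d_i+1)=2d_i^2+d_i$. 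This completes the argument. The principal subtlety is the bookkeeping of step three — converting a $\bQ_p$-dimension bound on a Weil-restricted subvariety into genuine $K_v$-non-density — and the reason the strict inequality works only for $[L_{w_i}:K_v]\geq 4$ is precisely that exactly this factor of four enters the left-hand side of the dimension comparison through the Weil restriction.
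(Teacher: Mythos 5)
Your proof is correct and follows essentially the same route as the paper: finiteness of $T_i$ reduces to finitely many orbits of $\SpAut(\rH^1_\pst(X_{y_i'}/\bQ_p^\nr))(\bQ_p)$, the scalar $\bG_m$ is quotiented out to get the bound $d_i(2d_i+1)$ from Proposition~\ref{prop:dim_of_aut}\eqref{proppart:dim_of_aut_symplectic}, and the strict inequality $d_i(2d_i+1)<[L_{w_i}:K_v]\cdot\frac{d_i(d_i+1)}2$ for $[L_{w_i}:K_v]\geq4$ gives non-density. The only cosmetic difference is that where the paper cites the base change--Weil restriction adjunction to convert the $\bQ_p$-dimension bound into $K_v$-non-density, you spell this out by hand via base change to $\Kbar_v$, projection to one factor, and Galois descent, which is a valid unpacking of the same step.
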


\begin{rmk}
	It is important in Lemma~\ref{lem:non-density_componentwise_a} that we Weil-restrict down to $K_v$: we certainly make no claim as to whether $\dH_{y_i'}(L_{w_i})_\acase$ is Zariski-dense in~$\dH_{y_i'}$.
\end{rmk}

\begin{proof}[Proof of Lemma~\ref{lem:non-density_componentwise_a}]
	Since~$T_i$ is finite, the set~$\dH_{y_i'}(L_{w_i})_\acase$ is a finite union of orbits under the action of (the $\bQ_p$-points of) $\uAut^\GSp_{(\varphi,N,G_{w_i})}(\rH^1_\pst(X_{y_i'}/\bQ_p^\nr))$ on $\dH_{y_i'}(L_{w_i})$, as in Lemma~\ref{lem:describe orbits}. This action is $\bQ_p$-algebraic, i.e.\ arises from an action of the $\bQ_p$-algebraic group 
	$\uAut^\GSp_{(\varphi,N,G_{w_i})}(\rH^1_\pst(X_{y_i'}/\bQ_p^\nr))$ on $\Res^{L_{w_i}}_{\bQ_p}\dH_{y_i'}$. Since the scalars inside $\uAut^\GSp_{(\varphi,N,G_{w_i})}(\rH^1_\pst(X_{y_i'}/\bQ_p^\nr))$ act trivially, it follows from the dimension estimate of Proposition~\ref{prop:dim_of_aut}\eqref{proppart:dim_of_aut_symplectic} that $\dH_{y_i'}(L_{w_i})_\acase$ is contained in a closed $\bQ_p$-algebraic subvariety of $\Res^{L_{w_i}}_{\bQ_p}\dH_{y_i'}$ of $\bQ_p$-dimension~$\leq d_i(2d_i+1)$. By the base change--Weil restriction adjunction, this implies that $\dH_{y_i'}(L_{w_i})_\acase$ is also contained in a closed $K_v$-algebraic subvariety of $\Res^{L_{w_i}}_{K_v}\dH_{y_i'}$ of $K_v$-dimension~$\leq d_i(2d_i+1)$. Since the $K_v$-dimension of $\Res^{L_{w_i}}_{K_v}\dH_{y_i'}$ is 
	\[
	\dim_{K_v} \Res^{L_{w_i}}_{K_v}\dH_{y_i'} = [L_{w_i}:K_v]\cdot\frac{d_i(d_i+1)}2>d_i(2d_i+1) \,,
	\]
	we have non-density for dimension reasons.
\end{proof}

\begin{proof}[Proof of Proposition~\ref{prop:non-density_a}]
	Consider the decomposition
	\[
	\dH_{y_0} = \prod_i\Res^{L_{w_i}}_{K_v}\dH_{y_i'}
	\]
	from~\eqref{eq:grassmannian_decomposition}. We claim that for every~$\Phi\in\dH_{y_0}(K_v)_{X,S,\acase}^\LV$ there exists an index~$i$ such that~$[G_v:G_{w_i}]\geq4$ and the~$i$th component~$\Phi_i'$ of~$\Phi$ lies in~$\dH_{y_i'}(L_{w_i})_\acase$. The proposition then follows from Lemma~\ref{lem:non-density_componentwise_a}.
	
	Proving the claim is a matter of unwinding definitions. Given some~$\Phi\in\dH_{y_0}(K_v)_{X,S,\acase}^\LV$, there is by definition an $S$-good symplectic pair~$(A,V)$ of type~\acase in the category of $G_K$-representations and an isomorphism
	\begin{equation}\label{eq:interpolating_pair_a}\tag{$\ast$}
	\MM^{\leq1}(\Phi) \cong (\sD_\pH(A|_{G_v}),\sD_\pH(V|_{G_v}))
	\end{equation}
	of symplectic pairs in filtered~$(\varphi,N,G_v)$-modules. Recall that being of type~\acase means that there exists some~$\psi\in\Sigma_A$ such that~$[G_v:G_{w_\psi}]\geq4$ and~$V_\psi$ is $\GSp$-irreducible. Since~$\sD_\pH$ is a fully faithful $\otimes$-functor, the algebra part of~\eqref{eq:interpolating_pair_a} is~$\sD_\pH$ of an isomorphism
	\[
	\rH^0_\et(X_{y_0,\Kbar_v},\bQ_p) \cong A|_{G_v} \,,
	\]
	so the element~$\psi\in\Sigma_A$ corresponds to a $\bQ_p$-algebra homomorphism
	\[
	\psi_i\colon\rH^0_\et(X_{y_0,\Kbar_v},\bQ_p) \to \bQ_p \,.
	\]
	Such a map is automatically the map induced by a $\Kbar_v$-point of $Y'_{y_0}$, i.e.\ a pair of a closed point $y_i'\in|Y'_{y_0}|$ and a $K_v$-embedding $K_v(y_i')\hookrightarrow\Kbar_v$. Replacing~$\psi$ by another element of~$\Sigma_A$ in its~$G_v$-orbit, we may assume without loss of generality that this embedding is the embedding $L_{w_i}\hookrightarrow\Kbar_v$ fixed in~\S\ref{sss:period_map_decomposition}.
	
	For this value of~$i$, we have~$[L_{w_i}:K_v]=[G_v:G_{w_\psi}]\geq4$ by assumption. We also have
	\[
	\sD_{\pH,w_\psi}(V_\psi|_{G_{w_\psi}}) = \sD_{\pH,w_\psi}(\bQ_p\otimes_{A,\psi}V) \cong \unit\otimes_{M^0|_{G_{w_i}},\sD_{\pH,w_i}(\psi_i)}\MM^1(\Phi)|_{G_{w_i}} = \MM^1_i(\Phi_i')
	\]
	as symplectic $\unit$-modules in the category of filtered discrete~$(\varphi,N,G_{w_i})$-modules, using Lemma~\ref{lem:M_decomposition} for the final identification. The fact that~$(A,V)$ is $S$-good of type~\acase implies that the number field~$L_\psi$, place~$w_\psi$ and symplectic representation~$V_\psi$ satisfy the conditions of Definition~\ref{def:T_representations_a}, and hence we have $\MM^1_i(\Phi_i')\in\dH_{y_i'}(L_{w_i})_\acase$. This completes the proof of the claim, and hence of the proposition.
\end{proof}

\subsection{Finiteness for points of type~\bcase}\label{ss:type_b}

Next, we deal with the set~$Y(K_v)_{X,\bcase}^\LV$. Again, this goes via the theory of period maps, but rather than using Faltings' Lemma and a dimension estimate, we instead use a more explicit argument. Throughout \S\ref{ss:type_b}, we again fix an abelian-by-finite family $X\to Y'\to Y$ and a $K_v$-point~$y_0\in Y(K_v)$. We define
\[
\dH_{y_0}(K_v)_{X,\bcase}^\LV\subseteq\dH_{y_0}(K_v)
\]
to be the set of Lagrangian $\rH^0_\deR(X_{y_0}/K_v)$-submodules~$\Phi$ of~$\rH^1_\deR(X_{y_0}/K_v)$ such that
\[
\MM^1(\Phi) \cong (\sD_\pH(A),\sD_\pH(V))
\]
for some symplectic pair~$(A,V)$ in the category of de Rham $G_v$-representations of type~\bcase. Our aim is to prove the following counterpoint to Proposition~\ref{prop:non-density_a}.

\begin{prop}\label{prop:non-density_b}
	$\dH_{y_0}(K_v)_{X,\bcase}^\LV$ is not Zariski-dense in~$\dH_{y_0}$.
\end{prop}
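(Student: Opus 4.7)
The argument parallels Proposition~\ref{prop:non-density_a}, but replaces the finiteness input from Lemma~\ref{lem:faltings_symplectic} with an explicit Schubert-type computation. Via the decomposition~\eqref{eq:grassmannian_decomposition} of $\dH_{y_0}$ and Lemma~\ref{lem:M_decomposition}, the problem reduces, for each index $i$ with $[L_{w_i}:K_v] \geq 4$, to showing that the locus of Lagrangians $\Phi_i' \in \dH_{y_i'}(L_{w_i})$ for which $\MM^1_i(\Phi_i')$ admits a non-zero isotropic sub-filtered-$(\varphi,N,G_{w_i})$-module $D_W = (D_{W,\pst},D_{W,\deR})$ satisfying $\dim_{L_{w_i}}(\Phi_i' \cap D_{W,\deR}) \geq \tfrac{1}{2}\dim_{\bQ_p^\nr}D_{W,\pst}$ is not Zariski-dense in $\Res^{L_{w_i}}_{K_v}\dH_{y_i'}$.

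Stratify this locus by $k \colonequals \dim_{\bQ_p^\nr}D_{W,\pst} \in \{1,\ldots,d_i\}$, and let $\dI_k$ denote the $\bQ_p$-scheme parametrising $k$-dimensional isotropic $(\varphi,N,G_{w_i})$-stable $\bQ_p^\nr$-subspaces of $\rH^1_\pst(X_{y_i'}/\bQ_p^\nr)$; this is a closed subscheme of the isotropic Grassmannian viewed as a $\bQ_p$-scheme. Each $D \in \dI_k(\bQ_p)$ determines via the comparison isomorphism $c_\BO$ an $L_{w_i}$-subspace $D_\deR \subseteq \rH^1_\deR(X_{y_i'}/L_{w_i})$. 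The $k$-th stratum of the bad locus is then contained in the image of the second projection of the incidence variety
\[
Z_k \subseteq \dI_{k,K_v}\times_{K_v}\Res^{L_{w_i}}_{K_v}\dH_{y_i'}
\]
consisting of pairs $(D,\Phi_i')$ with $\dim_{L_{w_i}}(\Phi_i' \cap D_\deR) \geq \lceil k/2 \rceil$. The fibres of $Z_k \to \dI_{k,K_v}$ are Weil restrictions of Schubert subvarieties $\dS(D) \subseteq \dH_{y_i'}$ of $L_{w_i}$-codimension at least $\lceil k/2 \rceil$ in the Lagrangian Grassmannian (of $L_{w_i}$-dimension $d_i(d_i+1)/2$). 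The algebraic group $H \colonequals \uAut^\GSp_{(\varphi,N,G_{w_i})}\bigl(\rH^1_\pst(X_{y_i'}/\bQ_p^\nr)\bigr)$ acts $\bQ_p$-algebraically on $\dI_k$ and, via base change to $K_v$, on $\Res^{L_{w_i}}_{K_v}\dH_{y_i'}$, with scalars acting trivially on the latter.

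Combining the Schubert bound on the fibres with an orbit-theoretic bound on the image of $Z_k$ in $\dI_{k,K_v}$ (using Proposition~\ref{prop:dim_of_aut}\eqref{proppart:dim_of_aut_symplectic}, $\dim_{\bQ_p}H \leq d_i(2d_i+1) + 1$) is expected to yield a $K_v$-dimension bound on the image of $Z_k$ of the form
\[
d_i(2d_i+1) + [L_{w_i}:K_v]\Bigl(\tfrac{d_i(d_i+1)}{2} - \lceil k/2\rceil\Bigr),
\]
which is strictly less than $[L_{w_i}:K_v]\cdot d_i(d_i+1)/2 = \dim_{K_v}\Res^{L_{w_i}}_{K_v}\dH_{y_i'}$ whenever $[L_{w_i}:K_v] \geq 4$ and $k \geq 1$. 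Taking the finite union over $k$, the whole bad locus will then be contained in a proper closed $K_v$-subvariety, giving the desired non-density.

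The hard part is justifying this $K_v$-dimension bound on the image of $Z_k$ in $\dI_{k,K_v}$: unlike in Proposition~\ref{prop:non-density_a}, one has no symplectic Faltings input to ensure that $\dI_k$ reduces to finitely many $H$-orbits, so the naive bound by $\dim_{\bQ_p} H$ does not apply directly. Resolving this requires a more refined orbit or Schubert analysis---either bounding directly the dimension of the image of $Z_k$ in $\dI_{k,K_v}$ by exploiting the $H$-invariance of the bad locus, or establishing a positive codimension bound in the $H$-quotient---in the spirit of the explicit arguments of \cite[Lemma~6.1]{LVinventiones}, adapted here to the symplectic and Weil-restricted setting of the present paper.
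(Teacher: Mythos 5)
There is a genuine gap here, and it is the one you yourself flag at the end: the dimension-count strategy modelled on Proposition~\ref{prop:non-density_a} cannot be pushed through in this setting. Two problems. First, even if you could bound the image of $Z_k$ in $\dI_{k,K_v}$ by $\dim_{\bQ_p}H\leq d_i(2d_i+1)$, your claimed inequality does not close: the bound $d_i(2d_i+1)+[L_{w_i}:K_v]\bigl(\tfrac{d_i(d_i+1)}{2}-\lceil k/2\rceil\bigr)<[L_{w_i}:K_v]\cdot\tfrac{d_i(d_i+1)}{2}$ is equivalent to $d_i(2d_i+1)<[L_{w_i}:K_v]\lceil k/2\rceil$, which fails for small $k$ (already for $k=1$ it demands $[L_{w_i}:K_v]>d_i(2d_i+1)$), and even using the true Schubert codimension rather than $\lceil k/2\rceil$ it still fails for $[L_{w_i}:K_v]=4$ and $d_i\geq2$. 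Second, there is no way to supply the missing orbit-finiteness: unlike the type~\acase argument, there is no local analogue of Faltings' Lemma, and the space of isotropic $(\varphi,N,G_{w_i})$-stable subspaces has dimension with no relation to $\dim_{\bQ_p}H$ (if $\varphi$, $N$ and $G_v$ act with little structure, $\dI_k$ can be essentially the whole isotropic Grassmannian), so bounding the bad locus by ``parameter space plus fibre'' cannot work with these inputs.

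The idea you are missing, and which the paper uses, is to exploit $\varphi$-stability of the isotropic submodule rather than to parametrise it. After base-changing $\Res^{L_{w_i}}_{K_v}\dH_{y_i'}$ to $\Kbar_v$, the $r=[L_{w_i}:K_v]$ components $\Phi_{ij}'$ of a Lagrangian are the images of a single $\Phi_{i1}'$ under the maps $\sigma_j\otimes\varphi^{m_j}$; since $D_\pst$ is $\varphi$-stable, the half-dimensional intersection with $\Kbar_v\otimes_{\bQ_p^\nr}D_\pst$ at one embedding propagates to the \emph{same} subspace at all $r$ embeddings. The problem then becomes purely linear-algebraic (Sublemma~\ref{sublem:linear_algebra}): for $r\geq4$, the set of $r$-tuples of Lagrangians all meeting a common non-zero isotropic subspace in at least half its dimension is contained in a proper Zariski-closed subset. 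That statement is proved not by any dimension count but by properness of the incidence projection from $\IGrass^*\times\LGrass^4$ together with an explicit witness: a Lagrangian $\Phi_1$, a dual complement $\Phi_2$, and the graphs of two isomorphisms $\Phi_1\xrightarrow{\sim}\Phi_2$ with distinct eigenvalue ratios, which no non-zero isotropic $W$ can half-meet simultaneously. This sidesteps entirely the need to control the family of submodules $D$, which is exactly where your approach gets stuck.
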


\begin{cor}\label{cor:finiteness_b}
	If~$X\to Y'\to Y$ has full monodromy, then $Y(K_v)_{X,\bcase}^\LV$ is finite.
\end{cor}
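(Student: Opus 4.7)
The plan is to mimic exactly the proof of Corollary~\ref{cor:finiteness_a}, replacing Proposition~\ref{prop:non-density_a} by Proposition~\ref{prop:non-density_b}. Fix a point $y_0 \in Y(K_v)$ and choose an admissible open neighbourhood $\Nbd_{y_0} \subseteq Y_{K_v}^\an$ of $y_0$, isomorphic to a closed disc, over which both $\cH^0_\deR(X_{K_v}/Y_{K_v})^\an$ and $\cH^1_\deR(X_{K_v}/Y_{K_v})^\an$ admit a full basis of horizontal sections for the Gau\ss--Manin connection. Such a $\Nbd_{y_0}$ exists because the two vector bundles with connection always trivialise on a sufficiently small closed disc around any $K_v$-point by Shimizu's result cited in \S\ref{ss:period_maps}.

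The key observation is that the period map $\Phi_{y_0}\colon \Nbd_{y_0} \to \dH_{y_0}^\an$ sends $Y(K_v)_{X,\bcase}^\LV \cap \Nbd_{y_0}(K_v)$ into $\dH_{y_0}(K_v)_{X,\bcase}^\LV$. Indeed, for any $y \in \Nbd_{y_0}(K_v)$, Proposition~\ref{prop:period_maps_control_abf_reps} provides an isomorphism
\[
\DpH\bigl(\rH^{\leq 1}_\et(X_{y,\Kbar_v},\bQ_p)\bigr) \cong \MM^{\leq 1}\bigl(\Phi_{y_0}(y)\bigr)
\]
of symplectic pairs in the category of filtered discrete $(\varphi,N,G_v)$-modules. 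If $y \in Y(K_v)_{X,\bcase}^\LV$, then by definition $\rH^{\leq 1}_\et(X_{y,\Kbar_v},\bQ_p)$ is of type~\bcase as a symplectic pair of de Rham $G_v$-representations; taking $(A,V) \colonequals \rH^{\leq 1}_\et(X_{y,\Kbar_v},\bQ_p)$ in Definition~\ref{def:types} then witnesses that $\Phi_{y_0}(y) \in \dH_{y_0}(K_v)_{X,\bcase}^\LV$.

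Given full monodromy of $X \to Y' \to Y$, Proposition~\ref{prop:non-density_b} asserts that $\dH_{y_0}(K_v)_{X,\bcase}^\LV$ is not Zariski-dense in $\dH_{y_0}$. Applying Corollary~\ref{cor:non-density_gives_finiteness} with $C_0 \colonequals Y(K_v)_{X,\bcase}^\LV \cap \Nbd_{y_0}(K_v)$, we deduce that $C_0$ is finite. Since $Y(K_v)$ is compact in its $v$-adic topology, it is covered by finitely many such neighbourhoods $\Nbd_{y_0}(K_v)$, so $Y(K_v)_{X,\bcase}^\LV$ is a finite union of finite sets, hence finite. There is no serious obstacle here: all the difficulty has been absorbed into Proposition~\ref{prop:non-density_b} and into the general machinery (Proposition~\ref{prop:period_maps_control_abf_reps} and Corollary~\ref{cor:non-density_gives_finiteness}) already established.
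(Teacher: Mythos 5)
Your proof is correct and is exactly the argument the paper intends: the paper omits the proof of Corollary~\ref{cor:finiteness_b} because it is the verbatim analogue of the proof of Corollary~\ref{cor:finiteness_a}, with Proposition~\ref{prop:non-density_b} in place of Proposition~\ref{prop:non-density_a}, and with the interpolating pair taken to be $\rH^{\leq1}_\et(X_{y,\Kbar_v},\bQ_p)$ itself (which is de Rham, as needed in the definition of $\dH_{y_0}(K_v)_{X,\bcase}^\LV$). The use of Proposition~\ref{prop:period_maps_control_abf_reps}, Corollary~\ref{cor:non-density_gives_finiteness}, and compactness of $Y(K_v)$ matches the paper's approach exactly.
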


Again, the decomposition of the fibre~$X_{y_0}$ reduces this to a simpler computation. For a closed point~$y_i'\in|Y'_{y_0}|$, we write
\[
\dH_{y_i'}(L_{w_i})_\bcase\subseteq\dH_{y_i'}(L_{w_i})
\]
for the set of Lagrangian $L_{w_i}$-subspaces $\Phi_i'$ of~$\rH^1_\deR(X_{y_i'}/L_{w_i})$ for which there exists a non-zero isotropic filtered discrete $(\varphi,N,G_{w_i})$-submodule $D \leq \MM^1_i(\Phi_i')$ such that
\[
\dim_{L_{w_i}}\rF^1\!D_\deR \geq \frac12\dim_{\bQ_p^\nr}D_\pst \,.
\]
We say that~$D$ has average Hodge--Tate weight~$\geq1/2$.

\begin{lem}[cf.\ {\cite[Lemma~6.3]{LVinventiones}}]\label{lem:non-density_componentwise_b}
	If $[L_{w_i}:K_v]\geq4$, then $\dH_{y_i'}(L_{w_i})_\bcase$ is not Zariski-dense in $\Res^{L_{w_i}}_{K_v}\dH_{y_i'}$.
	\begin{proof}
		It suffices to show that $\dH_{y_i'}(L_{w_i})_\bcase$ is not Zariski-dense in the base-change of $\Res^{L_{w_i}}_{K_v}\dH_{y_i'}$ from~$K_v$ to~$\Kbar_v$. We enumerate the $K_v$-embeddings $L_{w_i}\hookrightarrow\Kbar_v$ as $\iota_1,\dots,\iota_r$, with $\iota_1$ the embedding chosen earlier, so that there is an isomorphism
		\[
		\Kbar_v\otimes_{K_v}L_{w_i}\cong \prod_{j=1}^r \Kbar_v
		\]
		of~$\Kbar_v$-algebras whose $j$th component is $1\otimes\iota_j$. For each~$j$, we may choose an element~$\sigma_j\in G_v$ such that $\iota_j=\sigma_j\circ\iota_1$, and we may assume moreover that~$\sigma_j$ acts on~$\bQ_p^\nr$ as an integer power~$\varphi^{m_j}$ of Frobenius. In the case~$j=1$, we may take~$\sigma_1=1$ and~$m_1=0$. Fixing these choices of~$\sigma_j$ and~$m_j$, we have a $\Kbar_v$-linear isomorphism 
		\[
		\Kbar_v\otimes_{\iota_j}\rH^1_\deR(X_{y_i'}/L_{w_i})\cong\Kbar_v\otimes_{\bQ_p^\nr}\rH^1_\pst(X_{y_i'}/\bQ_p^\nr)
		\]
		given as the composite
		\begin{center}
			\begin{tikzcd}
				\Kbar_v\otimes_{\iota_j}\rH^1_\deR(X_{y_i'}/L_{w_i})  \arrow{r}{\sigma_j^{-1}\otimes1}[swap]{\sim}   & \Kbar_v\otimes_{\iota_1}\rH^1_\deR(X_{y_i'}/L_{w_i}) 
				\arrow{d}{c_\BO^{-1}\circ c_\deR^{-1}}[swap]{\sim}     & \\
				&  \Kbar_v\otimes_{\bQ_p^\nr}\rH^1_\pst(X_{y_i'}/\bQ_p^\nr)  
				\arrow{r}{\sigma_j\otimes\varphi^{m_j}} [swap]{\sim}
				& \Kbar_v\otimes_{\bQ_p^\nr} \rH^1_\pst(X_{y_i'}/\bQ_p^\nr) \,.
			\end{tikzcd}
		\end{center}
		Taken together, we obtain an isomorphism
		\begin{equation}\label{eq:base_change_trivialisation}
			\Kbar_v\otimes_{K_v}\rH^1_\deR(X_{y_i'}/L_{w_i}) \cong \prod_{j=1}^r\left(\Kbar_v\otimes_{\iota_j}\rH^1_\deR(X_{y_i'}/L_{w_i})\right) \cong \left(\Kbar_v\otimes_{\bQ_p^\nr}\rH^1_\pst(X_{y_i'}/\bQ_p^\nr)\right)^r
		\end{equation}
		of symplectic modules over $\Kbar_v\otimes_{K_v}L_{w_i}\cong\prod_{j=1}^r \Kbar_v$. In particular, Lagrangian $\Kbar_v\otimes_{K_v}L_{w_i}$-submodules of the left-hand side of~\eqref{eq:base_change_trivialisation} correspond bijectively to $r$-tuples of Lagrangian $\Kbar_v$-subspaces of the symplectic vector space $\Kbar_v\otimes_{\bQ_p^\nr}\rH^1_\pst(X_{y_i'}/\bQ_p^\nr)$, so we have a decomposition
		\[
		\left(\Res^{L_{w_i}}_{K_v}\dH_{y_i'}\right)_{\Kbar_v} \cong \LGrass(\Kbar_v\otimes_{\bQ_p^\nr}\rH^1_\pst(X_{y_i'}/\bQ_p^\nr))^r \,.
		\]
		
		If now $\Phi_i'$ is a Lagrangian $L_{w_i}$-subspace of~$\rH^1_\deR(X_{y_i'}/L_{w_i})$, then $\Kbar_v\otimes_{K_v}\Phi_i'$ corresponds under~\eqref{eq:base_change_trivialisation} to an $r$-tuple of Lagrangian $\Kbar_v$-subspaces $\Phi_{ij}'$ of $\Kbar_v\otimes_{\bQ_p^\nr}\rH^1_\pst(X_{y_i'}/\bQ_p^\nr)$ satisfying 
		\[
		\Phi_{ij}'=(\sigma_j\otimes\varphi^{m_j})(\Phi_{i1}')
		\]
		for all~$i$. If $\Phi_i'\in\dH_{y_i'}(L_{w_i})_\bcase$, then by definition there is a non-zero isotropic $(\varphi,N,G_{w_i})$-submodule $D_\pst\leq\rH^1_\pst(X_{y_i'}/\bQ_p^\nr)$ such that $\dim_{\Kbar_v}(\Kbar_v\otimes_{\bQ_p^\nr}D_\pst)\cap\Phi_{i1}'\geq\frac12\dim_{\bQ_p^\nr}D_\pst$. Since~$D_\pst$ is~$\varphi$-stable, this in fact implies that for all~$j$
		\[
		\dim_{\Kbar_v}(\Kbar_v\otimes_{\bQ_p^\nr}D_\pst)\cap\Phi_{ij}'\geq\frac12\dim_{\bQ_p^\nr}D_\pst \,.
		\]
		Hence non-density of~$\dH_{y_i'}(L_{w_i})_\bcase$ in $\left(\Res^{L_{w_i}}_{K_v}\dH_{y_i'}\right)_{\Kbar_v}$ follows from the following pure linear algebra lemma.
	\end{proof}
\end{lem}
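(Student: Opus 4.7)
The pure linear algebra lemma to be proved should say: given a symplectic vector space $V$ of dimension $2d$ over a field $F$ and an integer $r\geq 4$, the set $\cL_r\subseteq\LGrass(V)^r$ of tuples $(\Phi_1,\ldots,\Phi_r)$ for which there exists a non-zero isotropic subspace $W\leq V$ with $\dim(W\cap\Phi_j)\geq\tfrac12\dim W$ for every $j$ is not Zariski dense. The natural strategy is a dimension count via stratification by the dimension of the auxiliary isotropic witness~$W$.

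Concretely, for each integer $k$ with $1\leq k\leq d$ I would introduce the incidence variety
\[
Z_k\colonequals\bigl\{(W,\Phi_1,\ldots,\Phi_r)\in\IGrass(k,V)\times\LGrass(V)^r:\ \dim(W\cap\Phi_j)\geq\ell\text{ for all }j\bigr\},
\]
where $\ell\colonequals\lceil k/2\rceil$. Then $\cL_r$ is contained in the union over $k\in\{1,\dots,d\}$ of the images of the second projections $Z_k\to\LGrass(V)^r$, and it suffices to show $\dim Z_k<r\cdot\dim\LGrass(V)$ for every such $k$. Fibring $Z_k$ over $\IGrass(k,V)$ reduces this to two standard computations: $\dim\IGrass(k,V)=k(2d-k)-\binom{k}{2}$, and for any isotropic~$W$ of dimension~$k$ the Schubert locus $X_{W,\ell}\colonequals\{\Phi\in\LGrass(V):\dim(W\cap\Phi)\geq\ell\}$ has dimension $\ell(k-\ell)+\binom{d-\ell+1}{2}$. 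The latter formula can be obtained by realising $X_{W,\ell}$ as the image of the incidence $\{(W',\Phi):W'\leq\Phi\cap W,\ \dim W'=\ell\}$, whose generic fibres over $X_{W,\ell}$ are points.

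Combining these estimates with $\dim\LGrass(V)=\binom{d+1}{2}$, the argument reduces to verifying the elementary inequality
\[
k(2d-k)-\tbinom{k}{2}\ <\ r\cdot\Bigl(\tbinom{d+1}{2}-\ell(k-\ell)-\tbinom{d-\ell+1}{2}\Bigr)
\]
for all $k\in\{1,\dots,d\}$ whenever $r\geq 4$. The main (and essentially only) obstacle is this finite case-check; the tight case is $k=2$, $\ell=1$, where the inequality simplifies to $4d-5<r(d-1)$, equivalent to $r>4-\tfrac{1}{d-1}$. This is precisely what forces the hypothesis $r\geq 4$, and it aligns with the threshold $[G_v:G_{w_\psi}]\geq 4$ that appears in the Principal Trichotomy (Proposition~\ref{prop:principal_trichotomy}\bcase). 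For every other $k$, an elementary monotonicity argument shows that the resulting inequality is strictly weaker and so holds with room to spare once $r\geq 4$, completing the proof.
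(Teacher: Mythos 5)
Your proposal is correct in substance, but it proves the key linear-algebra input by a genuinely different method than the paper. The reduction of the arithmetic statement to the linear-algebra statement (base-changing to $\Kbar_v$, trivialising $\bigl(\Res^{L_{w_i}}_{K_v}\dH_{y_i'}\bigr)_{\Kbar_v}$ as $\LGrass(\Kbar_v\otimes_{\bQ_p^\nr}\rH^1_\pst)^r$ via Frobenius-twisted comparison isomorphisms, and using $\varphi$-stability of $D_\pst$ to see that one and the same subspace meets all $r$ components in large dimension) you take as given, and it is exactly there that the isotropy of the witness $W$ and the threshold $r=[L_{w_i}:K_v]\geq4$ enter; your formulation of the sublemma matches the paper's (Sublemma~\ref{sublem:linear_algebra}). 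For the sublemma itself the paper does not do a dimension count: it notes the bad locus is closed (properness of $\IGrass^*\times\LGrass^4\to\LGrass^4$ plus upper semicontinuity) and then exhibits an explicit $4$-tuple outside it, namely two complementary Lagrangians $\Phi_1=\langle e_\alpha\rangle$, $\Phi_2=\langle f_\alpha\rangle$ together with the graphs of $e_\alpha\mapsto f_\alpha$ and $e_\alpha\mapsto\lambda_\alpha f_\alpha$ with distinct $\lambda_\alpha$; any isotropic witness $W$ would force a common eigenvector $e_\alpha\in W$ and then $f_\alpha\in W$, contradicting isotropy. Your route instead stratifies by $k=\dim W$ and bounds $\dim Z_k\leq k(2d-k)-\binom{k}{2}+r\bigl(\ell(k-\ell)+\binom{d-\ell+1}{2}\bigr)$ with $\ell=\lceil k/2\rceil$; your dimension formulas are correct (the codimension of the Schubert condition in $\LGrass$ is $\ell(d-k+\ell)-\binom{\ell}{2}>0$), and the required inequality does hold for all $1\leq k\leq d$ when $r\geq4$: for even $k=2m$ the margin at $r=4$ is exactly $m$ (so $k=2$, margin $1$, is the tight case, matching your $4d-5<r(d-1)$), and for odd $k=2m+1$ the margin is $2d-m+1$, so your deferred "case-check'' genuinely closes. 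What each approach buys: yours is coordinate-free, gives a quantitative codimension bound for the bad locus, and generalises easily, at the cost of the stratified inequality verification; the paper's construction avoids all case analysis, works verbatim over any field, and produces an explicit tuple outside the locus, which is why it is the shorter argument in situ. One small point of care in your write-up: conclude non-density by noting each image $\pr_2(Z_k)$ is closed (or at least constructible) of dimension $<r\binom{d+1}{2}$ and that $\LGrass(V)^r$ is irreducible, so the finite union over $k$ is still a proper closed subset.
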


\newcommand{\vbla}{j}
\newcommand{\vblb}{j_1}
\newcommand{\vblc}{j_2}

\begin{sublem}[cf.\ {\cite[Lemma~6.4]{LVinventiones}}]\label{sublem:linear_algebra}
	Let $(V,\omega)$ be a symplectic vector space over a field~$F$, and let~$r\geq4$. Then the set of $r$-tuples $(\Phi_s)_{1\leq s\leq r}$ of Lagrangian $F$-subspaces of $V$ for which there exists a non-zero isotropic $F$-subspace $W\leq V$ satisfying
	\begin{equation}\label{eq:four_large_intersections}\tag{$\ast$}
		\dim(\Phi_\vbla\cap W)\geq\frac12\dim(W) \hspace{0.8cm} \text{for $1\leq \vbla\leq r$}
	\end{equation}
	is contained in a proper Zariski-closed subscheme of $\LGrass(V,\omega)^r$.
	\begin{proof}
		We may assume without loss of generality that~$F$ is algebraically closed, which anyway is the only case we will need. It suffices to prove the result in the case~$r=4$. Let $\IGrass^*(V,\omega)$ denote the Grassmannian of non-zero isotropic subspaces in $V$. The set $Z \subseteq \LGrass(V,\omega)^4$ described in the sublemma is the image under the proper projection 
		\[
		\pr_2 \colon \IGrass^*(V,\omega)\times\LGrass(V,\omega)^4 \to \LGrass(V,\omega)^4
		\]
		of the set $\tilde{Z}$ of all 	$(W,\Phi_1,\Phi_2,\Phi_3,\Phi_4)$ satisfying~\eqref{eq:four_large_intersections}. 
		Now for $\vbla=1,2,3,4$, the function $\dim(\Phi_\vbla\cap W)$  is upper-semicontinuous, and hence the subset $\tilde Z$ is Zariski-closed. The projection $Z = \pr_2(\tilde{Z})$ is then also Zariski-closed. We want to prove that $Z$ is not all of $\LGrass(V,\omega)^4$, for which it suffices to exhibit an $F$-point of $\LGrass(V,\omega)^4$ which doesn't lift to $\tilde Z(F)$.
		
		We construct such a point as follows. Let $\Phi_1 = \langle e_1,e_2,\dots,e_d\rangle$ be a Lagrangian subspace and let $\Phi_2 = \langle f_1,f_2,\dots,f_d\rangle$ be a Lagrangian complement spanned by a dual basis, i.e.\ such that $\omega(e_\alpha,f_\beta) = \delta_{\alpha\beta}$. The graph $\Phi_\psi$ of a linear map $\psi\colon \Phi_1 \to \Phi_2$ is a Lagrangian subspace of $V = \Phi_1 \oplus \Phi_2$ if and only if the pairing on $\Phi_1$ defined by $\langle x,y\rangle_\psi \colonequals \omega(x, \psi(y))$ is symmetric; and $\psi$ is an isomorphism if and only if $\langle -,-\rangle_\psi$ is non-degenerate. We choose $\psi_3,\psi_4\colon\Phi_1\xrightarrow\sim\Phi_2$ as the maps given by $\psi_3(e_\alpha)=f_\alpha$ and $\psi_4(e_\alpha)=\lambda_\alpha f_\alpha$ for a choice of distinct nonzero $\lambda_\alpha \in F^\times$. Clearly the pairings $\langle -, -\rangle_{\psi_\vbla}$ are symmetric for $\vbla = 3,4$. With $\Phi_\vbla = \Phi_{\psi_\vbla}$ for $\vbla = 3,4$ we find
		\[
		V = \Phi_{\vblb} \oplus \Phi_{\vblc} \ ; \ \text{ for all $\vblb<\vblc$, } (\vblb,\vblc) \neq (3,4) \,.
		\]
		
		Suppose now for contradiction that $(\Phi_1,\Phi_2,\Phi_3,\Phi_4)$ lies in the image of a point in $\tilde Z(F)$. There is thus a non-zero isotropic $F$-subspace $W$ of $V$ satisfying~\eqref{eq:four_large_intersections}.  It follows from this that, writing $W_\vbla = \Phi_\vbla \cap W$, we have
		\[
		W = W_{\vblb} \oplus W_{\vblc} \ ; \ \text{ for all $\vblb<\vblc$, } (\vblb,\vblc) \neq (3,4) \,.
		\]
		It follows that $\psi_\vbla(W_1) = W_2$ for $\vbla = 3, 4$ and thus $W_1$ is a $g = \psi_3^{-1} \psi_4$-stable subspace of $\Phi_1$. Since $g(e_\alpha) = \lambda_\alpha e_\alpha$, the map $g$ has $\dim(\Phi_1)$-many different eigenvalues and so $W_1$ must contain one of the vectors~$e_\alpha$. But then $W$ also contains $\psi_3(e_\alpha)=f_\alpha$, contradicting the fact that~$W$ is isotropic as $\omega(e_\alpha,f_\alpha) = 1$.
	\end{proof}
\end{sublem}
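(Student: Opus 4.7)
The plan is to show the set in question is Zariski-closed in $\LGrass(V,\omega)^r$ and then to exhibit a single $r$-tuple of Lagrangians avoiding it. I may reduce to $r = 4$ and assume $F$ algebraically closed, since both closedness and non-density are insensitive to base change, and a counterexample with $r = 4$ yields one for any $r \geq 4$: the set for $r > 4$ is contained in the preimage under the projection to the first four factors of the corresponding set for $r = 4$. For closedness, consider the incidence variety
\[
\tilde Z = \{(W,\Phi_1,\ldots,\Phi_4) \in \IGrass^*(V,\omega) \times \LGrass(V,\omega)^4 : \dim(\Phi_j \cap W) \geq \tfrac12\dim(W),\ j = 1,\ldots,4\},
\]
where $\IGrass^*(V,\omega)$ denotes the disjoint union over dimensions of the isotropic Grassmannians of nonzero isotropic subspaces. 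Upper-semicontinuity of $\dim(\Phi \cap W)$ makes $\tilde Z$ Zariski-closed, and the second projection is proper, so its image $Z$ is Zariski-closed in $\LGrass(V,\omega)^4$.

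It remains to produce an explicit 4-tuple off $Z$. Pick dual bases: a Lagrangian $\Phi_1 = \langle e_1, \ldots, e_d\rangle$ and a Lagrangian complement $\Phi_2 = \langle f_1, \ldots, f_d\rangle$ with $\omega(e_\alpha, f_\beta) = \delta_{\alpha\beta}$. I will use the standard fact that Lagrangians transverse to $\Phi_2$ are precisely the graphs of linear maps $\psi \colon \Phi_1 \to \Phi_2$ for which the pairing $\langle x, y\rangle_\psi \colonequals \omega(x, \psi(y))$ is symmetric on $\Phi_1$. Take $\Phi_3$ to be the graph of $\psi_3 \colon e_\alpha \mapsto f_\alpha$ and $\Phi_4$ the graph of $\psi_4 \colon e_\alpha \mapsto \lambda_\alpha f_\alpha$ for a choice of pairwise distinct nonzero scalars $\lambda_\alpha \in F^\times$; in both cases the induced pairing is diagonal, so symmetric. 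With these choices, every pair among $\{\Phi_1, \Phi_2, \Phi_3, \Phi_4\}$ except possibly $(\Phi_3, \Phi_4)$ is transverse.

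Now suppose towards a contradiction that some nonzero isotropic $W \leq V$ satisfies $\dim(\Phi_j \cap W) \geq \dim(W)/2$ for each $j = 1, 2, 3, 4$. Setting $W_j = W \cap \Phi_j$, the transversalities force $W = W_1 \oplus W_j$ for $j = 2, 3, 4$, so $\psi_j(W_1) = W_2$ for $j = 3, 4$ and hence $W_1$ is stable under $g \colonequals \psi_3^{-1}\psi_4$. But $g$ acts on $\Phi_1$ as the diagonal operator $e_\alpha \mapsto \lambda_\alpha e_\alpha$ with $d$ distinct eigenvalues, so $W_1$ must contain some $e_\alpha$. Then $W$ also contains $\psi_3(e_\alpha) = f_\alpha$, contradicting isotropy since $\omega(e_\alpha, f_\alpha) = 1$. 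The main obstacle is finding a 4-tuple of Lagrangians in sufficiently generic position that the intersection pattern forced by the hypothesis becomes too rigid to coexist with isotropy; the graph-of-symmetric-map parametrisation makes this generic position explicit, and distinctness of the $\lambda_\alpha$ does the rest. Note that this is where the hypothesis $r \geq 4$ enters, as we need three pairwise transverse Lagrangians beyond $\Phi_1$ to pin down both $W_2$ via $\psi_3$ and $\psi_4$ independently.
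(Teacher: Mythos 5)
Your proposal is correct and follows essentially the same route as the paper's own proof: the same incidence variety $\tilde Z$ with properness of the projection for Zariski-closedness, and the same explicit $4$-tuple (a dual pair of Lagrangians together with the graphs of $e_\alpha\mapsto f_\alpha$ and $e_\alpha\mapsto\lambda_\alpha f_\alpha$ with distinct $\lambda_\alpha$), concluding via the stability of $W_1$ under $\psi_3^{-1}\psi_4$ and the distinct-eigenvalue argument. Your explicit justification of the reduction from general $r$ to $r=4$ via the projection to the first four factors is a minor elaboration of a step the paper leaves implicit.
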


\begin{rmk}
	The corresponding result in \cite{LVinventiones}, Lemma~6.4, differs from Sublemma~\ref{sublem:linear_algebra} in that~$W$ is not required to be isotropic, but on the other hand,~$r$ is required to be at least~$ 5$.
\end{rmk}

\begin{proof}[Proof of Proposition~\ref{prop:non-density_b}]
	As in the proof of Proposition~\ref{prop:non-density_a}, our aim is to prove that for every $\Phi\in\dH_{y_0}(K_v)_{X,\bcase}^\LV$ there exists an index~$i$ such that~$[G_v:G_{w_i}]\geq4$ and the $i$th component~$\Phi_i'$ of~$\Phi$ lies in~$\dH_{y_i'}(L_{w_i})_\bcase$. This implies the proposition by Lemma~\ref{lem:non-density_componentwise_b} and the decomposition~\eqref{eq:grassmannian_decomposition} of~$\dH_{y_0}$.
	
	Again, proving this claim is a matter of unwinding definitions. Given some~$\Phi\in\dH_{y_0}(K_v)_{X,\bcase}^\LV$, there is by definition a symplectic pair~$(A,V)$ of type~\bcase in the category of $G_v$-representations and an isomorphism
	\begin{equation}\label{eq:interpolating_pair_b}\tag{$\ast$}
		\MM^1(\Phi) \cong (\sD_\pH(A),\sD_\pH(V))
	\end{equation}
	of symplectic pairs in filtered~$(\varphi,N,G_v)$-modules. Recall that being of type~\bcase means that there exists some~$\psi\in\Sigma_A$ such that~$[G_v:G_{w_\psi}]\geq4$ and~$V_\psi$ possesses a non-zero isotropic $G_{w_\psi}$-subrepresentation $W\leq V_\psi$ whose average Hodge--Tate weight is~$\geq1/2$. As in the proof of Proposition~\ref{prop:non-density_a}, the element~$\psi\in\Sigma_A$ corresponds to the $\bQ_p$-algebra homomorphism
	\[
	\psi_i\colon\rH^0_\et(X_{y_0,\Kbar_v},\bQ_p) \to \bQ_p
	\]
	induced by a pair of a closed point $y_i'\in|Y'_{y_0}|$ and a $K_v$-embedding $K_v(y_i')\hookrightarrow\Kbar_v$; again we may assume without loss of generality that this embedding is the embedding $L_{w_i}\hookrightarrow\Kbar_v$ fixed in~\S\ref{sss:period_map_decomposition}.
	
	For this value of~$i$, we have~$[L_{w_i}:K_v]=[G_v:G_{w_\psi}]\geq4$ by assumption, and
	\[
	\sD_{\pH,w_\psi}(V_\psi|_{G_{w_\psi}}) \cong \MM^1_i(\Phi_i')
	\]
	as symplectic $\unit$-modules in the category of filtered discrete~$(\varphi,N,G_{w_i})$-modules. Hence~$\sD_{\pH,w_\psi}(W)$ corresponds to a non-zero isotropic filtered discrete $(\varphi,N,G_{w_i})$-submodule~$D$ of $\MM^1_i(\Phi_i')$ whose average Hodge--Tate weight is~$\geq1/2$. So~$\MM^1_i(\Phi_i')\in\dH_{y_i'}(L_{w_i})_\bcase$, completing the proof.
\end{proof}

\subsection{The Kodaira--Parshin family and points of type~\ccase}\label{ss:type_ccase}

It remains to deal with the set $Y(K_v)_{X,\ccase}^\LV$. The approach here is different to the approach for $Y(K_v)_{X,S,\acase}^\LV$ and $Y(K_v)_{X,\bcase}^\LV$: rather than prove that $Y(K_v)_{X,\ccase}^\LV$ is finite in any level of generality, we will instead exhibit a particular abelian-by-finite family $X\to Y'\to Y$ for which $Y(K_v)_{X,\ccase}^\LV=\emptyset$. The relevant construction was given in \cite[\S7]{LVinventiones}, where Lawrence and Venkatesh constructed an abelian-by-finite family $X_q\to Y_q'\to Y$ called the \emph{Kodaira--Parshin family} with parameter~$q$ (an odd prime number). The Kodaira--Parshin family has constant relative dimension $d_q=(g-1/2)(q-1)>0$ \cite[\S7.2]{LVinventiones} and always has full monodromy \cite[\S8.2.3]{LVinventiones}. The key point here is that, for a suitable choice of~$q$, we also have $Y(K_v)_{X,\ccase}^\LV=\emptyset$.

\begin{prop}\label{prop:choose_the_right_k-p_family}
	Let $X_q\to Y'_q\to Y$ be the Kodaira--Parshin family with parameter~$q$ an odd prime number, and suppose that~$q$ satisfies the following two conditions:
	\begin{itemize}
		\item $q-1$ is not divisible by~$4$ or any odd prime divisor of $q_v(q_v+1)(q_v^3-1)$, where~$q_v$ is the cardinality of the residue field of~$K_v$; and
		\item $q-1 \geq 9n_v$, where~$n_v$ is the number of extensions of~$K_v$ of degree~$2$ or~$3$ inside~$\Kbar_v$.
	\end{itemize}
	Then $Y(K_v)_{X_q,\ccase}^\LV=\emptyset$.
	\begin{proof}
		We follow a proof similar to~\cite[Theorem~5.4]{LVinventiones}. For a finite non-empty $G_v$-set~$\Sigma$ let us write\footnote{This is the same as \cite[Definition~5.2]{LVinventiones}, except that we count orbits of size~$<4$ rather than~$<8$.} 
		\[
		\size_v(\Sigma) \colonequals \frac{\#\{x\in\Sigma \::\: \text{$x$ is contained in a $G_v$-orbit of size~$<4$}\}}{\#\Sigma} \,.
		\]
		So we wish to prove that
		\[
		\size_v(Y'_{q,y}(\Kbar_v)) < \frac1{d_q+1}
		\]
		for all~$y\in Y(K_v)$.
		
		We denote by $\Aff(q)$ the affine linear group $\bF_q\rtimes\bF_q^\times$ of dimension $1$, and write~$k=(q-1)/2$ for short. The construction of the Kodaira--Parshin family shows that there is a $G_v$-equivariant bijection
		\[
		Y'_{q,y}(\Kbar_v) \cong \Surj^{\out,*}(\pi_1^\et(Y_{\Kbar_v}-y),\Aff(q))\,,
		\]
		where $\Surj^{\out,*}(\pi_1^\et(Y_{\Kbar_v}-y),\Aff(q))$ denotes the set of surjections
		$\pi_1^\et(Y_{\Kbar_v}-y)\twoheadrightarrow\Aff(q)$	which are non-trivial on an inertia generator at $y$, up to conjugation. The projections $\Aff(q)\surj \bF_q^\times\twoheadrightarrow \bZ/k\bZ$ induce $G_v$-equivariant surjections
		\[
		\Surj^{\out,*}(\pi_1^\et(Y_{\Kbar_v}-y),\Aff(q))\twoheadrightarrow\Surj(\pi_1^\et(Y_{\Kbar_v}-y),\bF_q^\times)=\Surj(\pi_1^\et(Y_{\Kbar_v}),\bF_q^\times) \twoheadrightarrow \Surj(\pi_1^\et(Y_{\Kbar_v},\bZ/k\bZ)) \,,
		\]
		each of whose fibres all have the same size, see \cite[Lemma~2.11]{LVinventiones}\footnote{Strictly speaking, \cite[Lemma~2.11]{LVinventiones} only proves the corresponding statement when $\Surj^{\out,*}(\pi_1^\et(Y_{\Kbar_v}-y),\Aff(q))$ is replaced by the set $\Surj^*(\pi_1^\et(Y_{\Kbar_v}-y),\Aff(q))$ of surjections $\pi_1^\et(Y_{\Kbar_v}-y)\twoheadrightarrow\Aff(q)$ which are non-trivial on an inertia generator at~$y$, \emph{not} up to conjugation. But the conjugation action of $\Aff(q)$ on $\Surj^*(\pi_1^\et(Y_{\Kbar_v}-y),\Aff(q))$ is free (since~$\Aff(q)$ has trivial centre), from which we deduce the statement we need.}. We then have the estimate
		\[
		\size_v(Y_{q,y}(\Kbar_v)) = \size_v(\Surj^{\out,*}(\pi_1^\et(Y_{\Kbar_v}-y),\Aff(q))) \leq \size_v(\Surj(\pi_1^\et(Y_{\Kbar_v}),\bZ/k\bZ)) \,,
		\]
		so it suffices to prove that $\size_v(\Surj(\pi_1^\et(Y_{\Kbar_v}),\bZ/k\bZ))<\frac1{d_q+1}$.
		
		On the one hand, the number of surjections $\pi_1^\et(Y_{\Kbar_v})\twoheadrightarrow\bZ/k\bZ$ is equal to the number of $2g$-tuples of elements of $\bZ/k\bZ$ which generate it as an abelian group. The number of such tuples is
		\[
		\# \Surj(\pi_1^\et(Y_{\Kbar_v}),\bZ/k\bZ) = k^{2g}\cdot\prod_r(1-r^{-2g})>\zeta(2g)^{-1}\cdot k^{2g}
		\]
		where $r$ runs through the distinct prime factors\footnote{One can tighten the bound slightly by observing that~$2,3\nmid k$.} of $k$.
		
		On the other hand, the group of homomorphisms $\pi_1^\et(Y_{\Kbar})\rightarrow\bZ/k\bZ$ (not necessarily surjective) is $G_v$-equivariantly isomorphic to the \'etale cohomology group
		\[
		M\colonequals\rH^1_\et(Y_{\Kbar_v},\bZ/k\bZ) \,,
		\]
		which carries a perfect $G_v$-equivariant alternating Weil pairing
		\[
		\langle\cdot,\cdot\rangle\colon M\otimes M\rightarrow\mu_k^\vee\,.
		\]
		We need to estimate the size of the subset $\bigcup_{G_w} M^{G_w}$ of $M$, where $G_w$ ranges over open subgroups of index~$2$ or~$3$. Such a $G_w$ contains an element $T$ acting on the residue field as the $i$th power of arithmetic Frobenius for some $1\leq i\leq3$. If $m_1,m_2\in M^{G_w}$, we then have
		\[
		\langle m_1,m_2\rangle = \langle Tm_1,Tm_2\rangle = T\langle m_1,m_2\rangle = q_v^{-i}\langle m_1,m_2\rangle\,.
		\]
		But the first of our assumptions on $q$ ensures that $\gcd(k,q_v^i-1)=1$, and hence we have $\langle m_1,m_2\rangle=0$, i.e.\ $M^{G_w}$ is isotropic with respect to the Weil pairing. This implies that
		\[
		\# M^{G_w} \leq k^g \,.
		\]
		In total, the number of elements of~$M$ contained in a $G_v$-orbit of size~$<4$ is at most~$n_vk^g$.
		
		Combining this with the previous part, we obtain
		\[
		\size_v(\Surj(\pi_1^\et(Y_{\Kbar_v}),\bZ/k\bZ)) < \frac{\zeta(2g)n_v}{k^g} < \frac{9n_v}{8k^g} \leq \frac1{4k^{g-1}} < \frac1{(2g-1)k+1} = \frac1{d_q+1} \,,
		\]
		which is what we wanted to prove.
	\end{proof}
\end{prop}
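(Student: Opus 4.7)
The plan is to translate type-\ccase into a counting problem on $G_v$-orbits in $\Hom(\pi_1^\et(Y_{\Kbar_v}),\bZ/k\bZ)$, where $k=(q-1)/2$, and then exploit the two hypotheses on $q$ to sandwich the ``small-orbit fraction'' between a lower bound for the total count and an upper bound for the fixed-point count. By Remark~\ref{rmk:what_is_Sigma}, for $y\in Y(K_v)$ the set $\Sigma_A$ attached to $A=\rH^0_\et(X_{q,y,\Kbar_v},\bQ_p)$ is $G_v$-equivariantly identified with $Y'_{q,y}(\Kbar_v)$, with closed points of residue degree $<4$ playing the role of orbits of size $<4$. Emptiness of $Y(K_v)^\LV_{X_q,\ccase}$ thus amounts to the uniform estimate
\[
\size_v(Y'_{q,y}(\Kbar_v)) < \frac{1}{d_q+1} = \frac{1}{(2g-1)k+1}\qquad\text{for every }y\in Y(K_v).
\]

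My first step would be to unfold the Kodaira--Parshin construction from \cite[\S7]{LVinventiones}, which identifies $Y'_{q,y}(\Kbar_v)$ $G_v$-equivariantly with the set of outer surjections $\pi_1^\et(Y_{\Kbar_v}-y)\twoheadrightarrow \Aff(q)=\bF_q\rtimes\bF_q^\times$ non-trivial on an inertia generator at $y$. The quotients $\Aff(q)\twoheadrightarrow \bF_q^\times\twoheadrightarrow\bZ/k\bZ$ kill the inertia at $y$, so they factor through $\pi_1^\et(Y_{\Kbar_v})$, and \cite[Lemma~2.11]{LVinventiones} tells me that the resulting $G_v$-equivariant maps have constant fibre sizes. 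Since $\size_v$ is preserved under such surjections, the problem reduces to bounding $\size_v(\Surj(\pi_1^\et(Y_{\Kbar_v}),\bZ/k\bZ))$. The lower bound on the denominator is easy: $\pi_1^\et(Y_{\Kbar_v})^\ab\cong\hat\bZ^{2g}$, so surjectivity onto $\bZ/k\bZ$ is counted by inclusion--exclusion over maximal subgroups, giving $\#\Surj = k^{2g}\prod_r(1-r^{-2g}) > \zeta(2g)^{-1}k^{2g}$.

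The crux is upper-bounding the number of elements fixed by an open subgroup $G_w\leq G_v$ of index $2$ or $3$. I would identify $M\colonequals\Hom(\pi_1^\et(Y_{\Kbar_v}),\bZ/k\bZ)$ with $\rH^1_\et(Y_{\Kbar_v},\bZ/k\bZ)$, which carries a perfect $G_v$-equivariant Weil pairing $\langle-,-\rangle\colon M\otimes M\to\mu_k^\vee$. Choosing $T\in G_w$ acting as the $i$th power of arithmetic Frobenius on the residue field, with $i\in\{1,2,3\}$, the identity $\langle m_1,m_2\rangle = \langle Tm_1,Tm_2\rangle = q_v^{-i}\langle m_1,m_2\rangle$ for $m_1,m_2\in M^{G_w}$ forces $M^{G_w}$ to be isotropic provided $\gcd(k,q_v^i-1)=1$ for $i=1,2,3$. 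This is exactly what the first hypothesis secures: since $4\nmid q-1$, the number $k$ is odd; and every odd prime dividing $(q_v-1)(q_v+1)(q_v^2+q_v+1)$ divides $q_v(q_v+1)(q_v^3-1)$, so is excluded from $k$. Isotropy then yields $\#M^{G_w}\leq k^g$, hence at most $n_v k^g$ elements of $M$ lie in $G_v$-orbits of size $<4$.

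Combining gives $\size_v < \zeta(2g)n_v/k^g$, and I would finish with an elementary numerical check using $k\geq 9n_v/2$ and $\zeta(2g)\leq\zeta(4)<9/8$: this bounds $\size_v$ by $9n_v/(8k^g)\leq 1/(4k^{g-1})\leq 1/((2g-1)k+1)$ for $g\geq 2$, which is exactly $1/(d_q+1)$. The main conceptual obstacle is the isotropy-via-Weil-pairing step, since it is responsible for the crucial exponent drop from $k^{2g}$ to $k^g$; everything else is translation of known material or elementary numerics. A secondary technical point that I would need to verify carefully is that the fibres of the Kodaira--Parshin reduction are genuinely equisized $G_v$-sets, which reduces to the conjugation action of $\Aff(q)$ on the surjection sets being free, i.e.\ to the triviality of the centre of $\Aff(q)$.
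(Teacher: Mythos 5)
Your proposal is correct and follows essentially the same route as the paper's proof: the same reduction through the Kodaira--Parshin identification and the quotients $\Aff(q)\twoheadrightarrow\bF_q^\times\twoheadrightarrow\bZ/k\bZ$ (including the trivial-centre/freeness point), the same lower bound $\zeta(2g)^{-1}k^{2g}$ on surjections, the same Weil-pairing isotropy argument giving $\#M^{G_w}\leq k^g$ under the divisibility hypothesis, and the same final numerical chain using $q-1\geq 9n_v$ and $\zeta(2g)<9/8$.
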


\begin{rmk}
	The quantity~$n_v$ appearing in the statement of Proposition~\ref{prop:choose_the_right_k-p_family} is bounded depending only on~$K$. In fact, if~$v\nmid 6$ then we always have $n_v=7$: $K_v$ has three quadratic extensions and four cubic extensions inside~$\Kbar_v$. (If~$q_v\equiv1$ modulo~$3$ then all of the cubic extensions are Galois; if instead~$q_v\equiv-1$ modulo~$3$ then one of the cubic extensions is Galois and the other three are conjugate non-Galois extensions.)
\end{rmk}

\begin{cor}\label{cor:good_choice_of_family}
	For every finite place~$v$ of~$K$, there exists an abelian-by-finite family $X\to Y'\to Y$ of constant relative dimension $>0$, having full monodromy, such that $Y(K_v)_{X_q,\ccase}^\LV=\emptyset$.
	\begin{proof}
		Dirichlet's Theorem ensures that there exists an odd prime~$q$ satisfying the conditions of Proposition~\ref{prop:choose_the_right_k-p_family}. The Kodaira--Parshin family with this parameter~$q$ has full monodromy \cite[\S8.2.3]{LVinventiones} and has $Y(K_v)_{X,\ccase}^\LV=\emptyset$ by Proposition~\ref{prop:choose_the_right_k-p_family}.
	\end{proof}
\end{cor}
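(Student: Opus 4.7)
My plan is to prove the corollary by taking $X \to Y' \to Y$ to be the Kodaira--Parshin family $X_q \to Y_q' \to Y$ for a well-chosen odd prime $q$, so that the work reduces entirely to verifying that $q$ can be chosen to meet the hypotheses of Proposition~\ref{prop:choose_the_right_k-p_family}. The family $X_q \to Y_q' \to Y$ has constant relative dimension $d_q = (g - 1/2)(q - 1) > 0$ since $g \geq 2$ and $q \geq 3$, and it has full monodromy by \cite[\S8.2.3]{LVinventiones}, so these two structural requirements are automatic once a prime $q$ is picked.

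The heart of the plan is therefore to exhibit an odd prime $q$ satisfying both bulleted conditions of Proposition~\ref{prop:choose_the_right_k-p_family}. The first condition is that $q - 1$ is coprime to $4$ and to every odd prime $r$ dividing the nonzero integer $N_v \colonequals q_v(q_v+1)(q_v^3 - 1)$; equivalently, $q \equiv 3 \pmod{4}$ and, for each odd prime $r \mid N_v$, $q$ avoids the residue class $1 \pmod{r}$. Since each such $r$ is at least $3$, there is always a nonempty set of admissible residue classes modulo $r$, and by the Chinese Remainder Theorem these combine into a nonempty union of residue classes modulo the modulus $M \colonequals 4 \prod_{r \mid N_v,\, r\text{ odd}} r$.

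I would then invoke Dirichlet's theorem on primes in arithmetic progressions: it guarantees infinitely many primes $q$ in any such admissible residue class mod $M$. In particular, I can pick $q$ large enough that $q - 1 \geq 9 n_v$, which is the second condition of Proposition~\ref{prop:choose_the_right_k-p_family} (here $n_v$ depends only on the local field $K_v$ and is finite). With such a $q$ in hand, Proposition~\ref{prop:choose_the_right_k-p_family} yields $Y(K_v)_{X_q, \ccase}^\LV = \emptyset$, completing the argument.

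There is no real obstacle: every input is already in place in the paper, and the only mild verification is the elementary congruence bookkeeping for Dirichlet's theorem. If anything, one should be a little careful when $N_v$ happens to have no odd prime divisors (which cannot occur since $q_v \geq 2$ forces $q_v^3 - 1 \geq 7$, always divisible by some odd prime), or when one of the restricted residue classes mod $M$ forces $q$ to be even (which is ruled out by $q \equiv 3 \pmod 4$), but both are easily sidestepped.
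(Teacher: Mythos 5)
Your proposal is correct and follows essentially the same route as the paper: take the Kodaira--Parshin family $X_q\to Y'_q\to Y$ and use Dirichlet's theorem to find an odd prime $q$ meeting the hypotheses of Proposition~\ref{prop:choose_the_right_k-p_family}, which the paper invokes in one line and you merely spell out (the congruence conditions $q\equiv 3\pmod 4$, $q\not\equiv 1\pmod r$ for odd primes $r\mid q_v(q_v+1)(q_v^3-1)$, plus taking $q$ large so that $q-1\geq 9n_v$). No gaps.
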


\subsection{Proof of Theorem~\ref{thm:locus_finiteness}}

Putting this all together, we obtain the proof of Theorem~\ref{thm:locus_finiteness}. Suppose that~$v$ is self-conjugate, and let~$X_q\to Y'_q\to Y$ be the Kodaira--Parshin family with parameter~$q$, where~$q$ satisfies the conditions of Proposition~\ref{prop:choose_the_right_k-p_family}. The Principal Trichotomy (Proposition~\ref{prop:principal_trichotomy}) implies that
\[
Y(K_v)_{X_q,S}^\LV \subseteq Y(K_v)_{X_q,S,\acase}^\LV\cup Y(K_v)_{X,\bcase}^\LV\cup Y(K_v)_{X_q,\ccase}^\LV \,,
\]
since~$v$ is self-conjugate. Because the Kodaira--Parshin family has full monodromy, the sets $Y(K_v)_{X_q,S,\acase}^\LV$ and $Y(K_v)_{X_q,\bcase}^\LV$ are finite by Corollaries~\ref{cor:finiteness_a} and~\ref{cor:finiteness_b}, and we also have $Y(K_v)_{X_q,\ccase}^\LV=\emptyset$ by Proposition~\ref{prop:choose_the_right_k-p_family}. Hence the Lawrence--Venkatesh locus $Y(K_v)_{X_q,S}^\LV$ is finite.\qed

\begin{rmk}\label{rmk:compare_v,q}
	Compared with the proof in~\cite{LVinventiones}, our proof shows finiteness of the locus $Y(K_v)_{X_q,S}^\LV$ for more pairs~$(v,q)$ of a finite place~$v$ and an odd prime~$q$. The proof in \cite{LVinventiones} establishes finiteness of~$Y(K_v)_{X_q,S}^\LV$ whenever~$(v,q)$ satisfies the following list of conditions:
	\begin{enumerate}[label = (\arabic*$^\circ$), ref = (\arabic*$^\circ$)]
		\item\label{condn:LV_qv_1} $v$ is self-conjugate and unramified over~$\bQ$ (i.e.\ friendly), and does not divide~$2$;
		\item\label{condn:LV_qv_2} $q-1$ is not divisible by~$4$ or any odd prime divisor of~$q_v\cdot\prod_{i=1}^7(q_v^i-1)$;
		\item\label{condn:LV_qv_3} \[
		\frac{4\cdot\zeta(2g)\cdot 2^g}{(q-1)^g} < \frac1{(g-1/2)(q-1)+1} \,;
		\]
		\item\label{condn:LV_qv_4} the Kodaira--Parshin family~$X_q\to Y'_q\to Y$ admits a good model over the ring of integers of~$K_v$ in the sense of \cite[Definition~5.1]{LVinventiones}.
	\end{enumerate}
	By contrast, our proof establishes finiteness of~$Y(K_v)_{X_q,S}^\LV$ whenever~$(v,q)$ satisfies the following less stringent list of conditions:
	\begin{enumerate}[label = (\arabic*), ref = (\arabic*)]
		\item\label{condn:new_qv_1} $v$ is self-conjugate;
		\item\label{condn:new_qv_2} $q-1$ is not divisible by~$4$ or any odd prime divisor of~$q_v\cdot\prod_{i=1}^3(q_v^i-1)$;
		\item\label{condn:new_qv_3} \[
		\frac{n_v\cdot\zeta(2g)\cdot2^g}{(q-1)^g} < \frac1{(g-1/2)(q-1)+1} \,.
		\]
	\end{enumerate}
	(In the case that~$Y$ has good reduction at~$v$, the quantity~$n_v$ can be replaced by~$2$, since the representation~$M$ appearing in the proof of Proposition~\ref{prop:choose_the_right_k-p_family} is unramified, and so one need only consider the two subgroups $G_w$ corresponding to unramified extensions of~$K_v$.)
	\smallskip
	
	We conclude with a miscellany of remarks comparing how easy it is to satisfy the conditions~\ref{condn:LV_qv_1}--\ref{condn:LV_qv_4} compared with~\ref{condn:new_qv_1}--\ref{condn:new_qv_3}.
	
	\begin{enumerate}[label = (\Roman*), ref = (\Roman*)]
		\item\label{rmkpart:100_percent} When~$K$ contains no CM subfield, then for 100\% of places~$v$ (in the natural density) there exists an odd prime~$q$ satisfying~\ref{condn:LV_qv_1}--\ref{condn:LV_qv_4}. Indeed, for any~$q$ satisfying~\ref{condn:LV_qv_3}, conditions~\ref{condn:LV_qv_1} and~\ref{condn:LV_qv_4} are satisfied for all but finitely many places~$v$. And if we choose~$q\equiv3$ modulo~$4$ such that~$\bQ(\zeta_{q-1})$ is linearly disjoint from the Galois closure~$K'$ of~$K$, then $q_v$ is equidistributed among the invertible residue classes modulo~$q-1$, so condition~\ref{condn:LV_qv_2} is satisfied for a proportion of at least
		\begin{equation}\label{eq:original_LV_proportion}\tag{$\ast$}
		\prod_{r\mid q-1}\max\left(1-\frac{18}{r-1},0\right)
		\end{equation}
		of places of~$K$, where~$r$ runs over odd prime factors of~$q$. (Here, $18$ is the maximal number of elements of order~$\leq7$ in a cyclic group.)
		
		We claim that the quantity~\eqref{eq:original_LV_proportion} can be made arbitrarily close to~$1$ for suitably chosen~$q$ (satisfying all other relevant conditions). For this, choose an auxiliary prime number~$r_0\geq100$ strictly greater than all odd prime divisors of~$[K':\bQ]$, and let~$q$ be the least prime number congruent to~$3$ modulo~$4$ and congruent to~$2$ modulo all odd primes less than~$r_0$. Then~\ref{condn:LV_qv_3} is automatically satisfied, $K'$ is linearly disjoint from~$\bQ(\zeta_{q-1})$, and Linnik's Theorem tells us that $\log(q)=O(\pi(r_0)\log(r_0))=O(r_0)$ where~$\pi(r_0)=O(r_0/\log(r_0))$ is the prime counting function. Now every odd prime factor of~$q-1$ is at least~$r_0$ and there are at most $\log(q)/\log(r_0)$ of them, so we obtain
		\[
		\eqref{eq:original_LV_proportion} \geq \left(1-\frac{50}{r_0}\right)^{\log(q)/\log(r_0)} \geq \exp\left(-\frac{100\log(q)}{r_0\log(r_0)}\right) = \exp\left(-\frac{O(1)}{\log(r_0)}\right) \,,
		\]
		and the right-hand bound tends to~$1$ as~$r_0\to\infty$.
		\item On the other hand, there are always places~$v$ for which~\ref{condn:LV_qv_1}--\ref{condn:LV_qv_4} are not satisfied for any~$q$, e.g.\ if~$v$ is ramified over~$\bQ$, if $v\mid2$, or if~$Y$ has bad reduction at~$v$. This is the most significant difference in the context of this paper, since we want to prove a constraint for every place~$v$.
		\item In general, the smallest value of~$q$ satisfying~\ref{condn:new_qv_1}--\ref{condn:new_qv_3} is smaller than the smallest value satisfying~\ref{condn:LV_qv_1}--\ref{condn:LV_qv_4}. For example, if~$Y$ is a curve of genus~$3$, then the smallest value of~$q$ satisfying~\ref{condn:LV_qv_1}--\ref{condn:LV_qv_4} for some~$v$ is~$q=23$, since~\ref{condn:LV_qv_2} implies in particular that~$q-1$ cannot be divisible by $3$, $5$ or~$7$. On the other hand, the smallest value of~$q$ satisfying~\ref{condn:new_qv_1}--\ref{condn:new_qv_3} (with the modification of~\ref{condn:new_qv_3} for places of good reduction) is~$q=11$. As mentioned in the introduction, this extra efficiency may be useful in the context of carrying out the Lawrence--Venkatesh method in practice.
	\end{enumerate}
\end{rmk}
\section{The Lawrence--Venkatesh obstruction for Selmer sections}\label{s:proof}

Having shown finiteness of the Lawrence--Venkatesh locus, we now come to the other half of our proof of Theorem~\ref{thm:finite_selmer_image}. We fix notation as in the preceding section; that is, $K$ is a number field, $Y/K$ is a smooth projective curve of genus~$\geq2$, and $v$ is a $p$-adic place of~$K$ for some rational prime~$p$. Our aim is to prove the following, which proves point~\ref{mainpart:containment} of the introduction.

\begin{thm}\label{thm:selmer_sections_survive_LV}
	For every abelian-by-finite family~$X\to Y'\to Y$, the image of the localisation map
	\[
	\Sec^\Sel(Y/K) \to Y(K_v)
	\]
	is contained in the Lawrence--Venkatesh locus~$Y(K_v)^\LV_X$ (see Definition~\ref{def:global_pair}).
\end{thm}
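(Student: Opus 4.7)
The plan is to construct the interpolating pair $(A_s,V_s)$ directly from the relative \'etale cohomology of the abelian-by-finite family, restricted along the section~$s$. Writing $\pi\colon X\xrightarrow{\pia}Y'\xrightarrow{\pif}Y$, the sheaves $\rR^0\pi_{\et*}\bQ_p$ and $\rR^1\pi_{\et*}\bQ_p$ are lisse $\bQ_p$-sheaves on $Y_\et$ and, after choosing a geometric basepoint $\bar y_0$ of~$Y$ compatible with the fundamental exact sequence~\eqref{eq:fes}, correspond to continuous $\bQ_p$-linear representations $A$ and $V$ of $\pi_1^\et(Y,\bar y_0)$ on the fibres $\rH^0_\et(X_{\bar y_0},\bQ_p)$ and $\rH^1_\et(X_{\bar y_0},\bQ_p)$, respectively. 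Cup product and the first Chern class of the polarisation endow $(A,V)$ with the structure of a symplectic pair in the category of $\pi_1^\et(Y,\bar y_0)$-representations, as in the fibrewise discussion of \S\ref{s:abelian-by-finite}.

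I would then set $(A_s,V_s)$ to be the restriction of $(A,V)$ along $s\colon G_K\to\pi_1^\et(Y,\bar y_0)$. Since $\pi_1^\et(Y_{\Kbar},\bar y_0)$ acts on~$A$ and~$V$ by symplectic pair automorphisms, any two sections in the same $\pi_1^\et(Y_{\Kbar},\bar y_0)$-conjugacy class produce isomorphic symplectic pairs, so the assignment descends to a well-defined map $\Sec(Y/K)\to\pi_0\SPair(\Rep_{\bQ_p}(G_K))$.

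The key step is to verify that if $s$ is Selmer, then the adelic point $(y_u)_u$ produced by localisation lies in $Y(\bA_K)^\LV_X$, interpolated by $(A_s,V_s)$. For each place $u$, the Selmer assumption supplies a $K_u$-point $y_u\in Y(K_u)$ whose associated section $s_{y_u}\colon G_u\to\pi_1^\et(Y_{K_u})$ is $\pi_1^\et(Y_{\Kbar_u})$-conjugate to $s|_{G_u}$. Choosing a geometric point $\bar y_u$ over $y_u$ together with a path from $\bar y_u$ to~$\bar y_0$ in $Y_{\Kbar}$, the restriction of $(A,V)$ along $s_{y_u}$ computes the stalks of $\rR^0\pi_{\et*}\bQ_p$ and $\rR^1\pi_{\et*}\bQ_p$ at~$\bar y_u$. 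By smooth proper base change for \'etale cohomology, these stalks are $G_u$-equivariantly identified with $\rH^0_\et(X_{y_u,\Kbar_u},\bQ_p)$ and $\rH^1_\et(X_{y_u,\Kbar_u},\bQ_p)$. Since cup product and the polarisation Chern class are natural under base change, this identification respects the symplectic pair structure, yielding
\[
(A_s|_{G_u},V_s|_{G_u}) \cong \rH^{\leq1}_\et(X_{y_u,\Kbar_u},\bQ_p)
\]
as symplectic pairs in the category of $G_u$-representations, after invoking the conjugation invariance of~$(A_s,V_s)$ to pass between $s|_{G_u}$ and $s_{y_u}$.

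The main obstacle will be purely organisational: keeping the basepoints, conjugating paths, and comparison morphisms between the various fibre functors straight, and checking that each of them intertwines the algebra, module, and symplectic pairing structures. No input beyond smooth proper base change, the functoriality of cup products and Chern classes, and the definition of a Selmer section is needed, which is why this half of the argument is much more elementary than the finiteness statement in Theorem~\ref{thm:locus_finiteness}.
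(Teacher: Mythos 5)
Your proposal is correct and follows essentially the same route as the paper: restrict the monodromy representation of $\rR^{\leq1}\pi_{\et*}\bQ_p$ (with its symplectic pair structure from the polarisation) along the section to get $(A_s,V_s)$, then use the Selmer hypothesis plus basepoint/path bookkeeping and proper base change to identify $(A_s|_{G_u},V_s|_{G_u})$ with $\rH^{\leq1}_\et(X_{y_u,\Kbar_u},\bQ_p)$ at every place, which is exactly Proposition~\ref{prop:section_localisation} and the concluding argument in the paper.
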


For the remainder of this section, we fix a choice of abelian-by-finite family $X\to Y'\to Y$ and write $\pi \colon X \to Y$ for the projection.

\subsection{Relative \'etale cohomology of an abelian-by-finite family}\label{ss:relative_etale_cohomology}
The relative \'etale cohomology $\rR^\bullet\!\pi_{\et*}\bQ_p$ of the abelian-by-finite family $\pi\colon X\to Y$ is a $\bQ_p$-local system on~$Y_\et$ whose fibre at any geometric point~$y$ of~$Y$ is $\rH^\bullet_\et(X_y,\bQ_p)$. We see, e.g.\ by looking at any fibre, that the natural map
\[
\bigwedgebullet_{\pi_{\et*}\bQ_p} \rR^1\!\pi_{\et*}\bQ_p\xrightarrow\sim \rR^\bullet\!\pi_{\et*}\bQ_p
\]
is an isomorphism of $\bQ_p$-local systems on~$Y_\et$. Now let~$\lambda$ denote the polarisation on~$X\to Y'$. We define its \emph{first relative \'etale Chern class} 
\[
c_1^\et(\lambda)_{/Y}\in\rH^0(Y,\rR^2\!\pi_{\et*}\bQ_p(1))
\]
as follows. The Kummer sequence induces a boundary map
\[
\rR^1\!\pi_{\et*}\bG_{m} \to \rR^2\!\pi_{\et*}\mu_{p^n} \cong \rR^2\!\pi_{\et*}\bZ/p^n\bZ(1)
\]
for every~$n\geq0$. These maps are compatible in a natural way, so induce a map
\[
\rR^1\!\pi_{\et*}\bG_{m} \to \rR^2\!\pi_{\et*}\bZ_{p}(1)
\]
of inverse systems of abelian sheaves on~$Y_\et$, where the left-hand side is viewed as a constant inverse system. For a line bundle~$\dL$ on~$X$, we define $c_1^\et(\dL)_{/Y}$ to be the image of the class of~$\dL$ in $\Pic(X)=\rH^1(X_\et,\bG_{m})$ under the composite
\[
\rH^1(X_\et,\bG_{m}) \to \rH^0(Y,\rR^1\!\pi_{\et*}\bG_{m}) \to \rH^0\big(Y,\rR^2\!\pi_{\et*}\bZ_{p}(1)\big) \to \rH^0\big(Y,\rR^2\!\pi_{\et*}\bQ_p(1)\big) 
\]
where the first map arises from the Leray spectral sequence for \'etale cohomology, and the final map is tensoring with~$\bQ_p$ over~$\bZ_p$. We set~$c_1^\et(\lambda)_{/Y}=\frac12c_1^\et(\dL)_{/Y}$ for~$\dL=(1,\lambda)^*\dP$ with~$\dP$ the Poincar\'e bundle.

Now it follows from the usual functoriality properties of the Leray spectral sequence that the fibre of $c_1^\et(\lambda)_{/Y}$ at a geometric point~$y$ of~$Y$ is $c_1^\et(\lambda_y)\in\rH^2_\et(X_y,\bQ_p(1))$, where~$\lambda_y$ is the polarisation on the fibre~$X_y$. If we define 
\[
\relH_1^\et(X/Y,\bQ_p)\colonequals\cHom_{\pi_{\et*}\bQ_p}\big(\rR^1\!\pi_{\et*}\bQ_p, \pi_{\et*}\bQ_p\big),
\]
then 
\[
c_1^\et(\lambda)_{/Y}\in\rH^0\!\big(Y,\rR^2\!\pi_{\et*}\bQ_p(1)\big)
=\rH^0\big(Y,\!\!\bigwedgesquare_{\pi_{\et*}\bQ_p}\! \rR^1\!\pi_{\et*}\bQ_p(1)\big)
\]
corresponds to a pairing
\[
\check\omega_\lambda^\et\colon\bigwedgesquare_{\pi_{\et*}\bQ_p}\relH_1^\et(X/Y,\bQ_p)\to
\pi_{\et*}\bQ_p(1) \,.
\]
By considering the fibre of this pairing at a geometric point of~$Y$ and using Lemma~\ref{lem:etale_pairing_perfect}, we see that~$\check\omega_\lambda^\et$ is a perfect pairing: it induces an isomorphism
\[
\relH_1^\et(X/Y,\bQ_p)\xrightarrow\sim \rR^1\!\pi_{\et*}\bQ_p(1)
\]
of $\bQ_p$-local systems on~$Y_\et$. Hence there is a dual pairing
\[
\omega_\lambda\colon\bigwedgesquare_{\pi_{\et*}\bQ_p} 
\rR^1\!\pi_{\et*}\bQ_p
\to
\pi_{\et*}\bQ_p(-1) \,,
\]
making $\rR^1\!\pi_{\et*}\bQ_p$ into a symplectic $\pi_{\et*}\bQ_p$-module in the category of $\bQ_p$-local systems on~$Y_\et$. The fibre of the symplectic pair 
\[
\rR^{\leq 1}\!\pi_{\et*}\bQ_p \colonequals (\pi_{\et*}\bQ_p,\rR^1\!\pi_{\et*}\bQ_p)
\]
at a geometric point~$y$ of~$Y$ is isomorphic to the symplectic pair 
\[
\rH^{\leq1}_\et(X_{y},\bQ_p) = (\rH^0_\et(X_y,\bQ_p),\rH^1_\et(X_y,\bQ_p))
\]
compatibly with the algebra and symplectic module structures and, if~$y$ is defined over a non-algebraically closed field, the action of the Galois group.

\subsection{Attaching representations to Galois sections}

We can use this setup to attach to every Galois section a symplectic pair in the category of $G_K$-representations. Let us fix a geometric point~$\bar\eta$ of~$Y_{\Kbar}$, and write~$A$ and~$V$ for the $\pi_1^\et(Y,\bar\eta)$-representations corresponding to the $\bQ_p$-local systems $\pi_{\et*}\bQ_p$ and $\rR^1\!\pi_{\et*}\bQ_p$, respectively. Thus~$(A,V)$ is a symplectic pair in the category of $\pi_1^\et(Y,\bar\eta)$-representations.

If~$s$ is a section of the fundamental exact sequence
\[
1 \to \pi_1^\et(Y_{\Kbar},\bar\eta) \to \pi_1^\et(Y,\bar\eta) \to G_K \to 1 \,,
\]
then we define $A_s\colonequals A$ and $V_s\colonequals V$, endowed with the action of~$G_K$ given by restriction along $s\colon G_K\to\pi_1^\et(Y,\bar\eta)$. Thus~$(A_s,V_s)$ is a symplectic pair in the category of $G_K$-representations. Moreover, if~$\gamma$ is an element of $\pi_1^\et(Y_{\Kbar},\bar\eta)$ and $s'=\gamma s\gamma^{-1}$ is a conjugate section, then the action of~$\gamma$ gives a $G_K$-equivariant isomorphism $(A_s,V_s)\xrightarrow\sim (A_{s'},V_{s'})$. So the isomorphism class of the symplectic pair $(A_s,V_s)$ depends only on the $\pi_1^\et(Y_{\Kbar},\bar\eta)$-conjugacy class of~$s$.

More generally, if~$\bar\eta'$ is another geometric point of~$Y_{\Kbar}$ and $\gamma\in\pi_1^\et(Y_{\Kbar};\bar\eta,\bar\eta')$ is an \'etale path from~$\bar\eta$ to~$\bar\eta'$, then conjugation by~$\gamma$ induces an isomorphism
\begin{equation}\label{eq:two_fes}\tag{$\dagger$}
\begin{tikzcd}
	1 \arrow[r] & \pi_1^\et(Y_{\Kbar},\bar\eta) \arrow[r]\arrow[d,"\wr"] & \pi_1^\et(Y,\bar\eta) \arrow[r]\arrow[d,"\wr"] & G_K \arrow[r]\arrow[d,equals] & 1 \\
	1 \arrow[r] & \pi_1^\et(Y_{\Kbar},\bar\eta') \arrow[r] & \pi_1^\et(Y,\bar\eta') \arrow[r] & G_K \arrow[r] & 1
\end{tikzcd}
\end{equation}
between the fundamental exact sequences at basepoints~$\bar\eta$ and~$\bar\eta'$, respectively. If~$s'=\gamma s\gamma^{-1}$ is the section of the lower sequence in~\eqref{eq:two_fes} conjugate to~$s$, then the action of~$\gamma$ gives a $G_K$-equivariant isomorphism $(A_s,V_s)\xrightarrow\sim(A'_{s'},V'_{s'})$, where~$A'$ and~$V'$ are the $\pi_1^\et(Y,\bar\eta')$-representations corresponding to the $\bQ_p$-local systems $\pi_{\et*}\bQ_p$ and $\rR^1\!\pi_{\et*}\bQ_p$, respectively. So the isomorphism class of the symplectic pair~$(A_s,V_s)$ is also independent of the choice of geometric basepoint~$\bar\eta$ defining the \'etale fundamental group.

\subsubsection{Selmer sections}

In the particular case that the section~$s$ is Selmer, we can say considerably more about the local behaviour of the representations~$A_s$ and~$V_s$.

\begin{prop}\label{prop:section_localisation}
	Let~$u$ be a place of~$K$. Suppose that the restriction $s|_{G_u}$ of~$s$ to the decomposition group~$G_u\subseteq G_K$ is the local section arising from a point $y_u\in Y(K_u)$. Then
	\[
	(A_s|_{G_u},V_s|_{G_u}) \cong \rH^{\leq1}_\et(X_{y_u,\Kbar_u},\bQ_p)
	\]
	as symplectic pairs in the category of $G_u$-representations.
\end{prop}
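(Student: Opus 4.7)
\begin{pro*}[Proof plan]
The plan is to reduce the proposition to a direct application of proper base change in \'etale cohomology, once one unravels the definitions of $(A_s,V_s)$ and of the local section $s_{y_u}$ associated to a $K_u$-rational point. As observed after the definition of $(A_s,V_s)$, the isomorphism class of this symplectic pair depends only on the $\pi_1^\et(Y_{\Kbar},\bar\eta)$-conjugacy class of $s$; the same argument applied to the local fundamental exact sequence shows that the isomorphism class of $(A_s|_{G_u},V_s|_{G_u})$ depends only on the $\pi_1^\et(Y_{\Kbar_u},\bar\eta)$-conjugacy class of $s|_{G_u}$. So the first step will be to replace $s|_{G_u}$ by the explicit representative $s_{y_u}\colon G_u\to\pi_1^\et(Y_{K_u},\bar\eta)$ arising from a choice of geometric point $\bar y_u$ lying over $y_u$ and an \'etale path from $\bar y_u$ to $\bar\eta$ in $Y_{\Kbar_u}$.

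Next I would use the general yoga relating local systems and fundamental group representations: for any geometric point $\bar z$ of $Y$ lying over a point with residue field $k(z)$, restricting a $\bQ_p$-local system on $Y_\et$ along the natural map $G_{k(z)}\to\pi_1^\et(Y,\bar z)$ recovers the Galois action on the fibre at $\bar z$. Applied to $\bar y_u$ and combined with the \'etale path chosen above, this identifies $A_s|_{G_u}=A_{s_{y_u}}|_{G_u}$ with the fibre of $\pi_{\et*}\bQ_p$ at $\bar y_u$, and $V_s|_{G_u}=V_{s_{y_u}}|_{G_u}$ with the fibre of $\rR^1\!\pi_{\et*}\bQ_p$ at $\bar y_u$, each as $G_u$-representations. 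By proper base change for \'etale cohomology applied to the smooth proper morphism $\pi\colon X\to Y$, these fibres are canonically identified with $\rH^0_\et(X_{y_u,\Kbar_u},\bQ_p)$ and $\rH^1_\et(X_{y_u,\Kbar_u},\bQ_p)$, respectively, giving the desired isomorphisms on the level of underlying $G_u$-representations.

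It then remains to check that these identifications are compatible with the algebra structure on the $\rH^0$-part and with the symplectic pairing on the $\rH^1$-part. Compatibility with the algebra structure is automatic, since both the algebra structure on $A=\pi_{\et*}\bQ_p$ and the algebra structure on $\rH^0_\et(X_{y_u,\Kbar_u},\bQ_p)$ are induced from the obvious algebra structure on $\bQ_p$ via (relative) $\pi_{\et*}$. For the symplectic pairings, the key point is that the pairing on $V$ was constructed from the global class $c_1^\et(\lambda)_{/Y}\in\rH^0(Y,\rR^2\!\pi_{\et*}\bQ_p(1))$ as reviewed in \S\ref{ss:relative_etale_cohomology}; by functoriality of the Leray spectral sequence and base change, the fibre of $c_1^\et(\lambda)_{/Y}$ at $\bar y_u$ is exactly $c_1^\et(\lambda_{y_u})$, and hence the pairing on $V$ specialises to the polarisation-induced pairing on $\rH^1_\et(X_{y_u,\Kbar_u},\bQ_p)$.

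The main obstacle here is not conceptual but bookkeeping: one has to carefully track \'etale paths, basepoints, and the translation between local systems on $Y_\et$ and $\pi_1^\et(Y,\bar\eta)$-representations in order to identify $s_{y_u}^*\!\rR^i\!\pi_{\et*}\bQ_p$ with the Galois representation $\rH^i_\et(X_{y_u,\Kbar_u},\bQ_p)$ in a way that respects all additional structures, and to ensure that the final identification is genuinely canonical (up to the allowed conjugation by $\pi_1^\et(Y_{\Kbar_u},\bar\eta)$), so that it indeed gives an isomorphism of symplectic pairs as claimed.
\end{pro*}
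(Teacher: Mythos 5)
Your proposal is correct and follows essentially the same route as the paper: after reducing via basepoint change and conjugation (the paper's diagram relating the fundamental exact sequences at $\bar\eta$ and $\bar y_u$) one identifies $(A_s|_{G_u},V_s|_{G_u})$ with the pullback $\iota^*\rR^{\leq1}\!\pi_{\et*}\bQ_p$ along the pointed map $\iota\colon(\Spec(K_u),\Spec(\Kbar_u))\to(Y_{K_u},\bar y_u)$, whose identification with $\rH^{\leq1}_\et(X_{y_u,\Kbar_u},\bQ_p)$, compatibly with the algebra structure and the pairing coming from $c_1^\et(\lambda)_{/Y}$, is exactly the content of \S\ref{ss:relative_etale_cohomology} that the paper invokes. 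The only difference is presentational: the paper moves the basepoint to $\bar y_u$ outright, while you transport along an \'etale path, which amounts to the same thing.
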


\begin{proof}
	By the above discussion, we may suppose that the geometric point~$\bar\eta$ defining the fundamental group is~$\bar y_u$, the $\Kbar_u$-valued geometric point of~$Y_{\Kbar}$ determined by~$y_u$. Moreover, we may suppose that the restricted section~$s|_{G_u}$ is the map
	\[
	G_u=\pi_1^\et(\Spec(K_u),\Spec(\Kbar_u))\xrightarrow{\iota_*}\pi_1^\et(Y_{K_u},\bar y_u)\subseteq\pi_1^\et(Y,\bar y_u)
	\] 
	induced by functoriality from the morphism
	\[
	\iota\colon(\Spec(K_u),\Spec(\Kbar_u))\to(Y_{K_u},\bar y_u)
	\]
	of pointed schemes. By definition of the \'etale fundamental group as a functor, $(A_s|_{G_u},V_s|_{G_u})$ is $G_u$-equivariantly isomorphic to $\iota^*\rR^{\leq1}\!\pi_{\et*}\bQ_p$, which is isomorphic to $\rH_\et^{\leq1}(X_{y_u,\Kbar_u},\bQ_p)$.
\end{proof}

\begin{proof}[Proof of Theorem~\ref{thm:selmer_sections_survive_LV}.] As a consequence of Proposition~\ref{prop:section_localisation}, if~$s$ is a Selmer section with associated adelic point~$y=(y_u)_u\in Y(\bA_K)$, then the symplectic pair~$(A_s,V_s)$ interpolates the points~$y_u$ in the sense of Definition~\ref{def:global_pair}. In particular, we have $y\in Y(\bA_K)_X^\LV$, and $y_v\in Y(K_v)_X^\LV$. Thus we have proved Theorem~\ref{thm:selmer_sections_survive_LV}.
\end{proof}
%%%%%%%%%%%%%%%%%%%%%%%%%%%%%%%%%%%%%%%%%%%
%%%%%%%%%%%%%%%%%%%%%%%%%%%%%%%%%%%%%%%%%%%
\appendix
\section{Filtered derived categories}\label{appx:derived}

In this appendix, we prove a few basic lemmas on filtered derived categories of sheaves used in the proof of Theorem~\ref{thm:pushforward_of_pairs}. Suppose that~$(U,\dA)$ is a small ringed site. One then has the \emph{derived category} $\Derv(\dA)$ of $\dA$-modules, which is given as the localisation of the category$~\Kom(\dA)$ of (unbounded) complexes of $\dA$-modules at the quasi-isomorphisms.

Let $\SKom(\dA)$ denote the category of $\bZ$-indexed decreasing sequences
\[
\dots \to \rF^1\!A \to \rF^0\!A \to \rF^{-1}\!A \to \dots
\]
of complexes of sheaves of $\dA$-modules. The maps $\rF^{i+1}\!A\to\rF^i\!A$ are not necessarily monomorphisms. A morphism $A\to B$ in~$\SKom(\dA)$ is called a \emph{filtered quasi-isomorphism} just when the induced maps~$\rF^i\!A\to\rF^i\!B$ are quasi-isomorphisms for all~$i$. We defined the \emph{filtered derived category} $\FDerv(\dA)$ to be the localisation of~$\SKom(\dA)$ at the filtered quasi-isomorphisms. Equivalently, $\FDerv(\dA)$ is the derived category of the category~$\Seq(\dA)$ of sequences in~$\dA$.

\begin{rmk}
	Any object of~$\SKom(\dA)$ is quasi-isomorphic to an object~$A$ in which all of the maps $\rF^{i+1}\!A\to\rF^i\!A$ are monomorphisms, i.e.\ $A$ is a filtered complex. Thus~$\FDerv(\dA)$ can also be described as the derived category of the category~$\FMod(\dA)$ of filtered $\dA$-modules, hence the name. However, since~$\FMod(\dA)$ is not an abelian category, it is generally clearer to view~$\FDerv(\dA)$ as the derived category of~$\SMod(\dA)$ instead.
\end{rmk}

There is a second important class of morphisms in~$\SKom(\dA)$. If~$A\in\SKom(\dA)$ is a sequence of complexes of~$\dA$-modules, we write~$\gr^i_{\rF}A$ for the cone of the map $\rF^{i+1}\!A\to\rF^i\!A$. A map $A\to B$ in~$\SKom(\dA)$ is called a \emph{graded quasi-isomorphism} just when the induced maps $\gr^i_{\rF}A \to \gr^i_{\rF}B$ are quasi-isomorphisms for all~$i$.

\subsection{Cohomology objects}\label{ss:cohomology_objects}

If~$A\in\SKom(\dA)$ is a sequence of complexes, then it has natural cohomology objects~$\rH^j(A)\in\Seq(\dA)$, namely the sequences
\[
\dots\to\rH^j(\rF^1\!A)\to\rH^j(\rF^0\!A)\to\rH^j(\rF^{-1}\!A)\to\dots \,.
\]
Note that even if~$A\in\FKom(\dA)$ is a filtered complex, its cohomology objects~$\rH^j(A)$ may still not be filtered themselves (meaning the maps $\rH^j(\rF^{i+1}\!A)\to\rH^j(\rF^i\!A)$ may not be injective). We have the following criterion due to Deligne which governs when this happens.

\begin{lem}\label{lem:degeneration_implies_filtered_cohomology}
	Suppose that~$A$ is a filtered complex of sheaves of $\dA$-modules whose filtration is bounded below in every degree. If the spectral sequence associated to~$A$ degenerates at the first page, then~$\rH^j(A)$ is a filtered $\dA$-module for all~$j$.
	\begin{proof}
		It suffices to prove this under the additional assumption that the filtration is degreewise bounded above. In this case, $A$ is a filtered complex with strict differentials by \cite[Proposition~1.3.2]{deligne:hodge_ii}, and it is easy to check that this implies that~$\rH^j(A)$ is filtered.
	\end{proof}
\end{lem}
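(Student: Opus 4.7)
My plan is to reduce to the biregular case and then extract the injectivity of the transition maps from Deligne's strictness criterion. The conclusion --- that $\rH^j(A)$ is a filtered $\dA$-module --- amounts to requiring that for every $i,j$ the transition map
\[
\rH^j(\rF^{i+1}A) \longrightarrow \rH^j(\rF^i A)
\]
be a monomorphism. Since this question only involves filtration levels $\geq i$, it is insensitive to truncation below level $i$, and I would exploit this to achieve the reduction promised by the hint.

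Concretely, for the fixed target $i$ I would replace $A$ by the filtered complex $A'$ with $\rF^k A' \colonequals \rF^{\max(k,N)}A$ for some integer $N \leq i$. The transition maps of $A'$ at levels $\geq i$ coincide with those of $A$, so the statement for $A$ follows from the same statement for $A'$. A direct check shows that $\gr^p A' = \gr^p A$ for $p \geq N$ and $\gr^p A' = 0$ for $p < N$, so the spectral sequence of $A'$ is the restriction of that of $A$ to columns $p \geq N$ and still degenerates at $E_1$. By construction the filtration on $A'$ stabilises at $\rF^N A^n$ for $k \leq N$, and by hypothesis it vanishes for $k$ large, so in every cohomological degree $A'$ is biregular. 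This is the additional assumption supplied by the hint.

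In the biregular setting I would invoke Deligne's Proposition~1.3.2 of Hodge~II, which applies to filtered complexes in any abelian category and identifies $E_1$-degeneration with strictness of the differentials, i.e.\ $d(\rF^k A^n) = d(A^n) \cap \rF^k A^{n+1}$ for all $k,n$. Since sheaves of $\dA$-modules form an abelian category, this criterion applies verbatim. Strictness then yields the desired injectivity in one line: a class $[a]\in \rH^j(\rF^{i+1}A)$ mapping to zero in $\rH^j(\rF^i A)$ is represented by a cocycle $a \in \rF^{i+1}A^j$ with $a = db$ for some $b \in \rF^i A^{j-1}$; applying strictness to the relation $db = a \in \rF^{i+1}A^{j}$ produces $b' \in \rF^{i+1}A^{j-1}$ with $db' = a$, so that $[a]=0$ already in $\rH^j(\rF^{i+1}A)$.

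I expect the main obstacle to lie in the reduction step, specifically in verifying that passing from $A$ to $A'$ really does preserve $E_1$-degeneration and does not spoil any of the structure the spectral sequence depends on; once this bookkeeping is in hand, Deligne's criterion supplies the essential structural input and the final deduction of injectivity from strictness is an immediate cocycle chase.
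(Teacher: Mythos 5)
Your proposal is correct and follows essentially the same route as the paper: reduce to the degreewise biregular case, apply Deligne's Proposition~1.3.2 to convert $E_1$-degeneration into strictness of the differentials, and deduce injectivity of $\rH^j(\rF^{i+1}\!A)\to\rH^j(\rF^i\!A)$ from strictness by the cocycle argument. Your explicit truncation $\rF^kA'\colonequals\rF^{\max(k,N)}A$ is a valid way of making precise the reduction the paper leaves implicit, since the inclusion $A'\to A$ is injective on $E_1$-pages (an isomorphism in columns $p\geq N$, zero source elsewhere), so degeneration of the spectral sequence of $A$ indeed forces that of $A'$.
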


Let us say that~$A\in\SKom(\dA)$ has \emph{filtered cohomology objects} just when~$\rH^j(A)$ is filtered for all~$j$.

\begin{lem}\label{lem:iso_on_completed_cohomology}
	Suppose that $A\to B$ is a graded quasi-isomorphism in~$\SKom(\dA)$ and~$A$ has filtered cohomology objects.
	\begin{enumerate}
		\item $B$ also has filtered cohomology objects.
		\item The induced map $\rH^j(A)^\wedge\to\rH^j(B)^\wedge$ on the completions of cohomology objects with respect to their filtrations is an isomorphism for all~$j$.
	\end{enumerate}
	\begin{proof}
		For the first point, we note that~$A$ has filtered cohomology objects if and only if the coboundary map $\rH^i(\gr^j_{\rF}\!A)\to\rH^{i+1}(\rF^{j+1}\!A)$ is zero for all~$i$ and~$j$. So if~$A$ has filtered cohomology objects and $A\to B$ is a derived graded quasi-isomorphism, then we have a commuting square
		\begin{center}
			\begin{tikzcd}
				\rH^i(\gr^j_{\rF}\!A) \arrow[r,"0"]\arrow[d,"\wr"] & \rH^{i+1}(\rF^{j+1}\!A) \arrow[d] \\
				\rH^i(\gr^j_{\rF}\!B) \arrow[r] & \rH^{i+1}(\rF^{j+1}\!B)
			\end{tikzcd}
		\end{center}
		for all~$i$ and~$j$, where the top arrow is~$0$ and the left-hand vertical arrow is an isomorphism. So the bottom arrow is~$0$ too and~$B$ has filtered cohomology objects.
		
		For the second point, $A$ having filtered cohomology objects implies that the natural map
		\[
		\rH^i(\rF^j\!A)/\rH^i(\rF^k\!A)\to\rH^i(\rF^j\!A/\rF^k\!A)
		\]
		is an isomorphism for all~$i$ and all~$j\leq k$, where~$\rF^j\!A/\rF^k\!A$ is the cone of the map $\rF^j\!A\to\rF^k\!A$. Since $A\to B$ is a graded quasi-isomorphism, it induces quasi-isomorphisms $\rF^j\!A/\rF^k\!A\xrightarrow\sim\rF^j\!B/\rF^k\!B$ for all~$j\leq k$, and hence the induced maps
		\[
		\rH^i(\rF^j\!A)/\rH^i(\rF^k\!A)\to\rH^i(\rF^j\!B)/\rH^i(\rF^k\!B)
		\]
		are isomorphisms for all~$i$ and all~$j\leq k$. Taking the inverse limit over~$k$ gives the result.
	\end{proof}
\end{lem}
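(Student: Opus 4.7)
My plan is to translate the ``filtered cohomology objects'' condition into the vanishing of certain coboundary maps, then exploit naturality of the cone triangle $\rF^{j+1}\!A \to \rF^j\!A \to \gr^j_{\rF}\!A \to \rF^{j+1}\!A[1]$. The reformulation is that $A$ has filtered cohomology objects precisely when, for every $i$ and $j$, the coboundary $\rH^i(\gr^j_{\rF}\!A) \to \rH^{i+1}(\rF^{j+1}\!A)$ in the associated long exact sequence vanishes; equivalently, the transition maps $\rH^{i+1}(\rF^{j+1}\!A) \to \rH^{i+1}(\rF^j\!A)$ are injective. This is exactly the setup used in the statement of the lemma itself.

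For part (1), I would simply form the naturality square of coboundary maps induced by the morphism $A \to B$. The left vertical arrow $\rH^i(\gr^j_{\rF}\!A) \xrightarrow{\sim} \rH^i(\gr^j_{\rF}\!B)$ is an isomorphism by the graded quasi-isomorphism hypothesis, so commutativity forces the bottom coboundary $\rH^i(\gr^j_{\rF}\!B) \to \rH^{i+1}(\rF^{j+1}\!B)$ to vanish. Thus $B$ also has filtered cohomology objects.

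For part (2), I would first establish, by induction on $k-j$, that for a sequence with filtered cohomology objects the natural comparison map
\[
\rH^i(\rF^j\!A)/\rH^i(\rF^k\!A) \to \rH^i(\rF^j\!A/\rF^k\!A)
\]
is an isomorphism for all $j \le k$; the base case is the graded quotient, and the inductive step is a five-lemma application to the long exact sequences coming from the subquotient triangle $\rF^k\!A/\rF^{k+1}\!A \to \rF^j\!A/\rF^{k+1}\!A \to \rF^j\!A/\rF^k\!A$, using the vanishing coboundaries. By essentially the same induction (now using two-out-of-three in place of the five lemma), the graded quasi-isomorphism hypothesis upgrades to a quasi-isomorphism $\rF^j\!A/\rF^k\!A \xrightarrow{\sim} \rF^j\!B/\rF^k\!B$ for every $j \le k$. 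Combining these two inductions yields isomorphisms $\rH^i(\rF^j\!A)/\rH^i(\rF^k\!A) \xrightarrow{\sim} \rH^i(\rF^j\!B)/\rH^i(\rF^k\!B)$, and passing to the inverse limit over $k$ gives the required isomorphism of completions.

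The only mildly delicate point is the double induction in part (2): one transports the filtered cohomology property from graded pieces to arbitrary finite quotients, the other transports quasi-isomorphism through the same tower. Both are routine diagram chases once the coboundary reformulation of filtered cohomology is in hand, so I do not expect any real obstacle beyond careful bookkeeping.
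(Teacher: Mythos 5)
Your proposal is correct and follows essentially the same route as the paper: the coboundary-vanishing reformulation and naturality square for part (1), and for part (2) the comparison isomorphism $\rH^i(\rF^j\!A)/\rH^i(\rF^k\!A)\xrightarrow{\sim}\rH^i(\rF^j\!A/\rF^k\!A)$ together with the induced quasi-isomorphisms $\rF^j\!A/\rF^k\!A\xrightarrow{\sim}\rF^j\!B/\rF^k\!B$, followed by the inverse limit over $k$. The only difference is that you spell out by induction (five lemma, two-out-of-three) the dévissage steps the paper simply asserts, which is fine.
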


\begin{rmk}
	There are several different definitions of the filtered derived category in the literature; the definition we give above coincides with that used in~\cite{schapira-schneiders:filtered_derived_category} and~\cite{bhatt-morrow-scholze:thh}. If one localises $\SKom(\dA)$ instead at the graded quasi-isomorphisms, one obtains the filtered derived category as defined in e.g.~\cite{gwilliam-pavlov:filtered_derived_category}. The filtered derived categories used in \cite{scholze:relative_p-adic_hodge_theory} is also the localisation of~$\SKom(\dA)$ at graded quasi-isomorphisms: the term ``filtered derived category'' is not defined in \cite{scholze:relative_p-adic_hodge_theory}, but the definition was clarified to us in email correspondence with Peter Scholze. In particular, maps referred to as ``filtered quasi-isomorphisms'' in~\cite{scholze:relative_p-adic_hodge_theory} are what we refer to as graded quasi-isomorphisms above.
\end{rmk}

\begin{rmk}
	The above discussion doesn't strictly apply to the pro-\'etale site~$U_\proet$ of \cite{scholze:relative_p-adic_hodge_theory}, since~$U_\proet$ is large. The necessary modifications were described to us by Peter Scholze. One follows the approach in \cite{scholze:diamonds}, by picking a suitable cardinal (as in \cite[Lemma~4.1]{scholze:diamonds}) and considering only those adic spaces~$U$ which are \emph{$\kappa$-small} in the same sense as \cite[Definition~4.2]{scholze:diamonds}. If~$U$ is $\kappa$-small, then the $\kappa$-small pro-\'etale site~$U_{\proet,\kappa}$ is defined to be the restriction of the pro-\'etale site to the full subcategory of $\kappa$-small objects. This is a small site.
	
	One then checks, from \cite[Proposition~8.2]{scholze:diamonds} that the pullback functor
	\[
	c_{\kappa,\kappa'}^{-1}\colon U_{\proet,\kappa}^\sim \to U_{\proet,\kappa'}^\sim
	\]
	on categories of sheaves is fully faithful for~$\kappa<\kappa'$, and the natural map
	\begin{equation}\label{eq:increase_kappa}
		\dF \to \rR\!c_{\kappa,\kappa',*}c_{\kappa,\kappa'}^{-1}\dF
	\end{equation}
	is an isomorphism for all abelian sheaves~$\dF$ on $U_{\proet,\kappa}$. We call the (large) colimit
	\[
	U_\proet^\sim\colonequals \varinjlim_\kappa U_{\proet,\kappa}^\sim
	\]
	the category of \emph{small} sheaves on~$U_\proet$, which is equivalent to the category of sheaves on~$U_\proet$ which arise via pullback from a sheaf on some~$U_{\proet,\kappa}$. All of the sheaves discussed in \S\ref{ss:relative_p-adic_hodge} are small, and it follows from isomorphy of~\eqref{eq:increase_kappa} that cohomology and derived pushforwards of small sheaves can be computed on the level of sheaves on~$U_{\proet,\kappa}$ for a suitable~$\kappa$. So the discussion in this section can be applied to small sheaves on~$U_\proet$, even though it is large.
\end{rmk}
%%%%%%%%%%%%%%%%%%%%%%%%%%%%%%%%%%%%%%%%%%%
%%%%%%%%%%%%%%%%%%%%%%%%%%%%%%%%%%%%%%%%%%%
%%%%%% Bibliography %%%%%%%%%%%%%%%%%%%%%%%
%%%%%%%%%%%%%%%%%%%%%%%%%%%%%%%%%%%%%%%%%%%

\bibliographystyle{alphaSGA}
\nocite{scholze:relative_p-adic_hodge_theory_erratum}
\bibliography{references}

\end{document}